\documentclass[11pt, a4paper]{article}
\usepackage{amsmath, amssymb, amsthm, amsfonts}
\allowdisplaybreaks[4]

\usepackage{geometry}
\usepackage{mathtools}
\usepackage{mathrsfs} 
\usepackage{bm}
\usepackage{esint}
\usepackage{verbatim}
\usepackage{xcolor}

\definecolor{darkgreen}{RGB}{0,120,0}
\usepackage{comment}
\usepackage{hyperref}

\usepackage[style=alphabetic,maxalphanames=5,maxnames=5]{biblatex}
\usepackage{titlesec}
\titlespacing*{\section}{0pt}{1.5ex plus 0.5ex minus 0.2ex}{0.8ex plus 0.2ex}
\titlespacing*{\subsection}{0pt}{1.2ex plus 0.3ex minus 0.2ex}{0.5ex plus 0.2ex}
\titlespacing*{\subsubsection}{0pt}{1ex plus 0.2ex minus 0.2ex}{0.3ex plus 0.2ex}

\titleformat{\section}{\normalfont\large\bfseries}{\thesection}{1em}{}
\titleformat{\subsection}{\normalfont\normalsize\bfseries}{\thesubsection}{1em}{}
\titleformat{\subsubsection}{\normalfont\normalsize\bfseries}{\thesubsubsection}{1em}{}

\usepackage{enumitem}
\setlist{itemsep=0.2ex, parsep=0pt, topsep=0.5ex, partopsep=0pt}

\AtBeginDocument{
  \setlength{\abovedisplayskip}{1ex plus 0.3ex minus 0.3ex}
  \setlength{\belowdisplayskip}{1ex plus 0.3ex minus 0.3ex}
  \setlength{\abovedisplayshortskip}{0.3ex plus 0.2ex}
  \setlength{\belowdisplayshortskip}{0.8ex plus 0.2ex minus 0.2ex}
}

\newtheorem{theorem}{Theorem}[section]
\newtheorem{definition}[theorem]{Definition}
\newtheorem{definitiontheorem}[theorem]{Definition and Theorem}
\newtheorem{proposition}[theorem]{Proposition}
\newtheorem{corollary}[theorem]{Corollary}
\newtheorem{lemma}[theorem]{Lemma}
\newtheorem{example}[theorem]{Example}
\newtheorem{remark}[theorem]{Remark}

\newcommand{\R}{\mathbb{R}}
\newcommand{\Hn}{\mathbb{H}^n}

\newcommand{\V}{\mathbb{V}} 
\newcommand{\W}{\mathbb{W}} 
\newcommand{\norm}[1]{\lVert#1\rVert}
\newcommand{\knorm}[1]{\lVert#1\rVert
} 
\newcommand{\inner}[2]{\langle #1, #2 \rangle}
\newcommand{\gr}{\mathrm{gr}}

\author{
  Yibo Chen\thanks{Department of Mathematics and Statistics, University of Jyv\"askyl\"a, P.O. Box 35 (MaD), FI-40014 University of Jyv\"askyl\"a, Finland. \texttt{yibo.y.chen@jyu.fi}}
  \and
  Katrin F\"assler\thanks{Department of Mathematics and Statistics, University of Jyv\"askyl\"a, P.O. Box 35 (MaD), FI-40014 University of Jyv\"askyl\"a, Finland. \texttt{katrin.s.fassler@jyu.fi}}
  \and
  Kilian Zambanini\thanks{Dipartimento di Matematica, Universit\`a di Trento, Via Sommarive, 14, 38123 Povo TN, Italy. \texttt{kilian.zambanini@unitn.it}}
\thanks{Part of this work was carried out while K.Z. visited the Department of Mathematics and Statistics at the University of Jyv\"askyl\"a. The hospitality of the department is gratefully acknowledged. K.Z. is member of the {\it Gruppo Nazionale per l’Analisi Matematica, la Probabilità e le loro Applicazioni} (GNAMPA), of the {\it Istituto Nazionale di Alta Matematica} (INdAM), and was supported by the University of Trento and the INdAM-GNAMPA 2026 Project \emph{Variational, Geometric, and Analytic Perspectives on Regularity}, CUP E53C25002010001. K.Z. is also grateful to Andrea Pinamonti and Francesco Serra Cassano for useful discussions and suggestions.
}}

\title{On low-dimensional uniform rectifiability in Heisenberg groups - Part 2}

\date{\today}

\begin{document}
\maketitle
\numberwithin{equation}{section}

\begin{abstract}
Let $1\leq k\leq n$. We prove that $k$-dimensional intrinsic Lipschitz graphs in the Heisenberg group $\mathbb{H}^n$ satisfy a geometric lemma $\mathrm{GLem}(\beta_{2,\mathcal{V}_k},p)$ for horizontal $\beta$-numbers with an exponent $p=p(k)$. Previously, this result was  known only in the case $k=1$; our proof recovers  the sharp exponent $p=4$ in this setting. For $k>1$, we adapt an integral geometric approach originally developed by Orponen for Euclidean and parabolic Lipschitz functions. In addition, for $k=n$, we show how to deduce a geometric lemma with $p=4$ directly from an isotropic Dorronsoro theorem in $\mathbb{R}^{2n}$ using a Poincar\'e inequality. Building on the new geometric lemmas, we establish
a  necessary condition for $k$-regular sets in $\mathbb{H}^n$ to admit corona decompositions by intrinsic Lipschitz graphs. The condition is known to be sufficient by earlier work of the last two authors together with Pinamonti. It involves additional flatness coefficients besides $\beta_{2,\mathcal{V}_k}$. Along the way, we therefore extend the known stability results for geometric lemmas under the ``big pieces'' functor to a larger class of coefficients.
\end{abstract}

\setcounter{tocdepth}{1}
\tableofcontents

\section{Introduction} 
\subsection{Quantitative differentiation for intrinsic Lipschitz graphs} This paper studies properties of low-dimensional intrinsic Lipschitz graphs in Heisenberg groups $\mathbb{H}^n=(\mathbb{R}^{2n+1},\cdot)$ equipped with a left-invariant non-Euclidean distance $d$. Intrinsic Lipschitz graphs (also of large dimensions) are defined through a geometric cone condition adapted to the group and dilation structures of $\mathbb{H}^n$; see Sections \ref{s:Heis} -- \ref{s:iLG} for details. They were introduced by Franchi, Serapioni and Serra Cassano \cite{MR2287539} as a surrogate for Lipschitz graphs in Euclidean spaces, and they  play a pivotal role in geometric measure theory on Heisenberg groups (e.g., \cite{MR2789472,MR3587666,MR3682744,MR4342997,MR4377000}) and applications beyond \cite{MR3678211,MR3815462}. Intrinsic Lipschitz functions constitute a rich class of mappings;  see Theorem \ref{t:Lift} for a way to construct low-dimensional intrinsic Lipschitz graphs in $\mathbb{H}^n$.

One theme of research concerns approximation of intrinsic Lipschitz graphs in $\mathbb{H}^n$ by affine homogeneous subgroups. \emph{How well can an intrinsic Lipschitz graph be approximated by planes at different places and scales?} Infinitesimally, the approximation is guaranteed in almost all points of the graph by versions of Rademacher's differentiability theorem \cite{MR2836591,MR4277829,MR4377000}, but here we are interested in quantitative multi-scale properties.

In the classical Euclidean setting, quantitative tools have proven useful to establish the boundedness of certain singular integrals on Lipschitz graphs or, more generally, sets with big pieces of Lipschitz graphs \cite{zbMATH00051391}. Quantitative approximation results 
in this spirit are provided by Dorronsoro's quantitative differentiation theorem \cite{MR796440} applied to Lipschitz functions, and by Carleson-type geometric lemmas for Lipschitz graphs \cite{MR1251061}: if $E\subset \mathbb{R}^n$ is a $k$-dimensional Lipschitz graph, then there exists a constant $C>0$ such that 
 \begin{equation}\label{eq:EuclGLEm}\int_0^R\int_{E\cap B(x,R)}{\beta_{2,A(n,k)}^E}(y,r)^2\,d\mathcal H^k(y)\frac{dr}{r}\leq CR^k,\quad x\in E,\,0<R<\mathrm{diam}E\end{equation}
 where 
 \begin{equation}\label{eq:EuclBeta}
{\beta_{2,A(n,k)}^E}(x,r) = \inf_{V\in A(n,k)}\left(\fint_{B(x,r)\cap E} \left(\frac{d(y,V)}{r}\right)^2\,d\mathcal{H}^k(y)\right)^{1/2}
 \end{equation}
and the infimum is taken over all affine $k$-dimensional subspaces in $\mathbb{R}^n$.
The study of the multifaceted relations between singular integrals, geometric lemmas, Lipschitz graphs and images is nowadays known as the theory of \emph{uniform} or \emph{quantitative rectifiability}.

In Heisenberg groups, geometric lemmas for intrinsic Lipschitz graphs in terms of homogeneous subgroups are known in the case of dimension and co-dimension $1$. Such graphs 
$E\subset \mathbb{H}^n$ satisfy a condition as in the following Definition \ref{def:qgeomlemballs}, for suitable exponents $p$ and  with  coefficients $h(y,r)$ that measure how well $E$ is approximated by $1$-dimensional `horizontal' (or $1$-codimensional `vertical') left-translated homogeneous subgroups at point $y\in E$ and scale $r>0$, similarly to the Euclidean $\beta$-numbers \eqref{eq:EuclBeta}.

\begin{definition}[Geometric lemma]\label{def:qgeomlemballs}
Let $h:E\times \R^+\to\R^+$ be a $\mathcal{H}^k \otimes \mathcal{L}^1$-measurable map.     Let $E\subset \mathbb{H}^n$ be $k$-regular, $1\leq p<\infty$.  We say that \textit{$E$ satisfies the $p$-geometric lemma with respect to $h$}, and we write  $E\in{\rm GLem}(h,p,C)$ (or simply $E\in{\rm GLem}(h,p)$) if there exists $C>0$ such that for every $0<R<\infty$ with $R\leq \mathrm{diam}(E)$ and every $x\in E$ it holds
    \begin{equation*}\int_0^R\int_{E\cap B(x,R)}h(y,r)^p\,d\mathcal H^k(y)\frac{dr}{r}\leq CR^k.\end{equation*}
    \end{definition}

 Geometric lemmas for $1$-codimensional intrinsic Lipschitz graphs and vertical $\beta$-numbers are deep results that were proven with different methods in $\mathbb{H}^1$ (with exponent $p=4$; \cite{MR4922674}) and $\mathbb{H}^n$ for $n>1$  (\cite{MR4388340}; with exponent $p=2$); and it was shown \cite{2022arXiv220703013C} that $p=4$ cannot be improved to $p<4$ in $\mathbb{H}^1$. Recent work \cite{2025arXiv251026934H} has further clarified the role of geometric lemmas with exponent $p=2$ in the theory of $1$-codimensional uniform rectifiability on $\mathbb{H}^1$. 
 
 For $1$-dimensional intrinsic Lipschitz graphs, geometric lemmas with exponent $p=4$ and horizontal $\beta$-numbers follow from the quantitative methods in the study of traveling salesman theorems \cite{MR3456155,MR4477201}; see also the discussion in \cite{FV2,arXiv:2601.03837}. Yet, quoting \cite[p.4]{MR4922674}, ``\emph{Little is known, however,
about quantitative rectifiability for higher-dimensional subsets, even surfaces in $\mathbb{H}^n$ with topological
dimension between $2$ and $2n-1$}''. We aim to fill this gap for dimensions $k\in \{2,\ldots,n\}$.

It is a feature of the Heisenberg group that the study of quantitative $k$-rectifiability for the cases $k\leq n$ and $k>n$ often requires entirely different techniques. This distinction is  ultimately related to the fact that homogeneous subgroups of $\mathbb{H}^n$ appear in two flavors: \emph{horizontal} subgroups, which exist only in dimensions $k\leq n$, and \emph{vertical} subgroups. The present paper concerns the low-dimensional \emph{horizontal} regime and resumes where \cite{arXiv:2601.03837} left off. In this case, the graph maps of intrinsic Lipschitz functions are also metrically Lipschitz, and therefore our work contributes to the recent study of metric rectifiability modeled on Euclidean spaces \cite{2023arXiv230612933B}. We prove:

\begin{theorem}[Geometric lemma for low-dimensional intrinsic Lipschitz graphs]\label{t:SumiLG}
    Let $n\in \mathbb{N}$ and $k\in \{1,\cdots,n\}$. Then there exists an exponent $p=p(k)$ such that every entire $k$-dimensional intrinsic Lipschitz graph $E \subset \mathbb{H}^n$ satisfies 
    \begin{equation}\label{eq:MainGLem}
    E\in \mathrm{GLem}(\beta_{2,\mathcal{V}_k}^E,p)
   \quad\text{with}\quad
    \beta^E_{2,\mathcal{V}_k}
(x, r) = \inf_{\V\in\mathcal{V}_k} \left( \fint_{B(x,r) \cap E} \left( \frac{d(y, \V)}{r} \right)^2 d\mathcal{H}^k(y) \right)^{1/2},
\end{equation}
where the infimum is taken over all  $k$-dimensional horizontal planes. 
For all $n\in \mathbb{N}$, if $k=1$ or $k=n$, then \eqref{eq:MainGLem} holds with $p=4$.
\end{theorem}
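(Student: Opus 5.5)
We reduce everything to statements about the intrinsic Lipschitz function $\phi:\W\to\V$ that parametrizes $E=\gr(\phi)$, where $\W\in\mathcal{V}_k$ is a horizontal $k$-plane and $\V$ is a complementary vertical subgroup. The graph map $\Phi(w)=w\cdot\phi(w)$ is metrically bi-Lipschitz onto $E$ in the low-dimensional horizontal regime. Hence $\mathcal H^k|_E$ is comparable to the push-forward of Lebesgue measure on $\W\cong\R^k$, and balls $B(x,r)\cap E$ are comparable to projections of Euclidean balls in $\W$. The Carleson integral in $\mathrm{GLem}$ therefore turns into an integral over $\W\times\R^+$.

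Write $\phi=(\phi_H,\phi_t)$, with a horizontal part $\phi_H\in\R^{2n-k}$ and a vertical part $\phi_t$. The key structural fact we use is that $\phi_H$ is Euclidean Lipschitz on $\W$. The vertical part is then essentially determined, up to a term controlled by $\phi_H$, through the contact condition: $\phi_t$ is a ``$\frac12$-Hölder function whose horizontal derivative is a quadratic expression in $\phi_H$.'' Cf.\ Theorem~\ref{t:Lift}, which lifts Euclidean Lipschitz maps to intrinsic Lipschitz graphs.

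The first step is a pointwise estimate. For a candidate plane $\V'=p\cdot\V_0$, where $p=\Phi(w_0)$ and $\V_0$ is the horizontal plane determined by an affine map $A$ (linear part $L$), the Heisenberg distance satisfies
\[
d(\Phi(w),\V')\lesssim |\phi_H(w)-A(w)|+\big|\phi_t(w)-\phi_t(w_0)-\omega(w,w_0,\phi_H)\big|^{1/2},
\]
with $\omega$ the symplectic correction. We then bound the vertical error using the contact relation, integrating along segments from $w_0$ to $w$. This gives a bound by $r\cdot\big(\fint|\nabla\phi_H-L|\big)$ together with the square root of an area-type integral $\int|\phi_H-A|\,|\nabla\phi_H - L|$. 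The output is
\[
\beta^E_{2,\mathcal{V}_k}(\Phi(w_0),r)\lesssim \gamma(w_0,Cr)+\gamma(w_0,Cr)^{1/2}\quad\text{with}\quad \gamma(w_0,r)=\Big(\fint_{B(w_0,r)}\Big|\frac{\phi_H-A}{r}\Big|^2\Big)^{1/2}\ \text{ or an }L^1\text{-oscillation of }\nabla\phi_H.
\]
The square root is exactly why $p=4$ is expected. Squaring gives $\beta^4\lesssim\gamma^2$, so the geometric lemma with $p=4$ follows from a Carleson estimate $\int\int\gamma^2\,dw\,dr/r\lesssim R^k$ for the Euclidean Lipschitz function $\phi_H$.

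For $k=n$ we control the error with an isotropic Dorronsoro theorem in $\R^{2n}$, and there is a Poincaré inequality available. The relevant horizontal structure allows us to express the vertical defect as a function on $\R^{2n}$, or on $\W\times\V_H$, whose gradient is controlled. A Poincaré inequality then dominates the mean oscillation of this function by the gradient oscillation, and Dorronsoro's $L^2$ theorem turns the square of that into a Carleson measure. This yields $p=4$. For $k=1$ the path has one parameter, the area term reduces to a one-dimensional estimate, and the same scheme recovers $p=4$ directly.

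For general $k$ the main obstacle is that the vertical defect depends on areas of two-dimensional projections, which the squared $\beta$ of $\phi_H$ alone does not control along arbitrary paths. Here we follow Orponen's integral-geometric approach. We slice $\W$ by one-dimensional lines in all directions. On each line the restriction of $\phi$ is a $1$-dimensional intrinsic Lipschitz graph, so the $k=1$ estimate applies to it. We then average over lines, using Crofton-type formulas and Dorronsoro along lines to convert the line coefficients back to $k$-dimensional ones. Passing from line $\beta$-numbers to planar $\beta$-numbers costs a power through a Hölder/interpolation inequality, since control on many lines bounds the planar oscillation only polynomially. We therefore expect the final exponent to be some $p(k)\ge 4$ depending on $k$. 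Most of the work will go into this averaging step, in particular making the distortion of the exponent quantitative.
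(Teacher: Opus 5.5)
\textbf{The gap: the case $1<k<n$.} Your diagnosis of the obstacle there is off. The vertical error is not the problem. Write $\gamma_\Phi(w)=(w,\phi_H(w))$, $\gamma_A(w)=(w,A(w))$, $D=\gamma_\Phi-\gamma_A$, and let $u$ be the $t$-coordinate of $\mathbb{A}(w)^{-1}\Phi(w)$. The weak contact equations give $\nabla u=\tfrac12\omega(D,\nabla D)+\omega(D,\nabla\gamma_A)$ a.e. Hence $|\nabla u|\lesssim|\phi_H-A|$ whenever $A$ has controlled slope. Normalizing the vertical constant so that $u$ has mean zero on the ball, a Poincar\'e argument bounds $\beta^4_{2,\mathcal{V}_k}$ by $\fint|\phi_H-A|^2/r^2$ for every $k$, \emph{provided $A$ is isotropic affine}; otherwise $A$ does not lift to a horizontal plane.

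What is actually missing for $1<k<n$ is a Carleson estimate with isotropic approximants. The isotropy condition \eqref{iso} is quadratic in the coefficients and is not preserved by Dorronsoro-type averaging. Your line-slicing plan does not get around this. Averaging the $k=1$ bounds over lines gives at most a geometric lemma for the integral-geometric numbers $\beta^1_{\infty,4}$ (the paper's Lemma \ref{l:TLem2.5}). It does not produce a single \emph{horizontal} $k$-plane approximating $\Phi$ on a cube, and no H\"older or interpolation inequality can, because a $k$-plane assembled from good line or face approximants is in general not horizontal.

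The paper needs three ingredients you do not mention:
\begin{enumerate}
\item[(i)] An induction on $k$ through codimension-one slices (Lemmas \ref{l:SlicingLemmak-1}, \ref{l:k-1SlicesAffine}, \ref{l:HigherDimInduc}). For $k\geq 3$, line information does not control the $(k-1)$-faces of a simplex, since lines inside a fixed face are a null set of lines.
\item[(ii)] A generic choice of simplex whose faces are well approximated (the adaptation of Orponen's Lemma 2.15).
\item[(iii)] Crucially, Lemma \ref{l:almostHoriz_k}. Suppose the vertex images $q_i=\Phi(p_i)=(z_i,t_i)$, with $q_0$ translated to $0$, lie $\varepsilon$-close to horizontal $(k-1)$-planes through each face. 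Then $|\omega(z_i,z_j)|\lesssim\varepsilon$. The Gram--Schmidt-type perturbation $v_m=z_m+\sum_{l<m}c_{m,l}Jz_l$, with $|c_{m,l}|\lesssim\varepsilon$, then spans an isotropic plane, hence a horizontal $k$-plane within $\sqrt{\varepsilon}$ of all vertices.
\end{enumerate}
The exponent actually deteriorates at three places: this $\sqrt{\varepsilon}$; the comparison $d\lesssim|\cdot|^{1/2}$, used to pass from vertices to faces and then along lines into the interior; and the conversion $\beta_\infty\lesssim\beta_2^{2/(k+1)}$ at each induction step. Lemma \ref{l:IntGeomBdd} records the result as $\beta_2(cQ)^8\lesssim\beta^1_{\infty,4}(Q)^4+\beta^{k-1}_{\infty,2}(Q)$.

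\textbf{The cases $k=1$ and $k=n$.} Here your scheme is essentially the paper's. For $k=1$ the paper uses $L^\infty$ coefficients, but your $L^2$ version also works: every affine map $\R\to\R^{2n-1}$ is isotropic, and one-dimensional segment integration is harmless. For $k=n$, two repairs are needed.
\begin{enumerate}
\item[(a)] Do not pin the plane through $\Phi(w_0)$ and integrate along segments. For $k\geq 2$ this leads to $\int_B|\phi_H-A|\,|z-w_0|^{1-k}\,dz$, which is not controlled by the $L^2$ average without losing a power. Instead keep a near-optimal $A$, whose slope is controlled by Lemma \ref{lemma_lipconst}. Adjust only the vertical constant of the lifted plane so that $u$ has mean zero on the ball; for $k=n$, $u$ is just the difference of the vertical components. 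The $(2,2)$-Poincar\'e inequality with $\nabla u=-(\phi_H-A)$ then gives $\beta^4\lesssim\fint|\phi_H-A|^2/r^2$. Your alternative via an $L^1$-oscillation of $\nabla\phi_H$ is not usable, since gradient oscillations of Lipschitz maps satisfy no Carleson bound in general.
\item[(b)] The isotropic Dorronsoro theorem is not off the shelf and must be proved. The paper does this in Theorem \ref{appr} and Proposition \ref{aiso}. It shows that Dorronsoro's $L^2$-projection approximants of a map with symmetric Jacobian have symmetric linear part, because $\int_B f_i(z)(z_j-x_j)\,dz=\int_B f_j(z)(z_i-x_i)\,dz$ by integrating by parts against a potential $f=-\nabla\varphi_{n+1}$. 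It then localizes via the truncation in Proposition \ref{loc} to reach $W^{1,2}$.
\end{enumerate}
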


Theorem \ref{t:SumiLG} is a simplified and combined version of our results.  
For $k=1$, Theorem \ref{t:GlemStratBetanILGLinftyk=2} gives the geometric lemma with the $L^{\infty}$-based \emph{stratified $\widehat{\beta}_{\infty,\mathcal{V}_1}$-numbers}  from Li's work \cite{MR4477201} on the traveling salesman theorem, with an independent proof. For the case $k=n$, we obtain in Theorem \ref{t:GlemStratBetanILG}  a 
 geometric lemma for stratified $\widehat{\beta}_{2,\mathcal{V}_n}$-numbers; see Definition \ref{d:StratifBeta}. In both cases, the proofs actually yield geometric lemmas for potentially larger coefficients defined using  parametrizations of the intrinsic Lipschitz graph $E$ and the approximating planes $\mathbb{V}$ over a common horizontal plane $\mathbb{V}_0$. To be more precise, consider the horizontal subgroup $\mathbb{V}_0=\mathbb{R}^k \times \{0\}$ with complementary vertical subgroup $\mathbb{W}_0=\{0\}\times \mathbb{R}^{2n+1-k}$ in $\mathbb{H}^n$, and denote the coordinate projections
 \begin{displaymath}
     \Pi(w_1,\ldots,w_{2n},t):=(w_1,\ldots,w_k)\quad \text{and}\quad \Pi^{\bot}(w_1,\ldots,w_{2n},t):=(w_{k+1},\ldots,w_{2n}),
 \end{displaymath}
 for $(w_1,\ldots,w_{2n},t)\in \mathbb{H}^n$.
These projections isolate the horizontal components of an intrinsic Lipschitz map $\varphi: \mathbb{V}_0 \to \mathbb{W}_0$. The following theorem bounds the graph's geometric $\beta$-numbers using affine (for $k=1$) or isotropic affine (for $k=n$) approximations of $\Pi^\perp(\varphi)$.

 \begin{theorem}[Bounding $\beta$-numbers by parametrized coefficients]\label{t:ParamGLem}
     Let $n\in \mathbb{N}$. 
  
\noindent   If $k=1$ and $\varphi:\mathbb{V}_0\to \mathbb{W}_0$ is intrinsic $L$-Lipschitz, 
         then, for all $x\in \mathrm{gr}(\varphi)$ and $r>0$, it holds
       $$ 
       \beta^{\mathrm{gr}(\varphi) }_{\infty,\mathcal{V}_1}(x,r) ^4 \leq  \widehat{\beta}^{\mathrm{gr}(\varphi) }_{\infty,\mathcal{V}_1}(x,r) ^4 \lesssim \widehat{\Omega}_{\infty,\varphi}(\Pi(x),r)^4 \lesssim_L
       \Omega_{\infty,\Pi^{\bot}(\varphi)}(\Pi(x),r)^2,
        $$
where 
\begin{align*}
     \widehat{\Omega}_{\infty,\varphi}(v,r)^4:=
     \inf_{\substack{(A,A_{2n}):\V_0 \to \W_0\\\text{intrinsic affine}}}\sup_{z\in B_{\mathbb{R}}(v,r)}\tfrac{|\Pi^{\bot}(\varphi)(z)-A(z)|^2}{r^2}+\sup_{z\in B_{\mathbb{R}}(v,r)}\tfrac{d(\varphi(z),(A,A_{2n})(z))^4}{r^4},
\end{align*}
\begin{displaymath}
     \Omega_{\infty,\Pi^{\bot}(\varphi)}(v,r)^2:= \inf_{\substack{A:\mathbb{R} \to \mathbb{R}^{2n-1}\\\text{affine}}} \sup_{z\in B_{\mathbb{R}}(v,r)}\tfrac{|\Pi^{\bot}(\varphi)(z)-A(z)|^2}{r^2}.
\end{displaymath}
Moreover, one has $\mathrm{gr}(\varphi)\in \mathrm{GLem}( \Omega_{\infty,\Pi^{\bot}(\varphi)}(\Pi(\cdot),\cdot),2)$.

       \medskip
       
 \noindent        If $k=n$ and $\varphi:\mathbb{V}_0\to \mathbb{W}_0$ is intrinsic $L$-Lipschitz, then for all $x\in \mathrm{gr}(\varphi)$ and $r>0$, it holds

          $$ 
       \beta^{\mathrm{gr}(\varphi) }_{2,\mathcal{V}_n}(x,r) ^4 \leq  \widehat{\beta}^{\mathrm{gr}(\varphi) }_{2,\mathcal{V}_n}(x,r) ^4 \lesssim_{n,L} \widehat{\Omega}_{2,\varphi}(\Pi(x),r)^4\lesssim_{n,L}
       \Omega_{2,\Pi^{\bot}(\varphi)}^{iso}(\Pi(x),r)^2,
        $$
where 
\begin{align*}
     \widehat{\Omega}_{2,\varphi}(v,r)^4:=
     \inf_{\substack{(A,A_{n+1}):\V_0 \to \W_0\\\text{intrinsic affine}}}\fint_{B_{\mathbb{R}^n}(v,r)}\tfrac{|\Pi^{\bot}(\varphi)(z)-A(z)|^2}{r^2}\,dz+\left(\fint_{B_{\mathbb{R}^n}(v,r)}\tfrac{d(\varphi(z),(A,A_{n+1})(z))^2}{r^2}\,dz\right)^2,
\end{align*}
\begin{displaymath}
     \Omega_{2,\Pi^{\bot}(\varphi)}^{iso}(v,r)^2:= \inf_{\substack{A:\mathbb{R}^n \to \mathbb{R}^{n}\\\text{isotropic affine}}} \fint_{B_{\mathbb{R}^n}(v,r)}\tfrac{|\Pi^{\bot}(\varphi)(z)-A(z)|^2}{r^2}\,dz.
\end{displaymath}
Moreover, one has $\mathrm{gr}(\varphi)\in \mathrm{GLem}( \Omega_{2,\Pi^{\bot}(\varphi)}^{iso}(\Pi(\cdot),\cdot),2)$.
 \end{theorem}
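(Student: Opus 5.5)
The plan is to treat the three inequalities in each chain separately and then deduce the geometric lemma from a Euclidean Dorronsoro-type theorem.

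\textbf{First inequality.} The bound $\beta\le\widehat\beta$ is immediate from Definition \ref{d:StratifBeta}. The stratified coefficient adds a nonnegative vertical term, or restricts the competitors, so that the infimum defining $\beta$ is dominated.

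\textbf{Middle inequality.} To prove $\widehat\beta\lesssim\widehat\Omega$, I fix an intrinsic affine map $(A,A_{2n})$ (resp.\ $(A,A_{n+1})$) which is almost optimal for $\widehat\Omega_{\cdot,\varphi}(\Pi(x),r)$. Its graph is a left translate of a horizontal $k$-plane $\V$, and I use this $\V$ as the competitor in $\widehat\beta$.
\begin{itemize}
\item Since $\varphi$ is intrinsic $L$-Lipschitz, the graph map $z\mapsto z\cdot\varphi(z)$ is metrically Lipschitz and $\Pi$ is Lipschitz. Hence $B(x,r)\cap\mathrm{gr}(\varphi)$ is contained in the graph over $B_{\R^k}(\Pi(x),Cr)$.
\item The measure $\mathcal H^k$ on the graph is comparable to Lebesgue measure pulled back by $\Pi$.
\item The distance from $z\cdot\varphi(z)$ to $\V$ is bounded by $d(\varphi(z),(A,A_{\cdot})(z))$, up to constants, using the intrinsic graph structure.
\end{itemize}
The horizontal part of the stratified coefficient is then controlled by the $|\Pi^\perp\varphi-A|^2/r^2$ term, and the vertical part by the fourth power of the distance term. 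The enlargement of the ball from $r$ to $Cr$ is harmless, because the coefficients are monotone up to constants in the radius.

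\textbf{Last inequality.} The core step is $\widehat\Omega^4\lesssim\Omega^2$. Take an affine map $A$ (resp.\ isotropic affine map) which is nearly optimal for $\Pi^\perp(\varphi)$.
\begin{itemize}
\item The horizontal components determine $\varphi$ up to its $t$-coordinate. The intrinsic Lipschitz condition for horizontal $k\le n$ graphs says that the last coordinate $\varphi_{t}$ is determined, up to a constant, by integrating the symplectic form along $\Pi^\perp\varphi$. This is the "lifting'' description of Theorem \ref{t:Lift}.
\item The affine map $A$ is isotropic; in the case $k=1$ every affine map is isotropic. Hence $A$ lifts to an intrinsic affine map $(A,A_{\cdot})$ whose vertical component is quadratic, given by the same symplectic integral.
\item The difference $\varphi_t-A_t$ along a segment from the centre to $z$ is a symplectic integral of $(\Pi^\perp\varphi-A)\wedge d(\cdot)$. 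It is bounded by $r\sup|\Pi^\perp\varphi-A|$ for $k=1$, and by an $L^2$ analogue for $k=n$.
\end{itemize}
Since the Heisenberg distance weights the vertical coordinate by a square root, $d^4/r^4\lesssim(|\varphi_t-A_t|/r^2)^2+\ldots\lesssim \Omega^2$.

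\textbf{Main obstacle.} For $k=n$, the $L^2$ averaged version requires passing from pointwise segment integrals to averages over balls. Here I would use a Poincaré-type inequality on $B_{\R^n}(v,r)$ for $\varphi_t-A_t$. Its gradient is controlled by $|\Pi^\perp\varphi-A|$ times the Lipschitz constant, via the intrinsic gradient identity. Integrating and applying Jensen gives $\big(\fint d^2/r^2\big)^2\lesssim\fint|\Pi^\perp\varphi-A|^2/r^2$.

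\textbf{Geometric lemma.} The final claims $\mathrm{gr}(\varphi)\in\mathrm{GLem}(\Omega,2)$ follow by pushing the Carleson integral to $\R^k$ via $\Pi$, using measure comparability. For $k=1$ I then apply Dorronsoro's theorem for the Lipschitz function $\Pi^\perp\varphi:\R\to\R^{2n-1}$ with $L^\infty$ coefficients, valid in dimension one. For $k=n$ I apply the isotropic Dorronsoro theorem in $\R^{2n}$ mentioned in the abstract, which is assumed earlier in the paper, to $\Pi^\perp\varphi$, whose gradient lies in $L^\infty$.
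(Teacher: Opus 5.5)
Your $k=n$ argument is the paper's, and it works because $\W_0=\{0\}\times\R^n\times\R$ is abelian when $k=n$. The argument is: fix the free constant in $A_{n+1}$ so that $\varphi_{n+1}-A_{n+1}$ has mean zero on the ball, use $\nabla(\varphi_{n+1}-A_{n+1})=-(\Pi^{\bot}(\varphi)-A)$, and apply Poincar\'e. Abelianness gives $d(\varphi(z),(A,A_{n+1})(z))\sim|\Pi^{\bot}(\varphi)-A|+|\varphi_{n+1}-A_{n+1}|^{1/2}$. You should still say two things. A near-minimizer has $\fint|\Pi^{\bot}(\varphi)-A|^2/r^2\lesssim L^2$, which you need to absorb the squared term. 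Proposition \ref{aiso} needs the weak graph isotropy of $\Pi^{\bot}(\varphi)$ from Theorem \ref{horcon}.

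The genuine gap is the case $k=1<n$. Here $\W_0=\{0\}\times\R^{2n-1}\times\R$ is not abelian, and the contact equation $\varphi_{2n}'=-\varphi_n+\tfrac12\omega(\Pi^{\bot}(\varphi),\Pi^{\bot}(\varphi)')$ is quadratic (embed $\R^{2n-1}$ as $\{0\}\times\R^{2n-1}\subset\R^{2n}$). As a result, both of your claims about the Euclidean difference $\varphi_{2n}-A_{2n}$ fail. With $g:=\Pi^{\bot}(\varphi)-A$ one has $(\varphi_{2n}-A_{2n})'=-g_n+\tfrac12\omega(g,\Pi^{\bot}(\varphi)')+\tfrac12\omega(A,g')$. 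The last term is not of the form $g\wedge d(\cdot)$; integrating it leaves a boundary term $\tfrac12\omega(A,g)$ of size $|A|\,|g|$, not $r|g|$. Also, the vertical coordinate of $(A,A_{2n})(z)^{-1}\cdot\varphi(z)$ is $\varphi_{2n}-A_{2n}-\tfrac12\omega(A,\Pi^{\bot}(\varphi))$, so the Kor\'anyi distance is not controlled by $|g|+|\varphi_{2n}-A_{2n}|^{1/2}$. A concrete case: take $n=2$, $\Pi^{\bot}(\varphi)=(M,0,\epsilon h)$, $A=(M,0,0)$. Then $\varphi_4-A_4=\tfrac12M\epsilon h+\mathrm{const}$, while $\sup|g|=\epsilon\sup|h|$ and the intrinsic Lipschitz constant of $\varphi$ does not depend on $M$. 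This is the non-linearity the paper warns about: the lift of $\Pi^{\bot}(\varphi)-A$ is not the difference of the lifts.

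The quantity to estimate is the group difference $w:=\varphi_{2n}-A_{2n}-\tfrac12\omega(A,\Pi^{\bot}(\varphi))$, for which the bad term cancels. Combining the contact equations for $\varphi$ and $(A,A_{2n})$ gives $w'=-g_n+\omega(g,A')+\tfrac12\omega(g,g')$ a.e. Choose the constant in $A_{2n}$ so that $w(\Pi(x))=0$ (Remark \ref{r:VertTranslIntrAff}). Then $|w|\lesssim r\,I_\pi\,r\,(1+L+\mathrm{Lip}(A))$ on the ball, where $I_\pi:=\sup_{B_{\R}(\Pi(x),r)}|g|/r$, and so $\sup d^4/r^4\lesssim I_\pi^4+I_\pi^2$. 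The term $\omega(g,A')$ forces a bound $\mathrm{Lip}(A)\lesssim L$ on the near-minimizer, which your proposal never supplies. The paper gets it from Lemma \ref{lemma_lipconst}; together with $I_\pi\lesssim L$ it also yields $I_\pi^4\lesssim_L I_\pi^2$. The paper realizes the same cancellation by splitting through the weakly graph contact lift $\Psi_A$ of $g$ and the identity \eqref{eq:DerivControlComplTerm}. Alternatively, you could left-translate first so that $x=e$; then $|A|\lesssim Lr$ on the ball and the boundary term is harmless. For $n=1$ your argument works as written, since $\W_0$ is then abelian.
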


 We recall that the case $k=1$ of Theorem \ref{t:SumiLG} also follows from  \cite{MR3456155} (for $n=1$)  and \cite{MR4477201} (for $n\geq 1$) since entire $1$-dimensional intrinsic Lipschitz graphs are regular curves, but  the case $k=1$ of Theorem \ref{t:ParamGLem} does not.  It involves specific parametrizations of intrinsic Lipschitz graphs. We crucially rely on these finer 1-dimensional estimates from Theorem \ref{t:ParamGLem} to prove Theorem \ref{t:SumiLG} for the intermediate dimensions ($1 < k < n$). For $k>1$, the results  in Theorems  \ref{t:SumiLG}--\ref{t:ParamGLem} are, to the best of our knowledge, new.

We emphasize that the geometric lemmas  for $\Omega_{\infty,\Pi^{\bot}(\varphi)}$ and $\Omega_{2,\Pi^{\bot}(\varphi)}^{iso}$ 
in Theorem \ref{t:ParamGLem}
hold with exponent $p=2$, the same as for the Euclidean geometric lemma \ref{eq:EuclGLEm}, while we obtain worse exponents in Theorem \ref{t:SumiLG}. In our proofs, this difference appears since we are able to bound the $\widehat{\beta}$-coefficients by the $\Omega$-coefficients only when the former are raised to a large enough power as stated in Theorem \ref{t:ParamGLem}. However, in the case $k=1$, we also know that the exponent $p=4$ in Theorem  \ref{t:SumiLG} is sharp, at least if we require the constants in the geometric lemmas to be quantitatively controlled in terms of the Lipschitz constants and the dimensions. This can be seen as follows:

Essentially due to the construction by Juillet \cite{MR2789375}, it is known \cite{arXiv:2601.03837}
that there exists a $1$-regular curve $\Gamma \subset \mathbb{H}^1$ which does not satisfy $\mathrm{GLem}(\beta_{2,\mathcal{V}_1}^E,p)$ for any $p<4$. By \cite[Theorem 6.24]{MR4299821}, the curve $\Gamma$ has \emph{big pieces of $1$-dimensional intrinsic Lipschitz graphs}. Now if Theorem  \ref{t:SumiLG} were true, quantitatively, with an exponent $p<4$ for all $1$-dimensional Lipschitz graphs, this would imply by the ``big pieces'' stability, cf.\ Section \ref{s:StabAbstrGlem}, that $\Gamma$ should also satisfy $\mathrm{GLem}(\beta_{2,\mathcal{V}_1}^E,p)$, a contradiction.

The exponents we obtain in Theorem \ref{t:SumiLG} for the geometric lemmas in the cases $1<k<n$ are likely not sharp, 
nonetheless combined with other results,  Theorem \ref{t:SumiLG} yields a characterization of a notion of low-dimensional quantitative rectifiability in Heisenberg groups for all $1\leq k\leq n$, which we will next discuss.

\subsection{Characterizations of corona decompositions and other applications}

In \cite{arXiv:2601.03837}, the last two authors, together with Pinamonti, 
gave a new sufficient condition for a $k$-regular set $E\subset \mathbb{H}^n$, $1\leq k\leq n$, to admit a \emph{corona decomposition by intrinsic Lipschitz graphs (ILG-C)}, and they showed that the existence of such a corona decomposition implies that the set has \emph{big pieces of Lipschitz images (BPLI) of subsets in $\mathbb{R}^k$}, in a quantitative way.
The mentioned sufficient condition was a combination of a strong and a weak geometric lemma, where ``weak geometric lemma'' refers to a condition of the following form.

\begin{definition}[Weak geometric lemma]\label{def:WGLballs}
 Let $h:E\times \R^+\to\R^+$ be a $\mathcal{H}^k \otimes \mathcal{L}^1$-measurable map.    Let $E\subset \mathbb{H}^n$ be $k$-regular.  We say that \textit{$E$ satisfies the weak geometric lemma with respect to $h$}, and we write $E\in{\rm WGL}(h,C(\cdot))$ (or simply $E\in{\rm WGL}(h)$) if for every $\varepsilon>0$ there exists $C=C(\varepsilon)>0$ such that for every $R>0$ and every $x\in E$ it holds
    \begin{equation*}\int_0^R\int_{E\cap B(x,R)}\chi_{\{h>\varepsilon\}}(y,r)\,d\mathcal H^k(y)\frac{dr}{r}\leq CR^k.\end{equation*}
    \end{definition}
The corona decomposition (ILG-C) for a set $E\subset \mathbb{H}^n$ is somewhat technical to state, see Definition \ref{d:ILG-C}, but informally it means that a system of dyadic cubes on $E$ can be divided into a family of bad cubes, of which there are not too many, and the remaining good cubes can be further partitioned into a forest $\mathcal{F}$ of -- again not too many -- trees, so that the set $E$ is well approximated by intrinsic Lipschitz graphs with small constants from the perspective of each tree  $\mathcal{S}\in \mathcal{F}$.
We obtain the following characterization.

\begin{corollary}\label{c:introChar}
    Let $n\in \mathbb{N}$ and $k\in \{1,\cdots,n\}$.
    Let $E\subset\mathbb H^n$ be $k$-regular. Then \[E\in\mathrm{(ILG-C)}\Longleftrightarrow E\in\mathrm{GLem}(\beta_{2,\pi,A(2n,k)},2) \text{  and } E\in\mathrm{WGL}(\beta_{\infty,\mathcal V_k}),\]
    with the coefficient functions $\beta_{\infty,\mathcal V_k}$ and  $\beta_{2,\pi,A(2n,k)}$ defined in Definitions \ref{d:HorizBeta} and  \ref{def_projbetas}, respectively.
\end{corollary}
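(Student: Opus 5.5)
The direction ``$\Leftarrow$'' is the sufficient condition established in \cite{arXiv:2601.03837}. There, the strong geometric lemma $\mathrm{GLem}(\beta_{2,\pi,A(2n,k)},2)$ combined with $\mathrm{WGL}(\beta_{\infty,\mathcal V_k})$ was shown to yield (ILG-C). I would therefore only quote that result and concentrate on the necessity ``$\Rightarrow$''.

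For ``$\Rightarrow$'' I would argue in three steps.

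\emph{Step 1 (coefficients on a single graph).} Let $\Gamma$ be an entire $k$-dimensional intrinsic Lipschitz graph. I would first show $\Gamma\in \mathrm{GLem}(\beta_{2,\pi,A(2n,k)},2)$. The projection $\pi$ onto the horizontal layer $\R^{2n}$ maps $\Gamma=\mathrm{gr}(\varphi)$ onto the Euclidean Lipschitz graph of $\Pi^\bot(\varphi)$ over $\R^k$. This uses that intrinsic Lipschitz graphs of horizontal dimension are metrically Lipschitz, so $\Pi^\bot(\varphi)$ is Euclidean Lipschitz. The Euclidean geometric lemma \eqref{eq:EuclGLEm}, or Dorronsoro's theorem applied to $\Pi^\bot(\varphi)$, then gives the estimate with $p=2$ after comparing balls and measures through the bi-Lipschitz parametrization $v\mapsto v\cdot\varphi(v)$. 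Second, I would show that $\Gamma\in\mathrm{WGL}(\beta_{\infty,\mathcal V_k})$. This should follow from Theorem~\ref{t:SumiLG}: the condition $\mathrm{GLem}(\beta_{2,\mathcal V_k},p)$ implies $\mathrm{WGL}(\beta_{2,\mathcal V_k})$ by Chebyshev's inequality. Using $k$-regularity, a small $\beta_2$ at scale $2r$ controls $\beta_\infty$ at scale $r$ by the standard argument: a point far from the plane carries a ball of measure $\gtrsim r^k$ that is far from it. Hence $\mathrm{WGL}(\beta_{\infty,\mathcal V_k})$ holds for $\Gamma$, with constants depending only on $n,k,L$.

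\emph{Step 2 (from (ILG-C) to big pieces).} By \cite{arXiv:2601.03837}, $E\in$(ILG-C) implies that $E$ has big pieces of Lipschitz images. To run stability arguments, however, I would prefer big pieces of intrinsic Lipschitz graphs, or at least the corona structure itself. So I would plan to transfer the coefficient bounds directly through the corona decomposition, in the spirit of David--Semmes. On a tree $\mathcal S$ the set $E$ is close to a graph $\Gamma_{\mathcal S}$ with small constant. The coefficients of $E$ on cubes of $\mathcal S$ are then bounded by those of $\Gamma_{\mathcal S}$ plus an error measured by distances to $\Gamma_{\mathcal S}$. Bad cubes and tops of trees satisfy a Carleson packing condition, and I would simply count them.

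\emph{Step 3 (stability).} The error terms are handled by the ``big pieces'' stability for geometric lemmas of Section~\ref{s:StabAbstrGlem}. These results need to cover both the projected Euclidean coefficients $\beta_{2,\pi,A(2n,k)}$ and the horizontal $\beta_{\infty,\mathcal V_k}$ in the weak form. For the WGL part this is easy, since only small-coefficient cubes matter and the packing is additive. The main obstacle is the strong lemma for $\beta_{2,\pi,A(2n,k)}$ with exponent exactly $2$. The stability requires that the coefficient be monotone in the set and behave well under adding a distance term $\fint (d(y,\Gamma)/r)^2$. That term has to be summed over scales with a Carleson bound, which I would obtain from the $L^2$-distance estimates in the standard proof (a Jones-type estimate for distance to a big piece). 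I would try first to verify that $\beta_{2,\pi,A(2n,k)}$ fits the abstract framework of Section~\ref{s:StabAbstrGlem}, which is where the extension to ``a larger class of coefficients'' mentioned in the abstract comes in.
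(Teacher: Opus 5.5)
Your Steps 2 and 3 do not connect, and this is where the argument fails as written. The stability results of Section \ref{s:StabAbstrGlem} assume that $E\in BP(\mathcal E)$ for a family $\mathcal E$ satisfying the lemma with uniform constants. This applies to Theorem \ref{thm: BHHGNgeneralizedAbstract} with Lemma \ref{lemmaA2}, and to \cite[Proposition 2.24]{MR4485846} for the weak lemma. The reason is that the comparison \eqref{eq:AssLemmaA2} and the bound on $I_q(Q)$ only control cubes meeting the piece $\widetilde E$. The passage to all of $\mathbb D_R(E)$ then goes through \cite[Lemma A.1]{MR4485846}, which needs $\mathcal H^k(R\cap\widetilde E)\gtrsim\mathcal H^k(R)$ for every $R$. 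Your Step 2 never produces this. Big pieces of Lipschitz images is not a class for which you have the lemmas of Step 1. A coronization alone gives no lower bound on $\mathcal H^k(E\cap\Gamma_{\mathcal S}\cap Q)$ for $Q\in\mathcal S$, so there is no big piece for a ``Jones-type estimate for distance to a big piece'' to use.

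The missing ingredient is \cite[Theorem 1.1]{MR4485846}: a coronization with respect to $\mathcal E^{\mathrm{iLG}}_\eta$ (e.g.\ $\eta=1$) yields $E\in BP(BP(\mathcal E^{\mathrm{iLG}}_\eta))$. The paper then applies stability twice, with $p=q=2$ in Theorem \ref{thm: BHHGNgeneralizedAbstract}. The first application goes from $\mathcal E^{\mathrm{iLG}}_\eta$ (Theorem \ref{t:GlemAffProjBetanILG}, Corollary \ref{cor:WGLforILG}) to $BP(\mathcal E^{\mathrm{iLG}}_\eta)$, and the second from there to $E$. Bounded $E$ is reduced to the unbounded case by adjoining a horizontal $k$-plane at distance $\sim\mathrm{diam}(E)$. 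You are right that $\beta_{2,\pi,A(2n,k)}$ must be fitted into the abstract framework. However, the condition is not monotonicity in the set; it is \eqref{eq:AxiomRho}. This holds for $\rho_A=d_{\mathrm{Eucl}}(\pi(\cdot),A)$ with $L=m=1$ because $\pi:(\mathbb H^n,d)\to\R^{2n}$ is $1$-Lipschitz (Remark \ref{r:ApplGenBigPiece}).

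Alternatively, the direct passage through the corona that you announce in Step 2 can be completed, but it needs two arguments your sketch lacks.

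First, for $Q\in\mathcal S$ consider the bound $h(Q)\lesssim \widetilde h(\widetilde Q)+I_2(Q)$, in the notation of \eqref{eq:AssLemmaA2} with $\Gamma_{\mathcal S}$ in place of $\widetilde E$. This is not a pointwise triangle inequality, since the nearest-point map $E\to\Gamma_{\mathcal S}$ can concentrate $\mathcal H^k$. You need the averaging over the sets $G(y)$ from the proof of Lemma \ref{lemmaA2}. That averaging only uses regularity and $d(y,\Gamma_{\mathcal S})\lesssim\mathrm{diam}(Q)$ for $y\in 2Q$.

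Second, the Carleson bound for $I_2$ must come from the tree rather than from a big piece. Let $Q_y$ have the smallest side among the cubes $Q\in\mathcal S$ with $y\in 2Q$. Then $d(y,\Gamma_{\mathcal S})\le\eta\,\mathrm{diam}(Q_y)$, and a geometric series gives $\sum_{Q\in\mathcal S}I_2(Q)^2\mathcal H^k(Q)\lesssim\eta^2\mathcal H^k(Q(\mathcal S))$. This is then summed using the packing condition.

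On this route the weak lemma needs no geometric lemma at all. Since $\Gamma_{\mathcal S}$ is intrinsic $\eta$-Lipschitz, $\beta_{\infty,\mathcal V_k}\lesssim\eta$ on every cube of every tree. So for $\eta\ll\varepsilon$, only bad cubes can have $\beta_{\infty,\mathcal V_k}>\varepsilon$.
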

The direction ``$\Leftarrow$'' follows from \cite{arXiv:2601.03837}; our new contribution is the converse implication. This makes crucial use of the geometric lemmas which we prove for $k$-dimensional intrinsic Lipschitz graphs in Theorem \ref{t:SumiLG}. Once this is established, standard arguments imply that such graphs also satisfy the weak geometric lemma $\mathrm{WGL}(\beta_{\infty,\mathcal V_k})$, while $\mathrm{GLem}(\beta_{2,\pi,A(2n,k)},2)$ follows rather directly by considering projections of the graphs to the horizontal plane $\mathbb{R}^{2n}\times \{0\}$. The final step requires to show that the same strong and weak geometric lemmas hold if a set $E$ merely admits a corona decomposition by intrinsic Lipschitz graphs, but is not necessarily a graph itself. This can be achieved  with an abstract procedure to obtain ``big pieces squared'' from corona decompositions due to
Bortz, Hoffman, Hofmann, 
              Luna-Garcia,  and Nystr\"{o}m,
\cite{MR4485846}, and by stability of geometric lemmas under the ``big pieces functor''. The stability result from 
 \cite{MR4485846} does not directly apply to the coefficients $\beta_{2,\pi,A(2n,k)}$, even if the methods work with minor changes.

We took the opportunity to formulate in Theorem \ref{thm: BHHGNgeneralizedAbstract} and Lemma \ref{lemmaA2} a generalized framework in metric spaces
that provides sufficient conditions for coefficient functions $h$ so that if a set $E$ has big pieces of sets in a class $\mathcal{E}$, each of which satisfies $\mathrm{GLem}(h,p)$ with uniform constants, then also $E$ itself satisfies $\mathrm{GLem}(h,p)$. Our condition applies to the coefficient functions in  \cite{MR4485846}, of which $\beta_{q,A(n,k)}$ and $\beta_{q,\mathcal V_k}$ are examples, but it also covers $\beta_{q,\pi,A(2n,k)}$ and other types of flatness coefficients studied in \cite{MR2297880,MR2337487,2007arXiv0706.2517S,FV2,arXiv:2601.03837}, for a given range of exponents.

We close this section with two other applications of our main results  motivated by  the study of singular integrals on low-dimensional intrinsic Lipschitz graphs in Heisenberg groups.

First, by Corollary \ref{c:introChar} and the established (weak) geometric lemmas, $k$-dimensional intrinsic Lipschitz graphs in $\mathbb{H}^n$ for $k\in \{1,\cdots,n\}$ satisfy (ILG-C). It is probably easier to appreciate this statement after having read the technical comments in Section \ref{ss:ConseqGLem}, but in essence, (ILG-C) yields approximations by intrinsic Lipschitz graphs \emph{with small constants}. Such information has been useful to prove $L^2$-boundedness of certain singular integral operators on regular curves in $\mathbb{H}^1$, see \cite{MR4299821}, and it was not previously known beyond the case $k=1\leq n$ studied in \cite{MR4375018}.

The second application of Theorem \ref{t:SumiLG} is related to sets with big pieces of Lipschitz graphs. According to a classical result by David and Semmes \cite{MR1132876}, a $k$-regular set in $\mathbb{R}^n$ has big pieces of $k$-dimensional Lipschitz graphs if and only if it satisfies a weak geometric lemma and it has big orthogonal projections onto $k$-planes.
Theorem \ref{t:SumiLG} allows us to extend the ``only if'' part to our setting; see Section \ref{ss:BigPieceAppl} for the relevant definitions.
\begin{theorem}\label{t:BPiLGIntro} Let $n\in \mathbb{N}$ and $k\in \{1,\cdots,n\}$.
    Let $E\subset\mathbb H^n$ be $k$-regular. Assume that $E$ has big pieces of intrinsic Lipschitz graphs. Then $E\in \mathrm{WGL}(\beta_{\infty,\mathcal V_k})$ and $E$ has big horizontal projections.
\end{theorem}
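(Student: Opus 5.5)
The proof has two independent parts: the weak geometric lemma and big horizontal projections. The WGL part comes from Theorem \ref{t:SumiLG} plus big-pieces stability. The projection part is a direct consequence of the structure of intrinsic Lipschitz graphs over horizontal planes.

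\emph{Weak geometric lemma.} First I show that every entire $k$-dimensional intrinsic $L$-Lipschitz graph $\Gamma$ lies in $\mathrm{WGL}(\beta_{\infty,\mathcal V_k})$, with constants depending only on $n,k,L,\varepsilon$.
\begin{itemize}
\item By Theorem \ref{t:SumiLG}, $\Gamma\in \mathrm{GLem}(\beta_{2,\mathcal V_k}^\Gamma,p)$ with $p=p(k)$. Chebyshev then gives $\Gamma\in\mathrm{WGL}(\beta_{2,\mathcal V_k})$.
\item To upgrade from $\beta_2$ to $\beta_\infty$, I use the standard comparison for $k$-regular sets:
\[
\beta_{\infty,\mathcal V_k}(x,r)\lesssim \beta_{2,\mathcal V_k}(x,2r)^{2/(k+2)}.
\]
Its proof: if some $y\in \Gamma\cap B(x,r)$ has $d(y,\mathbb V)>\varepsilon r$, then by regularity the ball $B(y,\varepsilon r/2)$ carries mass $\gtrsim(\varepsilon r)^k$ at distance $\ge \varepsilon r/2$ from $\mathbb V$. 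This works because $d(\cdot,\mathbb V)$ is $1$-Lipschitz for the left-invariant metric $d$.
\item Hence $\{\beta_\infty>\varepsilon\}\subset\{\beta_2(\cdot,2\cdot)>c\varepsilon^{(k+2)/2}\}$. The WGL for $\beta_2$ then transfers after the change of variable $r\mapsto 2r$, which only enlarges the ball $B(x,R)$ to $B(x,2R)$ by a bounded factor.
\end{itemize}

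Next I pass from graphs to $E$. If $E$ has big pieces of intrinsic Lipschitz graphs, then for each $x\in E$ and $r$ there is a graph $\Gamma$ with $\mathcal H^k(E\cap\Gamma\cap B(x,r))\gtrsim r^k$. I apply the stability of weak geometric lemmas under the big pieces functor for coefficients that are monotone under set inclusion in the appropriate sense; the framework of Theorem \ref{thm: BHHGNgeneralizedAbstract} and Lemma \ref{lemmaA2} covers $\beta_{q,\mathcal V_k}$.

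A direct route is the classical David--Semmes argument. For a point $y\in E\cap\Gamma$ where $\Gamma$ has density bounded below in $B(y,r)$, every point of $E\cap B(y,r)$ lies close to $\Gamma$ at scale $r$. Otherwise $E$ would contain a ball-sized piece missing $\Gamma$, which a Carleson packing argument counts. So $\beta^E_\infty(y,r)\lesssim \beta^\Gamma_\infty(y,Cr)+{}$a small error, outside a Carleson family. I expect this transfer to be the main technical obstacle: it requires a John--Nirenberg/Carleson-type covering. I would rely on the paper's abstract stability result (which I take as covering WGL as well, via characteristic functions of $\{h>\varepsilon\}$) rather than redo it.

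\emph{Big horizontal projections.} Let $\Gamma=\mathrm{gr}(\varphi)$, with $\varphi:\mathbb V\to\mathbb W$ intrinsic Lipschitz over a horizontal $k$-plane $\mathbb V$. The projection $P_{\mathbb V}$ along $\mathbb W$ restricted to $\Gamma$ is the inverse of the graph map, and $P_{\mathbb V}$ is Lipschitz on $\mathbb H^n$ for the horizontal splitting. The graph map $v\mapsto v\cdot\varphi(v)$ is Lipschitz, which I assume from the earlier sections. Therefore $\mathcal H^k(P_{\mathbb V}(F))\gtrsim \mathcal H^k(F)$ for $F\subset\Gamma$.

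Applying this to $F=E\cap\Gamma\cap B(x,r)$ gives $\mathcal H^k(P_{\mathbb V}(E\cap B(x,r)))\gtrsim r^k$, which is the big horizontal projection property.
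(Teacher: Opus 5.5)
Your proof follows the paper's route. WGL for entire intrinsic Lipschitz graphs comes from the geometric lemma plus the comparison $\beta_{\infty,\mathcal V_k}(x,r)\lesssim\beta_{2,\mathcal V_k}(x,2r)^{2/(k+2)}$, which is Corollary \ref{cor:WGLforILG}; this transfers to $E$ by big-pieces stability, and big horizontal projections follow from $\pi_{\mathbb V}\circ\Phi=\mathrm{id}$ with $\Phi$ metrically Lipschitz. The only imprecision is the source of the stability step. Theorem \ref{thm: BHHGNgeneralizedAbstract} concerns strong geometric lemmas, and its hypothesis \eqref{eq:AssLemmaA2} does not hold verbatim for indicators $\chi_{\{h>\varepsilon\}}$, so you should invoke the WGL stability result \cite[Proposition 2.24]{MR4485846} as the paper does. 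The paper also handles bounded $E$ separately, via $E\cup\mathbb V$ for a horizontal plane $\mathbb V$ at distance $\sim\mathrm{diam}(E)$.
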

The converse implication was proven for $k=1=n$ in  \cite[Section 6.2]{MR4299821}. In the codimension $1$ case,  $(2n+1)$-regular sets in $\mathbb{H}^n$ have big pieces of intrinsic Lipschitz graphs over vertical hyperplanes if and only if they admit big vertical projections and satisfy a weak geometric lemma for vertical $\beta$-numbers, see Remark \ref{r:BPiLG}.

\subsection{Proof strategies for the geometric lemmas}

We now describe the main ideas behind the proof of the geometric lemmas for intrinsic Lipschitz graphs stated in Theorem \ref{t:SumiLG}. Let $n\in \mathbb{N}$, $k\in \{1,\ldots,n\}$, and assume that we are given a function $$\varphi:\mathbb{V}_0:=\mathbb{R}^k \times \{0\}\to \mathbb{W}_0:=\{0\}\times \mathbb{R}^{2n+1-k}.$$ Slightly abusing notation, we write in coordinates $\varphi=(\varphi_1,\ldots,\varphi_{2n+1-k})$.

Our starting point is the following observation. If $\varphi$ is intrinsic Lipschitz, then its horizontal components $\varphi_j$, $j=1,\ldots,2n-k$, are Euclidean Lipschitz and satisfy a certain PDE condition that we call \emph{weakly graph isotropic} (Theorem \ref{horcon}). Conversely, given a weakly graph isotropic Euclidean Lipschitz function $(\varphi_1,\ldots,\varphi_{2n-k}):\mathbb{R}^k \to \mathbb{R}^{2n-k}$, we can find $\varphi_{2n+1-k}:\mathbb{R}^k\to \mathbb{R}$ such that $\varphi:=(\varphi_1,\ldots,\varphi_{2n+1-k})$ is intrinsic Lipschitz (Theorem \ref{t:Lift}). The heuristic explanation is that $\varphi$ is intrinsic Lipschitz if and only if its intrinsic graph map $\Phi:\mathbb{V}_0 \to \mathbb{H}^n$ is metrically Lipschitz, which means that the intrinsic graph of $\varphi$ is tangential to the horizontal distribution in a weak sense ($\varphi$ is \emph{weakly graph contact} in the terminology of Definition \ref{d:WeakGraphContact}).

For $k=1$, the weak graph isotropic condition is trivial. Indeed, any Euclidean Lipschitz map $(\varphi_1,\ldots,\varphi_{2n-1}):\mathbb{R}^1 \to \mathbb{R}^{2n-1}$ can be lifted to an intrinsic Lipschitz function with a vertical component $\varphi_{2n}$ defined through integration. On the other hand, if $k=n$, then the weak graph isotropic 
condition reduces to the following system of linear PDEs
\begin{displaymath}
    \frac{\partial \varphi_l}{\partial x_j}=\frac{\partial \varphi_j}{\partial x_l},\quad\text{a.e.,}\quad \text{for all }j,l\in \{1,\ldots,n\}
\end{displaymath}
while for $1<k<n$, it becomes a non-linear condition; see
Definition \ref{d:WeaklyGraphIsotro}. Similarly, all lines in $\mathbb{R}^{2n}$ arise as projections of horizontal lines in $\mathbb{H}^n$ under the map $(w,t)\to w$, while for $k>1$, only the affine \emph{isotropic} $k$-planes arise as projections of horizontal $k$-planes; cf.\ Proposition \ref{ahp}.
These distinctions are the reason that our proof of Theorem \ref{t:SumiLG} is divided in three cases: $k=1$, $k=n$ and $1<k<n$.

\subsubsection{Proof of Theorem \ref{t:SumiLG} for $k=1$}
Dorronsoro's theorem for $1$-dimensional Euclidean Lipschitz graphs immediately implies that  $$\mathrm{gr}(\varphi)\in \mathrm{GLem}( \Omega_{\infty,\Pi^{\bot}(\varphi)}(\Pi(\cdot),\cdot),2).$$ Thus, once the comparison between the coefficients stated in Theorem \ref{t:ParamGLem} is established, the case $k=1$  of Theorem \ref{t:SumiLG}  will follow. The main task is therefore to verify that the `horizontal information'  encoded by  $\Omega_{\infty,\Pi^{\bot}(\varphi)}$ yields sufficient `vertical information' to control $ \widehat{\beta}^{\mathrm{gr}(\varphi) }_{\infty,\mathcal{V}_1}$. This is the case since both $\varphi$ and the approximating intrinsic affine functions $(A,A_{2n})$ are weakly graph contact, so the derivative of their last, vertical, component is controlled by their horizontal components. However, the precise computations are subtle due to the non-linearity of the weak graph contact condition in the case $k=1$. We address this by considering also the weak graph contact lift of $\Pi^{\bot}(\varphi)-A$, which is emphatically not the same as the difference of the individual weak graph contact lifts of $\Pi^{\bot}(\varphi)$ and $A$, respectively; see the proof of Theorem \ref{t:ControlStratifBetaByOmegaInftyk=1}.

\subsubsection{Proof of Theorem \ref{t:SumiLG} for $k=n$}
The case of $n$-dimensional intrinsic Lipschitz graphs in $\mathbb{H}^n$ is easier than the $1$-dimensional case in the sense that the weak graph contact condition becomes linear for $k=n$. The dimension $k=n$ is also the only one for which one can write $\mathbb{R}^{2n}$ as a direct sum of a $k$-dimensional subspace $V$ and its orthogonal complement $V^{\bot}$ so that both $V$ and $V^{\bot}$ are isotropic.
On the other hand, unlike for $k=1$, being weakly graph isotropic is a non-trivial additional requirement in the case $k=n$, and the statement
 \begin{equation}\label{eq:GlemIsotr}\mathrm{gr}(\varphi)\in \mathrm{GLem}( \Omega_{2,\Pi^{\bot}(\varphi)}^{iso}(\Pi(\cdot),\cdot),2)
 \end{equation}
 in Theorem \ref{t:ParamGLem}  does not directly follow from an existing result. Instead we note that a Dorronsoro-type theorem holds for weakly graph isotropic Lipschitz functions $\mathbb{R}^n \to \mathbb{R}^n$, where the family of admissible approximations does not include all affine functions  $\mathbb{R}^n \to \mathbb{R}^n$ but only the strict subclass of isotropic affine functions (Theorem \ref{aiso}). We believe that this observation, which is a purely Euclidean result, might be of independent interest also beyond analysis on the Heisenberg group. It can  be interpreted as a quantitative differentiation  result for the gradient of a real-valued function $u\in C^{1,1}(\mathbb{R}^n)$ that describes how well $\nabla u$ can be approximated by affine isotropic maps at different places and scales.

 With \eqref{eq:GlemIsotr} at hand, the case $k=n$ of Theorem \ref{t:SumiLG} will follow by proving the inequality 
 \begin{displaymath}
 \widehat{\Omega}_{2,\varphi}(\Pi(x),r)^4\lesssim_{n,L}
       \Omega_{2,\Pi^{\bot}(\varphi)}^{iso}(\Pi(x),r)^2,\quad x\in \mathrm{gr}(\varphi),\,r>0.
       \end{displaymath}
        This, once again, amounts to understanding how  a given approximation of the horizontal components $\Pi^{\bot}(\varphi)$ can be used to construct a good approximation of the full map $\varphi=(\Pi^{\bot}(\varphi),\varphi_{n+1})$. This time we use the weak graph contact condition $\nabla \varphi_{n+1}=-\Pi^{\bot}(\varphi)$ and a Poincar\'e inequality to obtain the desired estimates,
        see the proof of Theorem \ref{t:ControlStratifBetaByOmega}. 

\subsubsection{Proof of Theorem \ref{t:SumiLG} for $1<k<n$}
The case $1<k<n$ is the most difficult to handle as it combines challenges from both previous cases: the weak graph isotropic condition is non-trivial (like in the case $k=n$) and the weak graph contact condition is non-linear (like in the case $k=1$). It is not clear to us how to prove a counterpart for Theorem \ref{t:ParamGLem} in this case, but we were still able to obtain the case $1<k<n$ of Theorem \ref{t:SumiLG} with a different inductive argument, adapting an integral-geometric proof of Dorronsoro's theorem for Euclidean and parabolic Lipschitz functions due to Orponen \cite{MR4345824}. The idea is to consider lower-dimensional slices of intrinsic Lipschitz graphs.
The approximating $k$-planes produced by this approach do not a priori need to be horizontal planes in the sense of the Heisenberg group, so a main challenge is to verify that, with a small perturbation, they can be made horizontal, see Theorem \ref{l:almostHoriz_k}. 

\subsection{Comparison to previous work}
Low-dimensional intrinsic Lipschitz graphs in $\mathbb{H}^n$ are Euclidean rectifiable subsets of $\mathbb{R}^{2n+1}$, but our results do not follow from the Euclidean theory of uniform rectifiability since the horizontal $\beta_{\mathcal{V}_k}$-numbers require us to study approximation by \emph{horizontal} $k$-planes. 

Dorronsoro-type theorems for Lipschitz or Sobolev functions have been proven in a variety of settings. Without claiming to be complete, in addition to Dorronsoro's original theorem and the cited versions for $1$-codimensional intrinsic Lipschitz graphs in $\mathbb{H}^n$,
 we mention \cite{MR4171381,MR4345824,2023arXiv230613017A,MR5062411,hyde2026quantitativeharmonicapproximationsdorronsoros}. Our results are rather different as they concern mappings \emph{into} the metric space $\mathbb{H}^n$ rather than real-valued functions \emph{from} a metric space. The idea of considering lower-dimensional slices to prove a geometric lemma for intrinsic Lipschitz graphs has been applied in \cite{MR4388340} for the codimension-$1$ case in $\mathbb{H}^n$, $n>1$. In this setting, the slices
 lead the authors to consider 
 metric Lipschitz functions $\mathbb{H}^{n-1}\to \mathbb{R}$, thus again, real-valued functions that are quite different from our situation.
 Unlike for Euclidean Lipschitz mappings, it is also not possible to deduce Theorems \ref{t:SumiLG}--\ref{t:ParamGLem} by treating each component of the intrinsic Lipschitz function $\varphi$ separately. This difficulty essentially stems from the non-commutativity of the Heisenberg group product, and the resulting expressions for the Heisenberg metric and the cone condition.

\subsection{Structure of the paper} In Section \ref{s:Prelim}, we recall relevant terminology and facts related to Heisenberg groups, intrinsic Lipschitz graphs, and intrinsic linearity. Section \ref{s:LowDimiLG} focuses on low-dimensional intrinsic Lipschitz graphs in $\mathbb{H}^n$ and their projections to $\mathbb{R}^{2n}$. We claim little novelty for this section, as many of the results follow from the theory of metric Lipschitz images of $\mathbb{R}^k$, $1\leq k\leq n$, in $\mathbb{H}^n$. However, our presentation offers a new perspective which is useful for the main body of the paper and potentially for other applications as well. Section \ref{s:IsotrDorronsoro} discusses the isotropic Dorronsoro theorem, a Euclidean result. In Section \ref{s:GLem} we return to the Heisenberg group and prove the main results, Theorems  \ref{t:SumiLG}--\ref{t:ParamGLem}. Section \ref{s:BP} is mainly written in the setting of metric spaces, where we discuss stability of geometric lemmas under the big pieces functor, along with 
applications in $\mathbb{H}^n$ such as Corollary \ref{c:introChar}.

\subsection{AI disclosure} 
No AI tools were used to generate 
the mathematical content of the first version \cite{2026arXiv260830269C} of this article. 
In that first version, 
we had  applied a Morrey-Sobolev inequality in the proof of Theorem \ref{t:ControlStratifBetaByOmega}, which for $n>1$ resulted in a worse exponent $q>2n$ compared to $q=4$ in the present statement. The application of the Poincar\'e inequality to prove the current version of Theorem \ref{t:ControlStratifBetaByOmega} was suggested by OpenAI’s ChatGPT 6 Astra, which was also used to assist in searching for typographical errors and other minor mistakes.

\section{Preliminaries}\label{s:Prelim}

\subsection{Notation}\label{s:Not}
 For nonnegative numbers or functions $A$ and $B$, we will write $A \lesssim B$ to mean $A \leq CB$ where $C$ is a constant, and $A \lesssim_{s,t} B$ if $C$ depends on some parameters $s$ and $t$. Similarly, we will write $A
\sim B$ if $A \lesssim B \lesssim A$ and $A \sim_t B$ if $A \lesssim_t B \lesssim_t A$.

The Lipschitz constant of a Euclidean Lipschitz function $f$ is denoted by $\mathrm{Lip}(f)$. Average integrals are written as $\fint_B h \,d \mu = \frac{1}{\mu(B)}\int_B h \, d\mu$.

Balls in a metric space $(X,d)$ are denoted by $B(x,r)$, and Hausdorff measures by $\mathcal{H}^s$. The dependence on the metric will be indicated where needed. For a  subset $E\subset X$ and $0<s<\infty$, we write $E\in \mathrm{Reg}_s(C)$ if $E$ is AD-$s$-regular, or simply \emph{$s$-regular} with constant $C$, meaning that $E$
 is closed and 
 \begin{equation}\label{def_Ahlfors}
       C^{-1}r^s\leq\mathcal H^s(E\,\cap\,B(x,r) )\leq Cr^s\quad\text{ for every }x\in E,\, 0<r\leq\mathrm{diam}(E),\,r<\infty. \end{equation}

\subsection{The Heisenberg group}\label{s:Heis}
The $2n$-dimensional Heisenberg group is the manifold $\Hn = \R^{2n+1}$ endowed with the group product
\begin{equation}\label{sym}
    (z, t) \cdot (\zeta, \tau) = \left(z + \zeta, t + \tau + \tfrac{1}{2} \omega(z, \zeta) \right)
\end{equation}
where $z, \zeta \in \R^{2n}$, and $\omega(z, \zeta) = \inner{Jz}{\zeta}=\sum_{i=1}^n z_{i}\zeta_{n+i}-z_{n+i}\zeta_{i}$. Here $J = \begin{bmatrix} 0 & -I_n \\ I_n & 0 \end{bmatrix}_{2n \times 2n}$ is the \emph{standard complex structure}.

We will sometimes write simply $pq$ instead of $p\cdot q$ for $p,q\in \mathbb{H}^n$. Let $p = (z, t) \in \Hn$. The inverse element of $p$ is $p^{-1} = (-z, -t)$. The identity element is $e = (0, 0)$.

The \emph{Kor\'{a}nyi norm} $\knorm{\cdot}$ on $\Hn$ is
\begin{equation*}
    \knorm{(z, t)} = \left( |z|^4 + 16t^2 \right)^{1/4},
\end{equation*}
where $|\cdot|$ denotes the Euclidean norm on $\mathbb{R}^{2n}$.
The Euclidean distance is sometimes denoted as $d_{\mathrm{Eucl}}(z,z')=|z-z'|$.
The Kor\'{a}nyi metric is $d(p, p') = \knorm{p^{-1} \cdot p'}$ for all $p,p'\in \mathbb{H}^n$. The homogeneous dilations $\delta_\lambda : \Hn \to \Hn$ for $\lambda>0$
are given by
\begin{align*}
    \delta_\lambda(z, t) &= (\lambda z, \lambda^2 t),\quad (z,t)\in \mathbb{H}^n.
\end{align*}

The \emph{horizontal subgroups} of $\mathbb{H}^n$ are the sets of the form $\V = V \times \{0\}$, where $V \subset \R^{2n}$ is a subspace such that $\omega(v, v') = 0$ for all $v, v' \in V$.
We call $\W = V^\perp \times \R$ the \emph{complementary vertical subgroup} of $\mathbb{V}$. In particular, in this paper, the complementary vertical subgroup $\mathbb{W}$  of $\mathbb{V}$ is always assumed to be an orthogonal complement of $\mathbb{V}$ with respect to the underlying Euclidean structure of $\mathbb{R}^{2n+1}$. The sets
$\V$ and $\W$ are homogeneous subgroups of $\Hn$, that is, they are subgroups invariant under dilations. We have $\Hn = \V \ltimes \W$. This induces for every $p\in\mathbb{H}^n$ a unique decomposition $p =p_{\mathbb{V}}\cdot p_{\mathbb{W}}$ such that $p_{\mathbb{V}}\in\mathbb{V}$ and $p_{\mathbb{W}}\in\mathbb{W}$.

\begin{definition}[Isotropic subspaces and (affine) horizontal planes]\label{hp}
    A vector subspace $V \subset \mathbb{R}^{2n}$ is said to be \textit{isotropic} if $\omega(z, z') = 0$ for all $z, z' \in V$, where $\omega : \mathbb{R}^{2n} \times \mathbb{R}^{2n} \to \mathbb{R}$ is the form defined in (\ref{sym}).

For $1 \le k \le n$, we denote by $\mathcal{V}_k^0$ the family of $k$-dimensional \textit{horizontal subgroups} of $\mathbb{H}^n$. Every $\mathbb{V} \in \mathcal{V}_k^0$ is of the form $\mathbb{V} = V \times \{0\}$ for a $k$-dimensional isotropic subspace $V$ of $\mathbb{R}^{2n}$.

We call a subset $\mathbb{V}$ of $\mathbb{H}^n$ an \textit{(affine) $k$-dimensional horizontal plane} if it can be written as $x \cdot \mathbb{V}_0$ for some $x \in \mathbb{H}^n$ and $\mathbb{V}_0 \in \mathcal{V}_k^0$. We denote by $\mathcal{V}_k$ the collection of all affine horizontal $k$-dimensional planes of $\mathbb{H}^n$. The elements of $\mathcal{V}_1$ are also called \emph{horizontal lines}. A \emph{horizontal line segment} is simply a segment on a horizontal line.
\end{definition}

\begin{remark}\label{r:Line}
 If $\mathbb{V}\in \mathcal{V}_k$ and $\ell$ is an arbitrary line contained in $\mathbb{V}$, then $\ell\in \mathcal{V}_1$.    
\end{remark}
\begin{definition}[Projection onto horizontal subgroups]\label{d:HorizProj}
Let $\mathbb V=V\times\{0\}\in \mathcal V_k^0$ be a $k$-dimensional horizontal subgroup of $\mathbb H^n$, where $V$ is a $k$-dimensional isotropic subspace of $\R^{2n}$. The \textit{horizontal projection} onto $\V$ is defined by 
\[\pi_\V:\mathbb H^n\to \V,\quad \pi_\V(z,t):=(\pi_{V}(z),0),\]
where $\pi_{V}:\R^{2n}\to V$ stands for the standard Euclidean orthogonal projection onto $V$.
\end{definition}
When considering a splitting $\mathbb H^n=\V\ltimes\W$, where $\W$ is the complementary vertical subgroup associated to $\V$, it turns out that $\pi_\V (p)=p_\V$ for every point $p\in \mathbb H^n$.

The $k$-dimensional Hausdorff measure $\mathcal{H}^k$ with respect to the metric $d$ is a Haar measure on every horizontal subgroup $\mathbb{V}$ of topological dimension $k$.  ``Almost everywhere on $\mathbb{V}$''  should always be understood with respect to this measure, unless otherwise specified.

\subsection{Intrinsic Lipschitz functions}\label{s:iLG}
The notion of intrinsic Lipschitz continuity was introduced by B. Franchi, R. Serapioni and F. Serra Cassano \cite{MR2287539}. It can be formulated for maps between complementary homogeneous subgroups of Carnot groups, but we state it here only in the context relevant for the present paper. 
\begin{definition}[Intrinsic graph and intrinsic graph map]\label{d:graph}
The \emph{intrinsic graph} of a function
 $\varphi: A\subset  \V \to \W$ between complementary homogeneous subgroups of $\mathbb{H}^n$ is
\begin{equation*}
    \gr(\varphi) := \{ v \cdot \varphi(v) : v \in 
   A\}.
\end{equation*}
We also denote the \emph{graph map} of $\varphi$ by $\Phi:A \to \mathbb{H}^n,\,
\Phi(v):=v \cdot \varphi(v)$.
\end{definition}
The term ``intrinsic" stems from the fact that the class of intrinsic graphs is preserved under group translations of $\mathbb H^n$. Specifically, if we consider any $p\in \mathbb{H}^n$ and $\varphi: A\subset \V\to\W$ between complementary horizontal and vertical subgroups, then $p\cdot \mathrm{gr}(\varphi)$ is the intrinsic graph of $\varphi_p:A_p\subset \mathbb{V} \to \mathbb{W}$ given by 
\begin{displaymath}
    \varphi_p(v):= v^{-1}\cdot p \cdot \pi_{\mathbb{V}}(p)^{-1}\cdot v\cdot \varphi\left(\pi_{\mathbb{V}}(p)^{-1}\cdot v\right),\quad v\in A_p:=\pi_\V(p)\cdot A.
\end{displaymath}
Indeed, $\varphi_p$ is well-defined since $\W$ is always a normal subgroup of $\mathbb H^n$ and one has
\begin{align*}\mathrm{gr}(\varphi_p)=\{v\cdot \varphi_p(v): v\in A_p\}&=p\cdot\{ \pi_{\mathbb{V}}(p)^{-1}\cdot v\cdot \varphi(\pi_{\mathbb{V}}(p)^{-1}\cdot v):v\in \pi_\V(p)\cdot A\}\\ &=p\cdot \{v'\cdot \varphi(v'):v'\in A\}=p\cdot \gr(\varphi).\end{align*}
In particular, 
if $p = v \cdot \varphi(v) \in \gr(\varphi)$, then $\varphi_{p^{-1}}$ is the function whose intrinsic graph represents the shifting of the original graph $\gr(\varphi)$ so that the point $p$ is moved to the origin $e$. In this case, the explicit form of $\varphi_{p^{-1}}$ is given by $\varphi_{p^{-1}}(\eta) := \eta^{-1}\cdot \varphi(v)^{-1}\cdot \eta \cdot \varphi(v\cdot\eta)$ for $\eta\in v^{-1}\cdot A$.

\begin{definition}[Intrinsic Lipschitz]\label{d:IntrLip} Let  $\Hn = \V \ltimes \W$ be a splitting into complementary horizontal and vertical subgroups, and let $L\geq 0$.
We say that a function $\varphi: A\subset \V \to \W$ 
is \emph{intrinsic $L$-Lipschitz}, or simply \emph{intrinsic Lipschitz},
if for all $p = v \cdot \varphi(v) \in \gr(\varphi)$, we have:
\begin{equation}\label{def_intLip}
    \knorm{\varphi_{p^{-1}}(\eta)} \le L \knorm{\eta},\quad \eta \in v^{-1}\cdot A.
\end{equation}
In this case, $\mathrm{gr}(\varphi)$ is said to be an \emph{intrinsic ($L$-)Lipschitz graph}.
The \emph{intrinsic Lipschitz constant} of $\varphi$ is the infimum of the numbers $L$ such that the inequality holds for all $p\in \mathrm{gr}(\varphi)$ and all $\eta$ in the domain of $\varphi_{p^{-1}}$. It will be denoted by $\mathrm{Lip}(\varphi)$. If $\mathbb{V}\in \mathcal{V}_k^0$, we also say that $\mathrm{gr}(\varphi)$ is a \emph{$k$-dimensional} intrinsic Lipschitz graph.
\end{definition}

The symbol  $\mathrm{Lip}(\cdot)$ is used in this paper also to denote the Lipschitz constant of a (often real-valued) function that is Lipschitz with respect to the Euclidean metric. The intended meaning should always be clear from the context.

Definition \ref{d:IntrLip} has also been stated in the literature with the Kor\'{a}nyi norm $\|\cdot\|$ in \eqref{def_intLip} replaced by a different homogeneous gauge function. Since the relevant gauges are bi-Lipschitz equivalent, the resulting class of intrinsic Lipschitz functions and graphs is the same, although the value of the intrinsic Lipschitz constant may change (in a quantitatively controlled way).

Intrinsic Lipschitz continuity can be characterized through a cone property for $\gr(\varphi)$ \cite{MR2287539}, in analogy with the cone property of Euclidean Lipschitz graphs.
Let $\Hn = \V \ltimes \W$ be a splitting into horizontal and complementary vertical subgroup. The \textit{intrinsic cone} $\mathscr{C}_{\alpha}$ of aperture $\alpha > 0$ with axis $\mathbb{W}$ is
$$\mathscr{C}_{\alpha} := \{p \in \mathbb{H}^n : \|p_{\mathbb{V}}\| \leq \alpha\|p_{\mathbb{W}}\|\}.
$$
Observe that $\mathscr{C}_{\alpha}$ is homogeneous (invariant under dilations) and that $\mathbb{W} \subset \mathscr{C}_{\alpha}$. For $q \in \mathbb{H}^n$ we also introduce the cone $\mathscr{C}_{\alpha}(q) := q\cdot \mathscr{C}_{\alpha}$ with vertex $q$. Directly from the definition, one obtains the following characterization:

\begin{proposition}\label{p:iLipChar} Let  $\Hn = \V \ltimes \W$ be a splitting into complementary horizontal and vertical subgroups.
A function $\varphi : A \subset \mathbb{V} \to \mathbb{W}$ is intrinsic $L$-Lipschitz if and only if for all $ p \in \gr(\varphi)$ and for all $ 0 \leq \alpha < 1/L$,
\begin{equation*}
\gr(\varphi) \cap \mathscr{C}_{\alpha}(p) = \{p\}.
\end{equation*}
\end{proposition}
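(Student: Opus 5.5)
The plan is to unwind both sides of the equivalence directly from Definition \ref{d:IntrLip}, through the left translation that moves a point of the graph to the origin. Fix $p=v\cdot\varphi(v)\in\mathrm{gr}(\varphi)$. Every other point of the graph has the form $q=v'\cdot\varphi(v')$ with $v'\in A$. Set $\eta:=v^{-1}v'\in v^{-1}\cdot A$. Using the formula for $\varphi_{p^{-1}}$ and the identity $\mathrm{gr}(\varphi_{p^{-1}})=p^{-1}\cdot\mathrm{gr}(\varphi)$ recorded above, I would show that
\[
p^{-1}q=\eta\cdot\varphi_{p^{-1}}(\eta).
\]
This can also be checked directly: $p^{-1}q=\varphi(v)^{-1}v^{-1}v'\varphi(v')=\eta\cdot\bigl(\eta^{-1}\varphi(v)^{-1}\eta\,\varphi(v\eta)\bigr)$. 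Since $\W$ is normal, the bracket lies in $\W$, and $\eta\in\V$. By uniqueness of the splitting $\Hn=\V\ltimes\W$, this gives
\[
(p^{-1}q)_\V=\eta,\qquad (p^{-1}q)_\W=\varphi_{p^{-1}}(\eta).
\]

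Next I would note that $q\in\mathscr C_\alpha(p)$ if and only if $p^{-1}q\in\mathscr C_\alpha$, that is, if and only if $\|\eta\|\le\alpha\|\varphi_{p^{-1}}(\eta)\|$. Also, $q=p$ exactly when $\eta=e$; in that case $\varphi_{p^{-1}}(e)=e$.

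($\Rightarrow$) Assume $\|\varphi_{p^{-1}}(\eta)\|\le L\|\eta\|$ for all $\eta$, and take $0\le\alpha<1/L$. Suppose $q\neq p$ lies in the cone, so $\eta\neq e$. Then
\[
\|\eta\|\le\alpha\|\varphi_{p^{-1}}(\eta)\|\le\alpha L\|\eta\|<\|\eta\|,
\]
which is impossible because $\|\eta\|>0$. (If $L=0$, read $1/L=\infty$; the middle term vanishes and the same contradiction follows.)

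($\Leftarrow$) Assume the cone condition holds for all $\alpha<1/L$, and fix $\eta\neq e$. Write $w:=\varphi_{p^{-1}}(\eta)$. If $w=e$, there is nothing to prove. Otherwise, every $\alpha<1/L$ satisfies $\|\eta\|>\alpha\|w\|$, since the corresponding $q\neq p$ is not in $\mathscr C_\alpha(p)$. Letting $\alpha\uparrow1/L$ gives $\|\eta\|\ge\|w\|/L$, that is, $\|w\|\le L\|\eta\|$. When $L=0$, letting $\alpha\to\infty$ forces $w=e$. For $\eta=e$ the inequality is trivial.

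The only step needing care is the identification of the $\V$- and $\W$-components of $p^{-1}q$; it relies on normality of $\W$ and on uniqueness of the semidirect product decomposition. The rest is elementary.
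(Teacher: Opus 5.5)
Your proof is correct and is essentially the approach the paper takes: the paper gives no argument beyond saying the characterization follows ``directly from the definition'', and your unwinding of Definition \ref{d:IntrLip} via $p^{-1}q=\eta\cdot\varphi_{p^{-1}}(\eta)$, with $(p^{-1}q)_{\V}=\eta$ and $(p^{-1}q)_{\W}=\varphi_{p^{-1}}(\eta)$, together with the limit $\alpha\uparrow 1/L$ (including the case $L=0$), is exactly that. You only left implicit the trivial inclusion $p\in\mathscr{C}_\alpha(p)$, which holds because $e\in\W\subset\mathscr{C}_\alpha$.
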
 
An explicit expression for the intrinsic Lipschitz condition for maps from horizontal to complementary vertical subgroups in $\mathbb{H}^n$ is the following, cf. \cite[Remark 4.57  (ii)]{MR3587666}:

\begin{proposition}\label{1.2} Let  $\Hn = \V \ltimes \W$ be a splitting into complementary horizontal and vertical subgroups, and let $L\geq 0$.
A function $\varphi: A\subset \V \to \W$ is intrinsic $L$-Lipschitz if and only if
\begin{equation*}
\|{v'}^{-1}\cdot v \cdot \Phi(v)^{-1}\cdot \Phi(v')\|=  \knorm{v'^{-1} \cdot v \cdot \varphi(v)^{-1}\cdot v^{-1} \cdot v' \cdot \varphi(v')} \le L \knorm{v^{-1} \cdot v'}, \quad v,v'\in A.
\end{equation*}
\end{proposition}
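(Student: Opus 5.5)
The plan is to unwind Definition \ref{d:IntrLip}, written in terms of the translated function $\varphi_{p^{-1}}$, into an explicit two-point inequality. Fix $v,v'\in A$ and let $p=v\cdot\varphi(v)=\Phi(v)$. Set $\eta:=v^{-1}\cdot v'$, so that $\eta\in v^{-1}\cdot A$ and $v\cdot\eta=v'$. Every $\eta\in v^{-1}\cdot A$ arises this way, from the unique $v'=v\cdot\eta\in A$. Hence, as $p$ ranges over $\gr(\varphi)$ and $\eta$ over $v^{-1}\cdot A$, the pair $(v,v')$ ranges exactly over $A\times A$. The two conditions are therefore equivalent once we check that, for every such pair,
\[
\|\varphi_{p^{-1}}(\eta)\| = \|v'^{-1}\cdot v\cdot\varphi(v)^{-1}\cdot v^{-1}\cdot v'\cdot\varphi(v')\| \quad\text{and}\quad \|\eta\|=\|v^{-1}\cdot v'\|.
\]
The second identity is immediate from the choice of $\eta$.

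For the first identity, we use the explicit formula recorded after Definition \ref{d:graph}:
\[
\varphi_{p^{-1}}(\eta)=\eta^{-1}\cdot\varphi(v)^{-1}\cdot\eta\cdot\varphi(v\cdot\eta).
\]
Substituting $\eta=v^{-1}v'$ and $\eta^{-1}=v'^{-1}v$ gives
\[
\varphi_{p^{-1}}(\eta)=v'^{-1}\cdot v\cdot\varphi(v)^{-1}\cdot v^{-1}\cdot v'\cdot\varphi(v'),
\]
which is the middle expression in the statement. For the first expression, note that $\Phi(v)^{-1}=\varphi(v)^{-1}\cdot v^{-1}$ and $\Phi(v')=v'\cdot\varphi(v')$. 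Associativity then gives
\[
v'^{-1}\cdot v\cdot\Phi(v)^{-1}\cdot\Phi(v') = v'^{-1}\cdot v\cdot\varphi(v)^{-1}\cdot v^{-1}\cdot v'\cdot\varphi(v'),
\]
so the two norms in the displayed equality of the proposition coincide.

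If one wants to be careful about the formula for $\varphi_{p^{-1}}$, it can be rederived from the general $\varphi_q$ formula with $q=p^{-1}$. Since $\pi_{\mathbb V}$ is a homomorphism on $\mathbb V\ltimes\mathbb W$ with $\pi_{\mathbb V}(p)=v$, we have $\pi_{\mathbb V}(p^{-1})=v^{-1}$. The general formula then reads
\[
\varphi_{p^{-1}}(\eta)=\eta^{-1}\cdot p^{-1}\cdot v\cdot\eta\cdot\varphi(v\eta),
\]
and $p^{-1}v=\varphi(v)^{-1}v^{-1}v=\varphi(v)^{-1}$. This bookkeeping with group products is the only step needing care; there is no genuine obstacle, as everything is algebraic manipulation in the group.
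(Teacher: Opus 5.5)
Your proof is correct. The paper gives no argument of its own here, only a pointer to \cite[Remark 4.57 (ii)]{MR3587666}. Your substitution $\eta=v^{-1}\cdot v'$ into the formula $\varphi_{p^{-1}}(\eta)=\eta^{-1}\cdot\varphi(v)^{-1}\cdot\eta\cdot\varphi(v\cdot\eta)$ recorded after Definition \ref{d:graph}, together with the bijection between pairs $(p,\eta)$ and pairs $(v,v')\in A\times A$ and your check that $\pi_{\mathbb{V}}(p^{-1})=v^{-1}$, is exactly the routine unwinding of Definition \ref{d:IntrLip} that this citation stands for.
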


\subsection{Intrinsic linearity and intrinsic differentiability}
Next, we review  intrinsic notions  of linearity and differentiability in Heisenberg group.
\begin{definition}[Intrinsic linearity]\label{linearity} Let  $\Hn = \V \ltimes \W$ be a splitting into complementary horizontal and vertical subgroups. A function
$L: \V \to \W$ is 
\emph{intrinsic linear} if for all $v,v' \in \V$ and $\lambda > 0$:
\begin{align*}
    L(\delta_\lambda v) &= \delta_\lambda(L v) \\
    L(v \cdot v') &= v'^{-1} \cdot L(v) \cdot v' \cdot L(v').
\end{align*}
\end{definition}

The geometric intuition here is that a function 
 $L:\mathbb{V}\to \mathbb{W}$ is intrinsic linear if and only if 
its intrinsic graph $\text{gr}(L) = \{v \cdot L(v) : v \in \mathbb{V}\}$ forms a homogeneous subgroup of $\mathbb{H}^n$, see for instance \cite[Proposition 4.68]{MR3587666}.
Indeed, the first equation in Definition \ref{linearity} guarantees closure under dilations. To ensure closure under group operation, we need $$(v \cdot v') \cdot L(v \cdot v')=(v \cdot L(v)) \cdot (v' \cdot L(v')).$$
Left-multiplying by $(v \cdot v')^{-1}$ gives the second equation. 

\emph{Intrinsic differentiable} functions are those which are, infinitesimally, well approximated by intrinsic linear functions in the sense of the following definition.

\begin{definition}[Intrinsic differentiability]\label{1.4}  Let  $\Hn = \V \ltimes \W$ be a splitting into complementary horizontal and vertical subgroups. A function
$\varphi:U\subset \V\to \W$, with $U$ Borel,  is 
\emph{intrinsically differentiable} at a density point $v \in U$ of $U$ if there exists an 
intrinsic linear map $d\varphi_v: \V \to \W$ such that
\begin{equation}\label{eq:IntrDiff}
    \knorm{d\varphi_v(\eta)^{-1} \cdot \eta^{-1} \cdot \varphi(v)^{-1} \cdot \eta \cdot \varphi(v \cdot \eta)} = o(\knorm{\eta}) \quad \text{as } \knorm{\eta} \to 0.
\end{equation}
The map $d\varphi_v$ is called the intrinsic differential of $\varphi$ at $v$.
\end{definition}

With the notation introduced below Definition \ref{d:graph}, condition \eqref{eq:IntrDiff} can be rephrased as
\begin{equation}\label{eq:IntrDiffTransl}
    \|d\varphi_v(\eta)^{-1} \cdot \varphi_{p^{-1}}(\eta)\|=o(\|\eta\|),\quad
    \text{as } \knorm{\eta} \to 0
\end{equation}
with $p=v\cdot \varphi(v)$. In \eqref{eq:IntrDiff} and 
\eqref{eq:IntrDiffTransl}, $\eta$ ranges in the domain of $\varphi_{p^{-1}}$.

The following is a special instance of a more general Rademacher-type theorem for intrinsic Lipschitz functions with normal targets in Carnot groups, \cite[Theorem 1.1]{MR4277829}.

\begin{theorem}[Intrinsic Rademacher's Theorem]\label{1.5}  Let  $\Hn = \V \ltimes \W$ be a splitting into complementary horizontal and vertical subgroups. 
If  $U\subset\mathbb{V}$ is a Borel set and $\varphi: U \subset \V \to \W$ is intrinsic Lipschitz, then $\varphi$ is 
intrinsically differentiable
almost everywhere in $U$.
\end{theorem}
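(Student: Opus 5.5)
The plan is to reduce the statement to a Pansu--Rademacher theorem for the graph map $\Phi$, which is available because $\mathbb{V}$ is horizontal. I would proceed in three steps.

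\textbf{Step 1: $\Phi$ is Lipschitz.} Write $\Phi(v)=v\cdot\varphi(v)$ and note the identity
\[
\Phi(v)^{-1}\cdot\Phi(v\cdot\eta)=\eta\cdot\varphi_{p^{-1}}(\eta),\qquad p=\Phi(v).
\]
Since $\mathbb{V}$ is a horizontal subgroup, $\|\eta\|=|\eta|$ is Euclidean on $\mathbb{V}\cong\mathbb{R}^k$. Moreover $\|\eta\cdot w\|\lesssim\|\eta\|+\|w\|$ for $w\in\mathbb{W}$. Hence Definition \ref{d:IntrLip} gives
\[
d(\Phi(v),\Phi(v'))\lesssim (1+L)\,|v-v'|,\qquad v,v'\in U.
\]
So $\Phi:U\subset\mathbb{R}^k\to\mathbb{H}^n$ is metrically Lipschitz. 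Note also that $\pi_{\mathbb{V}}\circ\Phi=\mathrm{id}_U$.

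\textbf{Step 2: Pansu differentiability of $\Phi$.} I would invoke a Rademacher theorem for Lipschitz maps from measurable subsets of $\mathbb{R}^k$ into Carnot groups, in the form due to Pansu and Magnani. It gives, at a.e.\ density point $v\in U$, a homogeneous homomorphism $D\Phi_v:\mathbb{R}^k\to\mathbb{H}^n$ such that
\[
\delta_{1/r}\bigl(\Phi(v)^{-1}\Phi(v\cdot\delta_r\eta)\bigr)\to D\Phi_v(\eta)
\]
uniformly for $\eta$ in compact sets with $v\cdot\delta_r\eta\in U$.

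This step is where I expect the main difficulty, because $U$ is only Borel. I would first try to extend $\Phi$ to a Lipschitz map on all of $\mathbb{V}$. For $k\le n$ such extensions into $\mathbb{H}^n$ exist, since $\mathbb{H}^n$ is Lipschitz $(n-1)$-connected. Alternatively, one can extend the horizontal components, which are Euclidean Lipschitz, and recover the vertical component from the contact structure. In either case the extension agrees with $\Phi$ on $U$, so restricting to density points of $U$ only costs an $o(r)$ error.

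\textbf{Step 3: from $D\Phi_v$ to $d\varphi_v$.} Because $\pi_{\mathbb{V}}$ is a homogeneous homomorphism and $\pi_{\mathbb{V}}\circ\Phi=\mathrm{id}$, we get $\pi_{\mathbb{V}}\circ D\Phi_v=\mathrm{id}_{\mathbb{V}}$. Therefore $D\Phi_v(\eta)=\eta\cdot d\varphi_v(\eta)$ for a unique map $d\varphi_v:\mathbb{V}\to\mathbb{W}$. The image of $D\Phi_v$ is a homogeneous subgroup equal to $\mathrm{gr}(d\varphi_v)$, so $d\varphi_v$ is intrinsic linear by Definition \ref{linearity} and the remark following it.

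Finally I would verify \eqref{eq:IntrDiffTransl}. We have $\varphi_{p^{-1}}(\eta)=\eta^{-1}\cdot\Phi(v)^{-1}\Phi(v\eta)$. Then
\[
d\varphi_v(\eta)^{-1}\varphi_{p^{-1}}(\eta)=D\Phi_v(\eta)^{-1}\cdot\Phi(v)^{-1}\Phi(v\eta).
\]
After dilating by $\delta_{1/\|\eta\|}$, this is a difference of two points that converge to each other uniformly on the unit sphere. Continuity of the group operations then gives that its norm is $o(\|\eta\|)$, which completes the proof.
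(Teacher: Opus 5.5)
Your argument is correct, and it follows the route behind the result the paper simply cites, \cite[Theorem 1.1]{MR4277829}: for a normal target the graph map $\Phi$ is Lipschitz, it is Pansu differentiable at a.e.\ density point of the Borel set $U$ by Magnani's measurable-domain version of Pansu's theorem, and $d\varphi_v$ is read off from $D\Phi_v$ as in your Step 3 (cf.\ the paper's comment referring to \cite[Remark 3.1]{MR4277829}). Magnani's theorem (or the Wenger--Young extension) already handles the Borel domain, so drop your alternative of extending only the horizontal components, which fails for $k\ge 2$: a Euclidean Lipschitz extension need not be weakly graph isotropic, and even when it is, its contact lift need not agree with $\varphi_{2n-k+1}$ on $U$.
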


We will also use the second part of \cite[Theorem 1.1]{MR4277829}, which provides an area formula:

\begin{theorem}[Area Formula]\label{af}  Let  $\Hn = \V \ltimes \W$ be a splitting into complementary horizontal and vertical subgroups. 
Let $\varphi: U \subset \V \to \W$ be an intrinsic Lipschitz map on a Borel set, and let $\Phi: U \subset \V \to \Hn$ be its graph map. If $\dim_H(\mathbb{V}) = k\leq n$, then
\begin{equation*}
    \mathcal{H}^k(\Phi(V)) = \int_V J(d\Phi_v) d\mathcal{H}^k(v), \quad V\subset U\text{ Borel},
\end{equation*}
where $J(d\Phi_v)$ is the Jacobian given by
\begin{equation}\label{eq:JacFormula}
    J(d\Phi_v) = \frac{\mathcal{H}^k(d\Phi_v(B_\V(0,1)))}{\mathcal{H}^k(B_\V(0,1))}
\end{equation}
at points of intrinsic differentiability.
Here $B_\V(0,1)$ is the unit ball in $\V$, and $d\Phi_v(x)=x \cdot d\varphi_v(x)$.
\end{theorem}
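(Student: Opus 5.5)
The plan is to reduce the statement to the classical area formula for Lipschitz maps from $\mathbb{R}^k$ into a metric space (Kirchheim's metric area formula). This works because, when $\mathbb{V}\in\mathcal{V}_k^0$ is horizontal, the graph map $\Phi$ is an injective, metrically Lipschitz map on a Euclidean space. I would carry out four steps in order.

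\textbf{Step 1: the domain is Euclidean.} For $v=(z,0)\in\mathbb{V}$ we have $\|v\|=|z|$. Since $\omega$ vanishes on $V$, the group law restricted to $\mathbb{V}$ is vector addition. Hence $(\mathbb{V},d)$ is isometric to $(\mathbb{R}^k,|\cdot|)$, and $\mathcal{H}^k$ on $\mathbb{V}$ is a constant multiple of Lebesgue measure.

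\textbf{Step 2: $\Phi$ is injective and Lipschitz.} Injectivity follows from the uniqueness of the splitting $p=p_{\mathbb{V}}\cdot p_{\mathbb{W}}$, since $\pi_{\mathbb{V}}(\Phi(v))=v$. For the Lipschitz bound, write
\[
\Phi(v)^{-1}\Phi(v') = \varphi(v)^{-1}v^{-1}v'\varphi(v') = (v^{-1}v')\cdot\bigl(v'^{-1}v\,\varphi(v)^{-1}v^{-1}v'\varphi(v')\bigr).
\]
The first factor lies in $\mathbb{V}$ and the second lies in $\mathbb{W}$, because $\mathbb{W}$ is normal. By Proposition \ref{1.2}, the $\mathbb{W}$-factor has norm at most $L\|v^{-1}v'\|$. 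The quasi-triangle inequality for $\|\cdot\|$ then gives $d(\Phi(v),\Phi(v'))\lesssim (1+L)\,d(v,v')$. In particular $\Phi$ extends to a Lipschitz map on the closure of $U$, so Kirchheim's theorem applies to $\Phi|_V$ for Borel $V\subset U$.

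\textbf{Step 3: Pansu differentiability of $\Phi$.} Fix a point $v$ of intrinsic differentiability; these form a full-measure set by Theorem \ref{1.5}. Put $p=\Phi(v)$ and $d\Phi_v(\eta):=\eta\cdot d\varphi_v(\eta)$. Then
\[
\Phi(v)^{-1}\Phi(v\eta)=\varphi(v)^{-1}\eta\,\varphi(v\eta)=\eta\cdot\varphi_{p^{-1}}(\eta),
\]
and therefore
\[
d\bigl(d\Phi_v(\eta),\Phi(v)^{-1}\Phi(v\eta)\bigr)=\|d\varphi_v(\eta)^{-1}\varphi_{p^{-1}}(\eta)\|=o(\|\eta\|)
\]
by \eqref{eq:IntrDiffTransl}. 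By the discussion after Definition \ref{linearity}, $d\Phi_v$ is a homogeneous homomorphism $\mathbb{V}\to\mathbb{H}^n$ whose image $\mathrm{gr}(d\varphi_v)$ is a homogeneous subgroup. Its image is the image of $\mathbb{R}^k$ under a dilation-commuting homomorphism. It therefore lies in a horizontal subgroup, and $d\Phi_v$ acts there as a linear map of Euclidean spaces.

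\textbf{Step 4: identify the Jacobian.} From Step 3, the metric differential of $\Phi$ at $v$ is the seminorm $\eta\mapsto\|d\Phi_v(\eta)\|=|\pi(d\Phi_v(\eta))|$, which is Euclidean on the image plane. Kirchheim's Jacobian of this seminorm equals $\mathcal{H}^k(d\Phi_v(B_{\mathbb{V}}(0,1)))/\mathcal{H}^k(B_{\mathbb{V}}(0,1))$; this is the ordinary $|\det|$ of a linear map between Euclidean $k$-spaces, and it vanishes when the map is degenerate. Injectivity then removes the multiplicity in the area formula, giving $\mathcal{H}^k(\Phi(V))=\int_V J(d\Phi_v)\,d\mathcal{H}^k(v)$.

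\textbf{Main obstacle.} The step I expect to need the most care is Step 3 together with measure-theoretic bookkeeping. One must check that the metric differential provided by Kirchheim agrees a.e. with $\|d\Phi_v(\cdot)\|$ coming from intrinsic differentiability. When $U$ is not open, one must also argue at density points of $U$: use the Lipschitz extension to the closure of $U$, and note that $\Phi(U)$ and $\Phi(\bar U\setminus U)$ differ by a null set on $V$. Finally, one must justify that the image of the unit ball under $d\Phi_v$ is measured by the Euclidean $\mathcal{H}^k$ on a horizontal plane, so that \eqref{eq:JacFormula} coincides with Kirchheim's Jacobian.
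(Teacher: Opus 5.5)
Your argument is correct. Note, however, that the paper does not prove this statement. It quotes it from \cite[Theorem 1.1]{MR4277829}, where the area formula is stated with the Pansu differential of the graph map $\Phi$, and passes to the intrinsic differential via \cite[Remark 3.1]{MR4277829}.

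Your Step 3 is precisely that conversion. The identity $\Phi(v)^{-1}\Phi(v\eta)=\eta\cdot\varphi_{p^{-1}}(\eta)$ shows that $\eta\mapsto\eta\cdot d\varphi_v(\eta)$ is the Pansu differential of $\Phi$ at $v$. Your Steps 1, 2 and 4 then replace the cited area formula by Kirchheim's metric area formula for Lipschitz maps on Euclidean domains. That substitution works only because $\mathbb V$ is horizontal, hence isometric to $\mathbb R^k$. The cited theorem covers complementary subgroups of general Carnot groups with normal target, where the domain is in general a non-Euclidean Carnot group. There Kirchheim's theorem is unavailable and one has to work with Pansu differentiability instead.

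In the horizontal setting your route is more elementary and self-contained, apart from using Theorem \ref{1.5} for a.e.\ intrinsic differentiability. It also makes the Jacobian explicit as $\sqrt{\det(T^{T}T)}$ with $T=\pi\circ d\Phi_v$. Moreover, $T$ is always injective, since $\pi_{\mathbb V}\circ d\Phi_v=\mathrm{id}_{\mathbb V}$, so the degenerate case you allow for never occurs.

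Your ``main obstacle'' paragraph needs some cleanup. Extending $\Phi$ to $\overline U$ does not help, because $\overline U$ may still have empty interior. The remark about $\Phi(\overline U\setminus U)$ is irrelevant, since $V\subset U$. What you actually need are two things.
\begin{itemize}
\item Kirchheim's formula for Lipschitz maps on measurable subsets of $\mathbb R^k$. You can get it through an isometric embedding into $\ell^\infty$ and a McShane extension. Alternatively, use the intrinsic Lipschitz extension of Theorem \ref{3.2}, whose graph map is Lipschitz on all of $\mathbb V$.
\item The standard density-point argument. At a density point $v$ of $U$ where $\varphi$ is intrinsically differentiable, approximate $v+r\eta$ by points of $U$ at distance $o(r)$. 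Since both the extension and $d\Phi_v$ are Lipschitz, the metric differential of the extension at $v$ is $\eta\mapsto\|d\Phi_v(\eta)\|$.
\end{itemize}
Finally, the Kor\'anyi norm satisfies a genuine triangle inequality, so Step 2 gives $d(\Phi(v),\Phi(v'))\le(1+L)\,d(v,v')$ directly, with no quasi-constant.
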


The area formula in \cite[Theorem 1.1]{MR4277829} is stated in terms of the \emph{Pansu differential} of $\Phi$ but the version formulated here with the intrinsic differential is an immediate consequence as explained in \cite[Remark 3.1]{MR4277829}. By the standard argument using characteristic functions, we obtain the following corollary:

\begin{corollary}\label{cov} 
If $f: \mathbb{H}^n\to \R$ is a Borel function, and $\Phi$ is the graph of $\varphi$, $ U\subset\V$ as in Theorem \ref{af}, then the following change of variables formula holds:
\begin{equation*}
    \int_{\Phi(V)} f(y) d\mathcal{H}^k(y) = \int_V f(\Phi(v)) J(d\Phi_v) d\mathcal{H}^k(v),\quad V\subset U \text{ Borel}.
\end{equation*}
\end{corollary}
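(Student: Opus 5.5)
The change of variables formula follows from the Area Formula (Theorem \ref{af}) by the usual measure-theoretic bootstrapping. The chain is: characteristic functions, then simple functions, then nonnegative Borel functions by monotone convergence, then general Borel functions by splitting into positive and negative parts.

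First I would note two measurability facts.
\begin{itemize}
\item $f\circ\Phi$ is Borel on $U$, because $\Phi$ is continuous. Indeed, by Proposition \ref{1.2} the graph map of an intrinsic Lipschitz map is metrically Lipschitz in the low-dimensional horizontal case.
\item For a Borel set $V\subset U$, the image $\Phi(V)$ is Borel. This holds because $\Phi$ is injective: $\Pi_\V(\Phi(v))=v$, so $\Phi$ has the continuous left inverse $\pi_\V$. An injective continuous image of a Borel set is Borel by the Lusin--Souslin theorem.
\end{itemize}

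Next, the key step, for characteristic functions. Take $f=\chi_B$ with $B\subset\mathbb{H}^n$ Borel. Then $\Phi(V)\cap B=\Phi(V\cap\Phi^{-1}(B))$, and $V\cap \Phi^{-1}(B)$ is a Borel subset of $U$. Theorem \ref{af} applied to this set gives
\[
\int_{\Phi(V)}\chi_B\,d\mathcal H^k=\mathcal H^k(\Phi(V\cap\Phi^{-1}(B)))=\int_{V}\chi_B(\Phi(v))J(d\Phi_v)\,d\mathcal H^k(v).
\]
By linearity the formula then holds for nonnegative simple Borel functions.

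For a nonnegative Borel $f$, I would choose simple functions $0\le s_j\uparrow f$. Then $s_j\circ\Phi\uparrow f\circ\Phi$, and $J(d\Phi_v)\ge0$ is defined $\mathcal H^k$-a.e. on $U$ by Theorem \ref{1.5}. Monotone convergence on both sides gives the identity, with values in $[0,\infty]$. For general Borel $f$, I would write $f=f^+-f^-$, interpreting the formula whenever either side makes sense, for instance when one side is finite for $|f|$.

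The only mildly delicate point is the Borel measurability of $\Phi(V)$ and of $J(d\Phi_\cdot)$. The first is handled by injectivity as above. For the second, I would use that Theorem \ref{af} already presupposes the Jacobian to be measurable as an integrand.
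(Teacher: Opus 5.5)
Your proposal is correct and is exactly the argument the paper intends: the paper only says the corollary follows from Theorem \ref{af} ``by the standard argument using characteristic functions.'' That is, one applies the area formula to the Borel set $V\cap\Phi^{-1}(B)$ and extends by linearity and monotone convergence, just as you do. Your remarks on the Borel measurability of $f\circ\Phi$ and of $\Phi(V)$ fill in details the paper leaves implicit; these use the Lipschitz bound for $\Phi$ obtained from Proposition \ref{1.2} via the triangle inequality, and the continuous left inverse $\pi_{\V}$.
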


Partially defined intrinsic Lipschitz functions from horizontal to vertical subgroups in $\mathbb{H}^n$ admit intrinsic Lipschitz extensions according to \cite[Theorem 1.2]{MR4375018}, which we recall here.

\begin{theorem}[Intrinsic Lipschitz extension]\label{3.2}
Let $n \in \mathbb{N}$, $k\in \{1,\dots,n\}$. Assume that $\V$ is a $k$-dimensional horizontal subgroup of $\Hn$ with complementary vertical subgroup $\W$. Then for every $L \ge 0$, there exists a constant $L'(L,k,n)>0$ such that every intrinsic $L$-Lipschitz function $\varphi: E \subset \V \to \W$ defined on a set $E\subset\V$ can be extended to an intrinsic $L'$-Lipschitz function $\bar{\varphi}: \V \to \W$.
\end{theorem}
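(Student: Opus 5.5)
The plan is to reduce Theorem \ref{3.2} to a Lipschitz extension problem for maps from subsets of $\mathbb{R}^k$ into $\mathbb{H}^n$, and then to use that $\mathbb{H}^n$ is Lipschitz $(k-1)$-connected when $k\le n$.

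\textbf{Step 1 (reformulation).} First I would show that, for $\mathbb{V}\in\mathcal{V}_k^0$, a map $\varphi:E\subset\mathbb{V}\to\mathbb{W}$ is intrinsic Lipschitz if and only if its graph map $\Phi(v)=v\cdot\varphi(v)$ is metrically Lipschitz on $E$. The constants should depend only on each other. Both directions come from Proposition \ref{1.2} and the triangle inequality for $\|\cdot\|$. We have
\[
d(\Phi(v),\Phi(v'))=\|\Phi(v)^{-1}\Phi(v')\|=\|(v^{-1}v')\cdot (v'^{-1}v\,\Phi(v)^{-1}\Phi(v'))\|.
\]
By the triangle inequality this lies between $\|v'^{-1}v\,\Phi(v)^{-1}\Phi(v')\|-\|v^{-1}v'\|$ and $\|v^{-1}v'\|+\|v'^{-1}v\,\Phi(v)^{-1}\Phi(v')\|$. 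On $\mathbb{V}$ the metric $d$ is Euclidean, so $\|v^{-1}v'\|=|v-v'|$.

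\textbf{Step 2 (the constraint).} The problem then becomes: extend a Lipschitz map $\Phi:E\to\mathbb{H}^n$ with $\pi_{\mathbb{V}}\circ\Phi=\mathrm{id}_E$ to a Lipschitz map $\bar\Phi:\mathbb{V}\to\mathbb{H}^n$ that keeps $\pi_{\mathbb{V}}\circ\bar\Phi=\mathrm{id}$. Once this is done, $\bar\varphi(v):=v^{-1}\bar\Phi(v)$ lies in $\mathbb{W}$, and Step 1 shows it is intrinsic $L'$-Lipschitz.

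\textbf{Step 3 (Whitney construction).} We may assume $E$ is closed, since $\Phi$ extends uniquely to $\overline{E}$. I would take a Whitney decomposition of $\mathbb{V}\setminus E\cong\mathbb{R}^k\setminus E$ and build $\bar\Phi$ skeleton by skeleton.

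On each vertex $u$ of the Whitney complex I set $\bar\Phi(u):=u\cdot\varphi(e_u)$, where $e_u\in E$ is a nearest point to $u$. The comparison with Step 1 shows that vertex values are Lipschitz-compatible at the scale of the cubes.

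Inductively, for $j=1,\dots,k$, I extend from the boundary of each $j$-face to the face itself. The extension must be Lipschitz with constant controlled by the boundary data, relative to the face's diameter, and must satisfy the constraint $\pi_{\mathbb{V}}\bar\Phi=\mathrm{id}$.

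Equivalently, in graph coordinates, I have to extend the horizontal part $\Pi^\perp\varphi$ and the vertical component $\varphi_{2n+1-k}$ so that the extended graph map stays Lipschitz. In the smooth case this is a weak graph contact condition: the graph must be tangent to the horizontal distribution.

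\textbf{Step 4 (main obstacle: filling with prescribed projection).} The key claim is a quantitative filling statement. A Lipschitz map of $\partial[0,1]^j$ into $\mathbb{H}^n$, $j\le k\le n$, whose $\mathbb{V}$-projection is the identity, extends to $[0,1]^j$ with the same property and with comparable Lipschitz constant.

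Without the projection constraint, this is the Lipschitz $(k-1)$-connectedness of $\mathbb{H}^n$ for $k\le n$ (Wenger–Young type, or Gromov's horizontal $h$-principle). To keep the constraint, I would use only the $2n-k$ directions of $\mathbb{V}^\perp$ as free horizontal degrees of freedom. Since $\mathbb{V}$ is isotropic of dimension $k\le n$, these directions contain $J\mathbb{V}$, and that supplies enough symplectic pairing with $\mathbb{V}$ to produce arbitrary vertical displacement at controlled horizontal cost.

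Concretely, the first attempt is a cone-type or radial filling in the $\Pi^\perp$ components. This leaves a vertical defect of the size of the enclosed symplectic area. I would then correct that defect with a small bump in the $J\mathbb{V}$ directions, with amplitude proportional to the square root of the defect, so that it is Lipschitz at the face's scale.

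Making this correction Lipschitz uniformly across faces and scales is where I expect the real work to be. If the direct construction fails, the fallback is to quote the Lipschitz extension theorem for maps $\mathbb{R}^k\supset E\to\mathbb{H}^n$ and try to restore the projection constraint by a fiberwise modification, although that is not obviously Lipschitz.
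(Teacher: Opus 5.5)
The paper gives no proof of Theorem~\ref{3.2}. It quotes the result from \cite[Theorem 1.2]{MR4375018}, so your sketch has to stand on its own, and as written it does not.

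Steps~1--2 are fine. Your two triangle inequalities together with Proposition~\ref{1.2} show that $\varphi$ intrinsic $L$-Lipschitz implies $\Phi$ is $(L+1)$-Lipschitz, and that $\Phi$ $L$-Lipschitz implies $\varphi$ is intrinsic $(L+1)$-Lipschitz.

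Step~3, however, uses the wrong vertex values. Take two vertices $u,u'$ with the same nearest point $e$. Then $d(u\cdot\varphi(e),u'\cdot\varphi(e))=\|\varphi(e)^{-1}\cdot(u'-u)\cdot\varphi(e)\|$, and this conjugate is the point $(u'-u,\,\omega(u'-u,\pi(\varphi(e))))$. So the distance is comparable to $|u'-u|+|\omega(u'-u,\pi(\varphi(e)))|^{1/2}$. This is much larger than $|u'-u|$ at Whitney scales that are small compared with the $J\mathbb{V}$-components of $\varphi(e)$. Equivalently, a constant map $\mathbb{V}\to\mathbb{W}$ whose value has a nonzero $J\mathbb{V}$-component is not intrinsic Lipschitz. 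The correct ``intrinsic constant'' is the point above $u$ on the horizontal plane through $\Phi(e_u)$, namely $\Phi(e_u)\cdot(e_u^{-1}u)$, as in Remark~\ref{rmk:boundedness}. It projects to $u$ and lies at distance $|u-e_u|$ from $\Phi(e_u)$.

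The genuine gap is Step~4, which carries the whole proof and is only asserted. Your mechanism is a cone-type filling of the $\Pi^{\bot}$-components followed by a bump in the $J\mathbb{V}$-directions that cancels ``the'' vertical defect. This works for edges ($j=1$), where the only obstruction is one scalar mismatch of the vertical coordinate.

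For faces of dimension $j\ge 2$ it breaks down:
\begin{itemize}
\item Every Lipschitz map of a $j$-face into $\mathbb{H}^n$ has weakly isotropic horizontal projection (the mechanism behind Theorem~\ref{horcon}(iii), via \cite{MR3358059}). So the pull-back of $\omega$ must vanish a.e.\ on the face.
\item A radial filling of the horizontal components generically has nonzero pull-back of $\omega$ on an open set.
\item In that case no vertical component at all makes the graph map Lipschitz, and no bump can remove a pointwise $2$-form defect.
\end{itemize}

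What is actually required on a face $Q$ with coordinate directions $x_l$, $l\in S$, is the following. After choosing $\varphi_i,\varphi_{n+i}$ for $i\le n-k$, you must solve $d\alpha=\sum_{i=1}^{n-k}d\varphi_i\wedge d\varphi_{n+i}$ (from \eqref{HorizEq2}) for $\alpha=\sum_{l\in S}\varphi_{n-k+l}\,dx_l$. Here $\alpha$ is prescribed on all of $\partial Q$, normal as well as tangential parts, and you need scale-invariant Lipschitz bounds. You must then match the vertical boundary data. Writing $\alpha=\alpha_0+du$, this is a Whitney $C^{1,1}$-type extension problem for $u$ with prescribed $1$-jet on $\partial Q$, whose compatibility conditions must be extracted from the lower skeleta. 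None of this is in your sketch.

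For $k=n$ this viewpoint gives the whole theorem at once. With $u=-\varphi_{n+1}$ and $g=\Pi^{\bot}\varphi$, $\varphi$ is intrinsic Lipschitz on $E$ if and only if $|g(x)-g(x')|\lesssim|x-x'|$ and $|u(x')-u(x)-g(x)\cdot(x'-x)|\lesssim|x-x'|^2$. So Whitney's $C^{1,1}$ extension theorem applies. For $1<k<n$ the constraint is nonlinear and no such shortcut is available.

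Your fallback fails for the same reason as Step~3. Multiplying an unconstrained extension $\bar F$, on either side, by the $\mathbb{V}$-valued defect $v-\pi_{\mathbb{V}}(\bar F(v))$ produces commutator terms $\omega(\cdot,\cdot)$. These contribute $|v-v'|^{1/2}$ times a factor not controlled by $|v-v'|$, so the corrected map is not Lipschitz.
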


A Euclidean Lipschitz function can easily be modified so that it vanishes outside a ball $B(0,2r)$ and agrees with the original function inside $B(0,r)$ while maintaining the Lipschitz property. This ``truncation approach'' is useful, for instance when deducing a Dorronsoro theorem for Lipschitz functions  from a corresponding result for $W^{1,2}(\mathbb{R}^n)$ Sobolev functions. Due to the non-commutativity of the group law, analogous arguments in the Heisenberg group are more subtle. Nonetheless, the next proposition enables us to use the truncation approach for intrinsic Lipschitz functions on horizontal subgroups.

\begin{proposition}\label{loc} Let $n \in \mathbb{N}$, $k\in \{1,\dots,n\}$. Assume that $\V$ is a $k$-dimensional horizontal subgroup of $\Hn$ with complementary vertical subgroup $\W$.
Let $\varphi:
\V\to\W$ be an intrinsic $L$-Lipschitz function. 
If $\varphi(0)=0$,
then,
for all $
r > 0$, there exists $\tilde{\varphi}_{r}: \V \to \W$ intrinsically $L'$-Lipschitz such that
\[
    \tilde{\varphi}_{r} = \begin{cases}
        \varphi & \text{on } B(0,r) \\
        0 & \text{on } \V \setminus B(0, 2r)
    \end{cases}
\]
where $B(x,r)$ is the ball centered in $x$ with radius $r$ in $\V$ and $L'(L,k,n)>0$ is a constant depending on $L$, $k$, and $n$. 
\end{proposition}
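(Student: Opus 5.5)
The plan is to define the truncated function first on a disconnected set, check the intrinsic Lipschitz condition there by hand, and then obtain $\tilde\varphi_r$ from the extension theorem, Theorem \ref{3.2}. Set
\[
A:=B(0,r)\cup\bigl(\V\setminus B(0,2r)\bigr)
\]
and define $\psi:A\to\W$ by $\psi=\varphi$ on $B(0,r)$ and $\psi=0$ on $\V\setminus B(0,2r)$. If $\psi$ is intrinsic $L''$-Lipschitz on $A$ with $L''=L''(L)$, then Theorem \ref{3.2} gives an intrinsic $L'(L'',k,n)$-Lipschitz extension $\tilde\varphi_r:\V\to\W$ of $\psi$. This extension has exactly the two required properties. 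So the whole proof reduces to checking the criterion of Proposition \ref{1.2} for $\psi$ on $A$.

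\emph{Pairs in the same piece.} Pairs $v,v'$ both in $B(0,r)$ satisfy the criterion with constant $L$, since $\psi=\varphi$ there. Pairs both outside $B(0,2r)$ give left-hand side $\|v'^{-1}\cdot v\cdot v^{-1}\cdot v'\|=0$. Recall that $\V$ is abelian, its group law is vector addition, and $\|v^{-1}\cdot v'\|=|v'-v|$.

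\emph{Mixed pairs.} Take $v\in B(0,r)$ and $v'\notin B(0,2r)$, so that $|v'-v|\ge r$. The key input is that $\varphi(0)=0$. Applying \eqref{def_intLip} at $p=e$, where $\varphi_{e}(\eta)=\varphi(\eta)$, gives
\[
\|\varphi(v)\|\le L\|v\|\le Lr\le L|v'-v|.
\]
With the pair ordered as $(v',v)$ in Proposition \ref{1.2}, the left-hand side is $\|v^{-1}v'\cdot 0\cdot v'^{-1}v\,\varphi(v)\|=\|\varphi(v)\|$, which is already bounded as above.

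With the pair ordered as $(v,v')$, the left-hand side is $\|h\,\varphi(v)^{-1}h^{-1}\|$ with $h=v'^{-1}v\in\V$. Conjugating $w=(w_h,w_t)$ by a horizontal element $h$ leaves $w_h$ unchanged and shifts $w_t$ by $\pm\omega(h,w_h)$. Hence
\[
\|h\,w\,h^{-1}\|\lesssim \|w\|+|h|^{1/2}|w_h|^{1/2}.
\]
Taking $w=\varphi(v)^{-1}$, and using $|w_h|\le \|\varphi(v)\|\le Lr\le L|h|$, this gives a bound $\lesssim (L+\sqrt L)\,|v'-v|$.

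So $\psi$ is intrinsic $L''$-Lipschitz on $A$ with $L''\lesssim L+\sqrt L$, and Theorem \ref{3.2} concludes the proof. The only delicate step is the mixed-pair estimate: it is the one place where the normalization $\varphi(0)=0$ and the gap $|v'-v|\ge r$ between the two pieces are used. Everything else is bookkeeping with the Kor\'anyi norm.
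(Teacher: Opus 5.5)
Your proposal is correct and follows the paper's proof essentially verbatim: you use the same truncated function on $B(0,r)\cup(\V\setminus B(0,2r))$, the same case analysis via Proposition \ref{1.2} exploiting $\varphi(0)=0$ and $|v'-v|\ge r$, and the same final appeal to the extension Theorem \ref{3.2}. The only cosmetic difference is in the conjugated mixed-pair term. You compute $h\,w\,h^{-1}$ explicitly and get a constant $\lesssim L+\sqrt{L}$, whereas the paper bounds $\|y^{-1}x\varphi(x)^{-1}x^{-1}y\|$ by the triangle inequality together with $\|x\varphi(x)^{-1}x^{-1}\|\le L\|x\|$, obtaining $L+4$.
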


\begin{proof}
Let $E = B(0,r) \cup (\V \setminus B(0, 2r))$. Define a function $\hat{\varphi}_r:E\to\W$
\[
    \hat{\varphi}_{r} := \begin{cases}
        \varphi & \text{on } B(0,r) \\
        0 & \text{on } \V \setminus B(0, 2r)
    \end{cases}
\]
For simplicity, we denote $\hat{\varphi}_{r}$ by $\hat{\varphi}$.
We first show that $\hat{\varphi}$ is intrinsically $\hat{L}$-Lipschitz on $E$ for some $\hat{L}(L)>0$. 
Fix $x, y \in E = B(0,r) \cup (\mathbb{V} \setminus B(0,2r))$. We need to verify that
\begin{displaymath}
\|y^{-1} \cdot x \cdot \hat{\varphi}(x)^{-1} \cdot x^{-1} \cdot y\cdot\hat{\varphi}(y)\|\leq \hat L \|x^{-1}\cdot y\|,
   \end{displaymath}
   and an analogous inequality with the roles of $x$ and $y$ interchanged.

The cases where both $x,y\in B(0,r)$ or $x,y\in \mathbb{V} \setminus B(0,2r)$ hold trivially as $\hat{\varphi}$ coincides with either the intrinsic $L$-Lipschitz function $\varphi$ or the zero function.

Assume $x \in B(0,r)$ and $y \in \mathbb{V} \setminus B(0,2r)$, then $\hat{\varphi}(x) = \varphi(x)$ and $\hat{\varphi}(y) = 0$. 
By the triangle inequality, $\|y^{-1} \cdot x\| \ge \|y\| - \|x\| \ge 2r - r = r$, which implies
$$ \|x\| \le \|y^{-1} \cdot x\| \quad \text{and} \quad \|y\| \le \|x\| + \|y^{-1} \cdot x\| \le 2\|y^{-1} \cdot x\|. $$
Since $\varphi$ is intrinsic $L$-Lipschitz and $\varphi(0)=0$ 
then 
$$
\|x \cdot \varphi(x)^{-1} \cdot x^{-1}\|=\|0^{-1} \cdot x \cdot \varphi(x)^{-1} \cdot x^{-1} \cdot 0 \cdot \varphi(0)\| \le L \|0^{-1} \cdot x\| = L\|x\|.
$$ 
Applying the triangle inequality, we get
\begin{align*}
\|y^{-1} \cdot x \cdot \hat{\varphi}(x)^{-1} \cdot x^{-1} \cdot y\cdot\hat{\varphi}(y)\| &=\|y^{-1} \cdot x \cdot \varphi(x)^{-1} \cdot x^{-1} \cdot y\|\\
&\le \|y^{-1}\| + \|x \cdot \varphi(x)^{-1} \cdot x^{-1}\| + \|y\| \\
&\le 2\|y\| + L\|x\| \\
&\le 4\|y^{-1} \cdot x\| + L\|y^{-1} \cdot x\| = (L+4)\|y^{-1} \cdot x\|.
\end{align*}
To verify the intrinsic Lipschitz property of $\hat \varphi$ on $E$, we need to verify an analogous inequality where $x$ and  $y$ are swapped (but still $x\in B(0,r)$ and $y\in \mathbb{V}\setminus B(0,2r)$). In this case we have
\begin{align*}
\|x^{-1} \cdot y \cdot \hat{\varphi}(y)^{-1} \cdot y^{-1} \cdot x\cdot\hat{\varphi}(x)\| &= \|x^{-1}\cdot x \cdot  \varphi(x)\|\\
&=\|\varphi(x)\|\leq L \|x\|\leq L \|y^{-1}\cdot x\|= L\|x^{-1}\cdot y\|,
\end{align*}
where the bound for $\|\varphi(x)\|$ is a direct consequence of the intrinsic Lipschitz property of $\varphi$ and the assumption $\varphi(0)=0$ (see \eqref{def_intLip}).
Therefore, by Proposition \ref{1.2}, we conclude that $\hat \varphi$ is intrinsic $\hat L$-Lipschitz with $\hat{L} \le L+4$.

Finally, by Theorem \ref{3.2}, $\hat{\varphi}$ can be extended to an intrinsic $L'$-Lipschitz function $\tilde{\varphi}:\V\to\W$ with $L'$ depending only on $k$, $n$, and $\hat L$. This completes the proof.
\end{proof}

\section{Low-dimensional intrinsic Lipschitz graphs and their horizontal projections}\label{s:LowDimiLG}

In this section we show that low-dimensional intrinsic Lipschitz graphs in $\mathbb{H}^n$ project to Euclidean Lipschitz graphs with certain additional properties in $\mathbb{R}^{2n}$ (Theorem \ref{horcon}), and conversely, all Euclidean Lipschitz graphs with this property can be lifted to intrinsic Lipschitz graphs (Theorem \ref{t:Lift}). Various characterizations of low-dimensional intrinsic Lipschitz graphs in $\mathbb{H}^n$ have already appeared in \cite{MR3587666,MR4277829,MR4375018,MR4377000}, but here we focus on a characterization purely in terms of the first, horizontal, components of the function. As a related, but simpler problem, we make explicit the correspondence between \emph{intrinsic affine} functions in $\mathbb{H}^n$, and \emph{isotropic affine} functions in $\mathbb{R}^{2n}$, see Proposition \ref{ahp}. This provides also an alternative proof for the main part of Theorem \ref{horcon}, as explained in Remark \ref{r:AlternativeProof}.

\subsection{Reduction to the standard setting}\label{ss:RedStd}

The conditions for Euclidean Lipschitz graphs in $\mathbb{R}^{2n}$ to arise as projections of intrinsic Lipschitz graphs in $\mathbb{H}^n$ are particularly easy to state for graphs over the plane $\mathbb{R}^k \times \{0\}$, where $1\leq k\leq n$. We first justify that, thanks to rotations induced by unitary maps of $\mathbb{R}^{2n}$, it is not restrictive to consider graphs over this specific plane. Recall that the unitary group is $U(n)=\text{Sp}(n)\cap O(2n,\R)$, where $\text{Sp}(n)$ denotes the group of symplectic matrices in $\R^{2n}$ (namely, matrices preserving the symplectic form $\omega$), while $O(2n,\R)$ is the orthogonal group of matrices in $\R^{2n}$ (preserving the standard inner product).
See for instance \cite[Section 2]{MR2955184} for some relevant properties of~$U(n)$.

For $U\in U(n)$, denote $R_U(z,t):= (Uz,t)$. 
It is well known that, for every $U\in U(n)$, the map $R_U$ is a group isomorphism and an isometry of $\Hn$. Indeed, 
\begin{align*}R_U((z,t)\cdot(z',t'))&=R_U(z+z',t+t'+\tfrac{1}{2}\omega(z,z'))=(U(z+z'),t+t'+\tfrac{1}{2}\omega(z,z'))\\&= (Uz+Uz',t+t'+\tfrac{1}{2}\omega(Uz,Uz'))=(Uz, t)\cdot(Uz',t')=R_U(z,t)\cdot R_U(z',t'),\end{align*}
where we used the fact that $U\in U(n)\subset \text{Sp}(n)$ preserves the symplectic form $\omega$. Clearly, $R_U$ is also a bijection, since the same holds for $U$. In addition, by definition of \emph{Kor\'{a}nyi norm} $\knorm{\cdot}$, it is immediate that \[\|R_U(z,t)\|=\|(Uz,t)\|=\|(z,t)\|,\] since $U\in U(n)\subset O(2n,\R)$ preserves the Euclidean norm on $\R^{2n}$. It follows that $R_U$ is an isometry of $\Hn$.
\begin{lemma}\label{l:RotRed}
    Let $1\leq k\leq n$ and let $\Gamma$ be an intrinsic $L$-Lipschitz graph over a $k$-dimensional horizontal subgroup. Then there exists $U\in U(n)$ such that $R_U(\Gamma)$ is the intrinsic graph of an intrinsic $L$-Lipschitz function $\mathbb{V}_0\to \mathbb{W}_0$, where
    $
    \mathbb{V}_0:=\{(x,0):\,x\in \mathbb{R}^k\}$ and $\mathbb{W}_0=\{(0,y,t):\, y\in \mathbb{R}^{2n-k},\,t\in\mathbb{R}\}.
$
\end{lemma}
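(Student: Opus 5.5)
The plan has two parts: first pick $U\in U(n)$ carrying the given horizontal subgroup onto $\mathbb{V}_0$, then check that $R_U$ transports the intrinsic graph structure and the Lipschitz condition.

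\textbf{Choice of $U$.} Let $\Gamma=\mathrm{gr}(\varphi)$ with $\varphi:\mathbb{V}\to\mathbb{W}$, where $\mathbb{V}=V\times\{0\}$, $V$ is a $k$-dimensional isotropic subspace, and $\mathbb{W}=V^\perp\times\mathbb{R}$. I would use the standard fact that $U(n)$ acts transitively on isotropic $k$-planes. Concretely, take an orthonormal basis $e_1',\dots,e_k'$ of $V$. Isotropy means $\langle Je_i',e_j'\rangle=0$ for all $i,j$, so $\{e_i',Je_i'\}_{i\le k}$ is orthonormal; here we also use that $J$ is orthogonal and antisymmetric. Identifying $\mathbb{R}^{2n}\cong\mathbb{C}^n$ via $J$, the vectors $e_1',\dots,e_k'$ are therefore Hermitian-orthonormal. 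By Gram--Schmidt they extend to a Hermitian-orthonormal basis $e_1',\dots,e_n'$. The real-linear map sending $e_i'\mapsto e_i$ and $Je_i'\mapsto e_{n+i}=Je_i$ is then orthogonal and commutes with $J$, so it lies in $U(n)$. Call it $U$. Then $U(V)=\mathbb{R}^k\times\{0\}$, and since $U$ is orthogonal, $U(V^\perp)=(\mathbb{R}^k\times\{0\})^\perp$. Hence $R_U(\mathbb{V})=\mathbb{V}_0$ and $R_U(\mathbb{W})=\mathbb{W}_0$.

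\textbf{Transport of the graph.} Since $R_U$ is a group isomorphism,
\[
R_U(v\cdot\varphi(v))=R_U(v)\cdot R_U(\varphi(v)).
\]
Define $\psi:=R_U\circ\varphi\circ R_U^{-1}|_{\mathbb{V}_0}:\mathbb{V}_0\to\mathbb{W}_0$. With $v'=R_U(v)$ we get
\[
R_U(\Gamma)=\{v'\cdot\psi(v'):v'\in\mathbb{V}_0\}=\mathrm{gr}(\psi).
\]

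\textbf{Lipschitz constant.} Apply Proposition \ref{1.2}. The quantity
\[
\|{w'}^{-1}\cdot w\cdot\psi(w)^{-1}\cdot w^{-1}\cdot w'\cdot\psi(w')\|,\qquad w=R_Uv,\ w'=R_Uv',
\]
equals the norm of $R_U$ applied to the corresponding expression for $\varphi$ at $v,v'$, because $R_U$ is a homomorphism and commutes with inverses. Since $R_U$ preserves the Korányi norm, this equals the $\varphi$-expression, which is at most $L\|v^{-1}v'\|=L\|w^{-1}w'\|$. Hence $\psi$ is intrinsic $L$-Lipschitz with the same $L$.

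The only step requiring real care is the existence of the unitary $U$, specifically justifying that the isotropy of $V$ yields a Hermitian-orthonormal family that extends to a unitary basis. The remaining steps are formal consequences of $R_U$ being an isometric group automorphism.
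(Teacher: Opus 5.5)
Your proposal is correct and follows the paper's proof. Like the paper, you choose $U\in U(n)$ with $U(V)=\mathbb{R}^k\times\{0\}$, transport the graph through the group isomorphism $R_U$ to $R_U\circ\varphi\circ R_U^{-1}$, and verify the intrinsic $L$-Lipschitz bound via Proposition \ref{1.2}, using that $R_U$ is an isometric automorphism. The only differences are that you construct $U$ by hand with Hermitian Gram--Schmidt, where the paper cites \cite[Lemma 2.1]{MR2955184}, and that you explicitly check $R_U(\mathbb{W})=\mathbb{W}_0$, which the paper leaves implicit.
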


\begin{proof}
By assumption, there exists $\mathbb{V}\in \mathcal{V}_k^0$ with complementary vertical subgroup $\mathbb{W}$, and an intrinsic $L$-Lipschitz map $\varphi:\mathbb{V}\to \mathbb{W}$ such that $\Gamma$ is the intrinsic graph of $\varphi$, that is
\begin{displaymath}
    \Gamma=\gr(\varphi)= \{\Phi(v)=v\cdot \varphi(v):\,v\in \mathbb{V}\}.
\end{displaymath}
We have $\mathbb{V}=V \times\{0\}$, for a $k$-dimensional isotropic subspace $V$ of $\mathbb{R}^{2n}$. Now there exists $U\in U(n)$ such that $U(V)=\mathbb{R}^k \times \{0\}\subset\mathbb{R}^{2n}$, see for instance \cite[Lemma 2.1]{MR2955184}. 
Since $R_U$ is a group isomorphism
, we have, for all $v\in \mathbb{V}$, 
\begin{displaymath}
    R_U(\Phi(v))= R_U(v) \cdot R_U(\varphi(v))= R_U(v) \cdot [R_U \circ \varphi \circ R_U^{-1}](R_U(v))=:
    R_U(v) \cdot \varphi_U(R_U(v))=:\Phi_U(R_U(v)).
\end{displaymath}
Therefore,
\begin{displaymath}
\Phi_U(v)= R_U(\Phi(R_U^{-1}(v))),\quad v\in \mathbb{V}_0.
\end{displaymath}
It remains to check that $\varphi_U= R_U \circ \varphi \circ R_U^{-1}$ is intrinsic $L$-Lipschitz. To this end, consider, for $v,v'\in \mathbb{V}_0$, the expression
\begin{align*}
    \|{v'}^{-1}\cdot v \cdot \Phi_U(v)^{-1}\cdot \Phi_U(v')\|&= \|{v'}^{-1}\cdot v \cdot R_U(\Phi(R_U^{-1}(v)))^{-1}\cdot R_U(\Phi(R_U^{-1}(v')))\|\\
    &=\|{R_U^{-1}(v')}^{-1}\cdot R_U^{-1}(v) \cdot \Phi(R_U^{-1}(v))^{-1}\cdot \Phi(R_U^{-1}(v'))\|
    \\&\leq L \|{R_U^{-1}(v')}^{-1}\cdot R_U^{-1}(v)\|= L \|{v'}^{-1}\cdot v\|,
\end{align*}
where we used the definition of $\Phi_U$, the fact that $R_U$ is an isometry and a group isomorphism of $\Hn$ and the Lipschitz property of $\Gamma$, concluding the proof.
\end{proof}

By the above lemma, we can now fix our setting. We choose the horizontal subgroup $\V_0 = \R^k \times \{0\}^{2n-k+1}$ and the vertical subgroup  $\W_0 = \{0\}^k \times \R^{2n-k+1}$.  

\begin{remark}\label{r:FixedGroups}
The subgroup $\V_0$ with the Kor\'anyi metric $d$ is isometrically isomorphic to $(\mathbb{R}^k,+)$ with the Euclidean distance via the mapping $(x,0)\mapsto x$. We will sometimes abuse notation and identify $\mathbb{V}_0$ with $\mathbb{R}^k$. For all $x = \left(x_1,\cdots, x_k, 0, \cdots, 0\right) \in \V_0$, we also write $x=(x_1,\cdots, x_k)\in \mathbb{R}^k$. A mapping $\varphi: \V_0 \to \W_0$  will be written in coordinates as
\begin{equation}\label{ilg}
    \varphi(x) = \left(0,\cdots, 0, \varphi_1(x),\cdots, \varphi_{2n-k}(x), \varphi_{2n-k+1}(x)\right),
\end{equation}
or simply $\varphi(x) = \left(\varphi_1(x),\cdots, \varphi_{2n-k}(x), \varphi_{2n-k+1}(x)\right)$. We sometimes refer to the components $\varphi_i$ for $i=1,\ldots,2n-k$ as the \emph{horizontal components} and we call $\varphi_{2n-k+1}$ the \emph{vertical component}.
Finally, the graph map $\Phi(x)=x\cdot \varphi(x)$ of $\varphi$ is:
\begin{equation*}
    \Phi(x) = \left(x_1,\cdots, x_k, \varphi_1(x),\cdots, \varphi_{2n-k}(x), \varphi_{2n-k+1}(x) + \tfrac{1}{2} \sum_{i=1}^k x_i \varphi_{n-k+i}(x)\right)
\end{equation*}
where $\varphi_i: \V_0\cong\R^k \to \R$, $1\le i\le 2n-k+1$. 
\end{remark}


\subsection{Projections and liftings of planes}\label{s:PlaneProjLift}

Horizontal planes in $\mathbb{H}^n$, as in Definition \ref{hp}, are nothing but flat, low-dimensional intrinsic Lipschitz graphs. The characterization we will give for general intrinsic Lipschitz graphs takes a particularly simple form in the flat case. Here, the characterization (Proposition \ref{ahp}) follows directly from the relation between horizontal planes in $\mathbb{H}^n$ and (translates of) isotropic subspaces of $\mathbb{R}^{2n}$. This also has applications to intrinsic differentials, see Proposition \ref{p:IntrDiffForm}.

\begin{definition}[Isotropic affine]\label{defisoa} 
Let $1\leq k\leq n$ and let $A = (A_1, \dots, A_{2n-k}) : \mathbb{R}^k \to \mathbb{R}^{2n-k}$ be an affine map. We say that $A$ is \emph{isotropic affine} 
if its graph is a translation of an isotropic subspace, that is 
\[
\{(x,A(x)):x\in\R^k\}=V+c
\]
for some $k$-dimensional isotropic subspace $V\subset\R^{2n}$ and $c\in\R^{2n}$. If $c=0$, we also call $A$ \emph{isotropic linear}.
\end{definition}

\begin{proposition}\label{isoa}
Let $1\leq k\leq n$ and $A = (A_1, \dots, A_{2n-k}) : \mathbb{R}^k \to \mathbb{R}^{2n-k}$ be an affine map, that is,
\begin{equation}
    A(x) = 
    \begin{pmatrix}
        a_{1,1} & \cdots & a_{1,k} \\
        \vdots & \ddots & \vdots \\
        a_{2n-k,1} & \cdots & a_{2n-k,k}
    \end{pmatrix}
    \begin{pmatrix}
        x_1 \\ \vdots \\ x_k
    \end{pmatrix}
    + 
    \begin{pmatrix}
        c_1 \\ \vdots \\ c_{2n-k}
    \end{pmatrix}.
\end{equation} 
Then, it is isotropic affine if and only if for all $j, l \in \{1, \dots, k\}$,
\begin{equation}\label{iso}
    a_{n-k+l, j} - a_{n-k+j, l} + \sum_{i=1}^{n-k} (a_{n+i, j} \cdot a_{i, l} - a_{n+i, l} \cdot a_{i, j}) = 0.
\end{equation}
For $k = n$, the summation is zero, and we simply have $a_{l,j} = a_{j,l}$. If instead $k=1$, then \eqref{iso} is trivially
 satisfied: thus, any affine map $A:\R \to \R^{2n-1}$ is isotropic affine. \end{proposition}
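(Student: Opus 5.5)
The graph of $A$ is $\{(x,A(x)) : x\in\R^k\}$. This set is the translate $c'+V$, where $c'=(0,c)$ and
\[
V=\{(x,Mx):x\in\R^k\},
\]
with $M=(a_{i,j})$ the $(2n-k)\times k$ linear part. The subspace $V$ is determined by $A$ and is $k$-dimensional, since it is the graph of a linear map. If the graph also equals $V'+c''$ for some subspace $V'$, then $V'=V$, because the direction space of an affine subspace is unique. So $A$ is isotropic affine if and only if $V$ is isotropic. By bilinearity and antisymmetry of $\omega$, this holds exactly when $\omega(u_j,u_l)=0$ for all $j,l\in\{1,\dots,k\}$, where $u_j=(e_j,Me_j)\in\R^{2n}$ are the basis vectors of $V$.

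Next I compute $\omega(u_j,u_l)$ in coordinates. Write $u_j=(u_j^1,\dots,u_j^{2n})$. Then $u_j^m=\delta_{mj}$ for $m\le k$, and $u_j^{k+i}=a_{i,j}$ for $1\le i\le 2n-k$. Recall
\[
\omega(z,\zeta)=\sum_{m=1}^n \bigl(z_m\zeta_{n+m}-z_{n+m}\zeta_m\bigr).
\]
I split the index $m$ into two ranges.

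For $m\le k$, the coordinate $u_j^m=\delta_{mj}$. The coordinate $n+m$ satisfies $n+m>k$, so $u_l^{n+m}=a_{n+m-k,l}$. These terms contribute
\[
\sum_{m=1}^k\bigl(\delta_{mj}a_{n-k+m,l}-a_{n-k+m,j}\delta_{ml}\bigr)=a_{n-k+j,l}-a_{n-k+l,j}.
\]
For $k<m\le n$, write $m=k+i$ with $1\le i\le n-k$. Then $u_j^m=a_{i,j}$ and $u_l^{n+m}=a_{n+i,l}$. These terms contribute
\[
\sum_{i=1}^{n-k}\bigl(a_{i,j}a_{n+i,l}-a_{n+i,j}a_{i,l}\bigr).
\]
Adding the two ranges gives exactly the negative of the left-hand side of \eqref{iso}. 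Hence $\omega(u_j,u_l)=0$ for all $j,l$ is equivalent to \eqref{iso}.

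For the special cases: when $k=n$, the second sum is empty, and \eqref{iso} reads $a_{l,j}=a_{j,l}$. When $k=1$, the only pair is $j=l=1$, and then the left side of \eqref{iso} vanishes identically by antisymmetry.

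The main care point is index bookkeeping. Coordinate $n+m$ of $\R^{2n}$ corresponds to the component $a_{n+m-k,\cdot}$ of the map, and this shift by $k$ is what produces the indices $n-k+l$ and $n+i$ in \eqref{iso}. Everything else is linear algebra.
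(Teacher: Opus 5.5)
Your proposal is correct and follows essentially the same route as the paper: you reduce to isotropy of the direction space $V=\{(x,Mx):x\in\mathbb{R}^k\}$ and expand $\omega$ by splitting the index range into $m\le k$ and $k<m\le n$; evaluating on the basis vectors $u_j$ rather than on general pairs $(x,Mx),(y,My)$ is only a cosmetic difference. Your remark that the direction space of an affine subspace is unique makes explicit a step the paper leaves implicit.
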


\begin{proof}
Let $A(x) = A_0(x) + c$ be an affine map, where $A_0: \mathbb{R}^k \to \mathbb{R}^{2n-k}$ is its linear part given by $A_{0, p}(x) = \sum_{m=1}^k a_{p,m} x_m$ for $1 \le p \le 2n-k$, and $c \in \mathbb{R}^{2n-k}$ is a constant vector.

The graph of $A$ is a translation of the linear subspace $V = \{(x, A_0(x)) : x \in \mathbb{R}^k\} \subset \mathbb{R}^{2n}$. Thus, by Definition \ref{defisoa}, $A$ is isotropic affine if and only if $V$ is an isotropic subspace, which means that the symplectic form vanishes for any two points in $V$:
\begin{equation*}
    \omega(z_1, z_2) = 0 \quad \text{for all } z_1, z_2 \in V.
\end{equation*}
For any $x, y \in \mathbb{R}^k$, let $z_1=(x,A_0(x)), z_2=(y,A_0(y)) \in V$ be the points corresponding to $x$ and $y$. 
Evaluating the standard symplectic form on $z_1$ and $z_2$, we split the sum into the first $k$ terms and the remaining $n-k$ terms and get
\begin{align*}
    \omega(z_1, z_2) 
    &= \sum_{j=1}^k \left[ A_{0, n-k+j}(y) x_j - y_j A_{0, n-k+j}(x) \right] + \sum_{i=1}^{n-k} \left[ A_{0, n+i}(y) A_{0, i}(x) - A_{0, i}(y) A_{0, n+i}(x) \right]\\
    &= \sum_{j=1}^k \sum_{l=1}^k \left[ a_{n-k+l, j} - a_{n-k+j, l} + \sum_{i=1}^{n-k} (a_{n+i, j} a_{i, l} - a_{i, j} a_{n+i, l}) \right] y_j x_l.
\end{align*}
Then the condition $\omega(z_1, z_2) = 0$ holds for all $x, y \in \mathbb{R}^k$ if and only if for all $j, l \in \{1, \dots, k\}$
\begin{equation*}
    a_{n-k+l, j} - a_{n-k+j, l} + \sum_{i=1}^{n-k} (a_{n+i, j} a_{i, l} - a_{i, j} a_{n+i, l}) = 0.
\end{equation*}
This precisely matches the algebraic condition (\ref{iso}).
\end{proof}

We define the following generalization of intrinsic linearity (Definition \ref{linearity}):

\begin{definition}[Intrinsic affine]\label{intrinsicaffine}
    Let $\mathbb{V}$ be a $k$-dimensional horizontal subgroup of $\mathbb{H}^n$ with complementary vertical subgroup $\mathbb{W}$. We say that $A:\mathbb{V}\to \mathbb{W}$ is \emph{intrinsic affine} if its intrinsic graph is a $k$-dimensional horizontal plane.
\end{definition}

\begin{remark}\label{r:VertTranslIntrAff}
   If $A:\mathbb{V}\to \mathbb{W}$ is intrinsic affine, it remains intrinsic affine if a constant $c\in \mathbb{R}$ is added to its last, vertical, component. This is because the family of $k$-dimensional horizontal planes is preserved under left translates and left translating the intrinsic graph of $A$ by an element  $(z,t)=(0,c)\in \mathbb{R}^{2n}\times \mathbb{R}$ corresponds to adding constant $c$ to the vertical component of $A$. 
 \end{remark}

Specializing the definition of intrinsic affine to $\mathbb{V}_0=\mathbb{R}^k \times \{0\}$ and $\mathbb{W}_0=\{0\}\times \mathbb{R}^{2n+1-k}$ as in Remark \ref{r:FixedGroups}, we obtain:

\begin{proposition}[Lifting isotropic affine maps and projecting intrinsic affine maps]\label{ahp} Let $1\leq k\leq n$.
\begin{enumerate}
    \item If $A = (A_1, \dots, A_{2n-k}): \mathbb{R}^k \to \mathbb{R}^{2n-k}$ is an isotropic affine map, then there exists a map $A_{2n-k+1} : \mathbb{R}^k \to \mathbb{R}$ such that the map $L = (A, A_{2n-k+1}): \mathbb{V}_0 \to \mathbb{W}_0$ is intrinsic affine. Moreover, if $A$ is an isotropic linear map, then $A_{2n-k+1}$ can be taken equal to
    \begin{equation}\label{2n-k+1}
    A_{2n-k+1}(x) = -\tfrac{1}{2} \sum_{j=1}^k x_j A_{n-k+j}(x),
\end{equation}
and this is the uniquely defined function $A_{2n-k+1}$ which makes $L=(A,A_{2n-k+1})$  intrinsic linear.

    \item Conversely, if $L = (A_1, \dots, A_{2n-k}, A_{2n-k+1}): \mathbb{V}_0 \to \mathbb{W}_0$ is an intrinsic affine map, then the projection of $L$ onto its first $2n-k$ components, $A = (A_1, \dots, A_{2n-k}): \mathbb{R}^k \to \mathbb{R}^{2n-k}$, is an isotropic affine map. Moreover, if $L$ is intrinsic linear, then $A$ is isotropic linear. 
\end{enumerate}
\end{proposition}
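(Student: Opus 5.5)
The proof is a direct computation based on Remark \ref{r:FixedGroups}. For a map $L=(A,A_{2n-k+1}):\mathbb{V}_0\to\mathbb{W}_0$, the intrinsic graph is the image of the graph map
\begin{displaymath}
\Phi(x)=\Big(x,A(x),A_{2n-k+1}(x)+\tfrac12\sum_{j=1}^k x_jA_{n-k+j}(x)\Big).
\end{displaymath}
Its horizontal projection $(z,t)\mapsto z$ is the Euclidean graph $\{(x,A(x)):x\in\mathbb{R}^k\}\subset\mathbb{R}^{2n}$. The second part then follows at once from the structure of horizontal planes. Suppose $\mathrm{gr}(L)=q\cdot(V\times\{0\})$ with $q=(z_0,t_0)$ and $V$ isotropic. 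Then $\mathrm{gr}(L)=\{(z_0+v,\,t_0+\tfrac12\omega(z_0,v)):v\in V\}$, so its projection to $\mathbb{R}^{2n}$ is $z_0+V$. Since this projection is also the graph of $A$, Definition \ref{defisoa} shows that $A$ is isotropic affine. If $L$ is intrinsic linear, its graph is a homogeneous subgroup containing $e$ and equal to a horizontal plane, hence it is $V\times\{0\}$. In that case $c=z_0=0$, so $A$ is isotropic linear.

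For the first part, I start with the linear case. Let $V=\{(x,A(x))\}$ be isotropic and define $A_{2n-k+1}$ by \eqref{2n-k+1}. Then the vertical coordinate of $\Phi(x)$ vanishes, so $\mathrm{gr}(L)=V\times\{0\}$. This is a horizontal subgroup, hence a homogeneous subgroup, and by \cite[Proposition 4.68]{MR3587666} (equivalently, the discussion after Definition \ref{linearity}) $L$ is intrinsic linear. For uniqueness, suppose $(A,\tilde A_{2n-k+1})$ is intrinsic linear. Its graph is a homogeneous subgroup, and its horizontal projection is $V$. Dilation invariance forces the vertical coordinate $\tau(x)$ of $\Phi(x)$ to satisfy $\tau(\lambda x)=\lambda^2\tau(x)$. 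Closure under the group law, together with isotropy (so $\omega$ vanishes on $V$), forces $\tau$ to be additive. The two conditions together give $\tau\equiv0$, which recovers \eqref{2n-k+1}.

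In the general affine case, write $A=A_0+c$ with $A_0$ isotropic linear, and let $L_0=(A_0,A_{0,2n-k+1})$ be the intrinsic linear lift, whose graph is $V\times\{0\}$. Left translating by $q=((0,c),0)\in\mathbb{H}^n$ gives the horizontal plane $q\cdot(V\times\{0\})$. By the translation formula below Definition \ref{d:graph}, and since $\pi_{\mathbb{V}_0}(q)=0$, this plane is the intrinsic graph of $\varphi_q(v)=v^{-1}\cdot q\cdot v\cdot L_0(v)$. Computing this product, its horizontal components are $A_0(v)+c=A(v)$, while its vertical component is some explicit function $A_{2n-k+1}$. So $(A,A_{2n-k+1})$ is intrinsic affine. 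The only mildly delicate step is this bookkeeping: checking that $v^{-1}qv$ lies in $\mathbb{W}_0$ and shifts only the horizontal part by $c$. This is immediate because $\mathbb{W}_0$ is normal and conjugation leaves the $\mathbb{R}^{2n}$ coordinate unchanged.
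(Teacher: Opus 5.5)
Your proposal is correct and follows essentially the same route as the paper. In the linear case you use the explicit lift $A_{2n-k+1}(x)=-\tfrac12\sum_{j}x_jA_{n-k+j}(x)$, which kills the vertical coordinate of $\Phi$. You then pass to the affine case by left translation by $((0,c),0)$. For the converse you read off the projection of $q\cdot(V\times\{0\})$ to $\mathbb{R}^{2n}$.

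Your dilation-plus-additivity argument for uniqueness is actually slightly more complete than the paper's remark, which only asserts uniqueness among lifts whose graph is a horizontal subgroup. In part (2), add the one-line observation that $A$ is affine, because its graph equals the affine $k$-plane $z_0+V$, before invoking Definition \ref{defisoa}.
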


\begin{proof}\quad

 \textbf{(1)} First assume that $A=(A_1,\ldots,A_{2n-k}):\mathbb{R}^k \to \mathbb{R}^{2n-k}$ is isotropic linear, that is, there exists a $k$-dimensional isotropic subspace $V$ such that
\begin{displaymath}
    \{(x,A(x)):\,x\in \mathbb{R}^k\}=V. 
\end{displaymath}
Now we define
 \begin{equation}\label{eq:A_{2n-k+1}}
     A_{2n-k+1}(x):=-\tfrac{1}{2}\sum_{j=1}^k x_j A_{n-k+j}(x)
 \end{equation}
 and consider $L:=(A,A_{2n-k+1}):\mathbb{V}_0\to \mathbb{W}_0$. We claim that the intrinsic graph $\mathrm{gr}(L)$ is a $k$-dimensional horizontal subgroup, and $L$ is therefore intrinsic linear.  Indeed, 
 \begin{align}\mathrm{gr}(L)
 &=\{x\cdot L(x):x\in\mathbb{V}_0\}
 \notag\\ 
 &=\left\{\left(x, A(x), A_{2n-k+1}(x)+\tfrac{1}{2}\omega\left(\begin{pmatrix}x\\0\end{pmatrix}, \begin{pmatrix}0\\A(x)\end{pmatrix}\right)\right):x\in\R^k\right\}\notag \\ 
 &=\left\{\left(x, A(x), A_{2n-k+1}(x)+\tfrac{1}{2}\sum_{j=1}^k x_j A_{n-k+j}(x)\right):x\in\R^k\right\}\notag\\
 &=\left\{\left(x, A(x), 0\right):x\in\R^k\right\} =V\times \{0\}. \label{eq:grL}
 \end{align}
Since $V$ is isotropic, this is a horizontal subgroup. Moreover, the definition of $A_{2n-k+1}$ in \eqref{eq:A_{2n-k+1}} is the only possible choice for which $\mathrm{gr}(L)$ becomes a horizontal subgroup.
This concludes the proof of (1) in the case $A$ is isotropic linear.

\medskip
If $A$ is only isotropic affine, then we have the existence of an isotropic subspace $V$ and a vector $c$ in $\mathbb{R}^{2n}$ such that
\begin{displaymath}
    \{(x,A(x)):\,x\in \mathbb{R}^k\}=V+c. 
\end{displaymath}
Without loss of generality we may assume that $c=(0,c_0)$ for some $c_0 \in \mathbb{R}^{2n-k}$. Then $A_0:= A-c_0$ is isotropic linear. By the previous discussion, there exists an intrinsic linear map $L_0:\mathbb{V}_0 \to \mathbb{W}_0$ whose horizontal components agree with $A_0$ and whose  intrinsic graph $\mathrm{gr}(L_0)$ equals the horizontal subgroup $V \times \{0\}$. The translated set $(0,c_0,0)\cdot [V\times \{0\}]$ is again a  $k$-dimensional horizontal plane. Moreover, as a left-translate of an intrinsic graph over $\mathbb{V}_0$ it is the intrinsic graph of a function $L:\mathbb{V}_0\to \mathbb{W}_0$, the horizontal components of which agree with $A$ as desired.

This concludes the proof of part (1) of Proposition \ref{ahp}.

\medskip

\textbf{(2)} Conversely, assume that $L = (A_1, \dots, A_{2n-k}, A_{2n-k+1}): \mathbb{V}_0 \to \mathbb{W}_0$ is an intrinsic affine map. We want to show that its projection $A = (A_1, \dots, A_{2n-k})$ is an isotropic affine map. Since $\mathrm{gr}(L)$ is a $k$-dimensional horizontal plane, it can be written as $p\cdot \mathbb{V}$, with $\mathbb{V}$ a horizontal subgroup and $p=(c,t)\in \mathbb{R}^{2n}\times \mathbb{R}$. Then
\[\mathbb{V}=p^{-1}\cdot \mathrm{gr}(L)=\{(x_1-c_1,\dots, x_k-c_k,A_1(x)-c_{k+1},\dots,A_{2n-k}(x)-c_{2n}, \ast): x\in\R^k\}.\]
Since $\mathbb{V}$ is a horizontal subgroup, we must have  $\ast=0$. Moreover $A_1,\dots,A_{2n-k}$ are forced to be affine. Finally, $A=(A_1,\dots, A_{2n-k})$ is isotropic affine by Definition \ref{defisoa} since $\mathbb{V}$ is of the form $\mathbb{V}=V \times \{0\}$ for a $k$-dimensional isotropic subspace $V\subset \mathbb{R}^{2n}$.
If $L$ is intrinsic linear, then this argument with $p=(0,0)$ shows that $A$ is isotropic linear.
\end{proof}

 Applied to intrinsic differentials, which are intrinsic linear, Proposition \ref{ahp} yields the following.

\begin{proposition}\label{p:IntrDiffForm} Let $\mathbb{V}_0=\{(x,0,\ldots,0):x\in \mathbb{R}^k\}$ be the $k$-dimensional horizontal subgroup with complementary vertical subgroup $\mathbb{W}_0=\{(0,y,t):\, y\in \mathbb{R}^{2n-k},t\in\mathbb{R}\}$, and let $U\subset \mathbb{V}_0$ be a Borel set. Assume that $\varphi=(\varphi_1,\dots,\varphi_{2n-k+1}):U\subset \V_0 \to \W_0$ is intrinsically differentiable at a density point $v\in U$ of $U$. Then $\varphi_i:U\subset \mathbb{R}^k \to \mathbb{R}$ is differentiable at $v$ for every $i=1,\dots, 2n-k$. Moreover, the intrinsic differential of $\varphi$ at $v$ is given by \begin{equation*}
        d\varphi_v(x) = \left(0,\cdots 0, L_v^1(x),\cdots, L_v^{2n-k}(x), L_v^{2n-k+1}(x)\right),\qquad x=(x_1,\cdots,x_k)\in \V_0
    \end{equation*}
    where
    \[
   \left( \begin{array}{c} L_v^1(x)\\ \vdots \\ L_v^{2n-k}(x) \end{array} \right)
   =   \begin{pmatrix}
        \partial_1 \varphi_1(v) & \ldots & \partial_{k} \varphi_1(v) \\
        \vdots & \ddots & \vdots \\
        \partial_1 \varphi_{2n-k}(v) & \ldots & \partial_{k} \varphi_{2n-k}(v)
        \end{pmatrix}                   \left( \begin{array}{c} x_1\\ \vdots \\ x_k \end{array} \right)
   \] 
and
\[
L_v^{2n-k+1}(x)=-\tfrac{1}{2} \sum_{j=1}^k x_j L_v^{n-k+j}(x)=-\tfrac{1}{2} \sum_{j=1}^k\sum_{i=1}^k\partial_i\varphi_{n-k+j}(v)x_i x_j.
\]
\end{proposition}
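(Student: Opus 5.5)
Since $d\varphi_v$ is by Definition \ref{1.4} an intrinsic linear map $\mathbb{V}_0\to\mathbb{W}_0$, its intrinsic graph is a $k$-dimensional horizontal subgroup. Part (2) of Proposition \ref{ahp} then shows that its horizontal part $(L_v^1,\dots,L_v^{2n-k})$ is an isotropic linear map $\mathbb{R}^k\to\mathbb{R}^{2n-k}$. Part (1), including its uniqueness clause, shows that the vertical component is forced to be
\[
L_v^{2n-k+1}(x)=-\tfrac12\sum_{j=1}^k x_jL_v^{n-k+j}(x).
\]
So the whole statement reduces to identifying the linear horizontal part with the Jacobian matrix of $(\varphi_1,\dots,\varphi_{2n-k})$ at $v$, together with proving that these components are differentiable at $v$.

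To do this, I will compute the horizontal components of the expression inside the norm in \eqref{eq:IntrDiff}. In the group law \eqref{sym} the horizontal components add, and the vertical coordinate only picks up the symplectic correction. Hence the horizontal part of $\eta^{-1}\cdot\varphi(v)^{-1}\cdot\eta\cdot\varphi(v\cdot\eta)$ is simply $\Pi^\perp\varphi(v+\eta)-\Pi^\perp\varphi(v)$. Here $v\cdot\eta=v+\eta$ inside $\mathbb{V}_0\cong\mathbb{R}^k$, since $\mathbb{V}_0$ is abelian with trivial symplectic form. Left multiplying by $d\varphi_v(\eta)^{-1}$ subtracts the horizontal part $(L_v^i(\eta))_i$. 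Since the Korányi norm dominates the Euclidean norm of the horizontal part, $|z|\le\|(z,t)\|$, condition \eqref{eq:IntrDiff} yields
\[
|\Pi^\perp\varphi(v+\eta)-\Pi^\perp\varphi(v)-(L_v^i(\eta))_{i=1}^{2n-k}|=o(|\eta|),
\]
where $\|\eta\|=|\eta|$ on $\mathbb{V}_0$. This holds as $\eta\to0$ with $v+\eta\in U$.

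This is exactly Euclidean differentiability of each $\varphi_i$, $i\le 2n-k$, at $v$ relative to $U$, with linear part $L_v^i$. Because $v$ is a density point of $U$, the linear map is uniquely determined. Thus $\partial_j\varphi_i(v)$ makes sense (as an approximate or relative derivative, and as a genuine partial derivative when $U$ is a neighborhood of $v$) and equals the coefficient of $x_j$ in $L_v^i$. This gives the matrix formula. Substituting into the forced vertical component gives the double sum $-\tfrac12\sum_{i,j}\partial_i\varphi_{n-k+j}(v)x_ix_j$.

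The only delicate point is interpretive rather than computational: $U$ is merely Borel, so "differentiable at $v$" must be read relative to $U$. I will remark that density of $v$ guarantees uniqueness of the linear approximation, which is why the partial derivatives in the matrix are well defined. Everything else is bookkeeping with \eqref{sym} and Proposition \ref{ahp}.
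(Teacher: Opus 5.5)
Your proposal is correct and follows the paper's proof: extract the horizontal components of the expression in \eqref{eq:IntrDiff}, use $|z|\le\|(z,t)\|$ to get Euclidean differentiability of $\varphi_1,\dots,\varphi_{2n-k}$ at $v$, and read off the vertical component from Proposition \ref{ahp}. The differences are cosmetic. You obtain linearity of $(L_v^1,\dots,L_v^{2n-k})$ from Proposition \ref{ahp}(2), whereas the paper checks homogeneity and additivity directly from Definition \ref{linearity}; and you are more careful than the paper in reading differentiability relative to the Borel set $U$ at a density point.
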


\begin{proof} 
We write the intrinsic differential of $\varphi$ at $v \in \mathbb{V}_0$ as an intrinsic linear map $d\varphi_v : \mathbb{V}_0 \to \mathbb{W}_0$. For any $x = (x_1, \dots, x_k, 0, \dots, 0) \in \mathbb{V}_0$, its coordinate representation is given by:
\begin{equation*}
    d\varphi_v(x) = \left(0, \dots, 0, L_v^1(x), \dots, L_v^{2n-k}(x), L_v^{2n-k+1}(x)\right).
\end{equation*}

We begin by computing the first $2n-k$ components. By Definition \ref{1.4}, the intrinsic differential satisfies:
\begin{equation*}
    \| d\varphi_v(x)^{-1} \cdot x^{-1} \cdot \varphi(v)^{-1} \cdot x \cdot \varphi(v \cdot x) \| = o(\|x\|) \quad\text{ as }  x\to 0.
\end{equation*}
Since $\omega(v, x) = 0$ for $v, x \in \mathbb{V}_0$, we have $v \cdot x = v + x$. The first $2n$ components of the group product in $\mathbb{H}^n$ are simply vector addition. Extracting the $(k+i)$-th component of the expression inside the norm (which corresponds to the $i$-th component of $\mathbb{W}_0$, where $1 \le i \le 2n-k$), we obtain
\begin{equation*}
    -L_v^i(x) - x_{k+i} - \varphi_i(v) + x_{k+i} + \varphi_i(v+x) = \varphi_i(v+x) - \varphi_i(v) - L_v^i(x).
\end{equation*}
Then, the Kor\'{a}nyi norm gives us
\begin{equation}\label{diffofcomponents}
    |\varphi_i(v+x) - \varphi_i(v) - L_v^i(x)| = o(\|x\|)=o(|x|).
\end{equation}
Furthermore, by the intrinsic linearity of $d\varphi_v$ (Definition \ref{linearity}), $d\varphi_v(\delta_\lambda x) = \delta_\lambda (d\varphi_v(x))$, which implies $L_v^i(\lambda x) = \lambda L_v^i(x)$. A similar argument shows that $L_v^i(x+y)=L_v^i(x)+L_v^i(y)$ for every $x,y\in\V_0$, $1\leq i\leq 2n-k$. Hence $L_v^i$ is a linear map and \eqref{diffofcomponents} implies that $\varphi_i$ is differentiable at $v$ with
\begin{equation}
    L_v^i(x) = \sum_{j=1}^k \partial_j \varphi_i(v) x_j \quad \text{for } 1 \le i \le 2n-k.
\end{equation}
Finally, the formula for the last component $L_v^{2n-k+1}(x)$ follows from Proposition \ref{ahp} and the fact that $d\varphi_v$ is intrinsic linear.
\end{proof}

Proposition 
\ref{p:IntrDiffForm} can be used to control the Jacobian  in the area formula for an intrinsic Lipschitz function $\varphi:U\subset \mathbb{V}_0 \to \mathbb{W}_0$, Theorem \ref{af}. We recall from \eqref{eq:JacFormula} that
\begin{equation*}
    J(d\Phi_v) = \frac{\mathcal{H}^k(d\Phi_v(B_{\V_0}(0,1)))}{\mathcal{H}^k(B_{\V_0}(0,1))}
\end{equation*}
with $d\Phi_v(x)=x\cdot d\varphi_v(x)$. Moreover, the horizontal components of $\varphi$ are Euclidean Lipschitz, see for instance Proposition \ref{horcon}. 

\begin{lemma}[Bound for the Jacobian]\label{rem_jacobian} 
Let $1\leq k\leq n$, and let $U\subset \mathbb{V}_0$ and $\mathbb{W}_0$ be as in Proposition
\ref{p:IntrDiffForm}.
Assume that $\varphi=(\varphi_1,\dots,\varphi_{2n-k+1}):U\subset \V_0 \to \W_0$ is intrinsic Lipschitz and $v\in U$ is a point of intrinsic differentiability. Then there exists a constant $C$, depending only on $k$, $n$, and the Euclidean Lipschitz constants $\mathrm{Lip}(\varphi_1),\ldots,\mathrm{Lip}(\varphi_{2n-k})$ such that
\begin{displaymath}
    J(d\Phi_v)\leq C.
\end{displaymath}
\end{lemma}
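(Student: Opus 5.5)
The plan is to bound the image $d\Phi_v(B_{\V_0}(0,1))$ inside a set whose $\mathcal{H}^k$-measure is controlled. The natural candidate is a Lipschitz image of the unit ball of $\mathbb{R}^k$ under a map whose Lipschitz constant depends only on $k$, $n$ and the Euclidean Lipschitz constants of the horizontal components.

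By Proposition \ref{p:IntrDiffForm}, $d\varphi_v$ is intrinsic linear, so its intrinsic graph $\mathbb{V}_v:=\{x\cdot d\varphi_v(x):x\in\V_0\}$ is a $k$-dimensional horizontal subgroup. In coordinates, using the formula for $L_v^{2n-k+1}$ together with the graph computation \eqref{eq:grL}, we get
\[
d\Phi_v(x)=(x,M_vx,0),\qquad x\in\mathbb{R}^k,
\]
where $M_v=(\partial_j\varphi_i(v))_{i,j}$ is the $(2n-k)\times k$ Jacobian matrix of the horizontal components. Hence $d\Phi_v$ is the restriction to $\mathbb{R}^k$ of the linear map $x\mapsto (x,M_vx)$ into the isotropic subspace $V_v\times\{0\}$.

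Next we compare distances. On a horizontal subgroup the Korányi distance coincides with the Euclidean distance of the $\mathbb{R}^{2n}$-components. Indeed, for $p,q\in V_v\times\{0\}$ we have $p^{-1}q=(q-p,-\tfrac12\omega(p,q))=(q-p,0)$ by isotropy. Therefore, for $x,y\in\mathbb{R}^k$,
\[
d(d\Phi_v(x),d\Phi_v(y))=\sqrt{|x-y|^2+|M_v(x-y)|^2}\le \bigl(1+\|M_v\|\bigr)|x-y|.
\]
Since each $\varphi_i$, $1\le i\le 2n-k$, is Euclidean Lipschitz and differentiable at $v$ (Proposition \ref{p:IntrDiffForm}), each entry satisfies $|\partial_j\varphi_i(v)|\le \mathrm{Lip}(\varphi_i)$. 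This gives $\|M_v\|\le \bigl(\sum_i \mathrm{Lip}(\varphi_i)^2\bigr)^{1/2}\sqrt{k}$, so $d\Phi_v$ is Lipschitz from $(\V_0,d)$ into $(\mathbb{H}^n,d)$ with a constant $\Lambda$ depending only on $k$ and the $\mathrm{Lip}(\varphi_i)$.

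Finally, the standard property of Hausdorff measure under Lipschitz maps gives $\mathcal{H}^k(d\Phi_v(B_{\V_0}(0,1)))\le \Lambda^k\mathcal{H}^k(B_{\V_0}(0,1))$. Recall that $\V_0$ with $d$ is isometric to Euclidean $\mathbb{R}^k$ (Remark \ref{r:FixedGroups}), so the denominator in \eqref{eq:JacFormula} is a positive finite constant. Dividing yields $J(d\Phi_v)\le\Lambda^k=:C$.

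The only point needing care is the identification of $d\Phi_v(x)$ with $(x,M_vx,0)$, namely that the vertical coordinate cancels exactly. This follows from the uniqueness statement in Proposition \ref{ahp}(1), combined with Proposition \ref{ahp}(2) and the intrinsic linearity of $d\varphi_v$, which together show that the horizontal part $M_v$ is isotropic linear.
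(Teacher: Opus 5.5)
Your proof is correct and follows essentially the same route as the paper: both identify $d\Phi_v(x)=(x,M_vx,0)$ via Proposition \ref{p:IntrDiffForm}, and both use that the Kor\'anyi distance on the horizontal subgroup $d\Phi_v(\V_0)$ is Euclidean. The only difference is the last step. The paper computes $\mathcal{H}^k(d\Phi_v(B_{\V_0}(0,1)))$ exactly, as $\sqrt{\det(DF_v^TDF_v)}\,\mathcal{H}^k(B_{\V_0}(0,1))$, using the Gram determinant of the linear map $x\mapsto(x,M_vx,0)$. You instead bound it by $\Lambda^k\mathcal{H}^k(B_{\V_0}(0,1))$ through the Lipschitz-image inequality, which is all the upper bound requires.
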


\begin{proof}
We have by Proposition
\ref{p:IntrDiffForm} that
\[
d\Phi_v(x) = x\cdot d\varphi_v(x) = (x_1, \dots, x_k, L_v^1(x), \dots, L_v^{2n-k}(x), 0)
\]
with $L_v^j(x)=\sum_{i=1}^k \partial_i \varphi_j(v) x_i$. Let $F_v := d\Phi_v : \mathbb{R}^k \to \mathbb{R}^{2n+1}$, then
\begin{equation}\label{eq:PrepJacComp}
\mathcal{H}^k(d\Phi_v(B_{\mathbb{V}_0}(0,1))) = \mathcal{H}^k_{\mathrm{Eucl}}(DF_v(B_{\mathbb{R}^k}(0,1))) = 
\sqrt{\det(DF_v^T \cdot DF_v)}\, \mathcal{H}^k_{\mathrm{Eucl}}(B_{\mathbb{R}^k}(0,1)),
\end{equation}
where $DF_v$ denotes the Jacobi matrix of $F_v$ in the usual sense.
Here we used for the first identity that 
$d\Phi_v(B_{\V_0}(0,1)))$ is contained in the $k$-dimensional horizontal subgroup  $\V:=d\Phi_v(\V_0)$, and $d(v,v')=|v-v'|$ for all $v,v'\in \V$. Moreover, we also used that 
$(\mathbb{V}_0,d)$ and the Euclidean space $\mathbb{R}^k$ are isometric under the identification of $(x_1,\ldots,x_k,0,\ldots,0)\in \mathbb{H}^n$ with $(x_1,\ldots,x_k)\in \mathbb{R}^k$.
Since
\[
DF_v = \begin{pmatrix}\mathbb{I}_k\\D(\varphi_1,\ldots,\varphi_{2n-k})(v)\\0\end{pmatrix},
\] with $\mathbb{I}_k$ being the $(k\times k)$ identity matrix, the claim in the lemma follows from \eqref{eq:PrepJacComp}.
\end{proof}


\subsection{Projections and liftings of Lipschitz graphs}\label{s:LipGraphProjLift}

The goal of this section is a version of Proposition \ref{ahp} for general $k$-dimensional Lipschitz graphs instead of $k$-dimensional planes. 

\subsubsection{Isotropic and contact maps} The properties of low-dimensional intrinsic Lipschitz graphs in $\mathbb{H}^n$ discussed in later sections fit into the general theory of \emph{isotropic} and \emph{weakly contact} maps, so we first recall the relevant terminology.

\begin{definition}[Weakly contact]\label{d:WeaklyContact}  Let $1\leq k\leq n$.
A map $F = (f_1, \dots, f_n, g_1, \dots, g_n, h) \in
W_{\rm loc}^{1,1}(\mathbb{R}^k,\mathbb{R}^{2n+1})
$  is \emph{weakly contact} if
    \begin{equation}\label{dh}
\frac{\partial h}{\partial x_m}=        \frac{1}{2} \sum_{i=1}^n \left( f_i \frac{\partial g_i}{\partial x_m} - g_i \frac{\partial f_i}{\partial x_m} \right),\quad \text{a.e.}, \quad \text{for all } 1 \le m \le k.
    \end{equation}
\end{definition}

\begin{remark}\label{r:ContactEq}
    Equation \eqref{dh} is known as the \emph{weak contact equation}. If $F$ is differentiable at a point $x$,  it means that the image of the differential of $F$ at $x$ is contained in the horizontal tangent space of $\mathbb{H}^n$ at $x$.
      The horizontal tangent space of $\mathbb{H}^n$ is the kernel of the standard contact form, which is given by $\alpha = dt + \frac{1}{2}\sum_{i=1}^n (y_i dx_i - x_i dy_i)$. The condition stated above is equivalent to $F^*\alpha = 0$. By pulling back $\alpha$ via $F$, we get
    $$F^*\alpha = dh + \tfrac{1}{2} \sum_{i=1}^n (g_i df_i - f_i dg_i) = 0, $$
    which is exactly \eqref{dh}. For Sobolev mappings, this geometric constraint becomes a differential equation that holds almost everywhere.  For more details about the weak contact equation, see for instance \cite[Remark 2.16]{MR2659687}, \cite{MR3298724}, and \cite[(1.7)]{MR3358059}.
\end{remark}

\begin{definition}[Weakly isotropic]\label{d:WeaklyIsotropic}
Let $1\leq k\leq n$. A map $(f_1, \dots, f_n, g_1, \dots, g_n) \in 
W_{\rm loc}^{1,1}(\mathbb{R}^k,\mathbb{R}^{2n})
$
is \emph{weakly isotropic} if
\begin{equation}\label{dfdg}
\sum_{i=1}^n \left[ \frac{\partial g_i}{\partial x_j}\frac{\partial f_i}{\partial x_l} - \frac{\partial g_i}{\partial x_l}\frac{\partial f_i}{\partial x_j} \right] = 0,\quad \text{a.e.}, \quad \text{for all } j,l\in \{1,\ldots,k\}.
\end{equation}
\end{definition}
The term ``isotropic'' refers to the fact that 
    equation \eqref{dfdg} can be phrased as 
     \begin{equation}\label{dfdg1}
         \sum_{i=1}^n df_i\wedge dg_i=0,\quad  \text{a.e.}.
  \end{equation}

  For a smooth map $(f_1,\ldots,f_n,g_1,\ldots,g_n)$, this condition means precisely that the map pulls back the standard symplectic form to the zero form.
For the study of intrinsic Lipschitz graphs, it is convenient to specialize the conditions in Definitions \ref{d:WeaklyContact} and \ref{d:WeaklyIsotropic} to mappings of the form $\Phi(x)=x\cdot \varphi(x)$
as in Remark \ref{r:FixedGroups}, and to their horizontal components. This is the purpose of the following two definitions;  Lemma \ref{l:PDEEquiv} will justify the terminology.

\begin{definition}[Weakly graph contact]\label{d:WeakGraphContact}
A map $\varphi = (\varphi_1, \ldots, \varphi_{2n-k+1}) \in W_{\rm loc}^{1,1}(\mathbb{R}^k,\mathbb{R}^{2n+1-k})
$
is \emph{weakly graph contact} if:
\begin{itemize}
\item
In case $1\leq k < n$,  we have
\begin{equation}\label{DF22} 
\begin{aligned}
\nabla  \varphi_{2n-k+1}= -(\varphi_{n-k+1},\ldots,\varphi_n)^T - \frac{1}{2} \sum_{i=1}^{n-k} \left[ \varphi_{n+i} \nabla  \varphi_i - \varphi_i \nabla\varphi_{n+i} \right], \quad \text{a.e.} 
\end{aligned} 
\end{equation}
\item In case $k=n$, we have
\begin{equation}\label{eq:GradEq}
    \nabla\varphi_{n+1}=-( \varphi_1,\cdots, \varphi_n)^T, \quad \text{a.e.}\end{equation}
    \end{itemize}
\end{definition}

\begin{definition}[Weakly graph isotropic]\label{d:WeaklyGraphIsotro}
A map $ (\varphi_1, \ldots, \varphi_{2n-k}) \in
W_{\rm loc}^{1,1}(\mathbb{R}^k,\mathbb{R}^{2n-k})
$
is \emph{weakly graph isotropic} if:
\begin{itemize}
\item
In case $1\leq k < n$,  we have
\begin{equation}\label{HorizEq2}
\frac{\partial \varphi_{n-k+l}}{\partial x_j} - \frac{\partial \varphi_{n-k+j}}{\partial x_l} + \sum_{i=1}^{n-k} \left[ \frac{\partial \varphi_{n+i}}{\partial x_j} \frac{\partial \varphi_i}{\partial x_l} - \frac{\partial \varphi_{n+i}}{\partial x_l} \frac{\partial \varphi_i}{\partial x_j} \right] = 0,\quad \text{a.e., }\quad \text{for all } j,l\in \{1,\ldots,k\}.
\end{equation}
\item In case $k=n$, we have 
\begin{equation}\label{HorizEq2_k=1}
\frac{\partial \varphi_l}{\partial x_j}=\frac{\partial \varphi_j}{\partial x_l},\quad \text{a.e.,}\quad \text{for all }j,l\in \{1,\ldots,n\}.
\end{equation}
\end{itemize}
\end{definition}

\begin{remark}[The case $k=1$]\label{eq:Casek=1_1} For $k=1$ there is only one coordinate $x_1$ ($j=l=1$) and equation \eqref{HorizEq2} (or \eqref{HorizEq2_k=1} if $k=n=1$)  is trivially satisfied. On the other hand, \eqref{DF22} (or \eqref{eq:GradEq}) is non-trivial also when $k=1$; in that case, the gradient reduces to the usual derivative in one variable. 
\end{remark}

\begin{remark}[Condition for affine maps]
An affine function $(\varphi_1,\ldots,\varphi_{2n-k})$ is weakly graph isotropic if and only if it satisfies condition  \eqref{iso} from Theorem \ref{isoa}, that is, if and only if it is isotropic affine.
\end{remark}

\subsubsection{Projections of low-dimensional intrinsic Lipschitz graphs}

The projection $\pi:\mathbb{H}^n \to \mathbb{R}^{2n}$, $\pi(z,t)=z$, maps the graph of an intrinsic Lipschitz function $\varphi:\mathbb{V}_0 \to \mathbb{W}_0$ to the graph of a Euclidean Lipschitz function $\mathbb{R}^k \to \mathbb{R}^{2n-k}$ with a PDE constraint, the \emph{weak graph isotropicity condition}. This is a consequence of the weak contact condition satisfied by the graph map $\Phi:\mathbb{V}_0 \to \mathbb{H}^n$. One  motivation for explicitly spelling out the PDE condition for $\varphi$, and its horizontal components, is to highlight the conceptual differences between the cases $k=1$, $k=n$, and $1<k<n$ for the study of $k$-dimensional intrinsic Lipschitz graphs in $\mathbb{H}^n$, see Remarks \ref{r:k=1}--\ref{r:k=n}.

We first show that substituting the components of $\Phi$ into the weak contact condition  \eqref{dh} algebraically recovers the PDE system in Definition \eqref{d:WeakGraphContact}  for $\varphi$, and vice-versa. We also establish an analogous relation for the isotropicity conditions of the horizontal components.

\begin{lemma}\label{l:PDEEquiv}
    Let $1\leq k\leq n$, let $\mathbb{V}_0$ be the $k$-dimensional subgroup with complementary $\mathbb{W}_0$ as in Remark \ref{r:FixedGroups}. Then the following equivalences hold:
\begin{enumerate}
    \item[(i)]  A map $\varphi: \V_0 \to \W_0$ 
   in $W^{1,1}_{\rm loc}(\mathbb{R}^k,\mathbb{R}^{2n+1-k})$ 
  is weakly graph contact if and only if its graph map
   $\Phi:\V_0\to \mathbb{H}^n$, given by
    \begin{equation*}
    \Phi(x) = \left(x_1,\cdots, x_k, \varphi_1(x),\cdots, \varphi_{2n-k}(x), \varphi_{2n-k+1}(x) + \tfrac{1}{2} \sum_{i=1}^k x_i \varphi_{n-k+i}(x)\right),
\end{equation*} is weakly contact.
    \item[(ii)] A map $(\varphi_1,\ldots,\varphi_{2n-k}):\mathbb{R}^k \to \mathbb{R}^{2n-k}$   in $W^{1,1}_{\rm loc}(\mathbb{R}^k,\mathbb{R}^{2n-k})$  is weakly graph isotropic if and only if 
    \begin{displaymath}
     u:   \mathbb{R}^k \to \mathbb{R}^{2n},\quad u(x):= (x_1,\ldots,x_k,\varphi_1(x),\ldots,\varphi_{2n-k}(x))
    \end{displaymath}
    is weakly isotropic.
\end{enumerate}
\end{lemma}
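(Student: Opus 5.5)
Both equivalences come from substituting the explicit components of $\Phi$, respectively $u$, into \eqref{dh}, respectively \eqref{dfdg}, and simplifying algebraically. First fix notation. Write $\Phi=(f_1,\ldots,f_n,g_1,\ldots,g_n,h)$. Then, for $1\leq i\leq k$,
\[
f_i=x_i,\qquad g_i=\varphi_{n-k+i}.
\]
For $k<i\leq n$,
\[
f_i=\varphi_{i-k},\qquad g_i=\varphi_{n+i-k}.
\]
Finally,
\[
h=\varphi_{2n-k+1}+\tfrac12\sum_{i=1}^k x_i\varphi_{n-k+i}.
\]
Re-index the second group by $i'=i-k\in\{1,\ldots,n-k\}$, so that $f_{k+i'}=\varphi_{i'}$ and $g_{k+i'}=\varphi_{n+i'}$. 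All functions lie in $W^{1,1}_{\rm loc}$. Products of the form $x_i\varphi_j$ are in $W^{1,1}_{\rm loc}$ with the Leibniz rule a.e., because $x_i$ is smooth. Hence $\Phi\in W^{1,1}_{\rm loc}$ if and only if $\varphi\in W^{1,1}_{\rm loc}$, and all manipulations below are pointwise a.e. identities.

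For (i), compute $\partial_m h$ with the product rule:
\[
\partial_m h=\partial_m\varphi_{2n-k+1}+\tfrac12\varphi_{n-k+m}+\tfrac12\sum_{i=1}^k x_i\,\partial_m\varphi_{n-k+i}.
\]
Then compute the right-hand side of \eqref{dh}. The first $k$ indices contribute
\[
\tfrac12\sum_{i=1}^k\bigl(x_i\partial_m\varphi_{n-k+i}-\varphi_{n-k+i}\,\delta_{im}\bigr)
=\tfrac12\sum_{i=1}^k x_i\partial_m\varphi_{n-k+i}-\tfrac12\varphi_{n-k+m}.
\]
The remaining $n-k$ indices contribute
\[
\tfrac12\sum_{i'=1}^{n-k}\bigl(\varphi_{i'}\partial_m\varphi_{n+i'}-\varphi_{n+i'}\partial_m\varphi_{i'}\bigr).
\]
The terms $\tfrac12\sum_i x_i\partial_m\varphi_{n-k+i}$ cancel on both sides. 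Equation \eqref{dh} then becomes
\[
\partial_m\varphi_{2n-k+1}=-\varphi_{n-k+m}-\tfrac12\sum_{i'=1}^{n-k}\bigl(\varphi_{n+i'}\partial_m\varphi_{i'}-\varphi_{i'}\partial_m\varphi_{n+i'}\bigr).
\]
For $m=1,\ldots,k$ this is exactly the $m$-th component of \eqref{DF22}. When $k=n$ the sum is empty and we obtain \eqref{eq:GradEq}. Every step is reversible, which gives the equivalence.

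For (ii), apply the same substitution to \eqref{dfdg} with $u=(f,g)$. The first $k$ indices give
\[
\partial_j g_i\,\partial_l f_i-\partial_l g_i\,\partial_j f_i=\partial_j\varphi_{n-k+i}\,\delta_{il}-\partial_l\varphi_{n-k+i}\,\delta_{ij}.
\]
Summed over $i$, this equals $\partial_j\varphi_{n-k+l}-\partial_l\varphi_{n-k+j}$. The other indices give $\sum_{i'}\bigl[\partial_j\varphi_{n+i'}\partial_l\varphi_{i'}-\partial_l\varphi_{n+i'}\partial_j\varphi_{i'}\bigr]$. Together these form precisely the left side of \eqref{HorizEq2}. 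For $k=n$ the expression reduces to $\partial_j\varphi_l-\partial_l\varphi_j$, which is \eqref{HorizEq2_k=1}.

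The only real obstacle is bookkeeping. One has to identify $g_i=\varphi_{n-k+i}$ correctly for $i\leq k$; this comes from $\Phi$ sitting in coordinates $k+1,\ldots,2n$, so that the $(n+i)$-th coordinate is $\varphi_{n+i-k}$. One also has to get the signs of $\omega$ right. Beyond that, the one point needing justification is the a.e. Leibniz rule for the products $x_i\varphi_j$ in $W^{1,1}_{\rm loc}$, which is standard.
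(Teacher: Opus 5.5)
Your proposal is correct and follows the paper's proof: it writes $\Phi=(f_1,\ldots,f_n,g_1,\ldots,g_n,h)$ with the same component identifications, differentiates $h$ by the product rule, and checks that \eqref{dh} and \eqref{dfdg} reduce reversibly to \eqref{DF22}/\eqref{eq:GradEq} and \eqref{HorizEq2}/\eqref{HorizEq2_k=1}. The one thing you add is the explicit justification of the a.e.\ Leibniz rule for $x_i\varphi_j$ in $W^{1,1}_{\rm loc}$, which the paper leaves implicit.
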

\begin{proof}
    (i) We consider first the case $1\leq k<n$.  Let $\varphi: \V_0 \to \W_0$ be a map as in \eqref{ilg}, we write
 its graph map as 
 $\Phi = (f_1, \dots, f_n, g_1, \dots, g_n, h)$, where
\begin{align}
f_i(x) &= x_i \quad i=1, \dots, k \label{eq:Deff_i1}\\
f_i(x) &= \varphi_{i-k}(x) \quad i=k+1, \dots, n \label{eq:Deff_i2}\\
g_i(x) &= \varphi_{n-k+i}(x) \quad i=1, \dots, n \label{eq:Defg_i}\\
h(x) &= \varphi_{2n-k+1}(x) + \tfrac{1}{2} \sum_{i=1}^k x_i \varphi_{n-k+i}(x).\label{eq:Defh}
\end{align}
We assume that $\varphi$ is weakly graph contact, that is, \eqref{DF22} holds.
 Differentiating $h$ defined in \eqref{eq:Defh} with respect to $x_m$ (for $1 \le m \le k$) yields:
    \begin{align*}
        \frac{\partial h}{\partial x_m} &= \frac{\partial \varphi_{2n-k+1}}{\partial x_m} + \frac{1}{2} \sum_{i=1}^k \left[ \frac{\partial x_i}{\partial x_m} \varphi_{n-k+i} + \frac{\partial \varphi_{n-k+i}}{\partial x_m} x_i \right].
    \end{align*}
    Assuming \eqref{DF22} holds and substituting it into the first term, we obtain
    \begin{align*}
        \frac{\partial h}{\partial x_m} &= -\varphi_{n-k+m} - \frac{1}{2} \sum_{i=1}^{n-k} \left[ \varphi_{n+i}\frac{\partial \varphi_i}{\partial x_m} - \varphi_i \frac{\partial \varphi_{n+i}}{\partial x_m} \right] + \frac{1}{2} \sum_{i=1}^k \left[ \frac{\partial x_i}{\partial x_m} \varphi_{n-k+i} + \frac{\partial \varphi_{n-k+i}}{\partial x_m} x_i \right]\\
        &= \frac{1}{2}\sum_{i=1}^{n-k} \left[ \varphi_i \frac{\partial \varphi_{n+i}}{\partial x_m} - \varphi_{n+i}\frac{\partial \varphi_i}{\partial x_m} \right] + \frac{1}{2} \sum_{i=1}^k \left[ \frac{\partial \varphi_{n-k+i}}{\partial x_m} x_i - \frac{\partial x_i}{\partial x_m} \varphi_{n-k+i} \right]\\
        &= \frac{1}{2} \sum_{i=1}^n \left( f_i \frac{\partial g_i}{\partial x_m} - g_i \frac{\partial f_i}{\partial x_m} \right),
    \end{align*} 
   where the final identity is a direct substitution using the component definitions \eqref{eq:Deff_i1}--\eqref{eq:Defg_i}. This proves the weak contact equation \eqref{dh} for $\Phi$. Conversely, starting from equation \eqref{dh} for $\Phi$, reversing the same algebraic steps directly yields \eqref{DF22} and thus shows that $\varphi$ is weakly graph contact.

   The case $k=n$ is proven in essentially the same way, except that the sum $\sum_{i=1}^{n-k}$ does not appear in the computation of $\frac{\partial h}{\partial x_m}$.

(ii)
    An analogous substitution using \eqref{eq:Deff_i1}--\eqref{eq:Defg_i} verifies that $u$ satisfies condition \eqref{dfdg} for weak isotropicity exactly if $(\varphi_1,\ldots,\varphi_{2n-k})$ is weakly graph isotropic in the sense of Definition \ref{d:WeaklyGraphIsotro} since
    \begin{align*}
        \sum_{i=1}^n \left[ \frac{\partial g_i}{\partial x_j}\frac{\partial f_i}{\partial x_l} - \frac{\partial g_i}{\partial x_l}\frac{\partial f_i}{\partial x_j} \right]=& \sum_{i=1}^k\left[\frac{\partial \varphi_{n-k+i}}{\partial x_j}\frac{\partial x_i}{\partial x_l}-\frac{\partial \varphi_{n-k+i}}{\partial x_l}\frac{\partial x_i}{\partial x_j}\right]\\&+\sum_{i=k+1}^{n}\left[\frac{\partial \varphi_{n-k+i}}{\partial x_j}\frac{\partial \varphi_{i-k}}{\partial x_l}-\frac{\partial \varphi_{n-k+i}}{\partial x_l}\frac{\partial \varphi_{i-k}}{\partial x_j}\right]\\
        =&\frac{\partial \varphi_{n-k+l}}{\partial x_j} - \frac{\partial \varphi_{n-k+j}}{\partial x_l} + \sum_{i=1}^{n-k} \left[ \frac{\partial \varphi_{n+i}}{\partial x_j} \frac{\partial \varphi_i}{\partial x_l} - \frac{\partial \varphi_{n+i}}{\partial x_l} \frac{\partial \varphi_i}{\partial x_j} \right]. \qedhere
    \end{align*}
\end{proof}

\begin{theorem}[Projections of intrinsic Lipschitz graphs]\label{horcon}\label{diff}
   Let $1\leq k\leq n$, let $\mathbb{V}_0$ be the $k$-dimensional subgroup with complementary $\mathbb{W}_0$ as in Remark \ref{r:FixedGroups}. Then the following properties hold for an intrinsic $L$-Lipschitz function
    $\varphi: \mathbb{V}_0 \to \mathbb{W}_0$:
    \begin{enumerate}
\item[(i)]  The components $\varphi_i:\mathbb{R}^k \to \mathbb{R}$, $i=1,\ldots,2n-k$, are Euclidean $L$-Lipschitz, and $\varphi_{2n-k+1}$ is locally Euclidean Lipschitz. \item[(ii)] The map $\varphi$ is weakly graph contact.\\ Moreover, if $k=n$, then $\nabla \varphi_{n+1}=-(\varphi_1,\ldots,\varphi_n)^T$ holds everywhere.
\item[(iii)] The horizontal components $(\varphi_1,\ldots,\varphi_{2n-k})$ are weakly graph isotropic.
\end{enumerate}
\end{theorem}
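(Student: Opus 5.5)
I will prove (i) directly from the explicit intrinsic Lipschitz inequality in Proposition \ref{1.2}. Then I will obtain (ii) and (iii) by combining the intrinsic Rademacher theorem (Theorem \ref{1.5}) with the formula for intrinsic differentials in Proposition \ref{p:IntrDiffForm}, and finally translate pointwise identities into a.e.\ PDE identities.

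\textbf{Step (i).} For $v,v'\in\mathbb{V}_0$ we have $v\cdot v'=v+v'$, since $\omega$ vanishes on $\mathbb{V}_0$. The horizontal ($\mathbb{R}^{2n}$) part of ${v'}^{-1}\cdot v\cdot\varphi(v)^{-1}\cdot v^{-1}\cdot v'\cdot\varphi(v')$ is simply $(0,\varphi_i(v')-\varphi_i(v))_{i\le 2n-k}$, because horizontal coordinates add. The Korányi norm dominates the Euclidean norm of the horizontal part, and $\|v^{-1}v'\|=|v-v'|$. Hence $|\varphi_i(v')-\varphi_i(v)|\le L|v-v'|$.

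For the vertical component, I will compute the last coordinate of the same product. It equals $\varphi_{2n-k+1}(v')-\varphi_{2n-k+1}(v)$ plus symplectic correction terms. These are bilinear in $v-v'$ and in $\varphi_i(v)$, $\varphi_i(v')$, and they are bounded by $|v-v'|$ times a quantity controlled on bounded sets by (i) for the horizontal components. Using $16t^2\le\|\cdot\|^4$, I get $|t|\lesssim L^2|v-v'|^2$. This gives local Lipschitz continuity of $\varphi_{2n-k+1}$ on compacts. In particular $\varphi\in W^{1,\infty}_{\rm loc}$, so the PDE conditions make sense.

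\textbf{Step (ii).} By Theorem \ref{1.5}, $\varphi$ is intrinsically differentiable a.e. At such a point $v$, Proposition \ref{p:IntrDiffForm} gives the horizontal derivatives $\partial_j\varphi_i(v)$ and the quadratic form $L_v^{2n-k+1}$. Next, I compute the last coordinate of $\eta^{-1}\varphi(v)^{-1}\eta\varphi(v+\eta)$. It equals
\[
\varphi_{2n-k+1}(v+\eta)-\varphi_{2n-k+1}(v)+\text{(terms linear in }\eta\text{ involving }\varphi(v)\text{)}+O(|\eta|^2).
\]
The intrinsic differentiability estimate forces this to equal $L^{2n-k+1}_v(\eta)+o(|\eta|^2)$ up to the non-commutative correction from $d\varphi_v(\eta)^{-1}$. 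Since $L^{2n-k+1}_v$ is quadratic, the linear-in-$\eta$ part must vanish to order $o(|\eta|)$. So $\varphi_{2n-k+1}$ is differentiable at $v$, and its gradient is read off as $-(\varphi_{n-k+1},\dots,\varphi_n)(v)$ minus $\tfrac12\sum_{i\le n-k}[\varphi_{n+i}\nabla\varphi_i-\varphi_i\nabla\varphi_{n+i}](v)$, which is \eqref{DF22}. Because $\varphi_{2n-k+1}$ is locally Lipschitz, its a.e.\ classical derivative equals its weak derivative. This gives weak graph contact, and Lemma \ref{l:PDEEquiv}(i) also shows that $\Phi$ is weakly contact.

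For $k=n$, the right-hand side $-(\varphi_1,\dots,\varphi_n)$ is continuous by (i). A locally Lipschitz function whose a.e.\ gradient equals a continuous field is $C^1$ with that gradient everywhere, by integrating along segments (Fubini plus the fundamental theorem of calculus on a.e.\ line). This gives the "everywhere" claim.

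\textbf{Step (iii).} At a.e.\ point, the intrinsic differential $d\varphi_v$ is intrinsic linear. By Proposition \ref{ahp}(2), its horizontal part, the Jacobian matrix $(\partial_j\varphi_i(v))$, is isotropic linear and thus satisfies \eqref{iso}. This is exactly \eqref{HorizEq2} (or \eqref{HorizEq2_k=1}) at $v$. Since $\varphi_i$ is Lipschitz, its classical a.e.\ derivatives are its weak derivatives, which proves (iii).

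The main obstacle is the bookkeeping in Step (ii): expanding the group product's last coordinate and showing that the linear terms cancel exactly to yield \eqref{DF22}. If that gets messy, I would instead argue geometrically. The Pansu differential of the Lipschitz map $\Phi$ is horizontal, so $\Phi$ satisfies the contact equation \eqref{dh} a.e., and Lemma \ref{l:PDEEquiv}(i) then converts this to \eqref{DF22}.
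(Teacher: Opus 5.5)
Your proof is correct, but it follows a different route from the paper's. The paper handles the horizontal part of (i) exactly as you do, via Proposition \ref{1.2}. For the local Lipschitz continuity of $\varphi_{2n-k+1}$ and for (ii), including the everywhere identity when $k=n$, it simply cites \cite{MR4375018}, where these facts are stated for tame maps. It then derives (iii) from (ii): it applies \cite[Theorem 5.1]{MR3358059}, which says that weakly contact $W^{1,1}_{\rm loc}$ maps satisfy $\sum_i df_i\wedge dg_i=0$ a.e., to the graph map $\Phi$, translating back and forth with Lemma \ref{l:PDEEquiv}.

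You instead replace these citations with direct arguments:
\begin{itemize}
\item the vertical Lipschitz bound comes from the explicit last coordinate of the quotient in Proposition \ref{1.2};
\item \eqref{DF22} is obtained pointwise at points of intrinsic differentiability;
\item the case $k=n$ is upgraded to ``everywhere'' by a Fubini/continuity argument;
\item (iii) is proved by the intrinsic-differential argument that the paper only records as an alternative in Remark \ref{r:AlternativeProof}.
\end{itemize}
Your route is self-contained modulo the intrinsic Rademacher theorem and gives explicit pointwise identities. The paper's route shows that isotropy follows from the contact equation alone, for any weakly contact Sobolev map. That fits the lifting framework of Theorem \ref{t:Lift}.

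One bookkeeping correction in your Step (ii). Write $w=\pi(\varphi(v))$ and $w'=\pi(\varphi(v+\eta))$, and identify $\eta$ with $(\eta,0)\in\mathbb{R}^{2n}$. Then the last coordinate of $\eta^{-1}\varphi(v)^{-1}\eta\,\varphi(v\eta)$ is
\[
\varphi_{2n-k+1}(v+\eta)-\varphi_{2n-k+1}(v)+\omega(\eta,w)-\tfrac12\,\omega(w,w'-w).
\]
The final term is only $O(|\eta|)$, so it does not belong to an $O(|\eta|^2)$ remainder. It is exactly this term that produces the $\nabla\varphi_i$ contributions in \eqref{DF22}, once you insert the differentiability of the horizontal components from Proposition \ref{p:IntrDiffForm}. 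What remains is an $o(|\eta|)$ error, which is all you need. Your stated gradient formula is therefore right, even though the displayed expansion before it is not.
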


\begin{remark}[The case $k=1$]\label{r:k=1} We recall from Remark \ref{eq:Casek=1_1} that the weak graph isotropic condition is trivial when $k=1$, while the weak graph contact condition is not. This reflects the well-known fact that \emph{every} Euclidean Lipschitz curve in $\mathbb{R}^{2n}$ can be lifted to a horizontal curve in $\mathbb{H}^n$, but not every Euclidean Lipschitz curve in $\mathbb{R}^{2n+1}$ is horizontal and Lipschitz with respect the Kor\'{a}nyi metric.
\end{remark}

\begin{remark}[The case $k=n$]\label{r:k=n} 
    The case $k=n$ in Theorem \ref{horcon} is special for two reasons. First,  (i) and (ii) imply together that the vertical component $\varphi_{n+1}$ of an intrinsic Lipschitz function on the $n$-dimensional subgroup  $\mathbb{V}_0 \subset \mathbb{H}^n$ is Euclidean $C^{1,1}(\mathbb{R}^n)$. Second, conditions \eqref{eq:GradEq} and \eqref{HorizEq2_k=1} are linear (in particular, preserved under addition of functions), unlike the conditions \eqref{DF22} and \eqref{HorizEq2}.
\end{remark}

\begin{proof}[Proof of Theorem \ref{horcon}]
(i) Since $\mathbb{V}_0$ is isometric to $\mathbb{R}^k$,  the definition of intrinsic Lipschitz continuity implies, for instance via Proposition \ref{1.2}, that the horizontal components of $\varphi$ are Euclidean Lipschitz. See also, e.g., the proof of \cite[Proposition 3.3]{MR4375018}. The local Euclidean Lipschitz continuity of the vertical component $\varphi_{2n-k+1}$ was shown in \cite[Remark 2.9]{MR4375018}.

(ii) The weak graph contact conditions \eqref{DF22} and \eqref{eq:GradEq} were already proven in \cite[Remark 3.14]{MR4375018} and 
\cite[Proposition 3.12]{MR4375018} (for $k=n$ pointwise everywhere), respectively. There they were stated  for so-called tame maps 
 (see \cite[Definition 2.5]{MR4375018}), which differ from intrinsic Lipschitz functions only by a sign change in the last component. The components were also labeled in a different way, but it is easily checked that the PDEs agree up to relabeling.
 
(iii) This is the new part of the theorem compared to the results in \cite{MR4375018}. If $\varphi$ was sufficiently smooth, (iii) would easily follow by taking partial derivatives of the equations in (ii). As we do not have such strong regularity assumptions,  we will use a result in \cite{MR3358059} to deduce the weak graph isotropic equations \eqref{HorizEq2} and \eqref{HorizEq2_k=1}. 
Theorem 5.1 in \cite{MR3358059}  states  -- with our choice of coordinates -- for $k\geq 2$, $\Omega \subset \mathbb{R}^k$ open and $F=(f_1,\ldots,f_n,g_1,\ldots,g_n,h)\in W_{\rm loc}^{1,1}(\Omega,\mathbb{R}^{2n+1})$ that if 
\begin{equation}\label{eq:MMM1.7}
  dh = \frac{1}{2} \sum_{i=1}^n (f_i dg_i - g_i df_i),\quad \text{a.e. on }\Omega,
\end{equation}then also 
    \begin{equation}\label{MMM15_5.1}
  \sum_{i=1}^n df_i\wedge dg_i=0,\quad  \text{a.e. on }\Omega.
\end{equation}
According to parts (i) and (ii) of the theorem, $\Phi \in W_{\rm loc}^{1,1}(\mathbb{R}^k,\mathbb{R}^{2n+1})$ and $\varphi$ is weakly graph contact.  Lemma \ref{l:PDEEquiv} (i) then shows that $F(x):=\Phi(x)=x\cdot \varphi(x)$ is weakly contact and thus satisfies $\eqref{eq:MMM1.7}$.
By \cite[Theorem 5.1]{MR3358059}, this implies the weak isotropicity condition $\eqref{MMM15_5.1}$. Applying Lemma \ref{l:PDEEquiv} (ii), we then obtain that $\varphi$ is weakly graph isotropic.
\end{proof}

\begin{remark}\label{r:AlternativeProof} The intrinsic Rademacher theorem (Theorem \ref{1.5}) provides an alternative way to show that the horizontal components of an intrinsic Lipschitz function $\varphi:\mathbb{V}_0\to \mathbb{W}_0$ form a weak graph isotropic map. Indeed, by  Theorem \ref{1.5} such $\varphi$ is almost everywhere intrinsically differentiable. Proposition \ref{p:IntrDiffForm} yields an explicit expression for the intrinsic differential $d\varphi_v$ for a.e.\ $v\in \mathbb{V}_0$ in terms of the partial derivatives of the horizontal components of $\varphi$. Since $d\varphi_v$ is intrinsic linear, its horizontal components form an isotropic affine map according to Proposition \ref{ahp}. Finally the algebraic condition \eqref{iso} stated  in Proposition \ref{isoa}  for isotropic affine maps implies the PDE \eqref{HorizEq2} (respectively \eqref{HorizEq2_k=1}).
\end{remark}

\begin{remark}
    Parts (i)-(ii) of  Theorem \ref{horcon} can also be proven via the metric Lipschitz continuity of the graph map $\Phi:\mathbb{V}_0\to \mathbb{H}^n$, but we wish to highlight here that these properties follow easily directly from the intrinsic Lipschitz continuity of $\varphi$.
\end{remark}

\subsubsection{Low-dimensional intrinsic Lipschitz graphs as lifted Euclidean Lipschitz graphs}
Theorem \ref{horcon} describes the projections of low-dimensional intrinsic Lipschitz graphs to the horizontal plane $\mathbb{R}^{2n}\times \{0\}$. The next result, on the other hand, states a sufficient condition under which  a Euclidean Lipschitz graph in $\mathbb{R}^{2n}$ can be lifted to an intrinsic Lipschitz graph. Together, the two results provide a characterization of low-dimensional (entire) intrinsic Lipschitz graphs in terms of their horizontal projections.

The proof of the lifting theorem fits into the general theory of lifting isotropic to contact mappings. Such liftings have been studied in various contexts and under various regularity assumptions for instance in \cite{zbMATH01156777,zbMATH01782689,fassler2007extending,MR2659687, 2025arXiv250311506H}, and we apply here the same approach as in the proof of \cite[Theorem 7.8]{MR2659687}. However, as the argument in \cite{MR2659687} (below formula (65) therein) is formulated for maps defined on the disk in $\mathbb{R}^2$ respecting certain boundary conditions, we decided to include the full details for the precise statement needed in our application to intrinsic Lipschitz functions.

\begin{theorem}[Lifting to intrinsic Lipschitz graphs]\label{t:Lift}
Let $1\le k\le n$. Let $(\varphi_1, \varphi_2, \dots, \varphi_{2n-k}): \mathbb{R}^k \to \mathbb{R}^{2n-k}$ be a Euclidean Lipschitz map  which is weakly graph isotropic.
Then there exists a function $\varphi_{2n-k+1}: \mathbb{R}^k \to \mathbb{R}$, such that the map 
$$\varphi := (\varphi_1, \dots, \varphi_{2n-k}, \varphi_{2n-k+1}): \mathbb{V}_0 \to \mathbb{W}_0$$ 
is an intrinsic Lipschitz map with Lipschitz constant $L = L(n,k, \operatorname{Lip} \varphi_1,\ldots,\operatorname{Lip} \varphi_{2n-k})$.
If $k=n$, then $$\nabla\varphi_{n+1} = -(\varphi_1, \dots, \varphi_n)^T \quad\text{everywhere.}$$ 
Moreover, for every $c\in \mathbb{R}$, one can choose $\varphi_{2n-k+1}$ such that $\varphi_{2n-k+1}(0)=c$.
\end{theorem}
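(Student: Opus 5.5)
Throughout, write $\psi:=(\varphi_1,\dots,\varphi_{2n-k})$ and let $G:=-(\varphi_{n-k+1},\ldots,\varphi_n)^T - \tfrac12\sum_{i=1}^{n-k}\left[\varphi_{n+i}\nabla\varphi_i-\varphi_i\nabla\varphi_{n+i}\right]$ be the right-hand side of \eqref{DF22}. The plan is to construct $\varphi_{2n-k+1}$ by solving \eqref{DF22} (resp.\ \eqref{eq:GradEq} if $k=n$), i.e.\ $\nabla\varphi_{2n-k+1}=G$, and then to show that the resulting graph map $\Phi$ is metrically Lipschitz. Once that is done, we verify the intrinsic Lipschitz condition of Proposition \ref{1.2} directly.

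\textbf{Construction.} $G\in L^\infty_{\rm loc}$, and the obstruction to solving $\nabla\varphi_{2n-k+1}=G$ is $dG=0$ in the distributional sense. Formally $dG$ equals $-\sum_{i}d\varphi_{n-k+i}\wedge dx_i-\sum_{i\le n-k} d\varphi_{n+i}\wedge d\varphi_i$, which is, up to sign, exactly the weak graph isotropic expression \eqref{HorizEq2}; hence it vanishes a.e. The subtlety is that for Lipschitz maps the product terms are only $L^\infty$, so $d(\varphi_{n+i}d\varphi_i)=d\varphi_{n+i}\wedge d\varphi_i$ has to be justified distributionally. I would mollify: set $\psi^\varepsilon=\psi*\rho_\varepsilon$, note that $d(\varphi^\varepsilon_{n+i}d\varphi^\varepsilon_i)=d\varphi^\varepsilon_{n+i}\wedge d\varphi^\varepsilon_i$ holds classically, and pass to the limit. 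Here $\nabla\psi^\varepsilon\to\nabla\psi$ a.e.\ and boundedly, so the wedge products converge in $L^1_{\rm loc}$, and the left side converges distributionally. This is the step in \cite[Theorem 7.8]{MR2659687}. For $k=1$ no closedness condition is needed.

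Then define
\[
\varphi_{2n-k+1}(x):=c+\int_0^1 G(tx)\cdot x\,dt .
\]
Because $G$ is only defined a.e., I would instead take the limit of the smooth potentials of $G^\varepsilon$ (built from $\psi^\varepsilon$) normalized to equal $c$ at $0$. These potentials are locally uniformly Lipschitz, so Arzel\`a--Ascoli gives a locally Lipschitz limit satisfying $\nabla\varphi_{2n-k+1}=G$ a.e. Along the way, the closedness identity $dG^\varepsilon=-(\text{isotropic expression for }\psi^\varepsilon)\to0$ in $L^1_{\rm loc}$ shows that the line integral is path-independent in the limit. When $k=n$, $G=-(\varphi_1,\ldots,\varphi_n)^T$ is continuous, so $\varphi_{n+1}\in C^1$ and the gradient identity holds everywhere.

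\textbf{Lipschitz estimate.} By Lemma \ref{l:PDEEquiv}(i), $\Phi$ is weakly contact, and it is locally Lipschitz. Its horizontal projection $x\mapsto(x,\psi(x))$ is $(1+\sum\mathrm{Lip}\,\varphi_i)$-Lipschitz.

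Fix $v,v'$ and take the segment $\gamma(t)=v+t(v'-v)$. By Fubini, applied to a.e.\ parallel translate and then by continuity to every segment, $\Phi\circ\gamma$ is a horizontal Lipschitz curve whose horizontal speed is at most $C|v-v'|$. Since the Kor\'anyi distance is bounded by a constant times the length of horizontal curves, $d(\Phi(v),\Phi(v'))\lesssim |v-v'|$.

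Finally, $\|{v'}^{-1}v\Phi(v)^{-1}\Phi(v')\|\le \|{v'}^{-1}v\|+d(\Phi(v),\Phi(v'))\lesssim |v-v'|$, so Proposition \ref{1.2} gives intrinsic Lipschitz continuity with $L$ depending only on $n$, $k$ and the $\mathrm{Lip}\,\varphi_i$.

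\textbf{Main obstacle.} The main obstacle is the rigorous construction of the potential from merely Lipschitz data, i.e.\ showing that $G$ is distributionally closed and that the contact equation holds along every segment, not just a.e. I would handle both by the mollification and limit argument above, relying on the fact that \eqref{HorizEq2} is a null-Lagrangian (determinant-type) quantity that is stable under strong $W^{1,p}$ convergence.
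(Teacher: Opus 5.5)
Your proof is correct. Its central analytic step is the same as the paper's: you mollify $\psi$, observe that $dG^\varepsilon$ is exactly the weak graph isotropic expression for $\psi^\varepsilon$ (cf.\ \eqref{eq:EpsilonIsotropic}), which tends to $0$ a.e.\ and boundedly, and conclude that $dG=0$ in $\mathcal{D}'$. The other two steps take a different route.

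For the potential, the paper cites the distributional Poincar\'e lemma plus regularity results ($W^{1,1}_{\rm loc}$, then a locally Lipschitz representative). You instead use radial integrals and Arzel\`a--Ascoli. One slip: $G^\varepsilon$ is not closed, so ``the smooth potentials of $G^\varepsilon$'' do not exist. What works is $u^\varepsilon(x)=c+\int_0^1G^\varepsilon(tx)\cdot x\,dt$, whose gradient is $G^\varepsilon_j(x)+\int_0^1t\sum_ix_i(\partial_jG^\varepsilon_i-\partial_iG^\varepsilon_j)(tx)\,dt$. The error term tends to $0$ in $L^1_{\rm loc}$ by dominated convergence, because $dG^\varepsilon$ is uniformly bounded and tends to $0$ in $L^1_{\rm loc}$. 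Alternatively, once $dG=0$ in $\mathcal{D}'$, you can simply cite the Poincar\'e lemma as the paper does.

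For the Lipschitz bound, the paper applies \cite[Theorem 4.5]{MR2659687} on each ball $B(0,R)$, checks that the constant is uniform in $R$, and then invokes \cite[Corollary 4.62 (i)]{MR3587666}. You slice instead:
\begin{itemize}
\item By Fubini, $\Phi$ is horizontal along a.e.\ parallel translate of $[v,v']$.
\item The step-two identity $\gamma(a)^{-1}\gamma(b)=\bigl(z(b)-z(a),\tfrac12\int_a^b\omega(z(s)-z(a),z'(s))\,ds\bigr)$ bounds the Kor\'{a}nyi distance by $5^{1/4}$ times the horizontal length.
\item Continuity of $\Phi$ transfers this distance bound (rather than horizontality itself) to $[v,v']$.
\item The triangle inequality for the Kor\'{a}nyi norm then gives the condition of Proposition~\ref{1.2}, which is exactly the easy direction of the cited corollary.
\end{itemize}

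Your version is self-contained and uses only the convexity of $\mathbb{R}^k$ and the step-two structure. It also yields an explicit constant, e.g.\ $L\le 1+5^{1/4}\bigl(1+\sum_i\mathrm{Lip}\,\varphi_i\bigr)$. The paper's version is shorter modulo citations and places the lift inside the general theory of contact liftings in Carnot groups, following \cite[Theorem 7.8]{MR2659687}.
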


\begin{proof}
We will first show that there exists a locally Lipschitz continuous function $\varphi_{2n-k+1}$ such that $\varphi:=(\varphi_1,\ldots,\varphi_{2n-k},\varphi_{2n-k+1})$ is weakly graph contact; in other words, the given map can be lifted to a weakly graph contact map whose horizontal components are Lipschitz and the vertical component locally Lipschitz. The arguments for $k=1$ and $k=n$ are easier than for $1<k<n$, so we discuss those first.

 \textbf{Case 1: $k=1$.} \\When $k=1$, the domain is $\mathbb{R}$, and  $(\varphi_1,\ldots,\varphi_{2n-1})$ is an arbitrary Euclidean Lipschitz map. We define a function $F: \mathbb{R} \to \mathbb{R}$ by the right-hand side of the desired PDE (\ref{DF22})
\begin{equation}\label{eq:HorizLift}
F(x) := -\varphi_n(x) - \frac{1}{2}\sum_{i=1}^{n-1} \left[ \varphi_{n+i}(x)\varphi'_i(x) - \varphi_i(x)\varphi'_{n+i}(x) \right].
\end{equation}
Since $\varphi_i$ and $\varphi_{n+i}$ are Lipschitz continuous, they are locally bounded. By Rademacher's theorem, their classical derivatives $\varphi'_i$ and $\varphi'_{n+i}$ exist almost everywhere and belong to $L^\infty_{\rm loc}(\mathbb{R})$. Therefore, the product of these functions yields $F \in L^\infty_{\rm loc}(\mathbb{R})$. 
We can directly define $\varphi_{2n}$ by the integration
\begin{equation}\label{eq:IntLift}
\varphi_{2n}(x) := \int_0^x F(t) dt + C,
\end{equation}
for some constant $C \in \mathbb{R}$. By the fundamental theorem of calculus, $\varphi_{2n}$ is absolutely continuous, its derivative exists almost everywhere, and $\varphi'_{2n}(x) = F(x)$ almost everywhere. Furthermore, since $F \in L^\infty_{\rm loc}(\mathbb{R})$, $\varphi_{2n}$ is locally Lipschitz continuous.

\textbf{Case 2: $k=n$.} \\
It suffices to show that there exists $\varphi_{n+1}$ with $\nabla \varphi_{n+1}=-(\varphi_1,\ldots,\varphi_n)^T$ everywhere. Let $\mathcal{D}'(\mathbb{R}^n)$ be the space of all distributions over $\mathbb{R}^n$.
Since we know for $i,j = 1, 2, \dots, n$, that $\varphi_i$ is Lipschitz with $\partial_i \varphi_j = \partial_j \varphi_i$ almost everywhere, then 
\begin{equation}\label{eq:MixedDistrPartial}\partial_i \varphi_j = \partial_j \varphi_i \quad \text{in } \mathcal{D}'(\mathbb{R}^n).\end{equation}
By  a distributional version of the Poincar\'{e} lemma \cite{MR209834}, see also
\cite[Theorem 2.1]{MR2474500}, there exists a distribution
$\varphi_{n+1} \in \mathcal{D}'(\mathbb{R}^n)$
such that
\begin{equation}\label{eq:DistrGradEq}\nabla \varphi_{n+1} = -(\varphi_1, \varphi_2, \dots, \varphi_n)^T \quad \text{in } \mathcal{D}'(\mathbb{R}^n).\end{equation}
As the components $\varphi_i$ are continuous for $i=1,\ldots,n$, we have $\varphi_{n+1}\in C^1(\R^n)$ and the classical gradient
$$\nabla \varphi_{n+1} = -(\varphi_1, \varphi_2, \dots, \varphi_n) \quad \text{everywhere},$$
see, for instance, \cite[Theorem 6.10]{MR1817225}.

 \textbf{Case 3: $1<k<n$.} \\
 By assumption,  $(\varphi_1, \varphi_2, \dots, \varphi_{2n-k}): \mathbb{R}^k \to \mathbb{R}^{2n-k}$ is weakly graph isotropic, which means that 
for
all $j,l\in \{1,\ldots,k\}$,
\begin{equation}\label{3.19}
    \frac{\partial\varphi_{n-k+l}}{\partial x_j} - \frac{\partial\varphi_{n-k+j}}{\partial x_l} + \sum_{i=1}^{n-k} \left[ \frac{\partial\varphi_{n+i}}{\partial x_j}\frac{\partial\varphi_i}{\partial x_l} - \frac{\partial\varphi_{n+i}}{\partial x_l}\frac{\partial\varphi_i}{\partial x_j} \right] = 0 \quad \text{a.e. on } \mathbb{R}^k.
\end{equation}
Since each $\varphi_i$ is Lipschitz, its classical partial derivatives exist almost everywhere and are equal to its distributional derivatives. Let $\frac{\partial\varphi_i}{\partial x_m} \in L^\infty(\mathbb{R}^k) \subset \mathcal{D}'(\mathbb{R}^k)$ denote its distributional derivative. 
For $1 \le m \le k$, let us define the distribution $F_m \in \mathcal{D}'(\mathbb{R}^k)$ by
$$F_m := -\varphi_{n-k+m} - \frac{1}{2}\sum_{i=1}^{n-k} \left[ \varphi_{n+i} \frac{\partial\varphi_i}{\partial x_m} - \varphi_i \frac{\partial\varphi_{n+i}}{\partial x_m} \right].$$

Let $\eta_\varepsilon$ be the standard mollifier, for $1\le h\le 2n-k$ and $1\le m\le k$, we define the smooth approximations of $\varphi_h$ and $F_m$ by
\begin{equation*}
    \varphi_h^\varepsilon := \varphi_h * \eta_\varepsilon \quad\text{and}\quad
F^\varepsilon_m := -\varphi^\varepsilon_{n-k+m} - \frac{1}{2}\sum_{i=1}^{n-k} \left[ \varphi^\varepsilon_{n+i} \frac{\partial\varphi^\varepsilon_i}{\partial x_m} - \varphi^\varepsilon_i \frac{\partial\varphi^\varepsilon_{n+i}}{\partial x_m} \right].
\end{equation*}
Now, both $\varphi^\varepsilon_h$ and $F^\varepsilon_m$ are smooth on $\mathbb{R}^k$. Taking the derivative of $F^\varepsilon_j$ with respect to $x_l$, we obtain
$$ \frac{\partial F^\varepsilon_j}{\partial x_l} = -\frac{\partial\varphi^\varepsilon_{n-k+j}}{\partial x_l} - \frac{1}{2}\sum_{i=1}^{n-k} \left[ \frac{\partial\varphi^\varepsilon_{n+i}}{\partial x_l} \frac{\partial\varphi^\varepsilon_i}{\partial x_j} + \varphi^\varepsilon_{n+i} \frac{\partial^2\varphi^\varepsilon_i}{\partial x_l \partial x_j} - \frac{\partial\varphi^\varepsilon_i}{\partial x_l} \frac{\partial\varphi^\varepsilon_{n+i}}{\partial x_j} - \varphi^\varepsilon_i \frac{\partial^2\varphi^\varepsilon_{n+i}}{\partial x_l \partial x_j} \right].$$
Note that  $\frac{\partial^2\varphi^\varepsilon_h}{\partial x_j \partial x_l} = \frac{\partial^2\varphi^\varepsilon_h}{\partial x_l \partial x_j}$, thus for all $1\le j,l\le k$, we have
\begin{equation}\label{eq:EpsilonIsotropic}\frac{\partial F^\varepsilon_j}{\partial x_l} - \frac{\partial F^\varepsilon_l}{\partial x_j} = \left( \frac{\partial\varphi^\varepsilon_{n-k+l}}{\partial x_j} - \frac{\partial\varphi^\varepsilon_{n-k+j}}{\partial x_l} \right) + \sum_{i=1}^{n-k} \left[ \frac{\partial\varphi^\varepsilon_{n+i}}{\partial x_j}\frac{\partial\varphi^\varepsilon_i}{\partial x_l} - \frac{\partial\varphi^\varepsilon_{n+i}}{\partial x_l}\frac{\partial\varphi^\varepsilon_i}{\partial x_j} \right]. \end{equation}
Letting $\varepsilon\to 0$, we have $\varphi^\varepsilon_h\to \varphi_h$  and $\frac{\partial\varphi^{\varepsilon}_h}{\partial x_j}\to \frac{\partial\varphi_h}{\partial x_j}$ almost everywhere on $\mathbb{R}^k$. Since $F_m^\varepsilon$, restricted to compact sets, has uniformly bounded $L^{\infty}$-norm with respect to $\varepsilon$, by the dominated convergence theorem, we have $F_m^\varepsilon \to F_m$ in $L_{\rm loc}^1(\R^k)$. By the same reasoning, as the right-hand side of \eqref{eq:EpsilonIsotropic} 
 has uniformly bounded $L^{\infty}$-norm with respect to $\varepsilon$, we obtain
\begin{equation}\label{convergencetoDF22}
     \frac{\partial F^\varepsilon_j}{\partial x_l} - \frac{\partial F^\varepsilon_l}{\partial x_j}\to \left(\frac{\partial \varphi_{n-k+l}}{\partial x_j} - \frac{\partial \varphi_{n-k+j}}{\partial x_l}\right) + \sum_{i=1}^{n-k} \left[ \frac{\partial \varphi_{n+i}}{\partial x_j}\frac{\partial \varphi_i}{\partial x_l} - \frac{\partial \varphi_{n+i}}{\partial x_l}\frac{\partial \varphi_i}{\partial x_j} \right] \quad\text{in } L^1_{\rm loc}(\R^k).
\end{equation}
By our assumption \eqref{3.19}, the right-hand side of \eqref{convergencetoDF22} is equal to $0$ almost everywhere. 
Thus
\begin{equation}\label{eq:ConvEpsDeriv}
 \frac{\partial F^\varepsilon_j}{\partial x_l} - \frac{\partial F^\varepsilon_l}{\partial x_j}\to 0  \quad\text{in } L^1_{\rm loc}(\R^k).
\end{equation}
We aim to deduce that $ \frac{\partial F_j}{\partial x_l}=\frac{\partial F_l}{\partial x_j}$ in the sense of distributions. Since $F_j$ and $F_l$ are in $L^1_{\rm loc}(\mathbb{R}^k)$, this means verifying 
\begin{equation}\label{eq:spelledOutGoal}
    \int_{\mathbb{R}^k} F_j \frac{\partial \psi}{\partial x_l} dx=\int_{\mathbb{R}^k} F_l \frac{\partial \psi}{\partial x_j}   dx,\quad \psi\in C_c^{\infty}(\mathbb{R}^k).
\end{equation}
Now, let $\psi\in C_c^{\infty}(\R^k)$ be an arbitrary test function. We have
$$\int_{\mathbb{R}^k} \frac{\partial F_j^\varepsilon}{\partial x_l} \psi dx = - \int_{\mathbb{R}^k} F_j^\varepsilon \frac{\partial \psi}{\partial x_l}  dx.$$
Letting $\varepsilon\to 0$, we obtain
$$\lim_{\varepsilon \to 0}\int_{\mathbb{R}^k}\frac{\partial F_j^\varepsilon}{\partial x_l} \psi dx = \lim_{\varepsilon \to 0} - \int_{\mathbb{R}^k}F_j^\varepsilon \frac{\partial \psi}{\partial x_l} dx  =-\int_{\mathbb{R}^k} F_j \frac{\partial \psi}{\partial x_l} dx.$$
Therefore, our goal \eqref{eq:spelledOutGoal} is equivalent to
\begin{displaymath}
\lim_{\varepsilon \to 0}   \int_{\mathbb{R}^k} \frac{\partial F_j^\varepsilon}{\partial x_l} \psi dx=\lim_{\varepsilon \to 0}   \int_{\mathbb{R}^k} \frac{\partial F_l^\varepsilon}{\partial x_j} \psi dx,\quad \psi\in C_c^{\infty}(\mathbb{R}^k),
\end{displaymath}
but this clearly holds because of \eqref{eq:ConvEpsDeriv}.
Thus we get
$$ \frac{\partial F_j}{\partial x_l} = \frac{\partial F_l}{\partial x_j} \quad \text{in } \mathcal{D}'(\mathbb{R}^k). $$
Again by the distributional version of Poincar\'e's lemma, there exists a distribution $\varphi_{2n-k+1} \in \mathcal{D}'(\mathbb{R}^k)$ such that
\begin{equation}\label{eq:GoalDistrForm}
\nabla \varphi_{2n-k+1} = (F_1,\ldots,F_k) \quad \text{in } \mathcal{D}'(\mathbb{R}^k). 
\end{equation}
A priori, $\varphi_{2n-k+1}$ is just a distribution. However, as $F_m \in L^1_{\rm loc}(\mathbb{R}^k)$, the distributional solution $\varphi_{2n-k+1}$ of \eqref{eq:GoalDistrForm} belongs
to $W^{1,1}_{\rm loc}(\mathbb{R}^k)$; see for instance \cite[Theorem 6.74]{MR2895178}. Finally, as its distributional derivative is in $L^{\infty}_{\rm loc}(\mathbb{R}^k)$, the solution $\varphi_{2n-k+1}$ admits a locally Lipschitz representative by standard arguments as in \cite[5.8.b]{MR2597943}.
 By Rademacher's theorem, its classical partial derivatives exist almost everywhere. By expressing the distributional equality as an integral against a test function and applying the fundamental lemma of calculus of variation, we conclude that the above equation \eqref{eq:GoalDistrForm}, and thus the weak graph contact equation \eqref{DF22}, 
 are satisfied almost everywhere in $\mathbb{R}^k$.

\textbf{From weakly graph contact to intrinsic Lipschitz.}
Finally, we show that the map $\varphi = (\varphi_1, \dots, \varphi_{2n-k}, \varphi_{2n-k+1}): \mathbb{V}_0 \to \mathbb{W}_0$ is intrinsically Lipschitz since it is weakly graph contact with Lipschitz continuous horizontal components and locally Lipschitz continuous vertical component.

We consider its graph map $\Phi: \mathbb{V}_0 \to \mathbb{H}^n$ defined by $\Phi(x)=x\cdot \varphi(x)$. Since $\varphi$ is weakly graph contact,  $\Phi$ satisfies the weak contact equation \eqref{dh} almost everywhere on $\mathbb{R}^k$ by 
 Lemma \ref{l:PDEEquiv} (i). For the Heisenberg group (which has step $\nu = 2$), this equation is exactly the abstract contact equation \cite[Equation (26)]{MR2659687}. 
 
 We aim to apply \cite[Theorem 4.5]{MR2659687} to deduce that $\Phi:\mathbb{R}^k \to (\mathbb{H}^n,d)$ is metrically Lipschitz. This result is formulated in terms of Lipschitz maps from open geodetically convex subsets of Riemannian manifolds to graded Lie algebras. Let $\mathcal{M}$ denote the graded Lie algebra of $\mathbb{H}^n$, and define the algebra-valued map $F = \exp^{-1} \circ \Phi: \mathbb{V}_0 \to \mathcal{M}$. In the standard exponential coordinates of the Heisenberg group, $F$ shares the exact same coordinate representation as $\Phi$.

The vertical component $h$ of $\Phi$, as defined in \eqref{eq:Defh}, is not globally Euclidean Lipschitz. However, for any  radius $R>0$, the restriction $\Phi|_{B(0,R)}$ to  the open ball $B(0,R) \subset \mathbb{R}^k$ is  Euclidean Lipschitz. Therefore, we can apply \cite[Theorem 4.5]{MR2659687} on the restricted domain $B(0,R)$, which establishes that $\Phi|_{B(0,R)}$ is metrically Lipschitz with respect to the sub-Riemannian Carnot-Carathéodory distance. Moreover, \cite[Theorem 4.5]{MR2659687} also guarantees that the metric Lipschitz constant of $\Phi|_{B(0,R)}$ is bounded by a geometric constant times the Euclidean Lipschitz constant of its horizontal components, denoted as ${F_1}|_{B(0,R)}$. Since  $F_1(x) = (x_1, \dots, x_k, \varphi_1(x), \dots, \varphi_{2n-k}(x))$ is globally Euclidean Lipschitz on $\mathbb{R}^k$ by assumption, it follows that the metric Lipschitz constant of $\Phi|_{B(0,R)}$ is uniformly bounded by a global constant dependent only on $n,k$ and $\operatorname{Lip} \varphi_i$ for $i=1,\ldots,2n-k$, regardless of the radius $R$. This implies that $\Phi$ is globally metrically Lipschitz on the entire $\mathbb{R}^k$.

To conclude, according to \cite[Corollary 4.62 (i)]{MR3587666}, since the complementary vertical subgroup $\mathbb{W}_0$ is a normal subgroup of $\mathbb{H}^n$, a base map $\varphi: \mathbb{V}_0 \to \mathbb{W}_0$ is intrinsically Lipschitz if and only if its associated graph map $\Phi$ is globally metrically Lipschitz. Therefore, $\varphi$ is an intrinsic Lipschitz map with intrinsic Lipschitz constant $L = L(n,k, \operatorname{Lip} \varphi_1,\ldots,\operatorname{Lip} \varphi_{2n-k})$.

Finally, we can assume that $\varphi_{2n-k+1}(0)=c$ by simply adding a suitable constant to $\varphi_{2n-k+1}$, which will not change the property of being weakly graph contact.
\end{proof}

\begin{remark}[Weakly graph contact lift]\label{r:HorizLift} Let $I\subset \mathbb{R}$ be an interval. We call  $(\varphi_1,\ldots,\varphi_{2n-1},\varphi_{2n}):I\to \mathbb{R}^{2n}$  a \emph{weakly graph contact lift} of a given Euclidean Lipschitz function  $(\varphi_1,\ldots,\varphi_{2n-1}):I \to \mathbb{R}^{2n-1}$ if the last component is locally Euclidean Lipschitz and satisfies
\eqref{eq:HorizLift}, that is,
\begin{equation}\label{eq:HorizCondCurve}
\varphi_{2n}'(s) := -\varphi_n(s) - \frac{1}{2}\sum_{i=1}^{n-1} \left[ \varphi_{n+i}(s)\varphi'_i(s) - \varphi_i(s)\varphi'_{n+i}(s) \right],\quad \text{a.e. }s,
\end{equation}
or $\varphi_{2}':= -\varphi_1$ almost everywhere if $n=1$.

The terminology refers to the fact that the curve 
\begin{displaymath}
\gamma:I\to \mathbb{H}^n,  \quad \gamma(s):= (s,0\ldots,0)\cdot (0,\varphi_1(s),\ldots,\varphi_{2n}(s))
\end{displaymath} is tangential to the horizontal distribution, cf. Remark \ref{r:ContactEq}, and $(\varphi_1,\ldots,\varphi_{2n})$ is weakly graph contact in the sense of Definition \ref{d:WeakGraphContact}. Sometimes $\gamma$ is called the \emph{horizontal lift} of the curve $s\mapsto (s,\varphi_1(s),\ldots,\varphi_{2n-1}(s))$ from $\mathbb{R}^{2n}$ to $\mathbb{H}^n$.
For any Lipschitz  $(\varphi_1,\ldots,\varphi_{2n-1}):I\to \mathbb{R}^{2n-1}$ and any $c\in \mathbb{R}$ and $s_0 \in I$, there exists a unique weakly graph contact lift with the property $\varphi_{2n}(s_0)=c$, which can be obtained simply by integration as in \eqref{eq:IntLift}.
\end{remark}

\begin{remark}[$1$- and $n$-dimensional graphs]
    If $k=1$ or $k=n$, the last part of the proof of Theorem \ref{t:Lift} can also be carried out without reference to \cite[Theorem 4.5]{MR2659687}. For $k=n$, it was already shown in \cite[Proposition 3.6 and Proposition 3.12]{MR4375018}, with a direct computation, that in order to prove the intrinsic Lipschitz continuity of
     $\varphi:\mathbb{V}_0\to \mathbb{W}_0$, it suffices to show that there exists $\varphi_{n+1}$ with $\nabla \varphi_{n+1}=-(\varphi_1,\ldots,\varphi_n)^T$ everywhere. In the case $k=1$, the curve $\Phi:\mathbb{R}\to \mathbb{H}^n$ is horizontal and hence Lipschitz with respect to the Kor\'anyi distance, e.g., by \cite[Proposition 1.1]{MR3417082}.
\end{remark}

\begin{remark}[PDE characterization of intrinsic Lipschitz] The last part of the proof of Theorem \ref{t:Lift} combined with Theorem \ref{horcon} yields the following characterization. A function $\varphi:\mathbb{V}_0\to \mathbb{W}_0$ is intrinsic Lipschitz if and only if is weakly graph contact and its horizontal components are Euclidean Lipschitz.
\end{remark}

\section{Isotropic Dorronsoro Theorem}\label{s:IsotrDorronsoro}
We begin by recalling in Section \ref{ss:ClassicalDorronsoro} Dorronsoro's quantitative differentiation theorem in $\mathbb{R}^n$. In Section \ref{ss:IsotrDorronsoro} we prove a Dorronsoro-type theorem for weakly graph isotropic Lipschitz functions $\mathbb{R}^n \to \mathbb{R}^n$ in terms of approximations by isotropic affine maps.  This \emph{isotropic Dorronsoro theorem} will play a crucial role for our application to $n$-dimensional intrinsic Lipschitz graphs in $\mathbb{H}^n$ in Section~\ref{ss:k=n}.

\subsection{The classical Dorronsoro theorem for Sobolev functions}\label{ss:ClassicalDorronsoro}
In \cite[Theorem 2, Theorem 6]{MR796440}, Dorronsoro proved a quantitative differentiation result that expresses how well a  function in the Bessel potential space $L_\alpha^p(\mathbb{R}^n)$ can be approximated by specific polynomials of degree $ [\alpha]$ at different places and scales. For positive integer values of $\alpha$, the space $L_\alpha^p(\mathbb{R}^n)$ coincides with the classical Sobolev space $W^{\alpha,p}(\mathbb{R}^n)$ (see Section 9.4 of \cite{MR1681462} and Section 1.3 of \cite{MR3243741}). 
By specializing \cite[Theorem 6]{MR796440} to $\alpha=1$ and $p=2$, and evaluating the polynomial approximations over balls $B(x,r)$ (see \cite[p.23]{MR796440}), one obtains the following corollary for $W^{1,2}(\mathbb{R}^n)$; see also the different proofs in \cite{MR2299766,MR3512428,MR4012342}.

\begin{theorem}[Dorronsoro's Theorem for $W^{1,2}(\R^n)$]\label{corollaryofdorr}
Let $f \in L^2(\mathbb{R}^n)$. For $x \in \mathbb{R}^n$ and $r > 0$, define
$$ \Psi_{2,f}(x,r) := \left( \fint_{B(x,r)} \left( \frac{f(y) - A_{x,r,f}(y)}{r} \right)^2 dy \right)^{\frac{1}{2}}, $$
where $A_{x,r,f}(y) $ is the unique polynomial of degree $1$ such that
$$ \int_{B(x,r)} (f(y) - A_{x,r,f}(y)) y^\gamma dy = 0 $$
for every $n$-tuple $\gamma=(\gamma_1,\cdots,\gamma_n)\in\mathbb{N}^n$ with $|\gamma| =\gamma_1+\cdots+\gamma_n\le 1$. Then $f \in W^{1,2}(\mathbb{R}^n)$ if and only if
$$ G_1 f(x) := \left( \int_0^\infty \Psi_{2,f}(x,r)^2 \frac{dr}{r} \right)^{\frac{1}{2}} \in L^2(\mathbb{R}^n). $$
Furthermore, we have the norm equivalence $\|f\|_{W^{1,2}} \sim \|f\|_{L^2} + \|\nabla f\|_{L^2} \sim_n \|f\|_{L^2} + \|G_1 f\|_{L^2}$.
\end{theorem}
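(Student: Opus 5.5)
The plan is to prove the statement by a direct Plancherel computation. This gives, for every $f\in L^2(\mathbb{R}^n)$, the identity
\[
\|G_1 f\|_{L^2}^2=c_n\int_{\mathbb{R}^n}|\xi|^2|\hat f(\xi)|^2\,d\xi\in[0,\infty]
\]
for a dimensional constant $0<c_n<\infty$. Both the characterization and the norm equivalence follow at once from this.

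\textbf{Step 1: rescale the residual.} Rescaling to the unit ball, $\Psi_{2,f}(x,r)^2=r^{-2}\fint_{B(0,1)}|R_{x,r}f(y)|^2\,dy$. Here
\[
R_{x,r}f(y):=f(x+ry)-P\bigl[f(x+r\cdot)\bigr](y),
\]
and $P$ denotes the $L^2(B(0,1))$-orthogonal projection onto affine polynomials. The moment conditions defining $A_{x,r,f}$ say precisely that $A_{x,r,f}(x+r\cdot)=P[f(x+r\cdot)]$, since affine maps are preserved under $y\mapsto x+ry$.

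The key structural fact is that, for fixed $r$ and $y$, the map $f\mapsto R_{x,r}f(y)$ is a translation-invariant linear operator in $x$. First I check it on exponentials $f=e^{i\xi\cdot}$, extending $P$ complex-linearly. This gives $R_{x,r}f(y)=e^{i\xi\cdot x}\,\rho(r\xi,y)$, where
\[
\rho(\eta,y):=e^{i\eta\cdot y}-P[e^{i\eta\cdot}](y).
\]
Since $P$ has a bounded $L^2\to$ polynomial kernel, the general case then follows by Fourier inversion.

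\textbf{Step 2: Plancherel and Tonelli.} Plancherel in $x$ and Tonelli (all integrands are nonnegative, so Fubini issues disappear) give
\[
\int_{\mathbb{R}^n}\!\int_0^\infty\Psi_{2,f}(x,r)^2\frac{dr}{r}\,dx=\int_{\mathbb{R}^n}|\hat f(\xi)|^2\int_0^\infty \frac{m(r\xi)}{r^{2}}\frac{dr}{r}\,d\xi,
\]
where $m(\eta):=\fint_{B(0,1)}|\rho(\eta,y)|^2dy$. Substituting $s=r|\xi|$, the inner integral equals $|\xi|^2\int_0^\infty m(s\theta)s^{-3}\,ds$ with $\theta=\xi/|\xi|$. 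Because $P$ commutes with rotations of the ball, $m$ is radial, so this last integral is a constant $c_n$ independent of $\theta$.

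\textbf{Step 3: $0<c_n<\infty$.} This is the only genuinely analytic point, and I expect it to be the main obstacle, mostly in making the small-$s$ bound precise.
\begin{enumerate}
\item For $|\eta|\le1$: $P$ reproduces affine functions and is an $L^2$-contraction. Hence $m(\eta)\le\fint_B|e^{i\eta\cdot y}-1-i\eta\cdot y|^2\,dy\lesssim|\eta|^4$ by Taylor's theorem, so $\int_0^1 m(s\theta)s^{-3}\,ds<\infty$.
\item For $|\eta|\ge1$: again by the contraction property, $m(\eta)\le\fint_B|e^{i\eta\cdot y}|^2=1$, so $\int_1^\infty s^{-3}\,ds<\infty$.
\item Positivity: $e^{i\eta\cdot y}$ is not affine on $B(0,1)$ for $\eta\neq0$, so $m(\eta)>0$ there. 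By continuity of $m$ this forces $c_n>0$.
\end{enumerate}

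\textbf{Conclusion.} Combining the steps gives $\|G_1f\|_2^2=c_n\|\nabla f\|_2^2$ whenever $f\in W^{1,2}$, using $\widehat{\partial_jf}=i\xi_j\hat f$. Conversely, if $G_1f\in L^2$, then $|\xi|\hat f\in L^2$, so $\nabla f\in L^2$ distributionally and $f\in W^{1,2}$. Adding $\|f\|_{L^2}$ to both sides yields the norm equivalence $\|f\|_{W^{1,2}}\sim_n\|f\|_{L^2}+\|G_1f\|_{L^2}$.
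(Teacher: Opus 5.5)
Your Plancherel argument is correct, and it takes a genuinely different route from the paper. The paper does not prove this statement. It obtains it by specializing Dorronsoro's characterization of the Bessel potential spaces $L^p_\alpha(\mathbb{R}^n)$ to $\alpha=1$, $p=2$, using $L^2_1=W^{1,2}$ and evaluating the polynomial approximations on balls, and it points to other proofs in the literature.

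Your computation is self-contained and gives more than the stated norm equivalence. The exact identity $\|G_1 f\|_{L^2}^2=c_n\|\nabla f\|_{L^2}^2$ (up to the Fourier normalization constant) is homogeneous. It therefore yields $\|G_1f\|_{L^2}\lesssim_n\|\nabla f\|_{L^2}$ directly, whereas the paper has to extract this in the proof of Theorem~\ref{t:DorronsoroSpecial} by a dilation argument from the inhomogeneous equivalence. What the citation buys is generality: Dorronsoro's theorem covers all $1<p<\infty$ and all smoothness orders, while your method is tied to $p=2$, though that is the only case the paper uses.

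Two small points of precision:
\begin{enumerate}
\item The bounds in Step~3 use that $I-P$ is an $L^2$-contraction (equivalently, the best-approximation property of the orthogonal projection $P$), not that $P$ itself is one.
\item The passage from exponentials to general $f\in L^2$ is cleanest if you write $x\mapsto P[f(x+r\cdot)](u)=\int_{B(0,1)}K(u,v)f(x+rv)\,dv$ as a finite sum of convolutions of $f$ with bounded, compactly supported kernels. Its Fourier transform in $x$ is $\hat f(\xi)\,P[e^{ir\xi\cdot}](u)$, which gives the multiplier $\hat f(\xi)\rho(r\xi,u)$ directly, with no appeal to Fourier inversion.
\end{enumerate}
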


We now rewrite an instance of Dorronsoro's Theorem in a form that is particularly useful for geometric applications. Specifically, this version explicitly quantifies how well a $W^{1,2}$ function $f$ can be approximated by affine functions almost everywhere and at all scales, with the quantification controlled solely in terms of the gradient of $f$. This formulation is well-known, but for the convenience of the reader, we explain in detail how to derive it from Theorem \ref{corollaryofdorr}.

\begin{theorem}[Corollary of Dorronsoro's Theorem]\label{t:DorronsoroSpecial}
Let $n\in \mathbb{N}$ and $f\in L^2_{\rm loc}(\mathbb{R}^n)$. 
For $x \in \R^n$ and $r>0$, define
\begin{equation*}
    \Omega_{2, f}(x, r) := \inf_{A} \left(\fint_{B(x,r)} \left(\frac{|f(y) - A(y)|}{r}\right)^2 dy \right)^{1/2},
\end{equation*}
where the infimum is over all affine maps $A: \R^n \to \R$. If $f\in W^{1,2}(\R^n)$, then
\begin{equation}\label{app}
    \Omega_2(f) := \int_{\R^n} \int_0^\infty \Omega_{2, f}(x, r)^2 \frac{dr}{r} dx\le\int_{\R^n} (G_1f(x))^2 dx\lesssim_n \norm{\nabla f}_{L^2}^2,
\end{equation}
where the affine maps  $A_{x,r,f}$ appearing in the definition of $G_1 f$ in Theorem \ref{corollaryofdorr} are 
explicitly given by
\begin{equation}\label{aff}
   A_{x,r,f}(y) = \fint_{B(x,r)} f(z) dz + \frac{n+2}{r^2} \sum_{i=1}^n (y_i - x_i) \fint_{B(x,r)} f(z)(z_i - x_i) dz.
\end{equation}
\end{theorem}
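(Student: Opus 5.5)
The statement has three parts: a pointwise bound $\Omega_{2,f}(x,r)\le \Psi_{2,f}(x,r)$, the integrated bound \eqref{app}, and the explicit formula \eqref{aff} for the Dorronsoro polynomial. I would prove them in reverse order.

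\textbf{Step 1: formula \eqref{aff}.} Write $A_{x,r,f}(y)=a+\sum_i b_i(y_i-x_i)$. Using $y_i-x_i$ instead of $y^\gamma$ is harmless, since both families span the affine polynomials. The moment conditions with $|\gamma|\le 1$ then read
\[
\int_{B(x,r)}(f-A_{x,r,f})\,dy=0,\qquad \int_{B(x,r)}(f-A_{x,r,f})(y_i-x_i)\,dy=0 .
\]
Odd symmetry of the ball about $x$ kills the cross terms. The first condition gives $a=\fint_{B(x,r)} f$. The second gives $b_i\fint_{B(x,r)}(y_i-x_i)^2\,dy=\fint_{B(x,r)} f(z)(z_i-x_i)\,dz$. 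Finally,
\[
\fint_{B(x,r)}(y_i-x_i)^2\,dy=\frac1n\fint_{B(0,r)}|y|^2\,dy=\frac{r^2}{n+2},
\]
by the polar-coordinates computation $\int_0^r s^{n+1}\,ds\big/\!\int_0^r s^{n-1}\,ds=\tfrac{n}{n+2}r^2$. This yields \eqref{aff}. Uniqueness holds because the Gram matrix is diagonal and nonzero. The formula is also well defined for $f\in L^2_{\rm loc}$, which covers $f\in W^{1,2}$.

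\textbf{Step 2: pointwise comparison.} $A_{x,r,f}$ is one admissible affine map in the infimum defining $\Omega_{2,f}(x,r)$, so $\Omega_{2,f}(x,r)\le\Psi_{2,f}(x,r)$ for all $x$ and $r$. (In fact $A_{x,r,f}$ is the $L^2(B(x,r))$-orthogonal projection onto affine maps, so equality holds, but the inequality is all we need.) Squaring, integrating in $dr/r$ and then in $x$ gives $\Omega_2(f)\le \int_{\R^n}(G_1f)^2\,dx$. Measurability of $(x,r)\mapsto\Psi_{2,f}(x,r)$ is clear from the explicit formula, which is continuous in $(x,r)$.

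\textbf{Step 3: the gradient bound.} This is the only real obstacle. Theorem \ref{corollaryofdorr} gives $\|G_1f\|_{L^2}\lesssim_n\|f\|_{L^2}+\|\nabla f\|_{L^2}$, which carries an unwanted $\|f\|_{L^2}$ term. I remove it by scaling.

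Set $f_\lambda(y)=f(\lambda y)$. A change of variables shows $\Psi_{2,f_\lambda}(x,r)=\lambda\,\Psi_{2,f}(\lambda x,\lambda r)$, because the Dorronsoro polynomial commutes with dilations. The measure $dr/r$ is dilation invariant, so
\[
\|G_1 f_\lambda\|_{L^2}^2=\lambda^{2-n}\|G_1f\|_{L^2}^2 .
\]
Similarly $\|\nabla f_\lambda\|_{L^2}^2=\lambda^{2-n}\|\nabla f\|_{L^2}^2$ and $\|f_\lambda\|_{L^2}^2=\lambda^{-n}\|f\|_{L^2}^2$. Apply Dorronsoro's inequality to $f_\lambda$ and multiply through by $\lambda^{n-2}$:
\[
\|G_1f\|_{L^2}^2\lesssim_n \lambda^{-2}\|f\|_{L^2}^2+\|\nabla f\|_{L^2}^2 .
\]
Letting $\lambda\to\infty$ gives $\|G_1 f\|_{L^2}^2\lesssim_n\|\nabla f\|_{L^2}^2$, which completes \eqref{app}.

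Alternatively, one could bound $\Psi_{2,f}(x,r)\lesssim_n \big(\fint_{B(x,r)}|\nabla f-\fint_{B(x,r)}\nabla f|^2\big)^{1/2}$ via the Poincaré inequality and use a Littlewood–Paley estimate. The scaling argument is shorter, so I would use that.
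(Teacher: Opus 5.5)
Your proposal is correct and follows essentially the same route as the paper. The paper derives \eqref{aff} by expanding $A_{x,r,f}$ in the $L^2(B(x,r))$-orthogonal basis $\{1,y_1-x_1,\dots,y_n-x_n\}$ with $\fint_{B(x,r)}(z_i-x_i)^2\,dz=r^2/(n+2)$, obtains the first inequality from the admissibility of $A_{x,r,f}$, and removes the $\|f\|_{L^2}$ term with the same dilation argument $f_s(x)=f(sx)$, $s\to\infty$; your additional remarks (the equality $\Omega_{2,f}=\Psi_{2,f}$ and the measurability of $\Psi_{2,f}$) are correct refinements that the paper does not state.
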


\begin{proof}
First, we verify the explicit expression for the affine maps $A_{x,r,f}$ given in (\ref{aff}). 
By Theorem \ref{corollaryofdorr}, $A_{x,r,f}$ is the unique polynomial of degree 1 such that 
$$ 
\int_{B(x,r)} (f(z) - A_{x,r,f}(z)) A(z) dz = 0 
$$
for any affine map $A$.

Let $M$ be the space of affine functions on $B(x,r)$, then $M\subset L^2(B(x,r))$ is a subspace of dimension $n+1$, and the above equation tells us $(f-A_{x,r,f})\in M^{\perp}$ with respect to the inner product $\langle \cdot, \cdot \rangle$ on $L^2$. We choose an orthogonal basis of $M$ 
$$
\{e_0,e_1,e_2,\cdots,e_{n}\}=
\{1, y_1-x_1, y_2-x_2, \dots, y_n-x_n\}.
$$ 
Thus, $A_{x,r,f}$ can be written as:
$$ A_{x,r,f}(y) = c_0 + \sum_{i=1}^n c_i (y_i - x_i).$$
For all $i=0,1,\cdots,n$ we have $\langle f-A_{x,r,f},e_i\rangle=0$,
which means that
$$\langle f, e_k \rangle = \langle A_{x,r,f},e_k\rangle=\left\langle \sum_{j=0}^n c_j e_j, e_k \right\rangle = \sum_{j=0}^n c_j \langle e_j, e_k \rangle=c_k\langle e_k,e_k\rangle. $$
Hence, the coefficients are given by $c_k = \frac{\langle f, e_k \rangle}{\langle e_k, e_k \rangle},$ thus
$$ c_0 = \frac{\fint_{B(x,r)} f(z) dz}{\fint_{B(x,r)} 1 dz} = \fint_{B(x,r)} f(z) dz. $$
For $i\ge1$, we have
$$ \fint_{B(x,r)} (z_i - x_i)^2 dz = \frac{1}{n} \fint_{B(x,r)} |z-x|^2 dz = \frac{1}{n \omega_n r^n} \int_0^r \rho^2 \cdot n \omega_n \rho^{n-1} d\rho = \frac{r^2}{n+2}, $$
where $\omega_n$ is the volume of the unit ball in $\mathbb{R}^n$, therefore
$$ c_i = \frac{\fint_{B(x,r)} f(z)(z_i - x_i) dz}{\fint_{B(x,r)} (z_i - x_i)^2 dz} = \frac{n+2}{r^2} \fint_{B(x,r)} f(z)(z_i - x_i) dz. $$
Then, we obtain the explicit expression (\ref{aff}):
$$ A_{x,r,f}(y) = \fint_{B(x,r)} f(z) dz + \frac{n+2}{r^2} \sum_{i=1}^n (y_i-x_i) \fint_{B(x,r)} f(z)(z_i - x_i) dz. $$

Next, we prove the estimates in (\ref{app}). By definition, $\Omega_{2,f}(x,r)$ is defined as an infimum over \textit{all} affine maps $A$. Since $A_{x,r,f}$ is a specific affine map, we trivially have
$$ \Omega_{2,f}(x,r)^2 \le \fint_{B(x,r)} \left( \frac{|f(y) - A_{x,r,f}(y)|}{r} \right)^2 dy = \Psi_{2,f}(x,r)^2. $$
Integrating both sides over $r \in (0, \infty)$ with measure $\frac{dr}{r}$ and over $x \in \mathbb{R}^n$, we get
$$ \Omega_2(f) \le \int_{\mathbb{R}^n} \int_0^\infty \Psi_{2,f}(x,r)^2 \frac{dr}{r} dx = \int_{\mathbb{R}^n} (G_1 f(x))^2 dx =\|G_1 f\|^2_{L^2}. 
$$
Finally, we need to show that $\|G_1 f\|_{L^2} \lesssim_n \|\nabla f\|_{L^2}$. From the norm equivalence in Theorem \ref{corollaryofdorr}, we know that there exists a constant $C=C(n) > 0$ such that for any $f \in W^{1,2}(\mathbb{R}^n)$
$$
\|G_1 f\|_{L^2} \le C \left( \|f\|_{L^2} + \|\nabla f\|_{L^2} \right). 
$$
To remove the term $\|f\|_{L^2}$, we use a standard homogeneity argument; see for instance \cite[Lemma 2.6]{MR4171381}. 
More precisely, for any $s > 0$, define the dilated function $f_s(x) := f(sx)$. Then we have 
\begin{equation*}
    \|f_s\|_{L^2} = s^{-n/2}\|f\|_{L^2}\quad\text{and}\quad\|\nabla f_s\|_{L^2} = s^{1-n/2}\|\nabla f\|_{L^2}.
\end{equation*}
We also observe that $A_{x,r,f_s}(y) = A_{sx,sr,f}(sy)$, then $\Psi_{2, f_s}(x,r)^2 = s^2 \Psi_{2,f}(sx, sr)^2$. Consequently, we obtain
$$ G_1 f_s(x)^2 = \int_0^\infty s^2 \Psi_{2,f}(sx, sr)^2 \frac{d(sr)}{sr} = s^2 G_1 f(sx)^2. $$Integrating over $\mathbb{R}^n$ yields
$$ \|G_1 f_s\|_{L^2} = s^{1-n/2} \|G_1 f\|_{L^2}. $$
Now we have $\|G_1 f_s\|_{L^2} \le C (\|f_s\|_{L^2} + \|\nabla f_s\|_{L^2})$, that is
$$ s^{1-n/2} \|G_1 f\|_{L^2} \le C \left( s^{-n/2} \|f\|_{L^2} + s^{1-n/2} \|\nabla f\|_{L^2} \right). $$
Dividing both sides by $s^{1-n/2}$, we obtain
$$ \|G_1 f\|_{L^2} \le C \left( s^{-1} \|f\|_{L^2} + \|\nabla f\|_{L^2} \right). $$Letting $s \to \infty$, the term $s^{-1} \|f\|_{L^2}$ vanishes, and we conclude that $\|G_1 f\|_{L^2} \le C \|\nabla f\|_{L^2}$. Combining this with $\Omega_2(f) \le \|G_1 f\|_{L^2}^2$, we finally get
$$ \Omega_2(f) \le\|G_1 f\|_{L^2}^2 \lesssim_n \|\nabla f\|_{L^2}^2. $$
This completes the proof of Theorem \ref{t:DorronsoroSpecial}.
\end{proof}

\begin{remark}\label{t:DorronsoroAzzam}
In \cite[Theorem 1.2, Appendix (Section 7.3)]{MR3512428}, Azzam gave an alternative proof of Dorronsoro's estimate
\begin{equation}\label{eq:FinalDorronsoroEst}
    \Omega_2(f)\lesssim_n \|\nabla f\|_{L^2}^2,\quad f\in W^{1,2}(\mathbb{R}^n).
\end{equation}
His argument produced a different way of assigning to $x,r$ and $f$ an affine map $\widetilde A_{x,r,f}$ which yields the bound \eqref{eq:FinalDorronsoroEst}. Namely,
let $\phi$ be a radially symmetric nonnegative function on $\R^n$ supported in $B(0,1)$ such that $\int_{\R^n} \phi =1$, and let $\phi_r(x):=r^{-n}\phi(r^{-1}x)$. Define an affine map 
\begin{equation}\label{affAZZ}
    \widetilde{A}_{x,r,f}(y) := (\phi_r\ast\nabla f)(x) \cdot (y-x)+(f\ast\phi_r)(x)
\end{equation}
and let
$$
\widetilde{\Psi}_{2,f}(x,r):=\left(\fint_{B(x,r)} \left(\frac{|f(y) - \widetilde{A}_{x,r,f}(y)|}{r}\right)^2 dy\right)^\frac{1}{2}.
$$
Then one still has the same result as in Theorem \ref{t:DorronsoroSpecial}:
\begin{equation}\label{appAZZ}
    \Omega_2(f)\le \int_{\R^n} \int_0^\infty \widetilde{\Psi}_{2, f}(x, r)^2 \frac{dr}{r} dx\lesssim_n \norm{\nabla f}_{L^2}^2.
\end{equation}
\end{remark}

\subsection{Isotropic version of Dorronsoro's theorem}\label{ss:IsotrDorronsoro}

Dorronsoro's theorem, as stated in Theorem \ref{t:DorronsoroSpecial}, implies a geometric lemma for Lipschitz graphs in Euclidean spaces. In this section, we will prove an ``isotropic variant'' of Dorronsoro's theorem. This provides a method for quantifying how well  the graphs of weakly graph isotropic Lipschitz functions $\mathbb{R}^{n}\to \mathbb{R}^n$ can be approximated by affine isotropic planes at different locations and scales.

By the classical Rademacher's theorem, Euclidean Lipschitz functions  are differentiable almost everywhere. If a Lipschitz function  $f:\mathbb{R}^k \to \mathbb{R}^{2n-k}$ is additionally assumed to be weakly graph isotropic, then at almost every point its graph has a tangent plane that is isotropic. When $k=n$, this isotropicity condition propagates to the affine approximations at \emph{all} scales. This is related to how the condition in Definition \ref{d:WeaklyGraphIsotro} for a function $f=(f_1,\ldots,f_{2n-k})$ interacts in the case $k=n$ with the form of the affine approximations $A_{x,r,f_i}$ in Theorem \ref{t:DorronsoroSpecial} (or $\widetilde{A}_{x,r,f_i}$ in Remark \ref{t:DorronsoroAzzam}) for all points $x$ and $r>0$.  We will apply the construction from Theorem \ref{t:DorronsoroSpecial} to each component of $f$ and verify that the thus produced $n$-tuple of affine functions yields an isotropic affine map.

\begin{theorem}[Isotropic Dorronsoro's Theorem]\label{appr}
Let $f = (f_1, \dots, f_n):\R^n\to\R^n$ be a map with $f_i \in L^2_{\rm loc}(\mathbb{R}^n)$ for all $i=1,\cdots,n$. Define
\begin{equation}\label{3.4}
    \Omega^{iso}_{2,f}(x,r) := \inf_{A} \left( \fint_{B(x,r)} \left( \frac{|f(y) - A(y)|}{r} \right)^2 dy \right)^{1/2},\quad x\in \mathbb{R}^n,r>0,
\end{equation}
where the infimum is over all \textbf{isotropic} affine maps $A: \R^n \to \R^n$ and
\begin{equation}\label{3.5}
    |f(y) - A(y)|=\left(\sum_{i=1}^n |f_i(y)-A_i(y)|^2\right)^{1/2}.
\end{equation}
If $f_i\in W^{1,2}(\R^n)$  and $f$ is weakly graph isotropic, that is,
$$\partial_i f_j(z) = \partial_j f_i(z)\qquad 
\text{a.e. }z\in \mathbb{R}^n,$$
then $A_{x,r,f}(w):=(A_{x,r,f_1},\cdots,A_{x,r,f_n})$ is isotropic affine and
\[
\Omega^{iso}_{2}(f) := \int_{\mathbb{R}^n} \int_0^\infty \Omega^{iso}_{2,f}(x,r)^2 \frac{dr}{r} dx \lesssim_{n} \|\nabla f\|_{L^2}^2.
\]
\end{theorem}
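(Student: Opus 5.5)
The plan is to apply the explicit Dorronsoro approximations from Theorem \ref{t:DorronsoroSpecial} componentwise, to check that the resulting $n$-tuple of affine maps is isotropic, and then to sum the scalar Dorronsoro estimates.

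For each $i$, take the affine map $A_{x,r,f_i}$ from \eqref{aff}. Its linear part has coefficient matrix
\begin{displaymath}
a_{i,j}=\frac{n+2}{r^2}\fint_{B(x,r)} f_i(z)(z_j-x_j)\,dz .
\end{displaymath}
By Proposition \ref{isoa} with $k=n$, the map $A_{x,r,f}=(A_{x,r,f_1},\ldots,A_{x,r,f_n})$ is isotropic affine if and only if $a_{i,j}=a_{j,i}$ for all $i,j$. So the central step is the identity
\begin{displaymath}
\int_{B(x,r)} f_i(z)(z_j-x_j)\,dz=\int_{B(x,r)} f_j(z)(z_i-x_i)\,dz .
\end{displaymath}

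To prove this identity, note that $z_j-x_j=\partial_j g(z)$ for the radial function $g(z)=\tfrac12\big(|z-x|^2-r^2\big)$, which vanishes on $\partial B(x,r)$. Since $f_i\in W^{1,2}$, we can integrate by parts on the ball; the boundary term drops because $g=0$ on the sphere. This gives
\begin{displaymath}
\int_B f_i\,\partial_j g=-\int_B \partial_j f_i\, g .
\end{displaymath}
The symmetry $\partial_j f_i=\partial_i f_j$ a.e. then turns the right-hand side into $-\int_B \partial_i f_j\, g=\int_B f_j\,\partial_i g$, which is the claimed identity. If one prefers to avoid traces, the same computation can be run with a smooth cutoff approximating $\chi_B$, or one can first mollify $f$, since mollification preserves the symmetry condition, and pass to the limit. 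This symmetry verification is the only genuinely new point, and I expect it to be the main step. It works precisely because the weight $z-x$ is a gradient of a function vanishing on the boundary of the ball; the remaining care concerns justifying integration by parts for Sobolev functions, which I expect to be routine.

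Once $A_{x,r,f}$ is known to be isotropic affine, it is an admissible competitor in \eqref{3.4}. Using \eqref{3.5},
\begin{displaymath}
\Omega^{iso}_{2,f}(x,r)^2\le \sum_{i=1}^n \fint_{B(x,r)}\frac{|f_i-A_{x,r,f_i}|^2}{r^2}=\sum_{i=1}^n \Psi_{2,f_i}(x,r)^2 .
\end{displaymath}
Integrating over $x\in\R^n$ and over $r$ with respect to $dr/r$, and applying the estimate $\|G_1 f_i\|_{L^2}^2\lesssim_n\|\nabla f_i\|_{L^2}^2$ obtained in the proof of Theorem \ref{t:DorronsoroSpecial} to each component, gives
\begin{displaymath}
\Omega^{iso}_2(f)\lesssim_n \sum_{i=1}^n\|\nabla f_i\|_{L^2}^2=\|\nabla f\|_{L^2}^2 .
\end{displaymath}
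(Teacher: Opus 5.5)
Your proof is correct. Its overall structure matches the paper: componentwise Dorronsoro maps $A_{x,r,f_i}$, reduction of isotropy via Proposition~\ref{isoa} to the identity $\int_{B(x,r)} f_i(z)(z_j-x_j)\,dz=\int_{B(x,r)} f_j(z)(z_i-x_i)\,dz$, then summing the scalar bounds $\|G_1 f_i\|_{L^2}\lesssim_n\|\nabla f_i\|_{L^2}$. The difference lies in how you prove that identity.

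The paper goes up to a potential. It first assumes $f$ continuous and uses the distributional Poincar\'e lemma to get a $C^1$ function $f_{n+1}$ with $\nabla f_{n+1}=-f$. The divergence theorem then moves the derivative off $f_{n+1}$ onto the weight, giving $\delta_{ij}\int_B f_{n+1}-\frac1r\int_{\partial B}f_{n+1}(z_i-x_i)(z_j-x_j)\,d\sigma$, which is visibly symmetric. The general case follows by mollification.

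You go down to derivatives instead. You move the derivative onto $f_i$, using $z_j-x_j=\partial_j g$ with $g$ vanishing on $\partial B(x,r)$. The cleanest justification is to set $\psi:=g\chi_{B(x,r)}=\min\{g,0\}$. This function is Lipschitz with compact support, and $\partial_j\psi=(z_j-x_j)\chi_{B(x,r)}$ a.e. Your identity is then exactly the distributional relation $\partial_j f_i=\partial_i f_j$ tested against $\psi$, so no traces are needed.

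Your route is shorter and more elementary. It needs no preliminary continuity reduction, no Poincar\'e lemma and no limiting argument, and it makes clear that only the distributional symmetry of $\nabla f$ enters. The paper's route is less direct but ties the computation to $f_{n+1}$. That potential is exactly the vertical component of the intrinsic Lipschitz lift in Theorem~\ref{t:Lift}, and it is reused in the Poincar\'e-inequality step of Section~\ref{ss:k=n}.
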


Since the class of  admissible  functions in the definition of $\Omega_{2,f}^{iso}$ is restricted to isotropic ones,  $\Omega_{2,f}(x,r)\leq \Omega_{2,f}^{iso}(x,r)$ holds for the comparison with the classical coefficients in Theorem \ref{t:DorronsoroSpecial}. For $k>1$, the \emph{isotropic Grassmannian} of $k$-dimensional isotropic subspaces of $\mathbb{R}^{2n}$ is a submanifold of codimension $\dim O(k)$ inside the full Grassmannian $G(2n,k)$, see for instance \cite[Remark 2.8]{MR2955184}. It is therefore non-trivial information that the inequality $\Omega_{2}(f)\lesssim_n  \|\nabla f\|_{L^2}^2$ from Theorem \ref{t:DorronsoroSpecial} can be upgraded to $\Omega^{iso}_{2}(f)\lesssim_n  \|\nabla f\|_{L^2}^2$ in the context of Theorem \ref{appr}.

\begin{proof}[Proof of Theorem \ref{appr}] Fix $x$ and $r$, and consider the affine map $
A_{x,r,f}:=(A_{x,r,f_1},\cdots,A_{x,r,f_n})$,
where
\begin{equation*}
    A_{x,r,f_i}(w) :=\fint_{B(x,r)} f_i(z) dz + \frac{n+2}{r^2} \sum_{j=1}^n (w_j - x_j) \fint_{B(x,r)} f_i(z)(z_j - x_j) dz,\qquad w \in\R^n.
\end{equation*}
The $j$-th partial derivative of $A_{x,r,f_i}$ is
$$ \partial_j A_{x,r,f_i}(y) = \frac{n+2}{r^2} \fint_{B(x,r)} f_i(z)(z_j - x_j) dz. $$ 
We know that $\partial_i f_j = \partial_j f_i$ in the distributional sense, and we aim to exploit this information to show that $ \partial_j A_{x,r,f_i}=\partial_i A_{x,r,f_j}$, which will prove that $A_{x,r,f}$ is isotropic affine. Thus, our task is to show for all $i,j\in \{1,\ldots,n\}$ that
\begin{equation}\label{eq:CommIntGoal}
    \int_{B(x,r)} f_i(z)(z_j - x_j) dz = \int_{B(x,r)} f_j(z)(z_i - x_i) dz.
\end{equation}
Assume first that $f_1,\ldots,f_n$ are continuous. 
As in the proof of Theorem \ref{t:Lift}, the distributional version of Poincar\'{e}'s lemma implies that there exists  $f_{n+1}\in \mathcal{D}'(\mathbb{R}^n)$ such that $\nabla f_{n+1} = -(f_1, \dots, f_n)$ in the distributional sense. Since the components $f_1,\ldots,f_n$ are assumed to be continuous, $f_{n+1}$ is $C^1$ and the identity $\nabla f_{n+1} = -(f_1, \dots, f_n)$ holds for the classical derivatives pointwise everywhere. Thus we can write 
$$ \int_{B(x,r)} f_i(z)(z_j - x_j) dz = -\int_{B(x,r)} \partial_i f_{n+1}(z)(z_j - x_j) dz. $$
Applying the divergence theorem (\cite[\S C.2]{MR2597943}) and noting that the $i$-th component of the outward unit normal vector on the boundary $\partial B(x,r)$ is given by $\nu_i = \frac{z_i - x_i}{r}$, we obtain
\begin{align*}
    -\int_{B(x,r)} \partial_i f_{n+1}(z)(z_j - x_j) dz 
    &= \int_{B(x,r)} f_{n+1}(z) \partial_i (z_j - x_j) dz - \int_{\partial B(x,r)} f_{n+1}(z)(z_j - x_j) \frac{z_i - x_i}{r} d\sigma \\
    &= \delta_{ij} \int_{B(x,r)} f_{n+1}(z) dz - \frac{1}{r} \int_{\partial B(x,r)} f_{n+1}(z)(z_j - x_j)(z_i - x_i) d\sigma\\
    &= \delta_{ji} \int_{B(x,r)} f_{n+1}(z) dz - \frac{1}{r} \int_{\partial B(x,r)} f_{n+1}(z)(z_i - x_i)(z_j - x_j) d\sigma\\
    &= -\int_{B(x,r)} \partial_j f_{n+1}(z)(z_i - x_i) dz.
\end{align*}
Therefore
\begin{align*}
    \int_{B(x,r)} f_i(z)(z_j - x_j) dz =-\int_{B(x,r)} \partial_i f_{n+1}(z)(z_j - x_j) dz & = -\int_{B(x,r)} \partial_j f_{n+1}(z)(z_i - x_i) dz \\
    &= \int_{B(x,r)} f_j(z)(z_i - x_i) dz.
\end{align*}
This proves \eqref{eq:CommIntGoal} in the case of \emph{continuous} $f_1,\ldots,f_n$. The general case follows by a standard mollification argument. By what we discussed,  \eqref{eq:CommIntGoal} holds for the mollifications $f_1^{\varepsilon},\ldots,f_n^{\varepsilon}$ of $f_1,\ldots,f_n$. Since $f_i^{\varepsilon}$ converges to $f_i$ in $L^2(B(x,r))$, and $z\mapsto z_j-x_j$ is bounded on $B(x,r)$, we have
\begin{displaymath}
    \lim_{\varepsilon\to 0} \int_{B(x,r)} f_i^{\varepsilon}(z)(z_j - x_j) dz = \int_{B(x,r)} f_i(z)(z_j - x_j) dz,
\end{displaymath}
and analogously with the roles of ``$i$'' and ``$j$'' reverted. This yields the general case of  \eqref{eq:CommIntGoal}. 

So, the affine map $A_{x,r,f}$ satisfies condition (\ref{iso}) for $k=n$, thus, it is an isotropic affine map. By (\ref{app}), (\ref{3.4}) and (\ref{3.5}), we also have
\begin{align*}
    \Omega^{iso}_{2}(f) = \int_{\mathbb{R}^n} \int_0^{\infty}\Omega^{iso}_{2,f}(x,r)^2 \frac{dr}{r} dx
                \le \int_{\mathbb{R}^n} \int_0^{\infty} \sum_{i=1}^n\Psi_{2, f_i}(x, r)^2 \frac{dr}{r} dx\lesssim_n\|\nabla f \|_{L^2}^2.\label{3.11}
\end{align*}
This proves the theorem.
\end{proof}

\begin{remark}
An alternative proof of the second part of Theorem \ref{appr} passes via the affine functions  $\widetilde{A}_{x,r,f_i}$ from Azzam's approach \cite{MR3512428} (recall Remark \ref{t:DorronsoroAzzam}), instead of the canonical affine functions $A_{x,r,f_i}$ from Dorronsoro's original proof \cite{MR796440}. One can easily verify that, likewise,   $ \partial_j\widetilde{A}_{x,r,f_i}=\partial_i \widetilde{A}_{x,r,f_j}$ holds for $i,j\in \{1,\ldots,n\}$ if $f$ is a weakly graph isotropic function. 
\end{remark}

 Theorem \ref{appr} yields an isotropic approximation result for weakly graph isotropic Lipschitz functions from $\mathbb{R}^n$ to $\mathbb{R}^n$ in the spirit of a Carleson-type condition at all places and scales.

\begin{proposition}\label{aiso}
Let $f=(f_1,\ldots,f_n):\mathbb{R}^n \to \mathbb{R}^n$ be a Euclidean Lipschitz map that is weakly graph isotropic, that is, $\partial_i f_j = \partial_j f_i$ almost everywhere, $i,j=1,\ldots,n$. Then, for all $y\in \mathbb{R}^n$ and $R>0$,
\[
    \Omega^{iso}_{2,y,R}(f):= \int_{B(y,R)} \int_0^R \Omega^{iso}_{2,f}(x,r)^2 \frac{dr}{r} dx \lesssim_{n,\mathrm{Lip}(f)} R^n.
\]
\end{proposition}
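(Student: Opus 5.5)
The plan is a truncation (localization) argument that reduces Proposition \ref{aiso} to the global estimate of Theorem \ref{appr}. The main point is that the truncated function must remain weakly graph isotropic, so naive Euclidean cut-offs of each component are not allowed. Since Theorem \ref{appr} only needs $f_i\in W^{1,2}(\mathbb{R}^n)$ together with $\partial_i f_j=\partial_j f_i$, the natural route is through a potential. By Theorem \ref{t:Lift} (case $k=n$), there is $u=\varphi_{n+1}\in C^{1}(\mathbb{R}^n)$ with $\nabla u=-f$ everywhere. I would then cut off $u$ rather than $f$.

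\textbf{Normalization.} Isotropic affine maps are closed under adding isotropic affine maps, so $\Omega^{iso}_{2,f}(x,r)=\Omega^{iso}_{2,f-A}(x,r)$ for any isotropic affine $A$. Replacing $f$ by $f-f(y)$ (a constant, hence isotropic affine) and $u$ by $u-u(y)+f(y)\cdot(\cdot-y)$, I may assume $f(y)=0$ and $u(y)=0$. Then $|f|\le \mathrm{Lip}(f)\,R$ on $B(y,3R)$. Integrating $\nabla u=-f$ along segments from $y$ gives $|u|\lesssim \mathrm{Lip}(f)\,R^2$ on $B(y,3R)$.

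\textbf{Cut-off.} Take $\chi\in C_c^\infty(B(y,3R))$ with $\chi\equiv1$ on $B(y,2R)$, $|\nabla\chi|\lesssim R^{-1}$ and $|\nabla^2\chi|\lesssim R^{-2}$. Set $\tilde u:=\chi u$ and $\tilde f:=-\nabla\tilde u=\chi f-u\nabla\chi$. The map $\tilde f$ is compactly supported and Lipschitz, because $u\in C^{1,1}$, so each $\tilde f_i\in W^{1,2}(\mathbb{R}^n)$. Since $\tilde f$ is a gradient of a $C^{1,1}$ function, $\partial_i\tilde f_j=\partial_j\tilde f_i$ a.e. Differentiating,
\[
\nabla\tilde f=\chi\nabla f-\nabla\chi\otimes f-f\otimes\nabla\chi-u\nabla^2\chi,
\]
and the bounds above make every term $O(\mathrm{Lip}(f))$ on $B(y,3R)$. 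Hence $\|\nabla\tilde f\|_{L^2}^2\lesssim_{n,\mathrm{Lip}(f)}R^n$.

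\textbf{Conclusion.} For $x\in B(y,R)$ and $0<r<R$ we have $B(x,r)\subset B(y,2R)$, where $\tilde f=f$. So $\Omega^{iso}_{2,f}(x,r)=\Omega^{iso}_{2,\tilde f}(x,r)$ on that range. Theorem \ref{appr} then gives
\[
\Omega^{iso}_{2,y,R}(f)\le\Omega^{iso}_2(\tilde f)\lesssim_n\|\nabla\tilde f\|_{L^2}^2\lesssim_{n,\mathrm{Lip}(f)}R^n .
\]

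The only delicate step is preserving isotropy under truncation. Working at the level of the potential $u$ handles it, and the normalization $f(y)=0$, $u(y)=0$ is what keeps the term $u\nabla^2\chi$ bounded. Alternatively, one could invoke Proposition \ref{loc} on the lifted intrinsic Lipschitz map and use Theorem \ref{horcon} to recover isotropy, but the direct potential argument is simpler.
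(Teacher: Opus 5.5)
Your proof is correct. The overall structure matches the paper (localize while preserving isotropy, then apply Theorem \ref{appr}), but the truncation step is done differently.

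The paper first lifts $f$ via Theorem \ref{t:Lift} to an intrinsic Lipschitz map $\varphi=(f,\varphi_{n+1})$ with $\varphi(0)=0$. It then truncates this map in the Heisenberg sense with Proposition \ref{loc}, which relies on the intrinsic Lipschitz extension theorem, Theorem \ref{3.2}. Finally, it recovers Lipschitz continuity and weak graph isotropy of the truncated horizontal components from Theorem \ref{horcon}. The reductions to $f(0)=0$ and $y=0$ are handled as separate cases.

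You instead use that for $k=n$ the weak graph contact equation $\nabla\varphi_{n+1}=-f$ is linear. A smooth Euclidean cut-off of the $C^{1,1}$ potential $u$ therefore automatically produces a gradient field, hence a weakly graph isotropic one. Your normalization $f(y)=0$, $u(y)=0$ plays exactly the role that $\varphi(0)=0$ plays in Proposition \ref{loc}.

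Your route is elementary and self-contained, since it bypasses both Theorem \ref{3.2} and Theorem \ref{horcon}. It also gives the explicit bound $\Omega^{iso}_{2,y,R}(f)\lesssim_n \mathrm{Lip}(f)^2R^n$. In the paper, by contrast, the dependence on $\mathrm{Lip}(f)$ passes through the non-explicit constants of the lifting and extension theorems.

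The paper's route stays inside the intrinsic Lipschitz framework and uses a truncation tool valid for every $1\le k\le n$. Your cut-off depends on the linearity of the contact equation and has no direct analogue for $1<k<n$. That is not an issue here, since Theorem \ref{appr} is a $k=n$ statement anyway.

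Two slips that do not affect the estimate:
\begin{itemize}
\item From $\tilde f_j=\chi f_j-u\,\partial_j\chi$ and $\partial_iu=-f_i$ one gets $\partial_i\tilde f_j=\chi\,\partial_if_j+\partial_i\chi\,f_j+f_i\,\partial_j\chi-u\,\partial_{ij}\chi$. The two middle terms carry plus signs, not minus signs.
\item On $B(y,3R)$ the correct bound is $|f|\le 3\,\mathrm{Lip}(f)R$.
\end{itemize}
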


\begin{proof} 
Let $y\in\mathbb{R}^n$ and $R>0$ be arbitrary.

\noindent \textbf{Case 1: $y=0$.} We first assume $f(0)=0$. Since $\partial_i f_j = \partial_j f_i$ almost everywhere, Theorem \ref{t:Lift} guarantees the existence of $\varphi_{n+1}:\R^n\to\R$ such that $\varphi:=(f,\varphi_{n+1})$ is intrinsic $L$-Lipschitz from $\mathbb{V}_0=\mathbb{R}^n \times \{0\}$ to $\mathbb{W}_0=\{0\}\times \mathbb{R}^{n+1}$, with intrinsic Lipschitz constant $L=L(n,\mathrm{Lip}(f))$. In addition, we may assume that $\varphi_{n+1}(0)=0$, and hence $\varphi(0)=0$. By Proposition \ref{loc}, there exists an intrinsic $L'$-Lipschitz function $\tilde{\varphi}_{2R}:\V_0\to\W_0$ such that
$$
\tilde{\varphi}_{2R}(z) = \begin{cases}
        \varphi(z), & z\in B(0,2R) \\
        0, &  z\in \V_0\setminus B(0, 4R),
    \end{cases}
$$
where $L'=L'(n,L)$.
By Theorem \ref{diff}, the functions  $\tilde{\varphi}_{2R,i}$, $i=1,\ldots,n$, are Euclidean Lipschitz and satisfy
\begin{displaymath}
    \partial_j\tilde{\varphi}_{2R,i}(z)=\partial_i\tilde{\varphi}_{2R,j}(z),\qquad \text{a.e. } z\in \mathbb{R}^n,i,j=1,\cdots,n.
\end{displaymath}
We denote the horizontal components by $\tilde{f}_{2R}:=(\tilde{\varphi}_{2R,1},\ldots,\tilde{\varphi}_{2R,n})$.
Since $\tilde{f}_{2R}$ is a compactly supported Lipschitz function, it follows that $\tilde{f}_{2R}\in W^{1,2}(\mathbb{R}^n,\mathbb{R}^n)$, so that Theorem \ref{appr} is applicable to this function. 
By (i) of Theorem \ref{diff}, we also notice that the Lipschitz constant of the components of $\tilde{f}_{2R}$ is still $L'$. Moreover, for $x\in B(0,R)$, $0<r<R$, we have $B(x,r)\subset B(0,2R)$, and therefore $f=\tilde{f}_{2R}$ on $B(x,r)$, and thus 
\begin{equation}\label{3.15}
\Omega^{iso}_{2,f}(x,r)=\Omega^{iso}_{2,\tilde{f}_{2R}}(x,r), \quad
x\in B(0,R),\, 0<r<R.
 \end{equation}
Then, it follows that
\begin{align*}
\Omega^{iso}_{2,0,R}(f)&:= \int_{B(0,R)} \int_0^R \Omega^{iso}_{2,f}(x,r)^2 \frac{dr}{r} dx\notag \\
&\overset{\eqref{3.15}}{=} \int_{B(0,R)} \int_0^R \Omega^{iso}_{2,\tilde{f}_{2R}}(x,r)^2 \frac{dr}{r} dx \\
&\leq  \int_{\mathbb{R}^n} \int_0^\infty \Omega^{iso}_{2,\tilde{f}_{2R}}(x,r)^2 \frac{dr}{r} dx \notag\\&\lesssim_{n,\mathrm{Lip}(\tilde{f}_{2R})}R^n.\notag
\end{align*}
The last inequality is a direct consequence of  Theorem \ref{appr} (isotropic Dorronsoro's Theorem) and the fact that $\tilde{f}_{2R}$ vanishes outside $B(0,4R)$. Since the constants $L$ and $L'$ are both depending on $\mathrm{Lip}(f)$, we conclude that
$$\Omega^{iso}_{2,0,R}(f)\lesssim_{n,\mathrm{Lip}(f)}R^n.$$

This proves the claim for $y=0$ and $f(0)=0$. If $f(0)\neq 0$, we consider the function $f^0:=f-f(0)$. This is still Euclidean Lipschitz (with the same Lipschitz constant as $f$) and satisfies $\partial_i f^0_j =\partial_j f^0_i$, but now additionally $f^0(0)=0$. Then, by what we have proved so far, we know
\begin{equation}\label{eq:f0_ineq}
\Omega^{iso}_{2,0,R}(f^0)\lesssim_{n,\mathrm{Lip}(f)}R^n.
\end{equation}
On the other hand,
\begin{align*}
    \Omega^{iso}_{2,f^0}(x,r)&=\inf_{A} \left( \fint_{B(x,r)} \left( \frac{|f^0(z) - A(z)|}{r} \right)^2 dz \right)^{1/2}\\
    &=\inf_{A} \left( \fint_{B(x,r)} \left( \frac{\left|f(z) - (A(z)+f(0))\right|}{r} \right)^2 dz \right)^{1/2}
\end{align*}
where the infimum is over all isotropic affine maps $A: \R^n \to \R^n$, but $A+f(0)$ is again an isotropic affine map, 
and all affine isotropic maps can be written in this form.
Thus, $ \Omega^{iso}_{2,f^0}(x,r)=\Omega^{iso}_{2,f}(x,r)$ for all $x$ and $r$. Therefore, the statement \eqref{eq:f0_ineq} holds also for $f^0$ replaced by $f$.

\medskip

\noindent \textbf{Case 2: $y\neq0$.} By definition, we have
\begin{align*}
\Omega^{iso}_{2,y,R}(f)
&= \int_{B(y,R)} \int_0^R \Omega^{iso}_{2,f}(x,r)^2 \frac{dr}{r} dx\notag \\
&=  \int_{B(y,R)} \int_0^R\inf_{A}  \fint_{B(x,r)} \left( \frac{|f(z) - A(z)|}{r} \right)^2 dz  \frac{dr}{r} dx    \\
&\overset{\text{let }v=x-y}{=} \int_{B(0,R)} \int_0^R\inf_{A}  \fint_{B(v+y,r)} \left( \frac{|f(z) - A(z)|}{r} \right)^2 dz  \frac{dr}{r} dv\\
&\overset{\text{let }w=z-y}{=} \int_{B(0,R)} \int_0^R\inf_{A}  \fint_{B(v,r)} \left( \frac{|f^y(w) - A(w+y)|}{r} \right)^2 dw  \frac{dr}{r} dv
\end{align*}
where $f^y(w):=f(w+y)$ and the infimum is over all isotropic affine maps $A: \R^n \to \R^n$. However, $w\mapsto A(w+y)$ is again an isotropic affine map and any isotropic affine map can be written in this form; thus, by Case 1, we have
\begin{align*}
    \Omega^{iso}_{2,y,R}(f)
    =\Omega^{iso}_{2,0,R}(f^y)\lesssim_{n,\mathrm{Lip}(f^y)}R^n\lesssim_{n,\mathrm{Lip}(f)}R^n,
\end{align*}
completing the proof. 
\end{proof}

\section{Geometric lemmas for low-dimensional intrinsic Lipschitz graphs}\label{s:GLem}
The main results of this section are geometric lemmas (Definition \ref{def:qgeomlemballs}) for $k$-di{\-}men{\-}sional intrinsic Lipschitz graphs in $\mathbb{H}^n$. We apply different methods, and obtain results of different strengths, for the cases $k=n$ (Section \ref{ss:k=n}), $k=1$  (Section \ref{ss:Glemk=1}), and $1<k<n$ (Section \ref{s:IntegralGeo}). This distinction is related to differences between the $1$- and higher dimensional cases that are also present in the Euclidean setting, but additionally, we face differences specific for  intrinsic Lipschitz maps in these three cases, recall Remarks \ref{r:k=1} and \ref{r:k=n}. In Section \ref{ss:ConseqGLem}, we collect properties of intrinsic Lipschitz graphs that follow from the geometric lemmas, such as weak geometric lemmas (Definition \ref{def:WGLballs}) and corona decompositions by intrinsic Lipschitz graphs with small constants.

Throughout this section, $B(x,r)$ denotes the ball in $\Hn$ computed with respect to the Kor\'{a}nyi norm, while $B_{\mathbb{R}^k}(v,r)$ denotes a Euclidean ball in $\mathbb{R}^k$.

\subsection{Preliminaries on flatness coefficients}

We recall the definitions of various quantitative coefficients that all measure some aspects of `flatness' for low-dimensional sets in the Heisenberg group. We refer the reader to \cite{arXiv:2601.03837} for a historical account and motivation to study these coefficients. First, we consider the \emph{horizontal $\beta$-numbers}, which are a Heisenberg version of the classical $\beta$-numbers.

\begin{definition}[Horizontal $\beta$-numbers]\label{d:HorizBeta}
Let $E \subset \Hn$ be a $k$-regular set. For $x \in \Hn$ and $r > 0$, the \emph{horizontal} $\beta$-numbers are defined as:
\begin{align*}
    \beta^E_{p,\mathcal{V}_k}
(x, r) &= \inf_{\V\in\mathcal{V}_k} \left( \fint_{B(x,r) \cap E} \left( \frac{d(y, \V)}{r} \right)^p d\mathcal{H}^k(y) \right)^{1/p},\quad 1\leq p<+\infty,\\ \beta^E_{\infty,\mathcal{V}_k}(x,r)&:=\inf_{\V\in\mathcal V_k}
\,\sup_{y\in B(x,r)\cap E} \frac{d(y,\V)}{r},
\end{align*}
where the infimum is over all $k$-dimensional horizontal planes $\V\in\mathcal V_k$.
\end{definition}

The horizontal $\beta$-numbers fall into the general framework studied in \cite{MR4485846}. However, the next definitions are not of this type as they all involve expressions which are of a more complicated form than $d(y,\V)$.

\begin{definition}[Stratified $\beta$-numbers]\label{d:StratifBeta}
Let $E \subset \Hn$ be a $k$-regular set. For $x \in \Hn$ and $r > 0$, the \emph{stratified} $\beta$-numbers are defined as:\begin{small} 
\begin{align*}
   \widehat \beta^E_{p,\mathcal{V}_k}
(x, r) &= \inf_{\V\in\mathcal{V}_k} \left(\left[\fint_{B(x,r) \cap E} \left( \frac{d_{\rm Eucl}(\pi(y), \pi(\V))}{r} \right)^p d\mathcal{H}^k\right]^\frac{2}{p}+ \left[\fint_{B(x,r) \cap E} \left( \frac{d(y, \V)}{r} \right)^p d\mathcal{H}^k\right]^\frac{4}{p} \right)^{1/4},
\end{align*}
for $1\leq p<\infty$, and
\begin{align*}
\widehat{\beta}^E_{\infty,\mathcal{V}_k}(x,r)&:=\inf_{\V\in\mathcal V_k}\,\left(\sup_{y\in B(x,r)\cap E}\left[\frac{d_{\mathrm{Eucl}}(\pi(y),\pi(\V))}{r}\right]^2+\sup_{y\in B(x,r)\cap E}\left[\frac{d(y,\V)}{r}\right]^4\right)^{\frac{1}{4}},
\end{align*}
\end{small}where the infimum is over all $k$-dimensional horizontal planes $\mathbb{V}\in\mathcal V_k$.
\end{definition}

The \emph{horizontal projection $\beta$-numbers}, which we define next, correspond essentially to the first half of the stratified $\beta$-numbers.

\begin{definition}[Horizontal projection $\beta$-numbers]
Let $E \subset \Hn$ be a $k$-regular set. For $x \in \Hn$ and $r > 0$, the \emph{horizontal projection} $\beta$-numbers are defined as:
\begin{align*}
    \beta^E_{p,\pi, \mathcal{V}_k}
(x, r) &= \inf_{\V\in\mathcal{V}_k} \left( \fint_{B(x,r) \cap E} \left( \frac{d_{\rm Eucl}(\pi(y), \pi(\V))}{r} \right)^p d\mathcal{H}^k(y) \right)^{1/p}, \quad p<+\infty,\\ 
\beta^E_{\infty,\pi,\mathcal{V}_k}(x,r)&:=\inf_{\V\in\mathcal V_k}\,  \sup_{y\in B(x,r)\cap E} \frac{d_{\mathrm{Eucl}}(\pi(y),\pi(\V))}{r},
\end{align*}
where the infimum is over all $k$-dimensional horizontal planes $\V\in\mathcal V_k$.
\end{definition}
Equivalently, horizontal projection $\beta$-numbers could be defined by taking the infimum over all isotropic affine $k$-planes $V$ in $\mathbb{R}^{2n}$, instead of considering $\pi(\mathbb{V})$ for horizontal $k$-planes $\mathbb{V}$.
The \emph{projection $\beta$-numbers} are defined similarly as the horizontal projection $\beta$-numbers, only that the infimum is taken over the larger family of \emph{all} affine $k$-planes.
\begin{definition}[Projection $\beta$-numbers]\label{def_projbetas}
Let $E \subset \Hn$ be a $k$-regular set. For $x \in \Hn$ and $r > 0$, the \emph{projection} $\beta$-numbers are defined as:
\begin{align*}
    \beta^E_{p,\pi, A(2n,k)}
(x, r) &= \inf_{W\in A(2n,k)} \left( \fint_{B(x,r) \cap E} \left( \frac{d_{\rm Eucl}(\pi(y), W)}{r} \right)^p d\mathcal{H}^k(y) \right)^{1/p}, \quad p<+\infty,\\ 
\beta^E_{\infty,\pi,A(2n,k)}(x,r)&:=\inf_{W\in A(2n,k)}\,  \sup_{y\in B(x,r)\cap E} \frac{d_{\mathrm{Eucl}}(\pi(y),W)}{r},
\end{align*}
where  $A(2n,k)$ denotes the affine Grassmannian of $k$-dimensional affine planes in $\R^{2n}$.
\end{definition}

The above coefficients behave well under isometries induced by unitary maps as in Section \ref{ss:RedStd}. 

\begin{lemma}\label{l:RotInvCoeff} Let $n\in \mathbb{N}$, $k\in \{1,\ldots,n\}$, and let $h^E:\mathbb{H}^n \times (0,\infty) \to [0,\infty)$ be any of the coefficient functions in Definitions \ref{d:HorizBeta}--\ref{def_projbetas} associated to a $k$-regular set $E\subset \mathbb{H}^n$. 
If $U\in U(n)$ and $R_U(z,t):=(Uz,t)$, then $h^{R_U(E)}(R_U(x),r)=h^E(x,r)$ for all $x\in \mathbb{H}^n$ and $r>0$.
\end{lemma}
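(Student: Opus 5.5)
The plan is to show that each ingredient entering the definitions of the coefficients is preserved by $R_U$. We need four facts, which we verify in turn:
\begin{enumerate}
\item $R_U$ is an isometry of $(\mathbb{H}^n,d)$, so $R_U(B(x,r))=B(R_U(x),r)$ and $d(R_U(y),R_U(\V))=d(y,\V)$ for every set $\V$.
\item $R_U$ preserves $\mathcal{H}^k$, so $\mathcal{H}^k\llcorner R_U(E)=(R_U)_\#(\mathcal{H}^k\llcorner E)$ and $R_U(E)$ is again $k$-regular with the same constant.
\item $R_U$ induces a bijection of $\mathcal{V}_k$ onto itself.
\item $\pi\circ R_U=U\circ\pi$, where $U$ is a Euclidean isometry of $\mathbb{R}^{2n}$ that maps $A(2n,k)$ bijectively onto itself.
\end{enumerate}

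Facts 1 and 2 were established in Section \ref{ss:RedStd}: $R_U$ is a bijective isometry, and Hausdorff measures are defined metrically. For fact 3, take $\V=x\cdot(V\times\{0\})$ with $V$ isotropic. Since $R_U$ is a group isomorphism, $R_U(\V)=R_U(x)\cdot(UV\times\{0\})$. Because $U\in\mathrm{Sp}(n)$, we have $\omega(Uv,Uv')=\omega(v,v')=0$, so $UV$ is a $k$-dimensional isotropic subspace and $R_U(\V)\in\mathcal{V}_k$. Applying the same argument to $U^{-1}=U^T\in U(n)$ shows the map $\V\mapsto R_U(\V)$ is a bijection of $\mathcal{V}_k$. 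For fact 4, $\pi(R_U(z,t))=Uz$, and an orthogonal linear map sends affine $k$-planes to affine $k$-planes bijectively. It also gives $\pi(R_U(\V))=U\pi(\V)$ and $d_{\mathrm{Eucl}}(\pi(R_U y),U W)=d_{\mathrm{Eucl}}(\pi(y),W)$.

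With these facts, fix $x$, $r$ and any admissible competitor: $\V\in\mathcal{V}_k$ for the horizontal, stratified and horizontal projection coefficients, or $W\in A(2n,k)$ for the projection coefficient. Change variables $y'=R_U(y)$ in the averages and suprema over $B(R_U(x),r)\cap R_U(E)=R_U(B(x,r)\cap E)$. By facts 1, 2 and 4, each integrand (or supremum) computed for $R_U(E)$ with competitor $R_U(\V)$ (respectively $UW$) equals the corresponding quantity computed for $E$ with competitor $\V$ (respectively $W$). The averages are unchanged because the normalizing masses $\mathcal{H}^k(B\cap E)$ agree. Fact 3 (respectively fact 4) makes the competitor correspondence bijective, so the infima coincide and $h^{R_U(E)}(R_U(x),r)=h^E(x,r)$. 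The stratified coefficients combine the two types of terms under the same infimum over a single $\V$, so they are handled by the same correspondence.

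There is no real obstacle. The only point needing care is that the competitor correspondence be bijective rather than merely one-directional; applying the argument to $U^{-1}$ settles this. Measurability of the integrands is preserved because $R_U$ is a homeomorphism.
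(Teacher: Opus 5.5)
Your proof is correct and is the argument the paper has in mind; the paper itself only defers to the analogous invariance lemmas of the earlier work on corona decompositions, which verify the same ingredients. Those ingredients are: $R_U$ is an isometric group automorphism, so it preserves balls, $\mathcal{H}^k$, distances to sets and $k$-regularity; it permutes $\mathcal{V}_k$ because $U$ is symplectic; and $\pi\circ R_U=U\circ\pi$ with $U$ orthogonal, so the competitor maps $\mathbb{V}\mapsto R_U(\mathbb{V})$ and $W\mapsto UW$ are bijections that leave every integrand and supremum unchanged, with bijectivity supplied by your application of the argument to $U^{-1}\in U(n)$.
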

\begin{proof}
This follows by arguments similar to \cite[Lemmas 3.12-3.13]{arXiv:2601.03837}.
\end{proof}

In the following sections, we will study these coefficients  for $E$ equal to a $k$-dimensional intrinsic Lipschitz graph, and its horizontal projection. In this case, there is a natural parametrization of the graph, and we will work with `parametric' versions of the $\beta$-numbers. To ensure that the relevant coefficients are uniformly bounded, the following lemma will be useful.
\begin{lemma}\label{lemma_lipconst}
   Let $k>0,m\in \mathbb N$. Assume $f:\R^k\to\R^{m}$ is a Lipschitz map with Lipschitz constant $\mathrm {Lip}(f)$. Assume $A:\R^k\to\R^m$ is an affine map with the property that
    \begin{equation*}
        \fint_{B_{\mathbb{R}^k}(x,r)}\frac{|A(y)-f(y)|^2}{r^2} dy\leq 2\,\mathrm {Lip}(f)^2
    \end{equation*}
    for some $x\in \R^k$, $r>0$. There is some universal constant $N_k$, depending only on $k$, such that $\mathrm{Lip}(A)\leq N_k\mathrm{Lip}(f)$.
\end{lemma}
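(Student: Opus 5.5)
The plan is to control the linear part of $A$ through its $L^2$ size on the ball, using that $f$ is nearly constant at scale $r$ compared with $\mathrm{Lip}(f)\,r$. By translation and rescaling, $y = x + r u$ with $u\in B_{\mathbb{R}^k}(0,1)$, we reduce to $x=0$ and $r=1$. The hypothesis is invariant under this change of variables: replace $f$ by $f_r(u):=f(x+ru)/r$ and $A$ by $A_r(u):=A(x+ru)/r$. Then $\mathrm{Lip}(f_r)=\mathrm{Lip}(f)$, $\mathrm{Lip}(A_r)=\mathrm{Lip}(A)$, and
\[\fint_{B(0,1)}|A_r-f_r|^2\leq 2\,\mathrm{Lip}(f)^2.\]
Set $\Lambda:=\mathrm{Lip}(f)$ and $c:=f_r(0)$.

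First, $|f_r(u)-c|\leq \Lambda|u|\leq \Lambda$ on $B(0,1)$. By the triangle inequality in $L^2$,
\[\Big(\fint_{B(0,1)}|A_r(u)-c|^2\,du\Big)^{1/2}\leq \sqrt{2}\,\Lambda+\Lambda.\]
Write $A_r(u)=Mu+b-c+c$, with $M$ the linear part and $b':=b-c$. Since $\fint_{B(0,1)}u\,du=0$, the cross terms vanish, and
\[\fint_{B(0,1)}|Mu+b'|^2\,du=|b'|^2+\fint_{B(0,1)}|Mu|^2\,du.\]
The computation in the proof of Theorem~\ref{t:DorronsoroSpecial}, namely $\fint_{B}u_i u_j=\delta_{ij}/(k+2)$, gives $\fint_{B(0,1)}|Mu|^2=\|M\|_{HS}^2/(k+2)$.

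Hence
\[\|M\|_{op}\leq\|M\|_{HS}\leq\sqrt{k+2}\,(1+\sqrt2)\,\Lambda,\]
and $\mathrm{Lip}(A)=\|M\|_{op}$. So $N_k:=(1+\sqrt2)\sqrt{k+2}$ works.

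The only point requiring care is the orthogonality and second-moment identity on the ball. It is routine, and was already computed earlier in the paper.
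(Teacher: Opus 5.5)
Your proof is correct, and it takes a different route from the paper's.

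\textbf{The paper's argument.} It argues by contradiction:
\begin{itemize}
\item it fixes a direction $e$ of maximal stretch for $A$ and inscribes a cube of side $cr$ in $B(x,r)$;
\item on each segment parallel to $e$, it shows that $|A-f|$ is either uniformly $\gtrsim \mathrm{Lip}(f)\,r$, or else, away from a point where it is small, grows at rate at least $\mathrm{Lip}(A)-\mathrm{Lip}(f)$ on an interval of length $cr/4$;
\item Fubini then pushes the average above $4\,\mathrm{Lip}(f)^2$, contradicting the hypothesis.
\end{itemize}

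\textbf{Your argument.} You use the Hilbert structure of $L^2(B)$. After comparing $A$ with the constant $f_r(0)$, you use that constants are orthogonal to linear functions and that $\fint_{B(0,1)}u_iu_j\,du=\delta_{ij}/(k+2)$. This bounds the Hilbert--Schmidt norm directly. It gives the explicit constant $N_k=(1+\sqrt2)\sqrt{k+2}$, needs no case distinction, and covers $\mathrm{Lip}(f)=0$ automatically.

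\textbf{What each route buys.} The paper's more hands-on argument is robust. It uses only the triangle inequality along lines and the fact that an affine map stretches by the same factor along every line in a fixed direction. That is why the authors can reuse it essentially verbatim in the proofs of Lemma \ref{l:HigherDimInduc} and Theorem \ref{t:kDDorronsoro}, where the setting changes in four ways:
\begin{itemize}
\item the $L^2$ average becomes an $L^p$ average;
\item the constant $2$ becomes an arbitrary constant;
\item balls become cubes;
\item the maps are $\mathbb{H}^n$-valued ($\Psi$, $\mathbb{A}_{\varepsilon,e}$) and are measured in the Kor\'anyi distance.
\end{itemize}
Your argument also extends to these situations, but it needs extra steps. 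You would first project by $\pi$ to $\mathbb{R}^{2n}$, using $d(p,q)\geq|\pi(p)-\pi(q)|$ and $\mathrm{Lip}(\mathbb{A}_{\varepsilon,e})=\mathrm{Lip}(\pi\circ\mathbb{A}_{\varepsilon,e})$. For $p<2$ you would also invoke the equivalence of $L^p$ and $L^2$ averages on the finite-dimensional space of affine maps, together with the analogous second-moment computation on cubes.
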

\begin{proof}
    Let $x=(x_1,\dots, x_k)\in\R^k$, $r>0$ be as in the assumptions. Let $e\in\mathbb S^{k-1}$ be a direction of maximal stretch for $A$ (this means that $|A(y)-A(y+he)|=\mathrm {Lip}(A)|h|$ for every $y\in\R^k$, where $\mathrm {Lip}(A)$ equals the operator norm $\|A\|$). There exists a dimensional constant $c=c(k)$ such that, denoting by $Q(x,r)$ the cube centered at $x$ with side length $r$ and edges parallel to the coordinate axes, it holds $Q(x,cr)\subset B(x,r)$: take for instance $c(k)=2/\sqrt{k}$. Without loss of generality, assume that $e=e_1=(1,0,\dots,0)\in\R^k$ and write $Q(x,cr)=(x_1-cr/2,x_1+cr/2)\times \widehat Q$, where $\widehat Q$ is a cube in $\R^{k-1}$. Assume by contradiction that $\mathrm{Lip}(A)>N_k\mathrm{Lip}(f)$, where the constant $N_k\geq 1$ will be determined later. Fix now $\hat y=(y_2,\dots,y_{k})\in \widehat Q$ and distinguish two cases.\\

\noindent    \textbf{Case 1:} 
  \[|A(s,\hat y)-f(s,\hat y)|>\sqrt{\frac{4\omega_k}{c^k}}\mathrm{Lip}(f)\,r\quad\text{ for all }s\in (x_1-cr/2, x_1+cr/2).\]
    In this case, we can simply estimate
    \[\int_{x_1-cr/2}^{x_1+cr/2}\frac{|A(s,\hat y)-f(s,\hat y)|^2}{r^2}\,ds\geq \frac{4\omega_k}{c^{k-1}}\mathrm{Lip}(f)^2r.\]

\noindent  \textbf{Case 2:}  There exists $s_0=s_0(\hat y)\in(x_1-cr/2,x_1+cr/2)$ such that \[|A(s_0,\hat y)-f(s_0,\hat y)|\leq \sqrt{\frac{4\omega_k}{c^k}}\mathrm{Lip}(f)\,r.\]
    In this case we estimate, by our choice of $e$,
    \begin{align*}
        |A(s,\hat y)-f(s,\hat y)|&\geq |A(s,\hat y)-A(s_0,\hat y)|-|A(s_0,\hat y)-f(s_0,\hat y)|-|f(s_0,\hat y)-f(s,\hat y)|\\ &\geq \mathrm{Lip}(A)|s-s_0|-\sqrt{\frac{4\omega_k}{c^k}}{\mathrm{Lip}}(f)\,r-\mathrm{Lip}(f)|s-s_0|.
    \end{align*}
  Notice that there exists an interval $I\subset(x_1-cr/2,x_1+cr/2)$ of length at least $cr/4$ such that $|s-s_0|\geq cr/4$ for any $s\in I$. In particular, for those values of $s$, we can estimate
  \begin{align*}|A(s,\hat y)-f(s,\hat y)|&\geq [\mathrm {Lip}(A)-\mathrm{Lip}(f)]\frac{cr}{4}-\sqrt{\frac{4\omega_k}{c^k}}{\mathrm{Lip}}(f)\,r\\ &\geq \left[(N_k-1)\frac{c}{4}-\sqrt{\frac{4\omega_k}{c^k}}\right]\mathrm{Lip}(f)r.\end{align*}
  Integrating, this yields
  \[\int_{x_1-cr/2}^{x_1+cr/2}\frac{|A(s,\hat y)-f(s,\hat y)|^2}{r^2}\,ds\geq\int_I\frac{|A(s,\hat y)-f(s,\hat y)|^2}{r^2}\,ds\geq\frac{4\omega _k}{c^{k-1}}\mathrm{Lip}(f)^2r,\]
  if $N_k$ is chosen large enough depending only on $k$. By Fubini's theorem, we can now compute the integral on $Q(x,cr)$:
  \begin{align*}   
  \fint_{B_{\mathbb{R}^k}(x,r)}\frac{|A(y)-f(y)|^2}{r^2}dy&\geq \frac{1}{\omega_kr^k}\int_{Q(x,cr)}\frac{|A(y)-f(y)|^2}{r^2}dy\\ &=\frac{1}{\omega_kr^k}\int_{\widehat Q}\int_{x_1-cr/2}^{x_1+cr/2}\frac{|A(s,\hat y)-f(s,\hat y)|^2}{r^2}dsd\hat y\\ &\geq \frac{1}{\omega_k r^k}\frac{4\omega_k}{c^{k-1}}\mathrm{Lip}(f)^2r(cr)^{k-1}=4\mathrm{Lip}(f)^2.\end{align*}
  This is a contradiction and the proof is concluded. Note that, in the case $\mathrm{Lip}(f)=0$, then the unique map $A$ satisfying the assumption of the lemma is constant and the conclusion follows.
\end{proof}

\subsection{Isotropic approach for the case $k=n$}\label{ss:k=n}
The main goal of this section is to prove Theorem \ref{t:GlemStratBetanILG}, a geometric lemma  with stratified $\beta$-numbers for $n$-dimensional intrinsic Lipschitz graphs in $\mathbb{H}^n$. Our strategy is to apply the isotropic Dorronsoro theorem, in the form of Proposition \ref{aiso}, to the horizontal projection of the considered intrinsic Lipschitz function $\varphi$. Since $\nabla \varphi_{n+1}= -(\varphi_1,\ldots,\varphi_n)$ by Theorem \ref{horcon} (and similarly for intrinsic affine maps),  a Poincar\'e inequality will then allow us to control the vertical errors of the affine approximations in terms of the horizontal errors.

For the remainder of this section, we fix an $n$-dimensional intrinsic Lipschitz graph in $\mathbb{H}^n$. According to Lemma \eqref{l:RotRed}, it is not restrictive to assume that the graph is associated to the splitting $\mathbb{H}^n = \V_0 \ltimes  \W_0$, where $\V_0$ and $\W_0$ are as in Remark \ref{r:FixedGroups}, that is,
 \begin{equation}\label{eq:V0W0}
    \mathbb{V}_0:=\{(x,0):\,x\in \mathbb{R}^n\}\quad \text{and}\quad \mathbb{W}_0=\{(0,y,t):\, y\in \mathbb{R}^n,\,t\in\mathbb{R}\}.
\end{equation}
We will frequently identify $\mathbb{V}_0$ with $\mathbb{R}^n$, and use the projections
\begin{equation*}
 \Pi: \Hn \to \mathbb{R}^n,\quad    \Pi(w_1, \dots, w_{2n}, t) = (w_1, \dots, w_n), 
\end{equation*}
and
\begin{equation*}
 \Pi^{\bot}: \Hn \to \mathbb{R}^n,\quad    \Pi^\bot(w_1, \dots, w_{2n}, t) = (w_{n+1}, \dots, w_{2n}).
\end{equation*}
We also use the projection 
\begin{displaymath}
\pi:\mathbb{H}^n \to \mathbb{R}^{2n},\quad \pi(z,t)=z.
\end{displaymath} With this notation, $\pi(z,t)=(\Pi(z,t),\Pi^{\bot}(z,t))$.

For the intrinsic graph $\Gamma$ of an intrinsic Lipschitz function $\varphi:\mathbb{V}_0\to \W_0$, we aim to bound $ \widehat{\beta}^{\Gamma}_{2,\mathcal{V}_n}(x,r)$ from above by a suitable power of the quantity $\Omega_{2, \Pi^{\bot}(\varphi)}^{iso}(\Pi(x),r)$ in the isotropic Dorronsoro theorem for Lipschitz functions (Proposition \ref{aiso}).

\begin{theorem}\label{t:ControlStratifBetaByOmega}
Assume that $\varphi:\mathbb{V}_0 \to \mathbb{W}_0$ is an intrinsic Lipschitz function. If $\Gamma$ denotes the intrinsic graph of $\varphi$, then 

\begin{displaymath}
\widehat{\beta}^{\Gamma}_{2,\mathcal{V}_n}(x,r)^{4}\lesssim_{n,\mathrm{Lip}(\varphi_1),\dots,\mathrm{Lip}(\varphi_n)} \Omega^{iso}_{2,\Pi^{\bot}(\varphi)}(\Pi(x),r)^2,\quad x\in \Gamma,\,r>0.
\end{displaymath}
\end{theorem}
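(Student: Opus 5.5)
Fix $x=\Phi(v)\in\Gamma$ with $v=\Pi(x)$ and $r>0$, and write $f:=\Pi^{\bot}(\varphi)=(\varphi_1,\dots,\varphi_n)$. The plan is to take an isotropic affine $A:\mathbb{R}^n\to\mathbb{R}^n$ that (nearly) realizes $\Omega^{iso}_{2,f}(v,Cr)^2=:\varepsilon^2$ on a slightly enlarged Euclidean ball $B_{\mathbb{R}^n}(v,Cr)$. The constant $C$ is chosen so that $\Pi(B(x,r)\cap\Gamma)\subset B_{\mathbb{R}^n}(v,r)$; in fact $C=1$ suffices because $\Pi$ is $1$-Lipschitz from $(\mathbb{H}^n,d)$. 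We lift $A$ to an intrinsic affine map $(A,A_{n+1})$ by Proposition~\ref{ahp}, so that $\nabla A_{n+1}=-A$. The competitor in $\widehat\beta$ is the horizontal plane $\mathbb{V}:=\mathrm{gr}(A,A_{n+1})$, with the additive constant in $A_{n+1}$ still free by Remark~\ref{r:VertTranslIntrAff}. If $\varepsilon\gtrsim 1$ there is nothing to prove, since $\widehat\beta\lesssim 1$. Otherwise, when $\varepsilon\leq\mathrm{Lip}(f)$, Lemma~\ref{lemma_lipconst} gives $\mathrm{Lip}(A)\lesssim\mathrm{Lip}(f)$; this is needed for the constants in the steps below.

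\emph{Step 1: reduce to the parameter domain.} By Corollary~\ref{cov}, Lemma~\ref{rem_jacobian} (an upper bound on the Jacobian), and the trivial lower bound $J\ge 1$ (since $DF_v$ contains $\mathbb{I}_n$), combined with Ahlfors regularity of $\Gamma$, averages over $B(x,r)\cap\Gamma$ are dominated by averages over $B_{\mathbb{R}^n}(v,r)$ of the same integrand composed with $\Phi$. For the horizontal term, $d_{\rm Eucl}(\pi(\Phi(z)),\pi(\mathbb{V}))\le|f(z)-A(z)|$, so this term is at most $\varepsilon^2$ directly. For the metric term, I will bound $d(\Phi(z),\mathbb{V})\le d(\Phi(z),z\cdot(A,A_{n+1})(z))$ and compute the latter. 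Both points have first component $z$, and
\begin{displaymath}
(z\cdot(A,A_{n+1})(z))^{-1}\cdot\Phi(z)=\bigl(0,\,f(z)-A(z),\,(\varphi_{n+1}-A_{n+1})(z)+\tfrac12\textstyle\sum_j z_j(f_j-A_j)(z)-\tfrac12\omega\text{-cross term}\bigr).
\end{displaymath}
After a left translation moving $v$ to $0$, the vertical entry becomes $g(z)+O(|z-v|\,|f(z)-A(z)|)$, where $g:=\varphi_{n+1}-A_{n+1}$. The precise form of the correction comes from the group law. Consequently
\begin{displaymath}
d(\Phi(z),\mathbb{V})^2\lesssim |f-A|^2+|g(z)|+r|f-A|(z).
\end{displaymath}

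\emph{Step 2: Poincar\'e for the vertical error.} By Theorem~\ref{horcon}(ii), $\nabla\varphi_{n+1}=-f$ everywhere, so $\nabla g=-(f-A)$. Fix the free constant so that $g$ has mean zero on $B_{\mathbb{R}^n}(v,r)$. The Poincar\'e inequality then gives
\begin{displaymath}
\fint|g|\le\Bigl(\fint|g|^2\Bigr)^{1/2}\lesssim_n r\Bigl(\fint|f-A|^2\Bigr)^{1/2}=r^2\varepsilon.
\end{displaymath}
Combining with Step 1 and Cauchy--Schwarz,
\begin{displaymath}
\fint_{B_{\mathbb{R}^n}(v,r)}\frac{d(\Phi(z),\mathbb{V})^2}{r^2}\,dz\lesssim\varepsilon^2+\varepsilon+\varepsilon\lesssim\varepsilon,
\end{displaymath}
using $\varepsilon\lesssim1$. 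Squaring gives $\bigl(\fint d^2/r^2\bigr)^2\lesssim\varepsilon^2$. Together with the horizontal term, which is already at most $\varepsilon^2$, this yields $\widehat\beta^\Gamma_{2,\mathcal{V}_n}(x,r)^4\lesssim\varepsilon^2$.

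\emph{Main obstacle.} The delicate point is Step 1's bookkeeping: getting the exact vertical coordinate of $(z\cdot(A,A_{n+1})(z))^{-1}\Phi(z)$ via the group law, and verifying that the cross terms are only of size $r|f-A|$, linear in the error. This is what forces the exponent $4$ against $2$. Here $\mathrm{Lip}(A)\lesssim\mathrm{Lip}(f)$ from Lemma~\ref{lemma_lipconst} keeps all implicit constants depending only on $n$ and $\mathrm{Lip}(\varphi_i)$.
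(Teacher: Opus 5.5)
Your proposal is correct and follows the same route as the paper: reduce to the parameter ball by the area formula, lift a near-optimal isotropic affine $A$ to $(A,A_{n+1})$ with $\nabla A_{n+1}=-A$, normalize $g=\varphi_{n+1}-A_{n+1}$ to have mean zero on $B_{\mathbb{R}^n}(\Pi(x),r)$, and bound $\fint|g|$ by the Poincar\'e inequality via $\nabla g=-(\Pi^{\bot}(\varphi)-A)$. The ``main obstacle'' you flag is not actually there. Since $\mathbb{W}_0$ is abelian when $k=n$ and $d$ is left-invariant, one has exactly $\mathbb{A}(z)^{-1}\Phi(z)=(0,\,f(z)-A(z),\,g(z))$. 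Hence the translation, the extra $r|f-A|$ term and Lemma~\ref{lemma_lipconst} are unnecessary, though harmless, and the exponent $4$ comes solely from $|g|$ entering the Kor\'anyi norm through a square root.
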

\begin{proof}
    \noindent \textbf{Step 1: Reduction to parameterized version.}
For fixed $x\in \Gamma$ and $r>0$, consider
\begin{equation*}
   \widehat{\beta}^{\Gamma}_{2,\mathcal{V}_n}
(x, r) = \inf_{V\in\mathcal{V}_n} \left[\fint_{B(x,r) \cap \Gamma} \left(\frac{d_{\rm Eucl}(\pi(y), \pi(V))}{r}\right)^2 d\mathcal{H}^n(y)+ \left(\fint_{B(x,r) \cap \Gamma} \left(\frac{d(y, V)}{r} \right)^2d\mathcal{H}^n(y)\right)^2\right]^{1/4}.
\end{equation*}
We will replace these integrals by integrals over domains in $\mathbb{R}^n$, as in the definition of $\Omega^{iso}_{2,\Pi^{\bot}(\varphi)}$. 
Recall that $\Phi:\V_0\to\Hn$ denotes the graph map associated to $\varphi.$

First, by AD-regularity of the graph of an entire intrinsic Lipschitz map (\cite[Theorem 3.9]{zbMATH06607318}), we have $\mathcal{H}^n(\Gamma \cap B(x,r)) \sim r^n$. Second, since $\Phi^{-1}(\Gamma \cap B(x,r))= \Pi(\Gamma \cap B(x,r))$, by the area formula (Corollary \ref{cov}), and by noting that the Jacobian $J(d\Phi_v) \lesssim_{n,\mathrm{Lip}(\varphi_1),\dots,\mathrm{Lip}(\varphi_n)} 1 $ a.e. $v$ (see Lemma \ref{rem_jacobian}), we have for every nonnegative Borel function $f:\mathbb{H}^n \to \mathbb{R}$ that 
\begin{align}
  \int_{\Gamma \cap B(x,r)}f(y)\,d\mathcal{H}^n(y)=   \int_{\Pi(\Gamma \cap B(x,r))} f(\Phi(v)) J(d\Phi_v) dv\notag \lesssim_{n,\mathrm{Lip}(\varphi_1),\dots,\mathrm{Lip}(\varphi_n)}   \int_{\Pi(\Gamma \cap B(x,r))} f(\Phi(v)) dv. \label{eq:4.3}
\end{align}
Third, one can easily check, by the definition of Kor\'{a}nyi norm, that for $x \in \Hn$:
\begin{equation*}
    \Pi(\Gamma \cap B(x,r)) \subset \Pi(B(x,r)) \subset B_{\mathbb{R}^n}(\Pi(x), r)
\end{equation*}
where $B(x,r)$ is a ball in $\Hn$, and $B_{\mathbb{R}^n}(\Pi(x), r)$ is a ball in $\R^n\equiv \mathbb{V}_0$. Combining these three ingredients, we deduce that, for all $\V\in \mathcal{V}_n$,
\begin{equation}\label{eq:UpperStratifiedBeta2}
 \widehat{\beta}^{\Gamma}_{2,\mathcal{V}_n}
(x, r)\lesssim_{n,\mathrm{Lip}(\varphi_i)}
 \left[\fint_{B_{\mathbb{R}^n}(\Pi(x),r)} \left(\tfrac{d_{\rm Eucl}(\pi(\Phi(v)), \pi(\V))}{r}\right)^2 dv + \left(\fint_{B_{\mathbb{R}^n}(\Pi(x),r)} \left(\tfrac{d(\Phi(v), \V)}{r}\right)^2 dv\right)^2\right]^{1/4}.
\end{equation}
Now let $A:\mathbb{R}^n\to \mathbb{R}^n$ be an arbitrary affine isotropic map. 
Then there exists a function $A_{n+1}:\mathbb{R}^n \to \mathbb{R}$ such that the intrinsic graph of $(A,A_{n+1})$ is an affine horizontal $n$-plane (see Proposition \ref{ahp}). Such $A_{n+1}$ is not unique, but we will choose it so that

\begin{equation}\label{eq:VertCondNew}
 \int_{B_{\R^n}(\Pi(x),r)}(\varphi_{n+1}(v)-A_{n+1}(v))dv=0.
\end{equation}

This is possible by adding a suitable constant, as explained in Remark \ref{r:VertTranslIntrAff}. Thus, given an arbitrary affine isotropic $A$, the choice of $A_{n+1}$ depends additionally on $\varphi_{n+1}|_{B_{\R^n}(\Pi(x),r)}$.

 Let $\mathbb{A}$ be the graph map of $(A,A_{n+1})$. 
Then $\V:=\mathbb{A}(\mathbb{V}_0)$ is an affine horizontal $n$-plane for which we will apply \eqref{eq:UpperStratifiedBeta2} using the estimates
\begin{displaymath}
    d_{\rm Eucl}(\pi(\Phi(v)),\pi(\V))\leq |(v,\Pi^{\bot}(\varphi)(v))-(v,A(v))|=|\Pi^{\bot}(\varphi)(v)-A(v)|,\quad v\in \mathbb{R}^n,
\end{displaymath}
and
\begin{align*}
d(\Phi(v),\V)\leq d(\Phi(v),\mathbb{A}(v))&= d(\varphi(v),(A,A_{n+1})(v))\\&\sim |\Pi^{\bot}(\varphi)(v)-A(v)|+\sqrt{|\varphi_{n+1}(v)-A_{n+1}(v)|},\quad v\in \mathbb{R}^n.
\end{align*}
Using these, we deduce that
\begin{equation}\label{eq:BetaAest2}
     \widehat{\beta}^{\Gamma}_{2,\mathcal{V}_n}
(x, r)\lesssim_{n,\mathrm{Lip}(\varphi_1),\dots,\mathrm{Lip}(\varphi_n)}\inf_{A} [I_{\pi,2}(A) + \left(I_{\pi,2}(A)+ I_{t,2}(A)\right)^2]^{1/4},
\end{equation}
where the infimum runs over all affine isotropic maps $A$, and
\begin{displaymath}
I_{\pi,2}(A):=   \fint_{B_{\mathbb{R}^n}(\Pi(x),r)} \left(\frac{|\Pi^{\bot}(\varphi)(v)-A(v)|}{r}\right)^2 dv
\end{displaymath}
and
\begin{displaymath}
I_{t,2}(A)   := \fint_{B_{\mathbb{R}^n}(\Pi(x),r)} \frac{|\varphi_{n+1}(v)-A_{n+1}(v)|}{r^2} dv,
\end{displaymath}
where $A_{n+1}$ is the lift associated with $A$ as in \eqref{eq:VertCondNew}.

\medskip

\noindent \textbf{Step 2: Comparing with  $\Omega^{iso}_{2,\Pi^{\bot}(\varphi)}(\Pi(x),r)$.} 

Let us now compare the infimum on the right-hand side of \eqref{eq:BetaAest2} with the definition
\begin{equation}\label{eq:Omega_square}
  \left(\Omega^{iso}_{2,\Pi^{\bot}(\varphi)}(\xi,r)\right)^2 := \inf_{A}  \fint_{B_{\mathbb{R}^n}(\xi,r)} \left( \frac{|\Pi^{\bot}(\varphi)(y) - A(y)|}{r} \right)^2 dy, \quad \xi\in \mathbb{R}^n
\end{equation}
where the infimum runs over affine isotropic maps $A:\mathbb{R}^n \to \mathbb{R}^n$. 
Notice that, in the trivial case when $\mathrm{Lip}(\Pi^\perp(\varphi))=0$, then $\Pi^\bot(\varphi)$ is a constant map. In this situation one can choose $A\equiv \Pi^\bot(\varphi)$, which is an affine isotropic map satisfying $I_{\pi,2}(A)=I_{t,2}(A)=0$. Therefore, (\ref{eq:BetaAest2}) implies that $\widehat{\beta}^{\Gamma}_{2,\mathcal{V}_n}
(x, r)=0$ and the conclusion of the theorem is satisfied. Hence we can assume in the following that $\mathrm{Lip}(\Pi^\perp(\varphi))>0$. Fix an arbitrary $0<\varepsilon<\mathrm{Lip}(\Pi^\bot(\varphi))^2$.
Choose $A=A_\varepsilon$ to be an affine isotropic map almost realizing the infimum in  $\Omega^{iso}_{2,\Pi^{\bot}(\varphi)}(\Pi(x),r)$,
namely
\begin{equation}\label{A_eps}
   I_{\pi,2}(A)= \fint_{B_{\mathbb{R}^n}(\Pi(x),r)} \left( \frac{|\Pi^{\bot}(\varphi)(y) - A(y)|}{r} \right)^2 dy <\left(\Omega^{iso}_{2,\Pi^{\bot}(\varphi)}(\Pi(x),r)\right)^2 +\varepsilon.
\end{equation}
Thus, the main task is to control the term $I_{t,2}(A)^{1/2}$ in \eqref{eq:BetaAest2}.
\medskip

\noindent \textbf{Step 3: Controlling the vertical error.} 
We consider the expression
\begin{displaymath}
I_{t,2}(A)   = \fint_{B_{\mathbb{R}^n}(\Pi(x),r)}\frac{|\varphi_{n+1}(v)-A_{n+1}(v)|}{r^2}dv,
\end{displaymath}
 where $A_{n+1}$ is chosen such that $\nabla A_{n+1}=-A$ and \eqref{eq:VertCondNew} holds.
  We denote 
\begin{displaymath}
    u:= \varphi_{n+1}-A_{n+1},
\end{displaymath}
which is in $W^{1,2}_{\rm loc}(\mathbb{R}^n)$. Then, by Cauchy–Schwarz and $(2,2)$-Poincaré's inequality for a ball (see, e.g., \cite[\S 5.8.1, Theorem 2]{MR2597943}), we have\begin{align*}
I_{t,2}^2(A)&\le\frac{1}{r^4}\fint_{B_{\R^n}(\Pi(x),r)}|u(v)|^2dv\\
&\lesssim_n\fint_{B_{\R^n}(\Pi(x),r)}\left(\frac{|\nabla u(v)|}{r}\right)^2dv=I_{\pi,2}(A).
 \end{align*}
 Here we have used for the last identity that
 $\nabla \varphi_{n+1}=-\Pi^{\bot}(\varphi)$ (by Theorem \ref{horcon}). 
Therefore, by \eqref{eq:BetaAest2}
\[ \begin{split} \widehat{\beta}^{\Gamma}_{2,\mathcal{V}_n}
(x, r)^4&\lesssim_{n,\mathrm{Lip}(\varphi_1),\dots,\mathrm{Lip}(\varphi_n)} I_{\pi,2}(A) + \left(I_{\pi,2}(A)+ I_{t,2}(A)\right)^2\\ &\lesssim_{n,\mathrm{Lip}(\varphi_1),\dots,\mathrm{Lip}(\varphi_n)} I_{\pi,2}(A)+I_{\pi,2}(A)^{2}. \end{split}\]
To conclude, we will show that $I_{\pi,2}(A)$ is bounded in terms of the Lipschitz constants of $\varphi_1,\ldots,\varphi_n$, so that $I_{\pi,2}(A)^{2}\lesssim_{n,\mathrm{Lip}(\varphi_1),\dots,\mathrm{Lip}(\varphi_n)} I_{\pi,2}(A)$. This is indeed the case:
Since the map $\bar A:\R^n\to\R^n$, $\bar A(v)\equiv \Pi^\bot(\varphi)(\Pi(x))$ is affine isotropic, then \begin{equation*}
\left(\Omega^{iso}_{2,\Pi^\bot(\varphi)}(\Pi(x),r)\right)^2\leq \fint_{B_{\R^n}(\Pi(x),r)}\left(\frac{|\Pi^\bot(\varphi)(y)-\Pi^\bot(\varphi)(\Pi(x))|}{r}\right)^2dy\leq \mathrm{Lip}(\Pi^\bot(\varphi))^2.\end{equation*}
Therefore, $I_{\pi,2}(A)$ can be bounded as desired, using also \eqref{A_eps}.
 Thus, we finally get \begin{align}
\widehat{\beta}^{\Gamma}_{2,\mathcal{V}_n}(x,r)^{4}&\lesssim_{n,\mathrm{Lip}(\varphi_1),\dots,\mathrm{Lip}(\varphi_n)} I_{\pi,2}(A)\\&\lesssim_{n,\mathrm{Lip}(\varphi_1),\dots,\mathrm{Lip}(\varphi_n)} \Omega^{iso}_{2,\Pi^{\bot}(\varphi)}(\Pi(x),r)^2+\varepsilon,
\end{align}
and we conclude by the arbitrariness of $\varepsilon.$
\end{proof}

We can now state a {geometric lemma} in the sense of Definition \ref{def:qgeomlemballs} for $n$-dimensional intrinsic Lipschitz graphs in $\mathbb{H}^n$ and stratified $\beta$-numbers:

\begin{theorem}\label{t:GlemStratBetanILG}
Let $\mathbb{V}$ be an $n$-dimensional horizontal subgroup of $\mathbb{H}^n$ with complementary vertical subgroup $\mathbb{W}$. If $E\subset \mathbb{H}^n$ is the intrinsic graph of an intrinsic Lipschitz function $\varphi:\mathbb{V}\to \mathbb{W}$, then
\begin{equation*}
    \int_{E\cap B(y,R)} \int_0^R {\widehat\beta {^E_{2,\mathcal{V}_n}}}(x, r)^4 \frac{dr}{r} d\mathcal{H}^n(x) \lesssim_{n, \mathrm{Lip}(\varphi_1),\ldots, \mathrm{Lip}(\varphi_n)} R^n,\qquad y\in E,\,R>0.
\end{equation*}
\end{theorem}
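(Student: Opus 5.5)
The plan is to combine the pointwise comparison of Theorem \ref{t:ControlStratifBetaByOmega} with the Carleson-type estimate of Proposition \ref{aiso}, transferring integrals over $E$ to integrals over $\mathbb{R}^n$ via the graph map.

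\textbf{Reduction to the standard splitting.} By Lemma \ref{l:RotRed} there is $U\in U(n)$ such that $R_U(E)$ is the intrinsic graph of an intrinsic Lipschitz map $\mathbb{V}_0\to\mathbb{W}_0$ with the same constant. The map $R_U$ is an isometry, so it preserves $\mathcal{H}^n$ and balls. By Lemma \ref{l:RotInvCoeff}, it also preserves the coefficients $\widehat\beta_{2,\mathcal{V}_n}$. Hence we may assume $\mathbb{V}=\mathbb{V}_0$ and $\mathbb{W}=\mathbb{W}_0$. The Lipschitz constants of the horizontal components are controlled by the intrinsic Lipschitz constant (Theorem \ref{horcon}(i)), so I will track dependence through these quantities.

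\textbf{Pointwise bound and change of variables.} Fix $y\in E$ and $R>0$. By Theorem \ref{t:ControlStratifBetaByOmega}, for every $x\in E$ and $r>0$,
\[
\widehat\beta^E_{2,\mathcal{V}_n}(x,r)^4\lesssim \Omega^{iso}_{2,\Pi^\bot(\varphi)}(\Pi(x),r)^2 .
\]
Next I apply the area formula (Corollary \ref{cov}) together with the Jacobian bound of Lemma \ref{rem_jacobian}, using Tonelli to exchange the $r$- and $x$-integrals. This gives
\[
\int_{E\cap B(y,R)}\int_0^R \widehat\beta^E_{2,\mathcal{V}_n}(x,r)^4\frac{dr}{r}\,d\mathcal{H}^n(x)\lesssim \int_{\Pi(E\cap B(y,R))}\int_0^R \Omega^{iso}_{2,\Pi^\bot(\varphi)}(v,r)^2\frac{dr}{r}\,dv .
\]
Here $x=\Phi(v)$ with $\Pi(x)=v$. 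Measurability of the integrand is routine; for instance, one can use that $\Omega^{iso}$ depends continuously on $(v,r)$, since $\Pi^\bot(\varphi)$ is Lipschitz.

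\textbf{Domain containment and conclusion.} By the Korányi norm estimate already used in the proof of Theorem \ref{t:ControlStratifBetaByOmega}, $\Pi(E\cap B(y,R))\subset B_{\mathbb{R}^n}(\Pi(y),R)$. The map $f=\Pi^\bot(\varphi)$ is Euclidean Lipschitz and weakly graph isotropic by Theorem \ref{horcon}(iii). Proposition \ref{aiso} therefore bounds the right-hand side by $C(n,\mathrm{Lip}(f))R^n$, which completes the proof.

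\textbf{Main obstacle.} The only real subtlety is the measurability and Tonelli step, together with checking that the constants depend only on $n$ and the $\mathrm{Lip}(\varphi_i)$. The analytic content is already contained in Theorem \ref{t:ControlStratifBetaByOmega} and Proposition \ref{aiso}.
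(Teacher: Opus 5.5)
Your proposal is correct and follows the paper's proof essentially verbatim. You reduce to $\mathbb{V}_0,\mathbb{W}_0$ via Lemmas \ref{l:RotRed} and \ref{l:RotInvCoeff}, apply the pointwise bound of Theorem \ref{t:ControlStratifBetaByOmega}, transfer to $B_{\mathbb{R}^n}(\Pi(y),R)$ via the area formula and Jacobian bound, and conclude with Proposition \ref{aiso}, made applicable by Theorem \ref{horcon}; for measurability, upper semicontinuity of $\Omega^{iso}$, as an infimum of functions continuous in $(v,r)$, already suffices in place of full continuity.
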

\begin{proof}
By Lemma \eqref{l:RotRed} and Lemma \ref{l:RotInvCoeff}, we may assume that
$\V$ and $\W$ equal the standard subgroups,
$ \V=\V_0$ and $\W=\W_0$, as in \eqref{eq:V0W0}.
    The proof then follows immediately by combining Theorem \ref{t:ControlStratifBetaByOmega} with Proposition \ref{aiso} (isotropic Dorronsoro theorem) and Theorem \ref{diff}, using the area formula as in the first part of the proof of Theorem \ref{t:ControlStratifBetaByOmega}.
\end{proof}

\begin{remark}
    The argument used in the  proof of Theorem \ref{t:GlemStratBetanILG} also implies the case $k=n$ of Theorem \ref{t:ParamGLem}.
\end{remark}

The next result follows directly from  Theorem \ref{t:GlemStratBetanILG}, since $\beta^E_{2,\pi,\mathcal{V}_n}(x, r)^2\leq {\widehat\beta {^E_{2,\mathcal{V}_n}}}(x, r)^4$.

\begin{theorem}\label{t:GlemProjBetanILG}
 Let $\mathbb{V}$ be an $n$-dimensional horizontal subgroup of $\mathbb{H}^n$ with complementary vertical subgroup $\mathbb{W}$. If $E\subset \mathbb{H}^n$ is the intrinsic graph of an intrinsic Lipschitz function $\varphi:\mathbb{V}\to \mathbb{W}$, then
\begin{equation*}
    \int_{E\cap B(y,R)} \int_0^R {\beta^E_{2,\pi,\mathcal{V}_n}}(x, r)^2 \frac{dr}{r} d\mathcal{H}^n(x) \lesssim_{n, \mathrm{Lip}(\varphi_1),\ldots, \mathrm{Lip}(\varphi_n)} R^n,\qquad y\in E,\,R>0.
\end{equation*}
\end{theorem}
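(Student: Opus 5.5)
The plan is to deduce Theorem \ref{t:GlemProjBetanILG} from Theorem \ref{t:GlemStratBetanILG} by a pointwise comparison of coefficients, with no new analysis needed.

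\textbf{Step 1: pointwise comparison.} Fix $x\in E$, $r>0$, and let $\V\in\mathcal{V}_n$ be arbitrary. Write
\[
a(\V):=\fint_{B(x,r)\cap E}\Big(\tfrac{d_{\rm Eucl}(\pi(y),\pi(\V))}{r}\Big)^2\,d\mathcal H^n(y),\qquad b(\V):=\fint_{B(x,r)\cap E}\Big(\tfrac{d(y,\V)}{r}\Big)^2\,d\mathcal H^n(y).
\]
In Definition \ref{d:StratifBeta} with $p=2$ the first term carries the exponent $2/p=1$. Hence the quantity inside the infimum defining $\widehat\beta^E_{2,\mathcal V_n}(x,r)^4$ is exactly $a(\V)+b(\V)^2\ge a(\V)$.

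Taking the infimum over $\V\in\mathcal V_n$ on both sides, and noting that $\inf_{\V}a(\V)=\beta^E_{2,\pi,\mathcal V_n}(x,r)^2$ by the definition of the horizontal projection $\beta$-numbers, we get
\[
\beta^E_{2,\pi,\mathcal V_n}(x,r)^2\le\widehat\beta^E_{2,\mathcal V_n}(x,r)^4\qquad\text{for all }x\in E,\ r>0.
\]

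\textbf{Step 2: integrate.} Integrate this inequality over $E\cap B(y,R)$ against $\mathcal H^n$ and over $(0,R)$ against $dr/r$. Theorem \ref{t:GlemStratBetanILG} then bounds the right-hand side by $C(n,\mathrm{Lip}(\varphi_1),\ldots,\mathrm{Lip}(\varphi_n))R^n$, which gives the claim.

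\textbf{Measurability.} The only point needing care is that the coefficient be $\mathcal H^n\otimes\mathcal L^1$-measurable, so that the left-hand integral makes sense. If this is not settled elsewhere, I would interpret the integral as an upper integral. Alternatively, one can note that $(x,r)\mapsto\beta^E_{2,\pi,\mathcal V_n}(x,r)$ is an infimum of continuous-in-parameter averages over a separable family. Either way the monotone comparison passes through unchanged.

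I expect no genuine obstacle here; the argument is purely a matter of unwinding the definitions.
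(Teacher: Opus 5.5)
Your proposal is correct and matches the paper's argument: the paper likewise deduces Theorem~\ref{t:GlemProjBetanILG} from Theorem~\ref{t:GlemStratBetanILG} via the pointwise inequality $\beta^E_{2,\pi,\mathcal{V}_n}(x,r)^2\le\widehat\beta^E_{2,\mathcal{V}_n}(x,r)^4$. That inequality follows, exactly as you unwind it, from the $p=2$ case of Definition~\ref{d:StratifBeta}, where the first summand appears with exponent $1$.
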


\subsection{An $L^{\infty}$-based Dorronsoro theorem for $k=1$}
\label{ss:Glemk=1}

For $1$-dimensional intrinsic Lipschitz graphs in $\mathbb{H}^n$, one can obtain a  stronger version of the geometric lemma with $L^{\infty}$- instead of $L^2$-based coefficients, see Theorem \ref{t:GlemStratBetanILGLinftyk=2} below. This is due to a similar improvement of the classical Dorronsoro theorem in the $1$-dimensional case, which we recall here.
\begin{theorem}[Dorronsoro's theorem, $L^{\infty}$-version for the $1$-dimensional case]\label{t:DorronsoroLinftyk=1}
    Let $f:\mathbb{R}\to \mathbb{R}$ be an $L$-Lipschitz function. Then the following holds 
    \begin{displaymath}
        \int_{B(y,R)}\int_0^R \Omega_{\infty,f}(x,r)^2\frac{dr}{r}dx\lesssim_{\mathrm{Lip}(f)} R,\quad y\in \mathbb{R},\,R>0,
    \end{displaymath}
    where $\Omega_{\infty,f}(x,r):=\inf_A \sup_{y\in B(x,r)}\frac{|f(y)-A(y)|}{r}$ and the infimum runs over all affine functions $A:\mathbb{R}\to \mathbb{R}$.
\end{theorem}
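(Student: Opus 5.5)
The plan is to deduce the statement from the classical $L^2$-Dorronsoro theorem in dimension one, Theorem \ref{t:DorronsoroSpecial}, together with a one-dimensional Sobolev (Morrey) embedding. This embedding upgrades $L^2$ affine approximation to $L^\infty$ approximation at a comparable scale. The truncation step is the same as in the proof of Proposition \ref{aiso}.

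\textbf{Step 1 (reduction).} By translation invariance (replace $f$ by $f(\cdot+y)-f(y)$, which leaves $\Omega_{\infty,f}$ unchanged since affine maps are preserved), assume $y=0$ and $f(0)=0$. Let $\tilde f$ be a Lipschitz function with $\tilde f=f$ on $B(0,4R)$, $\tilde f=0$ outside $B(0,8R)$ and $\mathrm{Lip}(\tilde f)\lesssim \mathrm{Lip}(f)$. For $x\in B(0,R)$ and $r<R$ we have $\Omega_{\infty,f}(x,r)=\Omega_{\infty,\tilde f}(x,r)$, and $\|\tilde f'\|_{L^2}^2\lesssim \mathrm{Lip}(f)^2R$.

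\textbf{Step 2 (pointwise comparison, the key step).} I will show
\[
\Omega_{\infty,g}(x,r)^2\lesssim \Big(\fint_{B(x,2r)}|g'-c_{x,2r}|^2\Big)\quad\text{with } c_{x,2r}=\fint_{B(x,2r)}g' ,
\]
for Lipschitz $g:\mathbb{R}\to\mathbb{R}$. To see this, take $A(z)=g(x)+c(z-x)$ with $c=c_{x,2r}$. For $z\in B(x,r)$, Cauchy--Schwarz gives
\[
|g(z)-A(z)|=\Big|\int_x^z(g'-c)\Big|\le r^{1/2}\|g'-c\|_{L^2(B(x,r))}\lesssim r\Big(\fint_{B(x,2r)}|g'-c|^2\Big)^{1/2}.
\]
Dividing by $r$ yields the claim. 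The right-hand side is a mean-oscillation (square function) quantity for $h=g'\in L^2$. One then needs
\[
\int_{\mathbb{R}}\int_0^\infty \fint_{B(x,r)}|h-h_{B(x,r)}|^2\,\frac{dr}{r}\,dx\lesssim\|h\|_{L^2}^2,
\]
which is false in general, since the oscillation does not decay at large scales in an integrable way. This is the expected obstacle.

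\textbf{Step 2, corrected route.} Instead, apply the same Cauchy--Schwarz argument with $h-\,$(affine-approximation quantity) using Dorronsoro directly. Let $A$ be any affine map and set $e=g-A$. The one-dimensional Morrey/Gagliardo–Nirenberg interpolation on an interval $I=B(x,r)$ gives
\[
\|e\|_{L^\infty(I)}^2\lesssim \fint_I|e|^2+\Big(\fint_I|e|^2\Big)^{1/2}\, r\Big(\fint_I|e'|^2\Big)^{1/2}.
\]
Hence $\Omega_{\infty,g}(x,r)^2\lesssim \Psi+\Psi^{1/2}\,\mathrm{osc}$, where $\Psi=\Omega_{2,g}(x,r)^2$ and $\mathrm{osc}^2=\fint_I|g'-A'|^2$. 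This still involves a derivative oscillation term. The cleanest route is to use the known characterization: the derivative-oscillation square function of a Lipschitz function is controlled by Dorronsoro at the level of $L^2$ of $g'$ via Plancherel. Concretely, with a smooth averaging kernel one computes
\[
\int\!\!\int |(g'*\phi_r)(x)-(g'*\phi_{2r})(x)|^2\frac{dr}{r}dx\lesssim\|g'\|_2^2,
\]
and uses a telescoping/Littlewood–Paley decomposition to bound $\sup_{I}|e|/r$ by $\sum_{j\ge0}2^{-j/2}$-weighted differences of averaged derivatives at scales $2^{-j}r$. I would pick $A'=(g'*\phi_r)(x)$ as in Azzam's choice \eqref{affAZZ}, write $g'-A'$ on $I$ as a telescoping sum of $g'*\phi_{2^{-j}r}-g'*\phi_{2^{-j+1}r}$, and bound $\int_x^z$ of each term. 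Then I would sum via Cauchy–Schwarz with geometric weights and integrate in $dr/r\,dx$, invoking Plancherel. I expect this Littlewood–Paley bookkeeping to be the main technical point.

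\textbf{Step 3 (conclusion).} Integrating over $x\in B(0,R)$ and $r<R$ then gives
\[
\int_{B(0,R)}\int_0^R\Omega_{\infty,f}^2\,\frac{dr}{r}\,dx\lesssim\|\tilde f'\|_2^2\lesssim_{\mathrm{Lip}(f)}R,
\]
which is the statement.
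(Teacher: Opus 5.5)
Step~1 is fine, and you diagnose the failure of your first pointwise bound correctly. The interpolation detour is a dead end as well: all you know about $\fint_I|g'-A'|^2$ is that it is $\lesssim\mathrm{Lip}(g)^2$, so it gives only $\Omega_{\infty,g}^2\lesssim\Omega_{2,g}^2+\mathrm{Lip}(g)\,\Omega_{2,g}$, i.e.\ a Carleson bound for $\Omega_{\infty,g}^4$, not $\Omega_{\infty,g}^2$.

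\textbf{The gap is in the final step, which carries the whole content of the theorem.} You assert that $\sup_{B(x,r)}|g-A|/r$ is dominated by Littlewood--Paley pieces at scales $2^{-j}r$ with summable weights $2^{-j/2}$, but nothing you describe produces that decay. Suppose you bound $\int_x^z$ of each term $F_j=g'*(\phi_{2^{-j}r}-\phi_{2^{-j+1}r})$ and then apply Cauchy--Schwarz, as you indicate. This gives $\bigl|\int_x^zF_j\bigr|\le r\bigl(\fint_{B(x,r)}|F_j|^2\bigr)^{1/2}$, with no gain in $j$. After integrating in $dx\,dr/r$, each $j$ contributes $\sim\|g'\|_2^2$ by Plancherel, and the sum over $j$ diverges. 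This is the same loss of cancellation as in your first attempt.

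There is also a missing term. Since $A'=(g'*\phi_r)(x)$ is a constant, $g'-A'$ on $B(x,r)$ is not the telescoping sum you write: $(g'*\phi_r)(t)-(g'*\phi_r)(x)$ must be added. It is harmless, being at most $2\fint_{B(x,r)}|g'*(\phi')_r|$, which is square-summable since $\int\phi'=0$, but it has to be accounted for.

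\textbf{How to close it.} Integrate each piece exactly instead of estimating it in $L^2$. Take $\phi$ smooth and even, and write $\phi-\phi_2=\Theta'$ with $\Theta(u)=\int_{-\infty}^u(\phi-\phi_2)$. This $\Theta$ is compactly supported and, by evenness, $\int\Theta=0$.
\begin{itemize}
\item With $s=2^{-j}r$ and $G=g'*\Theta_s$, one has $F_j=s\,G'$, so $\bigl|\int_x^zF_j\bigr|\le 2s\sup_{B(x,r)}|G|$.
\item Cover $B(x,r)$ by $\sim r/s$ intervals $P$ of length $s$. On each, $\sup_P|G|\le\fint_P|G|+\int_P|G'|=\fint_P|G|+\fint_P|F_j|$.
\item Summing over $P$ gives $\sup_{B(x,r)}|G|\lesssim (r/s)^{1/2}\bigl(\fint_{B(x,r)}|G|^2+|F_j|^2\bigr)^{1/2}$.
\item Hence the $j$-th piece contributes $\lesssim 2^{-j/2}\bigl(\fint_{B(x,r)}|g'*\Theta_s|^2+|F_j|^2\bigr)^{1/2}$ to $\Omega_{\infty,g}(x,r)$.
\end{itemize}
Weighted Cauchy--Schwarz and Plancherel then give $\int_{\mathbb{R}}\int_0^\infty\Omega_{\infty,g}^2\,\frac{dr}{r}\,dx\lesssim\|g'\|_2^2$, because both $\Theta$ and $\phi-\phi_2$ have mean zero. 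With this ingredient your route becomes a valid continuous proof.

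\textbf{Comparison.} The paper does not prove the statement; it cites Garnett--Marshall. A standard proof obtains the same $2^{-k/2}$ decay from a Haar expansion $f'=\sum_J a_Jh_J$. On a dyadic interval $I$, $f$ minus its chord is $\sum_{J\subseteq I}a_J\int h_J$. At each point at most one Schauder tent per generation is nonzero, and a tent of generation $k$ has height $\tfrac12 2^{-k/2}|I|^{1/2}|a_J|$. This yields $\sum_I\beta_\infty(I)^2|I|\lesssim\sum_J|a_J|^2=\|f'\|_2^2$ without any Fourier analysis.
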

This version of Dorronsoro's theorem is proven in \cite[9, Chapter X, Lemma 2.4]{MR2150803}. We learned it from   \cite[Section 2.3]{MR4345824}. It is a crucial ingredient in the following proof.

\begin{theorem}\label{t:GlemStratBetanILGLinftyk=2}
Let $\mathbb{V}$ be a $1$-dimensional horizontal subgroup of $\mathbb{H}^n$ with complementary vertical subgroup $\mathbb{W}$. If $E\subset \mathbb{H}^n$ is the intrinsic graph of an intrinsic Lipschitz function $\varphi:\mathbb{V}\to \mathbb{W}$, then
\begin{equation*}
    \int_{E\cap B(y,R)} \int_0^R {\widehat\beta {^E_{\infty,\mathcal{V}_1}}}(x, r)^4 \frac{dr}{r} d\mathcal{H}^{1}(x) \lesssim_{n, \mathrm{Lip}(\varphi)} R,\qquad y\in E,\,R>0
\end{equation*}
\end{theorem}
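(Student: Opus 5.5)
By Lemma \ref{l:RotRed} and Lemma \ref{l:RotInvCoeff} we may assume $\mathbb{V}=\mathbb{V}_0=\mathbb{R}\times\{0\}$ and $\mathbb{W}=\mathbb{W}_0$. Then $\varphi=(\varphi_1,\ldots,\varphi_{2n})$, the horizontal part $f:=\Pi^{\bot}(\varphi)=(\varphi_1,\ldots,\varphi_{2n-1})$ is Euclidean $L$-Lipschitz by Theorem \ref{horcon}, and $\varphi_{2n}$ is the weakly graph contact lift of $f$ (Remark \ref{r:HorizLift}). The plan mirrors the case $k=n$. First, a pointwise estimate
\begin{equation*}
\widehat\beta^{E}_{\infty,\mathcal V_1}(x,r)^4\lesssim_{n,L}\Omega_{\infty,f}(\Pi(x),r)^2,
\end{equation*}
where $\Omega_{\infty,f}$ is the vector-valued version of the coefficient in Theorem \ref{t:DorronsoroLinftyk=1}, with the infimum over all affine $A:\mathbb{R}\to\mathbb{R}^{2n-1}$. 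Second, Theorem \ref{t:DorronsoroLinftyk=1} applied componentwise gives
\begin{equation*}
\int_{B(\xi,R)}\int_0^R\Omega_{\infty,f}(v,r)^2\,\tfrac{dr}{r}\,dv\lesssim R.
\end{equation*}
To combine the two, use $\Omega_{\infty,f}^2\le\sum_i\Omega_{\infty,f_i}^2$, since affine maps can be chosen componentwise. Then pass from $E\cap B(y,R)$ to $B_{\mathbb{R}}(\Pi(y),R)$ via the area formula (Corollary \ref{cov}) with the Jacobian bound of Lemma \ref{rem_jacobian}, and use $\Pi(B(y,R))\subset B_{\mathbb{R}}(\Pi(y),R)$.

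For the pointwise estimate, fix $x=\Phi(v_0)$ and $r>0$, and let $A$ be an affine map nearly attaining $\Omega_{\infty,f}(v_0,r)=:\Omega$. By Proposition \ref{isoa} every affine $A$ is isotropic when $k=1$. By Proposition \ref{ahp} it lifts to an intrinsic affine $(A,A_{2n})$, normalized via Remark \ref{r:VertTranslIntrAff} so that $A_{2n}(v_0)=\varphi_{2n}(v_0)$. Let $\mathbb V$ be its intrinsic graph. For $y=\Phi(v)\in B(x,r)\cap E$ we have $|v-v_0|<r$, and
\begin{equation*}
d_{\rm Eucl}(\pi(y),\pi(\mathbb V))\le|f(v)-A(v)|\le r\Omega.
\end{equation*}
This controls the first term by $\Omega^2$. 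For the second term,
\begin{equation*}
d(y,\mathbb V)\le d(\varphi(v),(A,A_{2n})(v))\sim|f(v)-A(v)|+|\varphi_{2n}(v)-A_{2n}(v)|^{1/2},
\end{equation*}
so it suffices to show
\begin{equation*}
|\varphi_{2n}(v)-A_{2n}(v)|\lesssim_{n,L} r^2\Omega \quad\text{for } |v-v_0|<r.
\end{equation*}
That gives $d(y,\mathbb V)^4/r^4\lesssim\Omega^4+\Omega^2\lesssim\Omega^2$, since $\Omega\le L$: compare with the constant map $f(v_0)$.

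The main obstacle is this vertical estimate, because the contact equation \eqref{eq:HorizCondCurve} is nonlinear. Write $g:=f-A$, with $\sup_{B(v_0,r)}|g|\le r\Omega$. Integrate \eqref{eq:HorizCondCurve} for both $\varphi$ and $(A,A_{2n})$ from $v_0$ to $v$ and subtract. The linear term contributes $-\int g_n$, which is bounded by $r\cdot r\Omega$. For each quadratic term $\varphi_{n+i}\varphi_i'-\varphi_i\varphi_{n+i}'$, substitute $\varphi=A+g$. Expanding produces the terms of $A$ (which cancel), the cross terms $A_{n+i}g_i'-A_ig_{n+i}'+g_{n+i}A_i'-g_iA_{n+i}'$, and the pure $g$ term $g_{n+i}g_i'-g_ig_{n+i}'$. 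I expect the derivative factors $g'$ to be the problem: $g'$ is only bounded by $2L$ in general, not by $\Omega$, so a careless bound loses the required power. The first move is integration by parts in the cross terms,
\begin{equation*}
\int A_{n+i}g_i'=[A_{n+i}g_i]-\int A_{n+i}'g_i.
\end{equation*}
Since $A$ is affine, $A'$ is constant, and $|A'|\lesssim L$ by Lemma \ref{lemma_lipconst}, or by its sup-version, which follows from the same argument. To avoid large values of $A$, first left-translate so that $x=0$; this uses the translation invariance of the problem and changes $A$ and $g$ compatibly (this is the weakly graph contact lift of $g$ trick mentioned in the introduction). After this normalization, $|A|\lesssim Lr$ on the ball, and the cross terms are $O(Lr\cdot r\Omega)$. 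For the quadratic term, integrate by parts in the same way to get
\begin{equation*}
\int(g_{n+i}g_i'-g_ig_{n+i}')=[g_{n+i}g_i]-2\int g_ig_{n+i}',
\end{equation*}
which is bounded by $r^2\Omega^2+r\Omega\cdot 2L\cdot r\lesssim_L r^2\Omega$. Altogether
\begin{equation*}
|\varphi_{2n}(v)-A_{2n}(v)|\lesssim_{n,L} r^2\Omega,
\end{equation*}
which completes the pointwise estimate and hence the theorem.
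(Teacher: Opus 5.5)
Your overall structure is the same as the paper's: reduce to $\mathbb{V}_0$, prove $\widehat\beta^E_{\infty,\mathcal V_1}(x,r)^4\lesssim_{n,L}\Omega_{\infty,f}(\Pi(x),r)^2$, then use Theorem~\ref{t:DorronsoroLinftyk=1} componentwise together with the area formula. However, the step
\[
d(\varphi(v),(A,A_{2n})(v))\sim|f(v)-A(v)|+|\varphi_{2n}(v)-A_{2n}(v)|^{1/2}
\]
is false for $n\ge 2$. It holds when $\mathbb{W}_0$ is abelian, i.e.\ for $n=1$, or for $k=n$ as in Theorem~\ref{t:ControlStratifBetaByOmega}. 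For $k=1<n$, however, $\mathbb{W}_0=\{0\}\times\mathbb{R}^{2n-1}\times\mathbb{R}$ is not commutative. Identify $a\in\mathbb{R}^{2n-1}$ with $(0,a)\in\mathbb{R}^{2n}$ and write $g=f-A$. Then the vertical coordinate of $(A,A_{2n})(v)^{-1}\cdot\varphi(v)$ is
\[
D(v)=\varphi_{2n}(v)-A_{2n}(v)-\tfrac12\,\omega(A(v),g(v)),
\]
and $d(\varphi(v),(A,A_{2n})(v))\sim|g(v)|+|D(v)|^{1/2}$. So your bound on $\varphi_{2n}-A_{2n}$ alone does not control $d(y,\mathbb{V})$. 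This non-commutativity is exactly what the paper's weakly graph contact lift $\psi_A$ of $g$ is designed to handle; your translation to $x=0$ is not that trick.

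Your argument can be repaired with what you already have. After normalizing $x=0$, you have $|A|\lesssim_n Lr$ on $B_{\mathbb{R}}(0,r)$. Hence $|\omega(A,g)|\le|A|\,|g|\lesssim_{n,L} r^2\Omega$, which is of the same order as your bound on $\varphi_{2n}-A_{2n}$, and so $|D|\lesssim_{n,L}r^2\Omega$. In fact, the boundary term $[\omega(A,g)]$ produced by your integration by parts is precisely this correction.

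There is also a cleaner route, equivalent to the paper's. Using $\varphi_{2n}'=-\varphi_n+\tfrac12\omega(f,f')$ and $A_{2n}'=-A_n+\tfrac12\omega(A,A')$, one computes
\[
D'=-g_n+\omega(g,A')+\tfrac12\,\omega(g,g')\quad\text{a.e.}
\]
Since $|A'|,|g'|\lesssim_n L$, this gives $|D'|\lesssim_{n,L}|g|$ pointwise. Choose the additive constant in $A_{2n}$ so that $D(v_0)=0$. Then $|D(v)|\le r\sup_{B_{\mathbb{R}}(v_0,r)}|D'|\lesssim_{n,L} r^2\Omega$ directly, with no translation and no integration by parts. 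The integration by parts in your pure-$g$ term is unnecessary anyway, since $|\omega(g,g')|\le|g|\,|g'|$.

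The paper obtains the same identity by splitting $D=(\psi_A)_{2n}-u_2$, where $(\psi_A)_{2n}'=-g_n+\tfrac12\omega(g,g')$ and $u_2'=\omega(A-f,A')$. It then combines the two pieces through the triangle inequality for the Kor\'anyi norm.
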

\begin{proof}
 By Lemma \eqref{l:RotRed} and Lemma \ref{l:RotInvCoeff}, we may  assume that
$ \V=\mathbb{R}^1\times \{0\}$ and $\W=\{0\}\times \mathbb{R}^{2n}$. The claim then follows from Theorem \ref{t:DorronsoroLinftyk=1}
 combined with Theorem \ref{t:ControlStratifBetaByOmegaInftyk=1} below and the area formula (Corollary \ref{cov}).
\end{proof}

\begin{remark}
    The argument used in the  proof of Theorem \ref{t:GlemStratBetanILGLinftyk=2} also implies the case $k=1$ of Theorem \ref{t:ParamGLem}.
\end{remark}

To formulate Theorem \ref{t:ControlStratifBetaByOmegaInftyk=1},
 we will use $\Omega_{\infty,f}(x,r)$ also for vector-valued functions $f:\R\to\R^m$, simply taking the infimum over all affine maps $A:\R\to\R^m$ and considering, in the definition, the Euclidean norm on $\R^m$. It is clear that the conclusion of Theorem \ref{t:DorronsoroLinftyk=1} still holds for vector-valued functions (by approximating each component separately).
Since we are now dealing with the case $k=1$, we let \begin{displaymath}
    \mathbb{V}_0:=\{(x,0):\,x\in \mathbb{R}\}\subset\Hn\quad \text{and}\quad \mathbb{W}_0=\{(0,y,t):\, y\in \mathbb{R}^{2n-1},\,t\in\mathbb{R}\}\subset\Hn
\end{displaymath}
and we will identify $\mathbb{V}_0$ with $\mathbb{R}$. We use the projections
\begin{equation*}
 \Pi: \Hn \to \mathbb{R},\quad    \Pi(w_1, \dots, w_{2n}, t) = w_1, 
\end{equation*}
and
\begin{equation}\label{eq:PiPerp1D}
 \Pi^{\bot}: \Hn \to \mathbb{R}^{2n-1},\quad    \Pi^\bot(w_1, \dots, w_{2n}, t) = (w_2, \dots, w_{2n}).
\end{equation}
We adapt the proof of Theorem \ref{t:ControlStratifBetaByOmega} to show the following:
\begin{theorem}\label{t:ControlStratifBetaByOmegaInftyk=1}
Let $n\geq 1$, $\mathbb{V}_0$ be the horizontal subgroup $\mathbb{R}\times \{0\}$ with complementary vertical subgroup $\mathbb{W}_0$ in $\mathbb{H}^n$. 
Assume that $\varphi:\mathbb{V}_0 \to \mathbb{W}_0$ is an intrinsic Lipschitz function. If $\Gamma$ denotes the intrinsic graph of $\varphi$, then 
\begin{displaymath}
    \widehat{\beta}^{\Gamma}_{\infty,\mathcal{V}_1}(x,r)^{4}\lesssim_{\mathrm{Lip}(\Pi^\bot(\varphi))} \Omega_{\infty,\Pi^{\bot}(\varphi)}(\Pi(x),r)^2,\quad x\in \Gamma,\,r>0.
\end{displaymath}
\end{theorem}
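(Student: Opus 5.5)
We follow the structure of the proof of Theorem \ref{t:ControlStratifBetaByOmega}, with suprema in place of averages. Fix $x\in\Gamma$, $r>0$, write $v_0:=\Pi(x)$, and let $\psi:=\Pi^{\bot}(\varphi)$. For $v\in\mathbb{R}$ with $\Phi(v)\in B(x,r)$ we have $|v-v_0|<r$. Hence every point of $\Gamma\cap B(x,r)$ is of the form $\Phi(v)$ with $v\in B_{\mathbb{R}}(v_0,r)$, and the suprema in $\widehat\beta^{\Gamma}_{\infty,\mathcal V_1}(x,r)$ are bounded by suprema over $v\in B_{\mathbb{R}}(v_0,r)$. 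By Proposition \ref{isoa}, every affine $A:\mathbb{R}\to\mathbb{R}^{2n-1}$ is isotropic. By Proposition \ref{ahp}, it lifts to an intrinsic affine $(A,A_{2n})$ whose graph map $\mathbb{A}$ parametrizes a horizontal line $\mathbb{V}$. We then have $d_{\rm Eucl}(\pi(\Phi(v)),\pi(\mathbb V))\le|\psi(v)-A(v)|$ and $d(\Phi(v),\mathbb V)\le d(\varphi(v),(A,A_{2n})(v))$. This gives
\[
\widehat\beta^{\Gamma}_{\infty,\mathcal V_1}(x,r)^4\lesssim \sup_{B_{\mathbb R}(v_0,r)}\tfrac{|\psi-A|^2}{r^2}+\sup_{B_{\mathbb R}(v_0,r)}\tfrac{d(\varphi(v),(A,A_{2n})(v))^4}{r^4},
\]
which is the inequality $\widehat\beta\lesssim\widehat\Omega_{\infty,\varphi}$. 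We choose $A$ almost optimal for $\Omega_{\infty,\psi}(v_0,r)$. As in Step 3 of the previous proof, testing with the constant map shows $\Omega_{\infty,\psi}(v_0,r)\le\mathrm{Lip}(\psi)$, so $\sup|\psi-A|/r\lesssim \mathrm{Lip}(\psi)$, and by a sup-version of Lemma \ref{lemma_lipconst} also $\mathrm{Lip}(A)\lesssim \mathrm{Lip}(\psi)$.

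The main point is the vertical term. Computing $\varphi(v)^{-1}$-type products is awkward, so instead we compute $d(\varphi(v),(A,A_{2n})(v))=\|(A,A_{2n})(v)^{-1}\cdot\varphi(v)\|$ directly in coordinates of $\mathbb W_0$. The horizontal part equals $|\psi(v)-A(v)|\le\Omega r$, with $\Omega:=\Omega_{\infty,\psi}(v_0,r)$ up to $\varepsilon$. The vertical part is
\[
\tau(v):=\varphi_{2n}(v)-A_{2n}(v)-\tfrac12\omega\big((0,A(v)),(0,\psi(v))\big).
\]
It suffices to show $|\tau(v)|\lesssim_{\mathrm{Lip}(\psi)}\Omega r^2$ on $B_{\mathbb R}(v_0,r)$, since then the vertical contribution to the fourth power is $\lesssim \tau^2/r^4\lesssim\Omega^2$, and the horizontal one is $\Omega^4\lesssim\Omega^2$.

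To obtain this bound, we differentiate $\tau$ using the weak graph contact equations \eqref{eq:HorizCondCurve} for both $\varphi$ and $(A,A_{2n})$ (the latter holds for intrinsic affine maps). Write $g:=\psi-A$, and note that $\omega$ restricted to the $\mathbb{W}_0$-horizontal coordinates pairs $\varphi_i$ with $\varphi_{n+i}$. Expanding the bilinear terms, the derivative becomes
\[
\tau'=-g_{n}-\tfrac12\sum_{i=1}^{n-1}\big[g_{n+i}g_i'-g_ig'_{n+i}\big]+\big(\text{terms of the form } g\cdot A' \text{ or } A\cdot g'\big).
\]
The terms with $A\cdot g'$ are removed exactly by the derivative of the correction $\tfrac12\omega(A,\psi)$; this is the role of that correction, and it is the reason $\tau$ is the weakly graph contact lift of $g$ plus lower-order terms. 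We have $|g|\le\Omega r$, $|g'|\lesssim\mathrm{Lip}(\psi)$ and $|A'|\lesssim\mathrm{Lip}(\psi)$, so $|\tau'|\lesssim_{\mathrm{Lip}(\psi)}\Omega r$ a.e. on $B_{\mathbb R}(v_0,r)$. Finally we normalize $A_{2n}$ by a vertical constant, as allowed by Remark \ref{r:VertTranslIntrAff}, so that $\tau(v_0)=0$. Integrating then gives $|\tau|\lesssim\Omega r^2$, as required.

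The main obstacle is this algebra. We must verify that after the cancellation no term of the form $A\cdot g'$ survives; such a term is only $O(r)$ rather than $O(\Omega r)$, and it would ruin the estimate. To handle this cleanly, I would first left-translate so that $\Phi(v_0)=0$ and $A(v_0)=\psi(v_0)$, which is possible up to an error of order $\Omega r$. This makes $A$ small near $v_0$ and makes the bookkeeping of the cross terms transparent.
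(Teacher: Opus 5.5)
Your proposal is correct and is essentially the paper's argument. You use the same reduction to suprema over $B_{\mathbb R}(\Pi(x),r)$, the same almost-optimal affine $A$ with $\mathrm{Lip}(A)\lesssim \mathrm{Lip}(\Pi^\bot(\varphi))$, and the same treatment of the vertical error: normalize it to vanish at $\Pi(x)$, then integrate a derivative bound coming from the weak graph contact equation. The only difference is that you differentiate the vertical coordinate $\tau$ of $(A,A_{2n})(v)^{-1}\cdot\varphi(v)$ directly. The paper instead splits it as $\tau=(\psi_A)_{2n}-u_2$, using the auxiliary lift $\Psi_A$ of $g=\Pi^\bot(\varphi)-A$.

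The cancellation you flag as the main obstacle is exact: $\tau'=-g_n+\tfrac12\omega(g,g')+\omega(g,A')$ a.e., which is precisely $(\psi_A)_{2n}'-u_2'$ in the paper's notation. So no $A\cdot g'$ term survives, and the final left-translation device is unnecessary.
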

\begin{proof}
Fix $x\in \Gamma, r>0$. Using 
\begin{equation*}
    \Pi(\Gamma \cap B(x,r)) \subset \Pi(B(x,r)) \subset B_{\mathbb{R}}(\Pi(x), r),
\end{equation*}
we obtain
that, for all $\V\in \mathcal{V}_1$,
\begin{equation}\label{eq:UpperStratifiedBetainfty}
 \widehat{\beta}^{\Gamma}_{\infty,\mathcal{V}_1}
(x, r)\leq
 \left[\left(\sup_{v\in B_{\mathbb{R}}(\Pi(x),r)} \frac{d_{\rm Eucl}(\pi(\Phi(v)), \pi(\V))}{r}  \right)^2+ \left(\sup_{v\in B_{\mathbb{R}}(\Pi(x),r)} \frac{d(\Phi(v), \V)}{r} \right)^4\right]^{1/4},
\end{equation}
where $\Phi$ denotes the graph map associated to $\varphi$.
Now let $A:\mathbb{R}\to \mathbb{R}^{2n-1}$ be an arbitrary affine map. 
Let $\psi_A:\mathbb{R}\to \mathbb{R}^{2n}$ be the weakly graph contact  lift of $\Pi^{\bot}(\varphi)-A$ such that $(\psi_A)_{2n}(\Pi(x))=0$ (recall Remark \ref{r:HorizLift}). 
We let $\pi_t:\Hn\to\R$ be the standard projection on the last component, namely $\pi_t(z,t)=t$. Define also $A_{2n}:\R\to\R$ to be such that $(A,A_{2n}):\R\to \R^{2n}$ is the weakly graph contact lift of $A$ such that
\begin{equation}\label{eq:VertCond}\pi_t\left[(\mathbb A(\Pi(x))^{-1}\Phi(\Pi(x)))^{-1}\cdot\Psi_A(\Pi(x))\right]=0,
\end{equation}where $\mathbb A:\R\to \Hn$ denotes the graph map of $(A,A_{2n})$ and $\Psi_A:\R\to\Hn$ is defined by $\Psi_A(v):=(0,\psi_A(v)).$

The idea behind the definition of $\Psi_A$ is the following: if the intrinsic Lipschitz condition for $k=1$ were preserved under group multiplication of mappings, then we could simply take $\Psi_A(v)= \mathbb{A}(v)^{-1}\Phi(v)$ and \eqref{eq:VertCond} would hold globally. Since this is not true, $\Psi_A$ is a more complicated mapping and we require \eqref{eq:VertCond} to hold only in the center of $B_{\mathbb{R}}(\Pi(x),r).$ However, we will obtain a good control on the derivative of this expression, see \eqref{eq:DerivControlComplTerm} below, which will allow us to control the error term $\widetilde{I}_t(A)$ in \eqref{eq:It_term}.

Since  $\mathbb A:\R\to \Hn$ is a horizontal curve which projects to a line in $\mathbb{R}^{2n}$, by uniqueness of the weakly graph contact lift (up to vertical translations), we obtain that $\mathbb{A}(\R)=\mathbb{A}(\V_0)$ is a horizontal line. Hence, taking $\V=\mathbb A(\V_0)\in\mathcal V_1$, 
\begin{align*}d(\Phi(v),\V)&\leq d(\Phi(v), \mathbb A(v))=\|\mathbb A(v)^{-1}\Phi(v)\|\\ &\leq \|(\mathbb A(v)^{-1}\Phi(v))^{-1}\Psi_A(v)\|+\|\Psi_A(v)\|\\ &\lesssim \sqrt{|\pi_t((\mathbb A(v)^{-1}\Phi(v))^{-1}\Psi_A(v))|}+|\Pi^\bot(\varphi)(v)-A(v)|+\sqrt{|(\psi_A)_{2n}(v)|},\end{align*}
where in the last estimate we used the explicit form of the Kor\'{a}nyi norm and the observation that $\mathbb A(v)^{-1}\Phi(v)$ and $\Psi_A(v)$ coincide in the first $2n$ coordinates. In addition, as in the proof of Theorem \ref{t:ControlStratifBetaByOmega} for the case $k=n$, we can bound
\[ d_{\rm Eucl}(\pi(\Phi(v)),\pi(\V))\leq |(v,\Pi^{\bot}(\varphi)(v))-(v,A(v))|=|\Pi^{\bot}(\varphi)(v)-A(v)|,\quad v\in \mathbb{R}.\]
Using these estimates, we deduce that 
\begin{equation}\label{eq:BetaAestinfty}
     \widehat{\beta}^{\Gamma}_{\infty,\mathcal{V}_1}
(x, r)\lesssim \inf_{A} \left[I_{\pi}(A)^2 + \left(I_{\pi}(A)+ \widetilde{I}_{t}(A)+\widehat{I}_{t}(A)\right)^4\right]^{1/4},
\end{equation}
where the infimum runs over all affine maps $A:\R\to\R^{2n-1}$ and
\begin{align}\label{eq:It_term}
I_{\pi}(A)&:=   \sup_{v\in B_{\mathbb{R}}(\Pi(x),r)} \frac{|\Pi^{\bot}(\varphi)(v)-A(v)|}{r},\notag \\ 
\widetilde{I}_{t}(A)   &:= \sup_{v \in B_{\mathbb{R}}(\Pi(x),r)} \frac{\sqrt{|\pi_t((\mathbb A(v)^{-1}\Phi(v))^{-1}\Psi_A(v))|}}{r} ,\\
\widehat{I}_{t}(A)  & := \sup_{v \in B_{\mathbb{R}}(\Pi(x),r)} \frac{\sqrt{|(\psi_A)_{2n}(v)|}}{r} .\notag
\end{align}
Let us now consider two cases. If $\mathrm{Lip}(\Pi^\perp(\varphi))=0$, then $\Pi^\bot(\varphi)$ is constant. Let us now fix $A\equiv\Pi^\bot(\varphi)$. It is easy to see, with this choice of $A$, that $I_\pi(A)=\widetilde I_t(A)=\widehat I_t(A)=0$. By \eqref{eq:BetaAestinfty}, it follows that $\widehat\beta_{\infty,\mathcal V_1}^\Gamma(x,r)=0$ and the conclusion of the theorem is trivially satisfied. Assume now that $\mathrm{Lip}(\Pi^\perp(\varphi))>0$ and fix an arbitrary $0<\varepsilon<(\sqrt 2-1)\mathrm{Lip}(\Pi^\perp(\varphi))$.
Let us now fix an affine map $A=A_\varepsilon:\R \to \R^{2n-1}$ such that
\begin{equation}\label{1estimate}I_\pi(A):=\sup_{v\in B_{\mathbb{R}}(\Pi(x),r)} \frac{|\Pi^{\bot}(\varphi)(v)-A(v)|}{r}<\Omega_{\infty,\Pi^{\bot}(\varphi)}(\Pi(x),r)+\varepsilon.\end{equation}
This also implies that 
\[\begin{split}\fint_{B_\R(\Pi(x),r)}\frac{|\Pi^\perp(\varphi)(v)-A(v)|^2}{r^2}\,dv&\leq \left(\sup_{v\in B_\R(\Pi(x),r)}\frac{|\Pi^\perp(\varphi)(v)-A(v)|}{r}\right)^2\\ &\leq \left(\sup_{v\in B_\R(\Pi(x),r)}\frac{|\Pi^\perp(\varphi)(v)-\Pi^\perp(\varphi)(\Pi(x))|}{r}+\varepsilon\right)^2\\&\leq 2\mathrm{Lip}(\Pi^\perp(\varphi))^2.\end{split}\]
Hence we can apply Lemma \ref{lemma_lipconst} and deduce that $\mathrm{Lip}(A)\leq N\,\mathrm{Lip}{\Pi^\perp(\varphi)}$ for some absolute constant $N$.
We then estimate $\widehat I_t(A)$. By the weak graph contact condition \eqref{eq:HorizCondCurve} and the previous control on $\mathrm{Lip}(A)$, 
\[\begin{split}\left|\frac{d}{dv}(\psi_A)_{2n}\right|&=\left|(\varphi_n-A_n)+\frac{1}{2}\sum_{i=1}^{n-1}(\varphi_{n+i}-A_{n+i})\frac{d}{dv}(\varphi_i-A_i)-(\varphi_{i}-A_{i})\frac{d}{dv}(\varphi_{n+i}-A_{n+i})\right|\\ &\lesssim_{\mathrm{Lip}(\Pi^\bot(\varphi))} |\Pi^\bot(\varphi)-A|\qquad\text{a.e. on }\R.\end{split} \]
Hence, if we define $u:=(\psi_A)_{2n}$, then $u$ is locally Euclidean Lipschitz, $u(\Pi(x))=0$ (by the choice of the weakly graph contact lift $\psi_A$) and $
    |\dot{u}|\lesssim_{\mathrm{Lip}(\Pi^{\bot}(\varphi)} |\Pi^{\bot}(\varphi)-A|$ almost everywhere.
By the fundamental theorem of calculus, for every $v\in B_{\mathbb{R}}(\Pi(x),r)$,
\begin{displaymath}
    |u(v)|=|u(v)-u(\Pi(x))|\lesssim_{\mathrm{Lip}(\Pi^{\bot}(\varphi))} \left( \sup_{y\in B_{\mathbb{R}}(\Pi(x),r)}|\Pi^{\bot}(\varphi)(y)-A(y)|\right)|v-\Pi(x)|,
\end{displaymath}
and therefore 
\begin{equation}\label{2estimate}\begin{aligned}
\widehat{I}_{t}(A) ^4  & = \sup_{v \in B_{\mathbb{R}}(\Pi(x),r)} \left( \frac{\sqrt{|(\psi_A)_{2n}(v)|}}{r}  \right)^4\\& \lesssim_{\mathrm{Lip}(\Pi^{\bot}(\varphi))}
\left(\sup_{y \in B_{\mathbb{R}}(\Pi(x),r)}\frac{|\Pi^{\bot}(\varphi)(y)-A(y)|}{r}  \right)^2< (\Omega_{\infty,\Pi^{\bot}(\varphi)}(\Pi(x),r)+\varepsilon)^2.\end{aligned}
\end{equation}
Similarly, we now deal with the expression $\widetilde I_t(A)$ from \eqref{eq:It_term}.
Since $\mathbb A(v)^{-1}\Phi(v)$ and $\Psi_A(v)$ coincide in the first $2n$ coordinates, it follows that 
\begin{equation}\label{eq:LinearVert}\pi_t((\mathbb A(v)^{-1}\Phi(v))^{-1}\Psi_A(v))=\pi_t(\Psi_A(v))-\pi_t(\mathbb A(v)^{-1}\Phi(v)),\quad v\in \mathbb{R}.\end{equation}
By the weak graph contact condition \eqref{eq:HorizCondCurve}, using standard identifications, we get that, almost everywhere on $\R$,
\begin{align*}\tfrac{d}{dv}\pi_t&\circ \Psi_A=-(\varphi_n-A_n)-\tfrac{1}{2}\sum_{i=1}^{n-1}(\varphi_{n+i}-A_{n+i})\tfrac{d}{dv}(\varphi_i-A_i)-(\varphi_{i}-A_{i})\tfrac{d}{dv}(\varphi_{n+i}-A_{n+i})\\ =&-\varphi_n+A_n+\tfrac{1}{2}\omega\left(\Pi^\bot(\varphi)-A, \tfrac{d}{dv}(\Pi^\bot(\varphi)-A)\right)\\ =& -\varphi_n+A_n+\tfrac{1}{2}\left[\omega\left(\Pi^\bot(\varphi), \Pi^\bot(\varphi)'\right)+\omega\left(A,    A'\right)-\omega\left(\Pi^\bot(\varphi),  A'\right)-\omega\left(A,\Pi^\bot(\varphi)'\right)\right],\end{align*}
where we slightly abused notation by identifying $A(v)\in\mathbb{R}^{2n-1}$ with $(0,A(v))\in \mathbb{R}^{2n}$, and $\Pi^{\bot}(\varphi(v))\in\mathbb{R}^{2n-1}$ with $(0,\Pi^{\bot}(\varphi(v))\in \mathbb{R}^{2n}$.

On the other hand, the derivative of the second term in \eqref{eq:LinearVert} can be expressed as follows
\begin{align*}
    \tfrac{d}{dv}(\pi_t(\mathbb A(\cdot)^{-1}\cdot\Phi))&=\tfrac{d}{dv}(\pi_t((A,A_{2n})(\cdot)^{-1}\cdot \varphi)) \\ & = \varphi_{2n}'- A_{2n}'-\tfrac{1}{2}\tfrac{d}{dv}\omega(A,\Pi^\bot(\varphi)) \\ &=\varphi_{2n}'- A_{2n}'-\tfrac{1}{2}\omega(A',\Pi^\bot(\varphi)-\tfrac{1}{2}\omega( A,\Pi^\bot(\varphi)')\quad\text{a.e. on }\R.
\end{align*}
Using again the weak graph contact condition \eqref{eq:HorizCondCurve} to compute $\varphi_{2n}'$ and $ A_{2n}'$, we get 
\begin{align*}
   & \frac{d}{dv}(\pi_t(\mathbb A(v)^{-1}\Phi(v)))\\&=-\varphi_n+A_n+\frac{1}{2}\left[\omega(\Pi^\bot(\varphi), \Pi^\bot(\varphi)')-\omega(A, A')-\omega( A',\Pi^\bot(\varphi))-\omega( A,\Pi^\bot(\varphi)')\right].
\end{align*}
Combining the previous estimates together with \eqref{eq:LinearVert}, we find that, almost everywhere on $\R$,
\begin{equation}\label{eq:DerivControlComplTerm}
    \frac{d}{dv}(\pi_t((\mathbb A(v)^{-1}\Phi(v))^{-1}\Psi_A(v)))=\omega(A, A')-\omega(\Pi^\bot (\varphi), A')=\omega(A-\Pi^\bot(\varphi), A').
\end{equation}
Now define $u_2(v):=\pi_t((\mathbb A(v)^{-1}\Phi(v))^{-1}\Psi_A(v))$. By choice of $A_{2n}$ as in \eqref{eq:VertCond} we know that $u_2(\Pi(x))=0$ and the previous relation, combined with the estimate on $\mathrm{Lip}(A)$, gives 
    \begin{displaymath}
    |u_2'|\lesssim_{\mathrm{Lip}(\Pi^\perp(\varphi))}|\Pi^{\bot}(\varphi)-A|.
    \end{displaymath}
By using the fundamental theorem of calculus ($u_2$ is locally Lipschitz), we conclude analogously as before by showing that
\begin{equation}\label{3estimate}\begin{aligned}
\widetilde{I}_{t}(A) ^4   &= \sup_{v \in B_{\mathbb{R}}(\Pi(x),r)} \left( \frac{\sqrt{|\pi_t((\mathbb A(v)^{-1}\Phi(v))^{-1}\Psi_A(v))|}}{r}  \right)^4 \\ &\lesssim_{\mathrm{Lip}(\Pi^\bot(\varphi))}
\left(\sup_{y \in B_{\mathbb{R}}(\Pi(x),r)}\frac{|\Pi^{\bot}(\varphi)(y)-A(y)|}{r}  \right)^2 <(\Omega_{\infty,\Pi^{\bot}(\varphi)}(\Pi(x),r)+\varepsilon)^2.\end{aligned}
\end{equation}
Finally, note that 
\begin{equation}\label{4estimate}
I_\pi(A)^4\lesssim_{\mathrm{Lip}(\Pi^\bot(\varphi))} I_\pi(A)^2,
\end{equation}
which follows from $I_{\pi}(A)<\Omega_{\infty,\Pi^{\bot}(\varphi)}(\Pi(x),r)+\varepsilon\leq \mathrm{Lip}(\Pi^\bot(\varphi))+\varepsilon\leq \sqrt 2\,\mathrm{Lip}(\Pi^\bot(\varphi))$.
Combining \eqref{eq:BetaAestinfty} with \eqref{1estimate}, \eqref{2estimate}, \eqref{3estimate} and \eqref{4estimate}, we conclude by arbitrariness of $\varepsilon$.
\end{proof}

\subsection{An integral geometric approach for intermediate dimensions}\label{s:IntegralGeo}

Throughout this section we assume that $n\geq 2$. We consider a $k$-dimensional horizontal subgroup $\mathbb{V}_0$ and, by Lemma \ref{l:RotRed}, we identify it with $\mathbb{R}^k$. We use the projection
\begin{equation}\label{projtoRk}
 \Pi: \Hn \to \mathbb{R}^k,\quad    \Pi(w_1, \dots, w_{2n}, t) = (w_1, \dots, w_k).
\end{equation}

Our goal is to prove the following result.

\begin{theorem}[Geometric Lemma for $k$-dimensional intrinsic Lipschitz graphs]\label{t:GlemBetanILGLk=2} Let $1<k<n$. There exists $q^*=q^*(k)\geq 4$ such that the following holds.
Let $\mathbb{V}_0$ be a $k$-dimensional horizontal subgroup of $\mathbb{H}^n$ with complementary vertical subgroup $\mathbb{W}_0$. If $E\subset \mathbb{H}^n$ is the intrinsic graph of an intrinsic Lipschitz function $\varphi:\mathbb{V}_0\to \mathbb{W}_0$, then
\begin{equation*}
    \int_{E\cap B(y,R)} \int_0^R {\beta {^E_{2,\mathcal{V}_k}}}(x, r)^{q^*} \frac{dr}{r} d\mathcal{H}^k(x) \lesssim_{n,k, \text{Lip}(\varphi)} R^k,\qquad y\in E,\,R>0.
\end{equation*}
\end{theorem}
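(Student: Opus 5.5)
The plan is to follow Orponen's integral-geometric route to Dorronsoro's theorem. The idea is to reduce to the case $k=1$ by slicing the $k$-dimensional graph along horizontal lines, and then reassemble the information from the slices.

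\textbf{Slicing.} By Lemma \ref{l:RotRed} and Lemma \ref{l:RotInvCoeff} we work with $\mathbb{V}_0=\mathbb{R}^k\times\{0\}$, and by the area formula (Corollary \ref{cov}) and Lemma \ref{rem_jacobian} we pass to parametrized integrals over $B_{\mathbb{R}^k}(\Pi(x),r)$. For a line $\ell=\{v+se:s\in\mathbb{R}\}\subset\mathbb{R}^k$, the curve $s\mapsto\Phi(v+se)$ is Lipschitz and horizontal, and its image lies in the set $\Phi(\ell)$. By Remark \ref{r:Line} and a unitary rotation $R_U$ sending $e$ to $e_1$, this set is a $1$-dimensional intrinsic Lipschitz graph in $\mathbb{H}^n$, with constant controlled by $\mathrm{Lip}(\varphi)$, $n$ and $k$. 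Applying Theorem \ref{t:ParamGLem} with $k=1$ and Theorem \ref{t:DorronsoroLinftyk=1} to each slice gives, for every direction $e\in\mathbb{S}^{k-1}$ and every offset $w\in e^\perp$, a Carleson estimate with exponent $2$ for $\Omega_{\infty}$ of the horizontal components along the slice. The same pointwise comparison then bounds the fourth power of the stratified $\widehat\beta_{\infty,\mathcal{V}_1}$ of the slice by the square of that $\Omega_\infty$. Integrating over $e$, over $w$, and over balls (Fubini) produces a Carleson-packing statement. For most pairs $(z,r)$, the graph $\Phi$ restricted to a large-measure set of segments through $B_{\mathbb{R}^k}(z,r)$ is uniformly close, at scale $r$ and to within $\epsilon(z,r)$, to horizontal line segments. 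Here $\int\int\epsilon(z,r)^{2}\,\frac{dr}{r}\,dz\lesssim R^k$, where $\epsilon$ is the slice quantity $\widehat\beta_{\infty,\mathcal V_1}^2$ averaged over directions and offsets.

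\textbf{Reassembling.} Next comes the combinatorial-geometric lemma at the heart of Orponen's argument. If a map on a ball is $\epsilon$-close to affine maps on a positive-density family of lines in $k$ generic directions, then it is $C\epsilon^{c(k)}$-close in $L^2$ to a single affine map $\mathbb{R}^k\to\mathbb{R}^{2n+1}$ on the ball. Concretely, one chooses $k$ nearly orthogonal directions and a grid of segments on which the slice approximations hold, and then uses the fact that a map that is affine along each direction of a grid is affine, quantitatively. I would apply this to the full graph map $\Phi$, viewed Euclideanly in $\mathbb{R}^{2n+1}$, rather than to $\varphi$. This yields an affine $k$-plane $P$ with
\[
\fint_{B_{\mathbb{R}^k}(\Pi(x),r)}\Big(\tfrac{d(\Phi(v),P)}{r}\Big)^2\,dv\lesssim \epsilon(x,r)^{c(k)}.
\]
Here $d$ is controlled via the Korányi distance, which is where the square root in the vertical direction costs another power.

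\textbf{Main obstacle.} I expect the hard part to be that the plane $P$ need not be horizontal. Each slice approximant is a horizontal line, but an affine $k$-plane all of whose lines in $k$ directions are nearly horizontal need not lie near a horizontal $k$-plane. What forces this is the non-linear isotropy condition \eqref{iso} together with the contact condition. My first attempt would be an ``almost horizontal implies near horizontal'' stability lemma, which is what Theorem \ref{l:almostHoriz_k} should provide. One writes $P=p\cdot\mathbb{V}'$ with $\mathbb{V}'$ a graph over $\mathbb{V}_0$ whose linear part is a matrix $M$. The near-horizontality of the segments in the pairwise sums of the chosen directions quantifies the defect in \eqref{iso}, of size $\lesssim\epsilon^{c}$. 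Algebraic stability of the isotropic Grassmannian, i.e.\ \L{}ojasiewicz-type bounds on a compact region since $|M|\lesssim\mathrm{Lip}(\varphi)$ by Lemma \ref{lemma_lipconst}, then gives an isotropic $M'$ with $|M-M'|\lesssim\epsilon^{c'}$. One lifts $M'$ via Proposition \ref{ahp}, choosing the vertical constant as in \eqref{eq:VertCondNew}. The perturbation error is then $\lesssim\epsilon^{c''}$ on the ball, using the Heisenberg distance estimate $d\lesssim|\Delta z|+|\Delta t|^{1/2}$ together with $|\Delta t|\lesssim r|\Delta z|$ on a ball of radius $r$. Consequently $\beta^E_{2,\mathcal{V}_k}(x,r)\lesssim\epsilon(x,r)^{c''}$. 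Choosing $q^*=2/c''\geq4$ and integrating against the Carleson bound for $\epsilon^{2}$ gives the theorem.
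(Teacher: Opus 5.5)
\textbf{The reassembly step fails as stated.} A map that is affine, even exactly, along every line in $k$ independent directions need not be close to any affine map. Products of the dual coordinates, such as $x_1x_2$, are affine on every coordinate line. So the claim that ``a map that is affine along each direction of a grid is affine'' is false, and the failure already occurs among the graphs in the theorem.

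Take $k=2$, $n\ge 3$, and a bounded Lipschitz $\chi:\mathbb{R}\to\mathbb{R}$ with $\chi(s)=s$ on $[-1,1]$. Set $\varphi_1(x)=\chi(x_1)\chi(x_2)$ and $\varphi_i\equiv 0$ for $2\le i\le 2n-2$. Every term of \eqref{HorizEq2} and \eqref{DF22} that contains $\varphi_1$ also contains $\varphi_{n+1}\equiv 0$. Hence this map is weakly graph isotropic, and by Theorem \ref{t:Lift} it lifts to an intrinsic Lipschitz $\varphi$ with vertical component $0$. Over the unit disk its graph is the saddle $\{(x_1,x_2,x_1x_2,0,\dots,0;0)\}$, which lies in the horizontal subgroup $\mathbb{R}^n\times\{0\}^{n+1}$. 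Restricted to the unit disk, every slice in direction $e_1$ or $e_2$ is exactly a horizontal segment. Yet $\beta^E_{2,\mathcal V_2}(0,1)\gtrsim 1$, because the saddle is not even Euclidean-close to a $2$-plane.

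So no bound of the form $\beta\lesssim\epsilon^{c}$ can come from slice data in $k$ directions: a grid never sees the mixed second derivatives. Your averaged $\epsilon$ is in fact large in this example, since slices in the other directions are parabolas. The information is present, but your reassembly discards it.

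\textbf{What is needed.} You have to use lines in further directions as well, for instance near $e_i+e_j$, selected by Chebyshev like the others. The polarization identity $\partial^2_{e_i+e_j}=\partial_i^2+2\partial_i\partial_j+\partial_j^2$ then removes the multilinear part. In the ``most lines'' setting this still requires a real quantitative argument, such as a Remez-type inequality for polynomials on large subsets. The resulting power losses are acceptable here, since $q^*$ need not be sharp. Note that the pairwise-sum directions you invoke only in the horizontality step are exactly what the reassembly lacks.

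The paper avoids grids altogether. It inducts on $k$ through $(k-1)$-dimensional slices (Lemmas \ref{l:SlicingLemmak-1}, \ref{l:k-1SlicesAffine}, \ref{l:HigherDimInduc}). It then chooses a generic simplex whose faces lie on good $(k-1)$-planes (Lemma \ref{l:orp21,2.15}), so that each pair of vertices lies near a horizontal line inside a face approximant. The vertices are fed into Lemma \ref{l:almostHoriz_k}, and the estimate is propagated to the interior along good lines crossing two faces (Lemma \ref{l:IntGeomBdd}).

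Your horizontality step (an almost isotropic matrix, a nearby isotropic one, then a lift) is sound and plays the role of Lemma \ref{l:almostHoriz_k}. One minor point: $\widehat\beta_{\infty,\mathcal V_1}$ bounds the Kor\'{a}nyi distance, so the slices are $\epsilon^{1/2}$-close rather than $\epsilon$-close; this only affects exponents.
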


The statement of the theorem is also true for $k=1$ and $k=n$, but in this case, Theorems \ref{t:GlemStratBetanILGLinftyk=2} and \ref{t:GlemStratBetanILG} yield more precise information.

We will prove Theorem \ref{t:GlemBetanILGLk=2} by induction, following the approach in Orponen's paper \cite{MR4345824}. Roughly speaking, if the $(k-1)$-dimensional slices of a $k$-dimensional intrinsic Lipschitz graph $E$ are well approximated by $(k-1)$-dimensional horizontal planes, then $E$ itself is well approximated by $k$-dimensional horizontal planes; the precise statement is more complicated, see Lemma \ref{l:IntGeomBdd}. The main new challenges when dealing with intrinsic Lipschitz graphs are discussed in Section \ref{ss:ConstrHoriz} (constructing horizontal instead of arbitrary planes) and Sections \ref{ss:1Dslice}-\ref{ss:1cDslice} (intrinsic Lipschitz slices of intrinsic Lipschitz graphs). Once these steps are dealt with, the proof of Theorem \ref{t:GlemBetanILGLk=2}  can be concluded similarly as in \cite{MR4345824}, see Section \ref{ss:ProofConclu}, keeping in mind that we always need to work with the full mapping $\varphi:\V_0\to\W_0$, rather than with real-valued component functions.

\subsubsection{Constructing horizontal planes}\label{ss:ConstrHoriz}
In brief,
the approach in Orponen's paper \cite{MR4345824} allows to find approximations by \emph{some} $k$-planes, but approximating by \emph{horizontal} $k$-planes requires more work. 
The following lemma is not needed for the proof of Theorem \ref{t:GlemBetanILGLk=2} but it  illustrates a core idea in our argument that will be made precise in Lemma \ref{l:almostHoriz_k}.

\begin{lemma}\label{l:HorizSimplex} Let $n\geq 2$, $n\in \mathbb{N}$ and $k\in \{2,\ldots,n\}$.
    Let $\mathbb{V} \subset \mathbb{H}^n$ be a topologically $k$-dimensional plane (an affine $k$-plane in $\mathbb{R}^{2n+1}$). Let $\Delta \subset \mathbb{V}$ be a non-degenerate $k$-simplex whose $(k-1)$-dimensional faces are contained in $(k-1)$-dimensional affine horizontal planes. Then $\mathbb{V}$ is an affine horizontal $k$-plane.
\end{lemma}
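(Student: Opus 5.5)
The plan is to reduce to the case where one vertex of $\Delta$ is the origin. Then the face conditions force all other vertices to have vanishing vertical coordinate and pairwise $\omega$-orthogonal horizontal parts, and these vertices span $\mathbb{V}$.

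\textbf{Step 1 (normalization).} Let $p_0,\ldots,p_k$ be the vertices of $\Delta$. Left translation $\tau_q(z,t)=(z+a,t+s+\tfrac12\omega(a,z))$ by $q=(a,s)$ is an affine bijection of $\mathbb{R}^{2n+1}$. It therefore maps affine $k$-planes to affine $k$-planes and non-degenerate simplices to non-degenerate simplices. It also preserves $\mathcal{V}_k$ and $\mathcal{V}_{k-1}$, since these families are left-invariant by Definition \ref{hp}. Replacing $\mathbb{V}$ and $\Delta$ by their images under $\tau_{p_0^{-1}}$, we may assume $p_0=e=(0,0)$. Then $\mathbb{V}$ is a linear subspace, and $\mathbb{V}=\mathrm{span}\{p_1,\ldots,p_k\}$ because the simplex is non-degenerate. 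We also use a simple fact: if an affine horizontal plane $x\cdot\mathbb{V}'$ with $\mathbb{V}'\in\mathcal{V}^0_{k-1}$ contains $e$, then $x^{-1}\in\mathbb{V}'$, so $x\cdot\mathbb{V}'=\mathbb{V}'=V'\times\{0\}$ with $V'$ isotropic.

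\textbf{Step 2 (vertical coordinates vanish).} Write $p_i=(z_i,t_i)$. Since $k\ge 2$, every vertex $p_i$ with $i\ge1$ lies in some $(k-1)$-face that also contains $p_0=e$, for instance the face omitting some $p_j$ with $j\notin\{0,i\}$. By hypothesis and the fact from Step 1, that face lies in a horizontal subgroup $V'\times\{0\}$. Hence $t_i=0$ for all $i=1,\ldots,k$.

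\textbf{Step 3 (isotropy).} Fix $1\le i<j\le k$; we show $\omega(z_i,z_j)=0$. The face omitting $p_0$ contains both $p_i$ and $p_j$, and it lies in an affine horizontal plane $x\cdot\mathbb{V}'$. Hence $p_i^{-1}p_j\in\mathbb{V}'$, and elements of $\mathbb{V}'$ have vanishing last coordinate. Using $t_i=t_j=0$ we compute $p_i^{-1}p_j=(z_j-z_i,-\tfrac12\omega(z_i,z_j))$, so $\omega(z_i,z_j)=0$. (Alternatively, when $k\ge3$ one can use a face through $e$ containing both $p_i$ and $p_j$; the argument via the face omitting $p_0$ covers $k=2$ as well.)

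\textbf{Step 4 (conclusion).} Since the $p_i=(z_i,0)$ for $i\ge1$ are linearly independent, the $z_i$ are linearly independent in $\mathbb{R}^{2n}$. Their span $V$ is a $k$-dimensional subspace on which $\omega$ vanishes by bilinearity and Step 3, so $V$ is isotropic. Therefore $\mathbb{V}=V\times\{0\}\in\mathcal{V}^0_k$, and translating back gives $\mathbb{V}\in\mathcal{V}_k$.

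The only delicate point is Step 3, namely extracting the cross-condition $\omega(z_i,z_j)=0$ from the face that does not pass through the normalized vertex. This is handled by the group-law computation of $p_i^{-1}p_j$. Everything else is bookkeeping, resting on left translations being affine maps of $\mathbb{R}^{2n+1}$.
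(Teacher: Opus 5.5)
Your proof is correct and follows essentially the same route as the paper: translate so that $p_0=e$, deduce $t_i=0$ because each $p_i$ lies in a horizontal object through the origin, and then read off $\omega(z_i,z_j)=0$ from the vanishing last coordinate of $p_i^{-1}p_j$. The only difference is cosmetic. The paper argues with the edges $[p_0,p_i]$ and $[p_i,p_j]$ as horizontal segments (via Remark \ref{r:Line}), whereas you work directly with the faces and the subgroup $\mathbb{V}'$, which makes explicit why those edges lie in horizontal planes.
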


Lemma \ref{l:HorizSimplex} is only relevant for $n>1$. Indeed, since $\mathbb{H}^1$ contains no horizontal subgroups of dimension $2$, it also cannot contain horizontal $k$-simplices as in the statement of the lemma.

\begin{proof}
    Let $p_0, p_1, \dots, p_k$ be the $k+1$ vertices of the simplex $\Delta$. For $i=0,\cdots,k$, we denote by 
    $F_i$ the $(k-1)$-dimensional face of $\Delta$ not containing $p_i$. 
 By assumption, each face $F_i$ is contained in a $(k-1)$-dimensional affine horizontal plane $\mathbb{V}_i$.
 
 Left translations are bijections which map planes to planes of the same topological dimension and they preserve the family of horizontal $m$-planes for $1\leq m\leq n$.
Without loss of generality, by applying a left translation, we may therefore assume that $p_0 = 0$.  Now observe that, for each $1 \le i \le k$,  
the edge connecting $p_0$ and $p_i$ is contained in one of the (horizontal) $(k-1)$-dimensional faces of $\Delta$. Thus, the edges connecting $p_0$ and $p_i$ ($1 \le i \le k$) are horizontal line segments starting at the origin (recall Remark \ref{r:Line}), and hence they must be contained in $\mathbb{R}^{2n} \times \{0\}$. Thus, we can write $p_i = (z_i, 0)$ for some $z_i \in \mathbb{R}^{2n}$. Now, for any $i \neq j$ and $i, j \ge 1$, the line connecting $p_i$ and $p_j$ is also horizontal by assumption. Hence, by a left translation, the point $p_i^{-1}p_j$ lies on a horizontal line passing through the origin. This implies that the last coordinate of $p_i^{-1}p_j$ is equal to 0, which means that $\omega(z_i,z_j)=0.$ This shows that $\text{span}\{z_1, \dots, z_k\}\subset\R^{2n}$ is a $k-$dimensional isotropic subspace of $\mathbb{R}^{2n}$ and $\mathbb{V}=\mathrm{span}\{p_1,\ldots,p_k\}$ therefore horizontal.
\end{proof}

The main result of this section, Lemma \ref{l:almostHoriz_k}, can be seen as an ``$\varepsilon$-version'' of Lemma \ref{l:HorizSimplex}. To prove it, we will also use the auxiliary result below. The idea is the following: if two points $q_1$ and $q_2$ lie in a  metric $\varepsilon$-tube around a horizontal line, then they either span an `almost horizontal' line themselves, or $d(q_1,q_2)$ must be small.

\begin{lemma}[Points close to a horizontal line]\label{l:VertDiff} Let $C>0$ and $0<\varepsilon<1$.
Assume that $q_1=(z_1,t_1),q_2=(z_2,t_2)\in \mathbb{H}^n$ are two points with $|z_1-z_2|\leq C$ and such that 
\begin{displaymath}
    d(q_1,p_1)\leq \varepsilon \quad \text{and}\quad
    d(q_2,p_2)\leq \varepsilon,
\end{displaymath}
where $p_1,p_2\in\mathbb H^n$ are two points lying on a common horizontal line $\ell$.
Then
\begin{displaymath}
    |t_1-t_2-\tfrac{1}{2}\omega(z_2,z_1)|\lesssim_C \varepsilon.
\end{displaymath}
\end{lemma}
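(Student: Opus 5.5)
The quantity $t_1-t_2-\tfrac{1}{2}\omega(z_2,z_1)$ is exactly the vertical coordinate of $q_2^{-1}\cdot q_1$. Indeed, $q_2^{-1}=(-z_2,-t_2)$ and the group law \eqref{sym} give
\begin{displaymath}
q_2^{-1}q_1=\bigl(z_1-z_2,\;t_1-t_2-\tfrac12\omega(z_2,z_1)\bigr).
\end{displaymath}
The plan is to compare $q_2^{-1}q_1$ with $p_2^{-1}p_1$. Since $p_1,p_2$ lie on a common horizontal line $\ell=p\cdot\mathbb{L}$, we have $p_2^{-1}p_1\in\mathbb{L}\subset\mathbb{R}^{2n}\times\{0\}$. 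Hence $p_2^{-1}p_1=(w,0)$ for some $w\in\mathbb{R}^{2n}$, and its vertical coordinate vanishes.

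Next, write $q_i=p_i\cdot g_i$ with $g_i:=p_i^{-1}q_i=(a_i,s_i)$. The assumption $d(q_i,p_i)=\|g_i\|\le\varepsilon$ and the form of the Kor\'anyi norm give $|a_i|\le\varepsilon$ and $|s_i|\le\varepsilon^2/4$. Then
\begin{displaymath}
q_2^{-1}q_1=g_2^{-1}\cdot(w,0)\cdot g_1 .
\end{displaymath}
Expanding with the group law, the vertical coordinate of this product is
\begin{displaymath}
s_1-s_2-\tfrac12\omega(a_2,w)+\tfrac12\omega(w-a_2,a_1).
\end{displaymath}

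Bounding each term is the only place where care is needed, and it is here that the hypothesis $|z_1-z_2|\le C$ enters. Comparing horizontal coordinates, $z_i=\pi(p_i)+a_i$, so
\begin{displaymath}
w=\pi(p_1)-\pi(p_2)=z_1-z_2-a_1+a_2,
\end{displaymath}
and therefore $|w|\le C+2\varepsilon\le C+2$. Using $|\omega(u,v)|\le|u||v|$, the four terms are bounded as follows:
\begin{displaymath}
|s_1|+|s_2|\le\tfrac{\varepsilon^2}{2},\qquad |\omega(a_2,w)|+|\omega(w,a_1)|\le 2(C+2)\varepsilon,\qquad |\omega(a_2,a_1)|\le\varepsilon^2 .
\end{displaymath}
Since $\varepsilon<1$, we have $\varepsilon^2\le\varepsilon$. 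Collecting the bounds gives
\begin{displaymath}
\bigl|t_1-t_2-\tfrac12\omega(z_2,z_1)\bigr|\lesssim(1+C)\,\varepsilon,
\end{displaymath}
which is the claim. No step is a genuine obstacle: the identification of the target quantity as a vertical coordinate and the bound $|w|\le C+2$ are the only points requiring attention.
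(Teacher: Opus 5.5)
Your proof is correct and takes essentially the same route as the paper. The paper likewise uses that the vertical coordinate of $p_2^{-1}p_1$ vanishes and that $p_i^{-1}q_i$ is $\varepsilon$-small in the Kor\'anyi norm, and its coordinate rearrangement of $t_1-t_2-\tfrac12\omega(z_2,z_1)$ is exactly your expansion of the vertical coordinate of $g_2^{-1}(w,0)\,g_1$. Your group-theoretic packaging simply makes the bookkeeping more transparent.
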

\begin{proof} We denote in coordinates  $p_1=(z_1', t_1')\,,p_2=(z_2',t_2')$.
Since $\ell$ is a horizontal line, $p_1=p\cdot v_1$, $p_2=p\cdot v_2$ for some $p\in\mathbb H^n$, $v_1,v_2\in p^{-1}\cdot\ell\in\mathcal V^0_1$. Hence $p_2^{-1}\cdot p_1=v_2^{-1}\cdot v_1$ and, taking the last coordinate, using the structure of the Heisenberg product and recalling that $v_1,v_2\in\mathcal V_1^0$, we get 
\begin{equation}\label{eq:(i)Horiz}
    t_1'-t_2'-\tfrac{1}{2}\omega(z_2',z_1')=0.
\end{equation}
On the other hand, spelling out  $ d(q_1,p_1)$ and $ d(q_2,p_2)$, we also have
\begin{equation}\label{eq:(v)Dist}
 |z_1-z_1'|,|z_2-z_2'|\leq \varepsilon,\quad |t_1-t_1'-\tfrac{1}{2}\omega(z_1',z_1)|,|t_2-t_2'-\tfrac{1}{2}\omega(z_2',z_2)|\leq \varepsilon^2.
\end{equation}
Now, let us write
\begin{align*}
    t_1-t_2-\tfrac{1}{2}\omega(z_2,z_1)\overset{\eqref{eq:(i)Horiz}}{=}& \left[t_1-t_2-\tfrac{1}{2}\omega(z_2,z_1)\right]-\left[ t_1'-t_2'-\tfrac{1}{2}\omega(z_2',z_1')\right]\\
    =& \left[t_1-t_1'-\tfrac{1}{2}\omega(z_1',z_1) \right]
    -  \left[t_2-t_2'-\tfrac{1}{2}\omega(z_2',z_2) \right]
  \\&  + \tfrac{1}{2}\omega(z_1',z_1)-\tfrac{1}{2}\omega(z_2',z_2)-\tfrac{1}{2}\omega(z_2,z_1)+\tfrac{1}{2}\omega(z_2',z_1')\\
   =& \left[t_1-t_1'-\tfrac{1}{2}\omega(z_1',z_1) \right]
    -  \left[t_2-t_2'-\tfrac{1}{2}\omega(z_2',z_2) \right]
  \\&  +\tfrac{1}{2} \omega(z_1'-z_2,z_1-z_2')\\
   =& \left[t_1-t_1'-\tfrac{1}{2}\omega(z_1',z_1) \right]
    -  \left[t_2-t_2'-\tfrac{1}{2}\omega(z_2',z_2) \right]
  \\&  +\tfrac{1}{2}\omega(z_1'-z_1,z_1-z_2)+\tfrac{1}{2}\omega(z_1'-z_1,z_2-z_2')\\&+\tfrac{1}{2}\omega(z_1-z_2,z_1-z_2)+\tfrac{1}{2}\omega(z_1-z_2,z_2-z_2').
\end{align*}
It finally follows from this expression, the assumption $|z_1-z_2|\leq C$ and \eqref{eq:(v)Dist} that 
\begin{displaymath}
    |t_1-t_2-\tfrac{1}{2}\omega(z_2,z_1)|\lesssim_C \varepsilon,
\end{displaymath}
as desired.
\end{proof}


We now give a sufficient condition under which a $k$-plane in $\mathbb{H}^n$ is close to a horizontal plane, in a suitable sense. By left-translations, this can be reduced to the problem of approximating an `almost isotropic' plane in $\mathbb{R}^{2n}$ by an isotropic plane. The construction of the approximating isotropic plane bears similarities with the Gram-Schmidt orthogonalization, although to a given set of $k$ vectors, we now assign a set of vectors that span a possibly different, but isotropic, plane. 

\begin{lemma}[Finding horizontal $k-$planes]\label{l:almostHoriz_k}
Let $2\leq k\leq n$. Let $0<\varepsilon\ll C$ with $\varepsilon$ sufficiently small depending on $C$ and $k$. 
    Assume that there are $k+1$ points $q_0, q_1, \dots, q_k \in \mathbb{H}^n$ satisfying the following conditions:
    \begin{enumerate}
        \item[(1)] There exists a $k$-dimensional horizontal subgroup $\mathbb{V}_0 = V_0 \times \{0\}$ so that the projected points $\{\pi_{\mathbb{V}_0}(q_0), \pi_{\mathbb{V}_0}(q_1), \dots, \pi_{\mathbb{V}_0}(q_k)\}$ form the vertices of a non-degenerate $k$-simplex in $\mathbb{V}_0$ which contains a $k$-dimensional cube of side-length $\frac{1}{C}$ and is contained in a cube of side-length $C$.
        \item[(2)] $d(q_i, q_j) \in [\frac{1}{C}, C]$ for all $i \neq j$.
        \item[(3)] For each $m \in \{0, 1, \dots, k\}$, there exists an affine $(k-1)$-dimensional horizontal plane $\mathbb{U}_m\in \mathcal{V}_{k-1}$ such that for all $j \in \{0, 1, \dots, k\} \setminus \{m\}$, we have:$$ d(q_j, \mathbb{U}_m) \le \varepsilon $$
        \end{enumerate}
    Then there exists a horizontal $k$-plane $\mathbb{V} \in \mathcal{V}_k$ that is an intrinsic graph of a function $A_{\mathbb{V}}:\mathbb{V}_0\to\mathbb{W}_0$ such that$$ d(q_i, \mathbb{A}_{\mathbb{V}}(\pi_{\mathbb{V}_0}(q_i))) \lesssim_{C,k} \sqrt{\varepsilon}, \quad \text{for all } i=0,1,\dots,k$$
    where $\mathbb{A}_{\mathbb{V}}$ denotes the graph map associated to $A_{\mathbb{V}}$. 
\end{lemma}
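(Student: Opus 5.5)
The plan is to normalize by a left translation and a rotation, extract near-isotropy of the horizontal parts of the points from hypothesis (3) via Lemma \ref{l:VertDiff}, and then correct those vectors by a Gram--Schmidt-type procedure. The $\sqrt{\varepsilon}$ in the conclusion should come only from the square root in the vertical part of the Kor\'anyi norm.

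\textbf{Normalization.} Left translations are isometries, preserve $\mathcal{V}_{k-1}$ and $\mathcal{V}_k$, and map intrinsic graphs over $\mathbb{V}_0$ to intrinsic graphs over $\mathbb{V}_0$. Since $\pi_{\mathbb{V}_0}(q_0^{-1}q)=\pi_{\mathbb{V}_0}(q)-\pi_{\mathbb{V}_0}(q_0)$, hypothesis (1) is unchanged, and the graph point $\mathbb{A}_{\mathbb{V}}(\pi_{\mathbb{V}_0}(q_i))$ is carried to the corresponding graph point of the translated plane. So I may assume $q_0=e$. By Lemma \ref{l:RotRed}, applying $R_U$ with $U\in U(n)$, I may also assume $\mathbb{V}_0=\mathbb{R}^k\times\{0\}$.

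Write $q_i=(z_i,t_i)$, so $z_0=0$ and $t_0=0$. By (2), $|z_i|\le d(q_i,e)\le C$ and $|z_i-z_j|\le C$.

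\textbf{Step 1: almost isotropy.} Fix $i\neq j$ in $\{0,\dots,k\}$. Since $k\ge 2$, there is an index $m\notin\{i,j\}$, and by (3) both $q_i$ and $q_j$ lie within $\varepsilon$ of $\mathbb{U}_m$. Let $p_i,p_j\in\mathbb{U}_m$ be points with $d(q_i,p_i),d(q_j,p_j)\le\varepsilon$. The line through $p_i$ and $p_j$ lies in $\mathbb{U}_m$, so it is horizontal by Remark \ref{r:Line}. Lemma \ref{l:VertDiff} then gives
\[
|t_i-t_j-\tfrac12\omega(z_j,z_i)|\lesssim_C\varepsilon .
\]
Taking $j=0$ gives $|t_i|\lesssim_C\varepsilon$ for all $i$. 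Feeding this back into the pairwise estimate gives $|\omega(z_i,z_j)|\lesssim_C\varepsilon$ for all $i,j\ge1$.

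\textbf{Step 2: isotropic correction (the main step).} Write $z_i=(x_i,y_i)$ with $x_i\in\mathbb{R}^k\cong V_0$. By (1), the vectors $x_1,\dots,x_k$ are the edge vectors of a simplex containing a cube of side $1/C$ and contained in a cube of side $C$. Hence the matrix $X=(x_1|\cdots|x_k)$ satisfies $\|X\|,\|X^{-1}\|\lesssim_{C,k}1$.

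I construct $w_1,\dots,w_k$ inductively as $w_i=z_i+c_i$ with $c_i\in JV_0$. Because $V_0$ is isotropic, $JV_0\subset V_0^{\perp}$, so adding $c_i$ does not change the $V_0$-component and $\pi_{\mathbb{V}_0}$ of each point is preserved. The requirement is $\omega(w_i,w_j)=0$ for all $j<i$. Writing $c_i=Jc_i'$ with $c_i'\in V_0$, this becomes
\[
\omega(Jc_i',w_j)=\langle J^2c_i',w_j\rangle=-\langle c_i',x_j\rangle=-\omega(z_i,w_j),\qquad j<i,
\]
using that the $V_0$-component of $w_j$ is $x_j$. This is a linear system in $c_i'$ whose coefficient vectors $x_j$ are well-conditioned. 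Inductively $|\omega(z_i,w_j)|\lesssim\varepsilon$, since $\omega(z_i,z_j)=O(\varepsilon)$ and $|c_j|=O(\varepsilon)$ with $|z_i|\le C$. So the system has a solution with $|c_i|\lesssim_{C,k}\varepsilon$.

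The errors compound only a bounded number ($k$) of times, which is where the smallness of $\varepsilon$ relative to $C$ and $k$ is used. This bookkeeping is the main obstacle: one must check that each new right-hand side stays $O(\varepsilon)$ with constants depending only on $C$ and $k$.

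\textbf{Step 3: conclusion.} Set $\mathbb{V}:=\mathrm{span}\{w_1,\dots,w_k\}\times\{0\}$. It is horizontal because the $w_i$ are pairwise $\omega$-orthogonal, and it is $k$-dimensional because their $V_0$-components $x_i$ are independent. For the same reason $\mathbb{V}$ is the intrinsic graph over $\mathbb{V}_0$ of an intrinsic linear map $A_{\mathbb{V}}$, and $\mathbb{A}_{\mathbb{V}}(\pi_{\mathbb{V}_0}(q_i))=(w_i,0)$, with $w_0=0$. Therefore
\[
d\big(q_i,(w_i,0)\big)=\big\|(c_i,\ t_i-\tfrac12\omega(w_i,z_i))\big\|\lesssim |c_i|+\sqrt{|t_i|+|\omega(c_i,z_i)|}\lesssim_{C,k}\varepsilon+\sqrt{\varepsilon}\lesssim\sqrt{\varepsilon}.
\]
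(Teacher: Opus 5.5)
Your proof is correct. The normalization and Step 1 (near-isotropy via Lemma \ref{l:VertDiff}, first with $j=0$ to get $|t_i|\lesssim_C\varepsilon$) match the paper. Your isotropic correction, however, takes a genuinely different and simpler route.

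The paper sets $v_m=z_m+\sum_{l<m}c_{m,l}Jz_l$ and solves $\widetilde G_{m-1}C_m=-W_m$ with $\widetilde G_{m-1}=(\langle v_r,z_l\rangle)$. To invert this it must show that $\widetilde G_{m-1}$ is an $O(\varepsilon)$-perturbation of the Gram matrix of $z_1,\dots,z_{m-1}$. It then bounds that Gram determinant from below using (1), the lower bound $|z_l|\gtrsim_C1$ and Gram's volume formula; this is where smallness of $\varepsilon$ enters. Since $Jz_l$ generally has a nonzero $V_0$-component, the paper also needs two further steps (its Steps 4--5). First, it checks that $\pi_{\mathbb V_0}|_{\mathbb V}$ has rank $k$. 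Second, it shows that the coefficients in $\pi_{V_0}(z_i)=\sum_j\lambda_{i,j}\pi_{V_0}(v_j)$ satisfy $\lambda_{i,j}=\delta_{ij}+O(\varepsilon)$, which yields $|z_i-z_i^\ast|\lesssim_C\varepsilon$.

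By putting the correction in $JV_0\subset V_0^\perp$, you keep every $V_0$-component fixed, and this buys three things:
\begin{enumerate}
\item[(a)] The coefficient vectors of your linear system are the projected vertices $x_j$. These are well conditioned directly by (1), since $|\det X|=k!\,\mathrm{vol}\ge k!\,C^{-k}$ and $|x_j|\le\sqrt k\,C$.
\item[(b)] The system has $i-1<k$ equations in $k$ unknowns, so it is solvable with $|c_i|\lesssim_{C,k}\varepsilon$ for every $\varepsilon$.
\item[(c)] The graph point over $\pi_{\mathbb V_0}(q_i)$ is exactly $(w_i,0)$, so the paper's Steps 4--5 disappear.
\end{enumerate}

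In particular, the ``main obstacle'' you flag is not really one in your scheme. Smallness of $\varepsilon$ is used only for Lemma \ref{l:VertDiff} and for $\varepsilon\lesssim\sqrt\varepsilon$.

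What the paper's construction buys is independence from the reference plane: it is an isotropic Gram--Schmidt on the $z_i$ alone, and $V_0$ only enters the conditioning estimates. That generality is not needed for this lemma.

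A minor point: the rotation normalization is unnecessary, since you can work with $V_0$ and $V_0^\perp$ directly. Lemma \ref{l:RotRed} is about intrinsic Lipschitz graphs; what you actually use is only the existence of $U\in U(n)$ with $U(V_0)=\mathbb R^k\times\{0\}$, together with $R_U\circ\pi_{\mathbb V_0}=\pi_{R_U(\mathbb V_0)}\circ R_U$.
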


Before proving Lemma \ref{l:almostHoriz_k}, we remark that assumption $(1)$ implies that the $k$-simplex with vertices $\{\pi_{\mathbb{V}_0}(q_0), \pi_{\mathbb{V}_0}(q_1), \dots, \pi_{\mathbb{V}_0}(q_k)\}$ has edges of length at least $\frac{1}{C}$. This is a purely geometric observation, which we formulate below in a stronger form.
\begin{lemma}\label{lem:geometric}
    Let $\triangle$ be a non-degenerate $k$-simplex inside $\R^m$, with $k\leq m$. Assume that $\triangle$ contains a $k$-dimensional ball of diameter $D$. Let $h\in\{1,\dots,k\}$, and let $\triangle_h$ be a $h$-face (an element of the $h$-skeleton of $\triangle$). Then $\triangle_h$ contains a $h$-dimensional ball of diameter $D$. In particular, the edges of $\triangle$ (namely, its 1-faces) have length at least $D$.
\end{lemma}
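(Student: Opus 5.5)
The plan is to prove the statement by descending induction on the face dimension $h$: it suffices to treat a face of dimension $h-1$ inside an $h$-simplex that contains an $h$-dimensional ball of diameter $D$. Iterating this from $h=k$ down to $h=1$ then gives the claim for every $h$-face, since each $h$-face of $\triangle$ is reached by a chain of faces $\triangle=\triangle_k\supset\triangle_{k-1}\supset\dots\supset\triangle_h$. The ``in particular'' part is the case $h=1$: a $1$-dimensional ball of diameter $D$ is a segment of length $D$, so every edge has length at least $D$. We work throughout inside the affine hull of the relevant simplex, identified with $\mathbb{R}^h$, so that non-degeneracy holds.

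For the inductive step, let $S\subset\mathbb{R}^h$ be a non-degenerate $h$-simplex containing a ball $B(c,D/2)$. Let $F$ be the facet opposite the vertex $p$, let $H$ be the hyperplane containing $F$, and let $\nu$ be the unit normal to $H$ pointing towards $p$. The key construction is orthogonal projection of the ball onto $H$. We will show that
\[
\pi_H\bigl(B(c,D/2)\bigr)=B_H(\pi_H(c),D/2)
\]
is contained in $F$. Since $B_H(\pi_H(c),D/2)$ is an $(h-1)$-dimensional ball of diameter $D$, this completes the step.

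To prove the containment, take $y\in B_H(\pi_H(c),D/2)$. Then $y=\pi_H(x)$ with $x=y+\mathrm{dist}(c,H)\,\nu\in B(c,D/2)\subset S$. We then need to show that $y$ lies in $S$, after which $y\in S\cap H=F$. Write the simplex as an intersection of half-spaces, $S=\{\ell_i\ge 0\}$, where $\ell_0$ is the affine function vanishing on $H$. For the facets other than $F$ we argue as follows. The segment from $x$ in direction $-\nu$ can leave $S$ only through some facet. We need $\ell_i(x-s\nu)\ge 0$ for $0\le s\le \mathrm{dist}(c,H)$.

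This is the main obstacle, because $\nu$ need not point ``inward'' for the other facets, so the convexity argument is not immediate. I would handle it with a shrinking-ball trick rather than the segment. For $t\in[0,1]$, consider the balls
\[
B_t:=B\bigl(c-t\,\mathrm{dist}(c,H)\nu,\,D/2\bigr)\cap\{\ell_0\ge0\}.
\]
Alternatively, and more simply, I would use the support-function formulation. A ball $B(c,r)$ lies in $S$ if and only if $\ell_i(c)\ge r|\nabla\ell_i|$ for all $i$. The projected $(h-1)$-ball lies in $F$ if and only if $\ell_i(\pi_H c)\ge r\,|\pi_{H}\nabla\ell_i|$ for $i\ne0$, since $F=H\cap\{\ell_i\ge0,\,i\neq 0\}$.

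Write $\pi_H c=c-\ell_0(c)\nu/|\nabla\ell_0|$. We then need to show
\[
\ell_i(c)-\ell_0(c)\,\frac{\langle\nabla\ell_i,\nu\rangle}{|\nabla\ell_0|}\ \ge\ r\,|\pi_H\nabla\ell_i|.
\]
If $\langle\nabla\ell_i,\nu\rangle\le0$ this is immediate, since then the left side is at least $\ell_i(c)\ge r|\nabla\ell_i|\ge r|\pi_H\nabla\ell_i|$. If $\langle\nabla\ell_i,\nu\rangle>0$, I would use instead the vertex $p$ and the fact that $\ell_i$ vanishes on the facet through $p$. Rescaling along the cone from $p$ shows that the ball can be slid towards $H$ inside $S$ while keeping its $H$-shadow fixed. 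This requires checking the explicit inequality via barycentric coordinates, and I expect this case to be the delicate computation.
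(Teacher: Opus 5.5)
Your descending induction and the reduction to one step (a facet $F$ of an $h$-simplex $S$ containing $B(c,D/2)$) are fine. The gap is the key claim of that step: the orthogonal projection $\pi_H(B(c,D/2))$ need \emph{not} lie in $F$. This already fails for obtuse triangles.

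Take the triangle $S$ with vertices $(0,0)$, $(1,0)$, $p=(3,1)$, and let $F=[0,1]\times\{0\}$ and $H=\{y=0\}$. The center $c=(2,3/5)$ lies in $S$, and its distances to the three side lines $y=0$, $x-3y=0$, $x-2y-1=0$ are $3/5$, $1/(5\sqrt{10})$ and $1/(5\sqrt5)$. All of these exceed $1/20$, so the disc $B(c,1/20)$ lies in $S$. Its orthogonal projection onto $H$ is $[39/20,41/20]\times\{0\}$, which is disjoint from $F$.

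In your support-function notation this is exactly your ``delicate'' case. With $\ell_2(x,y)=2y-x+1$ one has $\langle\nabla\ell_2,\nu\rangle=2>0$ but $\ell_2(\pi_H c)=\ell_2(2,0)=-1<0$. So the inequality you planned to verify via barycentric coordinates is false, not merely hard. No sliding of the ball towards $H$ that keeps its $H$-shadow fixed can work, because that shadow lies outside $F$.

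The fix is to replace orthogonal projection by central projection from the opposite vertex $p$. Your last sentence gestures at this, but with the wrong target. Let $H'$ be the hyperplane through $c$ parallel to $H$. Since $S$ is the convex hull of $p$ and $F$, one has $S\cap H'=p+\lambda(F-p)$ with $\lambda=\mathrm{dist}(p,H')/\mathrm{dist}(p,H)\in(0,1)$; here $\lambda<1$ because $c$ is an interior point of $S$. The equatorial slice $B(c,D/2)\cap H'$ is an $(h-1)$-ball of diameter $D$ contained in $S\cap H'$. The homothety centered at $p$ with ratio $1/\lambda\geq 1$ maps $S\cap H'$ onto $F$, and it maps this slice onto an $(h-1)$-ball of diameter $D/\lambda\geq D$ inside $F$.

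This is precisely the paper's argument (the equatorial section parallel to the face, then radial projection from the opposite vertex). It works for every non-degenerate simplex, which matters because the lemma is applied in the proof of Lemma \ref{l:almostHoriz_k} to simplices over which there is no angle control.
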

\begin{proof}
    We prove the claim by backward induction on $h$. The case $h=k$ holds by assumption. Let us assume now that the conclusion holds for $h+1$ and let us prove it for $h$. Let $\triangle_h$ be a $h$-face of $\triangle$ and consider a $(h+1)$-face $\triangle_{h+1}$ of $\triangle$ containing $\triangle_h$. By inductive hypothesis, $\triangle_{h+1}$ contains a $(h+1)$-dimensional ball $B_{h+1}$ of diameter $D$. Consider the ``equatorial" section $S_h$ of $B_{h+1}$ which is parallel to $\triangle_h$. Hence, $S_h$ is a $h$-dimensional ball of diameter $D$. Let us also denote by $p_h$ the vertex of $\triangle_{h+1}$ which is not contained in $\triangle_h$. Consider the radial projection \textit{from} $p_h$ \textit{onto} $\triangle_{h}$, which is defined on $\triangle_{h+1}$: by similarity, this maps $S_h$ to a $h$-dimensional ball $B_h$ which is contained in $\triangle_h$. In addition, again by similarity, the diameter of $B_h$ is at least the one of $S_h$, hence at least $D$. 
\end{proof}

\begin{proof}[Proof of Lemma \ref{l:almostHoriz_k}]
 \textbf{Step 1: Reduction to $q_0 = 0$.} 
By applying a left translation, we may without loss of generality assume that $q_0=0$. Indeed, left translations are bijections that preserve the families $\mathcal{V}_m$ of horizontal $m$-planes for $m=1,\ldots,k$. They are also isometries with respect to the Kor\'{a}nyi distance. Moreover, horizontal projections are group homomorphisms. Finally, if $\Gamma=\{v\cdot f(v):\,v\in \mathbb{V}_0\}$ is an intrinsic graph and $q\in \mathbb{H}^n$, then $q\cdot \Gamma$ is the intrinsic graph of $f_q:\mathbb{V}_0 \to \mathbb{W}_0$ given by 
\begin{displaymath}
    f_q(v):= v^{-1}\cdot q \cdot \pi_{\mathbb{V}_0}(q)^{-1}\cdot v\cdot f\left(\pi_{\mathbb{V}_0}(q)^{-1}\cdot v\right),\quad v\in \mathbb{V}_0,
\end{displaymath}
recall the comment below Definition \ref{d:graph}.
Combining all of this, we conclude that we may indeed assume that $q_0=0$

We will show that there exists a horizontal subgroup $\mathbb{V}=V\times \{0\}$ with the desired properties. Let $q_i = (z_i, t_i)$. The isotropic plane $V$ will be constructed as $V=\mathrm{span}\{v_1,\ldots,v_k\}$, where, for each $i=1,\ldots,k$, the vector $v_i$ is a small perturbation of $z_i$. 

\medskip

\textbf{Step 2: $\mathrm{span}\{z_1,\ldots,z_k\}$ is almost isotropic.}
Our goal is to show that 
\begin{equation}\label{eq:OmegaEst}
\left|\omega\left(z_i,z_j\right)\right|\lesssim_C \varepsilon,\quad i,j\in \{1,\ldots,k\}.
\end{equation} 
We observe that
\begin{equation}\label{eq:|z1|,|z2|}
    |z_l|\sim_C 1,\quad l=1,\ldots,k.
\end{equation}
For now, we will only need the upper bound, but the lower bound will be used later.
To see why \eqref{eq:|z1|,|z2|} holds true, recall that, by (2), 
\begin{displaymath}
|z_l|\leq \|q_l\|=d(q_l,q_0)\leq C,\quad l=1,\ldots,k.
\end{displaymath}
On the other hand, denoting by  $\pi_{V_0}: \R^{2n}\to V_0$ the Euclidean orthogonal projection onto $V_0$,
we find by (1) and Lemma \ref{lem:geometric} that
\begin{equation}\label{eq:z1z2est}
    |z_l|\geq |\pi_{V_0}(z_l)|\gtrsim_C 1,\quad l=1,\ldots,k,
\end{equation}
yielding together \eqref{eq:|z1|,|z2|}.
Fix such $i\neq j$. 
Choose $m\in \{0,\ldots,k\}\setminus \{i,j\}$ and let $\mathbb{U}_m$ be the $(k-1)$-dimensional horizontal plane given by condition (3).  Then there exist $p_i^m, p_j^m \in \mathbb{U}_m$ such that 
\begin{equation}\label{eq:CloseEndpoints}
d( q_i,p_i^m) \leq  \varepsilon\quad\text{and}\quad d( q_j,p_j^m) \leq  \varepsilon. 
\end{equation}
As a subset of $\mathbb{U}_m$, the line segment $\ell_{i,j}$ connecting  $p_i^m$ and $p_j^m$ is horizontal (recall Remark \ref{r:Line}). We will apply Lemma \ref{l:VertDiff} with $\ell=\ell_{i,j}$. For this we need the condition \eqref{eq:CloseEndpoints} as well as assumption (2) to ensure that
\begin{displaymath}
    |z_i-z_j|\leq d(q_1,q_2)\leq C.
\end{displaymath}
It then follows by Lemma \ref{l:VertDiff}  that
\begin{equation}\label{eq:VertDiffEst}
    |t_i-t_j-\tfrac{1}{2}\omega(z_j,z_i)|\lesssim_C \varepsilon.
\end{equation}
This will yield the desired estimate for $\omega(z_j,z_i)$ if we can control $t_i,t_j$.  To achieve the latter, we apply again Lemma \ref{l:VertDiff}, but with $\ell=\ell_{0,i}$ and $\ell=\ell_{0,j}$, respectively. This yields
\begin{equation}\label{eq:t1_t2_bdd}
    |t_l|\lesssim_C \varepsilon,\quad l=0,\ldots,k.
\end{equation}
Therefore, by \eqref{eq:VertDiffEst},
we obtain 
    $$ |\omega(z_i, z_j)|\lesssim_C \varepsilon,\quad i,j=1,\ldots,k.$$

\medskip

     \textbf{Step 3: Constructing $\mathbb{V}\in \mathcal{V}_k^0$ to which $\{q_0,q_1,\ldots,q_k\}$ lie close.} 
The plane spanned by $z_1,\ldots,z_k$ need not be isotropic, but with a small perturbation of the vectors $z_l$ it will be. We define
\begin{equation}\label{eq:v_j}
    v_1:=z_1\quad \text{and}\quad v_m := z_m + \sum_{l=1}^{m-1} c_{m,l}\, J z_l\text{ for }1 < m \le k,
\end{equation}
    where $J$ is the standard complex structure as defined below \eqref{sym}. We want to find (small) coefficients $c_{m,l}\in \mathbb{R}$ such that $\omega(v_r, v_m) = 0$ for all $1 \le r < m \le k$. This will ensure that $V := \text{span}\{v_1, \dots, v_k\}$ is isotropic. 
    Note that $\omega(v_r, v_m) = 0$ is equivalent to
    \begin{equation}\label{eq:EquivIsotr} \omega(v_r, z_m) + \sum_{l=1}^{m-1} c_{m,l}\, \omega(v_r, J z_l) = 0.\end{equation}
    Using the property $\omega(v_r, J z_l) = \langle v_r, z_l \rangle$,  condition \eqref{eq:EquivIsotr} is further equivalent to 
    $$ \sum_{l=1}^{m-1} c_{m,l} \langle v_r, z_l \rangle = -\omega(v_r, z_m). $$

 In conclusion, we have $\omega(v_r,v_m)=0$ for all $1\leq r<m\leq k$ if and only if for all $1<m\leq k$, we can find
 $C_m = (c_{m,1}, \dots, c_{m,m-1})^T$ such that 
 \begin{equation}\label{eq:CoeffCond}
\widetilde{G}_{m-1} C_m = -W_m,
 \end{equation}
 where 
 \begin{displaymath}
    \widetilde{ G}_{m-1}:=\begin{pmatrix}\langle v_1,z_1\rangle&& \langle v_1,z_{m-1}\rangle \\&\ddots&\\\langle v_{m-1},z_1\rangle&&\langle v_{m-1},z_{m-1}\rangle\end{pmatrix}
 \end{displaymath}
and $W_m = (\omega(v_1, z_m), \dots, \omega(v_{m-1}, z_m))^T$.

We will now verify by induction on $m$ that the equation \eqref{eq:CoeffCond} can be solved for $C_m$,   and
\begin{equation}\label{eq:IndHyp}\tag{$H_m$}
\|\widetilde{G}_{m-1}\|\lesssim_C 1,\quad |\det \widetilde{G}_{m-1}|\gtrsim_C 1,\quad\text{and}\quad |c_{m,l}|\lesssim_C \varepsilon\quad \text{for } l=1,\ldots,m-1,
\end{equation}
where $\|\widetilde{G}_{m-1}\|$ denotes the operator norm of $\widetilde{G}_{m-1}$. Here the implicit constants are allowed to depend on $m$, which is bounded by $k$, without special mentioning.

To start the induction, consider $m=2$. Then, by \eqref{eq:|z1|,|z2|}, 
\begin{displaymath}
    \widetilde{G}_1 = \langle v_1,z_1\rangle = \langle z_1,z_1 \rangle \sim_C 1,
\end{displaymath}
and
\eqref{eq:CoeffCond} is equivalent to 
\begin{displaymath}
   \langle v_1,z_1 \rangle c_{2,1}=-\omega (v_1,z_2).
\end{displaymath}
The left-hand side equals  $c_{2,1}|z_1|^2=\langle z_1,z_1\rangle c_{2,1}$, while the right-hand side is $-\omega (v_1,z_2)=-\omega(z_1,z_2)$. Thus,  $C_2:=c_{2,1}:=-\omega(z_1,z_2)/|z_1|^2$ solves \eqref{eq:CoeffCond} for $m=2$ with
\begin{displaymath}
    |c_{2,1}|=\frac{|\omega(z_1,z_2)|}{|z_1|^2}\lesssim_C \varepsilon,
\end{displaymath}
where we used \eqref{eq:OmegaEst},\eqref{eq:|z1|,|z2|} in the last estimate. Hence the base case $m=2$ of the induction hypothesis \eqref{eq:IndHyp} holds.

Assume now that $(H_{r})$ holds for all $2\leq r\leq m-1$, and show that \eqref{eq:IndHyp} follows. By definition \eqref{eq:v_j},
the entries of $W_m$ are given by $\omega(v_1,z_m)=\omega(z_1,z_m)$ and
\begin{displaymath}
    \omega(v_r,z_m)= \omega(z_r,z_m)+\sum_{l=1}^{r-1}c_{r,l}\omega(Jz_l,z_m),\quad r=2,\ldots,m-1.
\end{displaymath}
Since $|\omega(z_r,z_m)|\lesssim_C \varepsilon$
by \eqref{eq:OmegaEst}, and moreover $|z_j|\sim_C 1$, $j=1,\ldots,k$ and $|c_{r,l}|\lesssim_C \varepsilon$ by induction hypothesis $(H_r)$, we conclude that $|W_m|\lesssim_C \varepsilon$. 
By a similar computation, we verify that  $\|\widetilde{G}_{m-1}\|\lesssim_C 1$ since $\langle v_1,z_j\rangle = \langle z_1,z_j\rangle$ and, for every $ r=2,\ldots,m-1,\, j=1,\ldots,m-1$,
\begin{align*}
 \langle v_r,z_j \rangle &= \langle z_r,z_j\rangle+\sum_{l=1}^{r-1}c_{r,l}\langle Jz_l,z_j\rangle \\
 & =\langle z_r,z_j \rangle + \langle C_r,(\langle Jz_1,z_j\rangle,\ldots,\langle Jz_{r-1} ,z_j\rangle)^T\rangle.
\end{align*}
This computation also shows that
\begin{displaymath}
    \widetilde{G}_{m-1}=G_{m-1}+ R_{m-1},
\end{displaymath}
where $G_{m-1}=(\langle z_i,z_j\rangle )_{i,j=1,\ldots,m-1}$ is the Gram matrix of $\{z_1,\ldots,z_{m-1}\}$, and $R_{m-1}$ is a matrix whose coefficients are  in absolute value bounded by $\lesssim_C \varepsilon$ thanks to the induction hypothesis and the fact that $|z_j|\sim_C 1$, $j=1,\ldots,k$. Then, for all $|v|=1$,
\begin{equation}\label{eq:l(tilde G)}
|\widetilde{G}_{m-1}v|= |G_{m-1}v+R_{m-1}v|\geq |G_{m-1}v|-|R_{m-1}v|\gtrsim_C 1
\end{equation}
provided that $\varepsilon$ is chosen small enough depending on $C$ and $k$. Here we have used in the last step that $\|G_{m-1}\|\lesssim_C 1$ and 
\begin{equation}\label{eq:DetGram}
|\det G_{m-1}|\gtrsim_C 1.
\end{equation}

Let us justify more carefully why 
the Gram determinant $\det G_{m-1}$ is bounded away from zero. This would be  immediate if the points $z_j$ were replaced by $\pi_{V_0}(z_j)$. Indeed, since $\pi_{\mathbb{V}_0}(q_l)=(\pi_{V_0}(z_l),0)$ by definition of the horizontal Heisenberg projection, it follows from assumption (1) and Lemma \ref{lem:geometric} that the points $\{0,\pi_{V_0}(z_1),\ldots,\pi_{V_0}(z_{m-1})\}$ are the vertices of a non-degenerate $(m-1)$-simplex $\Delta_{m-1}$ in $V_0$ containing a sphere of diameter $1/C$. Therefore, $\pi_{V_0}(z_1),\ldots,\pi_{V_0}(z_{m-1})$ are linearly independent and $\mathcal{H}^{m-1}(\Delta_{m-1})\gtrsim_C 1$. (We record for later that this step also works for $m=k+1$.)

Finally, let $\widetilde{\Delta}_{m-1}$ be the $(m-1)$-simplex in $\mathbb{R}^{2n}$ with vertices $0,z_1,\ldots,z_{m-1}$. By construction $\pi_{V_0}(\widetilde{\Delta}_{m-1})=\Delta_{m-1}$, and therefore $$\mathcal{H}^{m-1}(\widetilde{\Delta}_{m-1})\geq  \mathcal{H}^{m-1}(\pi_{V_0}(\widetilde{\Delta}_{m-1}))\gtrsim_C 1$$ (again, this also holds for $m=k+1$). 
By Gram's formula, the volume of the $(m-1)$-simplex  $\widetilde{\Delta}_{m-1}$ equals $\sqrt{|\det G_{m-1}|}/{(m-1)!}$, yielding \eqref{eq:DetGram}, and thus \eqref{eq:l(tilde G)}.

From \eqref{eq:l(tilde G)} it finally follows that $|\det \widetilde{G}_{m-1}|\geq_C 1$. Recalling \eqref{eq:CoeffCond}, the bound on $|C_m|$ follows from the corresponding bound for $|W_m|$ and the estimates $\|\widetilde{G}_{m-1}\|\lesssim_C 1$, $|\det \widetilde{G}_{m-1}|\geq_C 1$, completing the proof of the induction step \eqref{eq:IndHyp}. Continuing the induction until we reach $m=k$ finally shows that $V=\mathrm{span}\{v_1,\ldots,v_k\}$ is isotropic, as claimed. Moreover, by the definition \eqref{eq:v_j} of the vectors $v_j$, and the control on the coefficients $c_{m,l}$ given by \eqref{eq:IndHyp} and since $|z_j|\sim_C 1$, we obtain
 \begin{equation}\label{eq:smallPerturb}
     |z_m - v_m| =\left| \sum_{l=1}^{m-1} c_{m,l}\, J z_l\right|\lesssim_C \varepsilon,\quad m=1,\ldots,k\end{equation}

Since  $|\det G_k|\gtrsim_C 1$ and  $|z_j|\sim_C 1$, $j=1,\ldots,k$, the estimate \eqref{eq:smallPerturb} implies by an analogous argument as above that $\{v_1,\ldots,v_k\}$ are linearly independent, provided that $\varepsilon$ is chosen small enough depending on $C$ and $k$.

    Now let $\mathbb{V} := V \times \{0\}$. We conclude that $V$ is isotropic and $\mathbb{V}$ is a horizontal $k$-plane.

\medskip

\textbf{Step 4: $\mathbb{V}$ is an intrinsic graph.}
It suffices to show that the rank of the projection $\pi_{\mathbb{V}_0}|_{\mathbb{V}}$ is $k$. Let us see why this is the case. First $\mathbb{V}=V\times \{0\}$, where $V$ is spanned by the linearly independent vectors $v_1,\ldots,v_k$. The vectors   $\{\pi_{V_0}(z_1), \dots, \pi_{V_0}(z_k)\}$ span $V_0$, and the argument in Step 3 shows that they are quantitatively linearly independent in the sense that their Gram determinant is bounded away from $0$ in terms of $C$.
 Since  $|\pi_{V_0}(z_i)-\pi_{V_0}(v_i)|\leq |z_i - v_i| \lesssim_C \varepsilon$ by \eqref{eq:smallPerturb}, it follows that for $\varepsilon$ small enough, depending on $C$ and  $k$, the vectors $\{\pi_{V_0}(v_1), \dots, \pi_{V_0}(v_k)\}$ are also linearly independent. 
 Thus, we can deduce that  $\pi_{\mathbb{V}_0}|_{\mathbb{V}}$ has rank $k$. Therefore, $\mathbb{V}$ is an intrinsic graph over $\mathbb{V}_0$, and the function $\mathbb{A}_{\mathbb{V}}$ is the inverse of $(\pi_{\mathbb{V}_0})|_{\mathbb{V}}$.

 \medskip

\textbf{Step 5: Estimating the distance of $q_i$, $i=1,\ldots,k$, from $\mathbb{V}$.}
We need to show $d(q_i, \mathbb{A}_{\mathbb{V}}(\pi_{\mathbb{V}_0}(q_i))) \lesssim_C \sqrt{\varepsilon}$. 
    As $\{\pi_{V_0}(v_i)\}_{i=1}^k$ are linearly independent and therefore span $V_0$, we can write
    \begin{equation}\label{eq:ProjForm}
    \pi_{V_0}(z_i) = \sum_{j=1}^k \lambda_{i,j} \pi_{V_0}(v_j),\quad i=1,\ldots,k,
    \end{equation}
    for suitable coefficients $\lambda_{i,j}$.
Therefore, for $i=1,\ldots,k$,
\begin{equation}\label{eq:AMapExp}
   \mathbb{A}_{\mathbb{V}}(\pi_{\mathbb{V}_0}(q_i))=
     \mathbb{A}_{\mathbb{V}}(\pi_{V_0}(z_i),0)=\mathbb{A}_{\mathbb{V}}\bigg( \sum_{j=1}^k \lambda_{i,j} \pi_{V_0}(v_j),0\bigg)=\bigg(\sum_{j=1}^k \lambda_{i,j} v_j, 0\bigg)=:(z_i^{\ast},0),
\end{equation}
and $\mathbb{A}_{\mathbb{V}}(\pi_{\V_0}(q_0))=0$. Clearly, $d(q_0,\mathbb{A}_{\mathbb{V}}(\pi_{\mathbb{V}_0}(q_0))=0$. For $i=1,\ldots,k$, we have $q_i=(z_i,t_i)$ and thus 
\begin{equation}\label{eq:DistPointMap}
      d\left(q_i, \mathbb{A}_{\mathbb{V}}(\pi_{\mathbb{V}_0}(q_i))\right) \sim |z_i - z_i^*| + \left| t_i - \tfrac{1}{2}\omega(z_i^*, z_i) \right|^{1/2}.
\end{equation}
To bound the expression $|z_i-z_i^{\ast}|$, we need to control the coefficients $\lambda_{i,j}$ in \eqref{eq:ProjForm}. We first notice that the vectors  $\{\pi_{V_0}(v_1), \dots, \pi_{V_0}(v_k)\}$ are quantitatively linearly independent, in terms of $C$. This follows from the fact that the same holds for $\{\pi_{V_0}(z_1), \dots, \pi_{V_0}(z_k)\}$ and from the estimate $|\pi_{V_0}(z_i)-\pi_{V_0}(v_i)|\leq |z_i - v_i| \lesssim_C \varepsilon$. Hence, we get the same conclusion for $\{\pi_{V_0}(v_1), \dots, \pi_{V_0}(v_k)\}$ by taking $\varepsilon$ small enough depending only on $k$ and $C$. Combining this with \eqref{eq:ProjForm} and using again the estimate $|\pi_{V_0}(z_i)-\pi_{V_0}(v_i)|\leq |z_i - v_i| \lesssim_C \varepsilon$, it follows that $\lambda_{i,j}=\delta_{ij}+O(\varepsilon)$. It then follows from \eqref{eq:AMapExp} and \eqref{eq:smallPerturb} that $|z_i-z_i^{\ast}|\lesssim_C \varepsilon.$
Also note that by \eqref{eq:t1_t2_bdd}, we have $|t_i|\lesssim_C \varepsilon$. Thus, the Korányi distance \eqref{eq:DistPointMap} is estimated by
    \begin{align*}
        d\left(q_i, \mathbb{A}_{\mathbb{V}}(\pi_{\mathbb{V}_0}(q_i))\right) \sim_C |z_i - z_i^*| + \left| t_i - \frac{1}{2}\omega(z_i^*-z_i, z_i) \right|^{1/2}
        \lesssim_C \varepsilon +\sqrt{\varepsilon}\lesssim_C \sqrt{\varepsilon},
    \end{align*}
    for $\varepsilon \ll 1$ small enough. This completes the proof. 
\end{proof}


\subsubsection{One-dimensional slices of intrinsic Lipschitz graphs}\label{ss:1Dslice}

Let $1<k\leq n$, $\mathbb{V}_0=\mathbb{R}^k \times \{0\}$ and $\mathbb{W}_0=\{0\}\times \mathbb{R}^{2n+1-k}$. Fix an intrinsic $L$-Lipschitz function 
\begin{displaymath}
\varphi:\mathbb{V}_0 \equiv \mathbb{R}^k \to \mathbb{W}_0\equiv \R^{2n+1-k},\quad \varphi(x)=(\varphi_1(x),\ldots,\varphi_{2n+1-k}(x)).
\end{displaymath}
We aim to use that $1$-dimensional ``slices'' of $\varphi$ are again intrinsic Lipschitz, with controlled Lipschitz constant. To this end, consider an arbitrary affine line in $\mathbb{R}^k$, parametrized as follows:
\begin{equation}\label{def_lae}
\ell_{a,e}(v)= v + \sum_{i=1}^{k-1}a_i e_i^{\bot}\quad v\in V_e:=\mathrm{span}\{e\}\subset\R^k,
\end{equation}
for fixed $e\in S^{k-1}$, $\{e_1^{\bot},\ldots,e_{k-1}^{\bot}\}$ an orthonormal basis of $e^{\bot}$ inside $\R^k$, and $a:=(a_1,\ldots,a_{k-1})\in \mathbb{R}^{k-1}$.
We identify the 1-dimensional horizontal subgroup $\mathbb{V}_e:= \mathrm{span}\{e\}\times \{0\}$ with the 1-dimensional subspace $V_e$ of $\R^k$, via the identification of $\V_0$ with $\R^k$. 
We also let $\mathbb{W}_e =\mathrm{span}\{e_1^{\bot},\ldots,e_{k-1}^{\bot}\}\times \mathbb{R}^{2n-k+1}$ to be the complementary vertical subgroup of $\V_e$.  
We finally consider the \emph{vertical projection} $\pi_{\mathbb{W}_e}$ associated to the splitting $\mathbb{H}^n= \mathbb{V}_e \ltimes \mathbb{W}_e$, that is
\begin{displaymath}
  \pi_{\mathbb{W}_e}: \mathbb{H}^n \to \mathbb{W}_e,\quad p=p_{\V_e}\cdot p_{\W_e} \mapsto p_{\W_e};
\end{displaymath}
see for instance \cite{MR2789472}.
\begin{lemma}[$1$-dimensional slices of intrinsic Lipschitz graphs]\label{l:SlicingLemma}
Let $e\in S^{k-1}$ and $a\in \R^{k-1}$. If $\varphi:\mathbb{V}_0 \to \mathbb{W}_0$ is intrinsic $L$-Lipschitz, then so is the function 
    \begin{displaymath}
    \psi:=\psi_{a,e}:\mathbb{V}_e\to \mathbb{W}_e,\quad  \psi(v):=
\pi_{\mathbb{W}_e}\left(\Phi(\ell_{a,e}(v))\right),
\end{displaymath}
where $\ell_{a,e}(v)$ is defined in \eqref{def_lae} and $\Phi$ denotes the graph map of $\varphi$.
\end{lemma}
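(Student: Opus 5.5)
The plan is to show that the intrinsic graph of $\psi$ over $\mathbb{V}_e$ is exactly a subset of $\gr(\varphi)$, namely the image $\Phi(\ell_{a,e}(V_e))$. Once this is established, the cone characterization (Proposition \ref{p:iLipChar}) transfers the Lipschitz property. The one point needing care is that the cone for the splitting $\mathbb{V}_e\ltimes\mathbb{W}_e$ differs from the cone for $\mathbb{V}_0\ltimes\mathbb{W}_0$.

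\textbf{Step 1: the graph of $\psi$.} For $v\in\mathbb{V}_e$, write $p=\Phi(\ell_{a,e}(v))=p_{\mathbb{V}_e}\cdot p_{\mathbb{W}_e}$. Since $\pi_{\mathbb{V}_e}(p)$ is the Euclidean projection of the horizontal part onto $\mathrm{span}\{e\}$, and the first $k$ horizontal coordinates of $p$ are $\ell_{a,e}(v)=v+\sum a_ie_i^\perp$, we get $p_{\mathbb{V}_e}=v$. Hence $v\cdot\psi(v)=p$, so
$$\gr(\psi)=\Phi(\ell_{a,e}(V_e))\subset\gr(\varphi).$$
Moreover, $\psi$ is well defined as a map $\mathbb{V}_e\to\mathbb{W}_e$, with values in $\mathbb{W}_e$ by construction.

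\textbf{Step 2: Lipschitz estimate.} I would check Proposition \ref{1.2} directly rather than comparing cones. For $v,v'\in\mathbb{V}_e$, set $x=\ell_{a,e}(v)$ and $x'=\ell_{a,e}(v')$, which lie in $\mathbb{V}_0$. The expression to bound is
$$\|{v'}^{-1}v\,\Phi(x)^{-1}\Phi(x')\|.$$
Since $\mathbb{V}_0$ is abelian and $x-x'=v-v'$, we have ${v'}^{-1}v={x'}^{-1}x$ as elements of $\mathbb{V}_0$; indeed, both equal the vector $v-v'$ with zero vertical part, because $\omega$ vanishes on $\mathbb{V}_0$. Therefore the expression equals $\|{x'}^{-1}x\,\Phi(x)^{-1}\Phi(x')\|$. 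By Proposition \ref{1.2} applied to $\varphi$, this is at most
$$L\|x^{-1}x'\|=L|x-x'|=L|v-v'|=L\|v^{-1}v'\|.$$
So Proposition \ref{1.2}, applied now to the splitting $\mathbb{V}_e\ltimes\mathbb{W}_e$, gives that $\psi$ is intrinsic $L$-Lipschitz with the same constant.

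\textbf{Main obstacle.} The delicate point is Step 1: verifying that $\pi_{\mathbb{V}_e}(\Phi(x))=v$, i.e. that the semidirect decomposition along $\mathbb{V}_e\ltimes\mathbb{W}_e$ of a point whose first $k$ coordinates are $x$ has $\mathbb{V}_e$-part equal to $\langle x,e\rangle e$. This holds because $\pi_{\mathbb{V}}(p)=p_{\mathbb{V}}$ (noted after Definition \ref{d:HorizProj}), and because $e_i^\perp\in\mathbb{W}_e$ is orthogonal to $e$. Once this identification is made, Proposition \ref{1.2} only uses the $\mathbb{V}$-components, which coincide for the two splittings, so the estimate transfers verbatim.
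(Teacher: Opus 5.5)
Your proposal is correct and follows essentially the same route as the paper: you identify the graph map of $\psi$ as $\Psi(v)=\Phi(\ell_{a,e}(v))$, and then verify Proposition \ref{1.2} for the splitting $\mathbb{V}_e\ltimes\mathbb{W}_e$ using ${v'}^{-1}\cdot v=(\ell_{a,e}(v'),0)^{-1}\cdot(\ell_{a,e}(v),0)$ in the abelian subgroup $\mathbb{V}_0$. The only cosmetic difference is in the first step: the paper writes $\ell_{a,e}(v)=(v,0)\cdot(\sum_i a_ie_i^{\bot},0)$ and computes $\psi(v)=(\sum_i a_ie_i^{\bot},0)\cdot\varphi(\ell_{a,e}(v))$ explicitly, whereas you obtain the same identity from $\pi_{\mathbb{V}_e}(p)=p_{\mathbb{V}_e}$.
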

\begin{proof}
Note that $\omega(e,e_i^{\bot})=0$ for $i=1,\ldots,k-1$ since $e,e_i^{\bot} \in \mathbb{V}_0$, and $\mathbb{V}_0$ is isotropic.
Therefore, 
\begin{equation}\label{eq:ProdAdd1}
    \ell_{a,e}(v)= (v,0)\cdot \left(\sum_{i=1}^{k-1}a_i e_i^{\bot} ,0\right),\quad v\in V_e.
\end{equation}
Then, for $\psi$ defined as in the statement of the lemma, we have
\begin{displaymath}
\psi(v)=
     \pi_{\mathbb{W}_e}\left(\Phi(\ell_{a,e}(v))\right)\overset{\eqref{eq:ProdAdd1}}{=}  \pi_{\mathbb{W}_e}\left(v\cdot \left(\sum_{i=1}^{k-1}a_i e_i^{\bot} ,0\right)\cdot \varphi(\ell_{a,e}(v))\right) =\left(\sum_{i=1}^{k-1}a_i e_i^{\bot} ,0\right)\cdot \varphi(\ell_{a,e}(v)).
\end{displaymath}
In the last step we used that $v\in \mathbb{V}_e$ (identified with $V_e$), while $(\sum_{i=1}^{k-1}a_i e_i^{\bot} ,0)$ and $\varphi(\ell_{a,e}(v))$ belong to $\W_e$. 
We denote by $\Psi$ the graph map of $\psi$, given by
\[\Psi(v):=v\cdot \psi(v)=v\cdot \left(\sum_{i=1}^{k-1}a_i e_i^{\bot} ,0\right)\cdot  \varphi(\ell_{a,e}(v))=\ell_{a,e}(v)\cdot \varphi(\ell_{a,e}(v))=\Phi(\ell_{a,e}(v)).\]
Therefore
\begin{align*}
    \|v'^{-1}\cdot v\cdot \Psi(v)^{-1}\cdot \Psi(v')\|&=  \|v'^{-1}\cdot v\cdot \Phi(\ell_{a,e}(v))^{-1}\cdot \Phi(\ell_{a,e}(v'))\|\\
    &=  \|(\ell_{a,e}(v'),0)^{-1}\cdot (\ell_{a,e}(v),0)\cdot \Phi(\ell_{a,e}(v))^{-1}\cdot \Phi(\ell_{a,e}(v'))\|,
\end{align*}
where we used again $\omega(e,e_i^{\bot})=0$ for $i=1,\ldots,k-1$ in the last step. Using Proposition \ref{1.2}, the last expression is bounded by $L|\ell_{a,e}(v)-\ell_{a,e}(v')|=L|v-v'|$, since $\varphi$ is intrinsic $L$-Lipschitz. This proves the $L$-Lipschitz property of $\psi$.
\end{proof}
Note that, using the notation of the previous proof,
\[\mathrm{gr}(\psi)=\{\Psi(v): v\in \V_e \}=\{\Phi(\ell_{a,e}(v)):v\in\V_e\}\subset\mathrm{gr}(\varphi).\] In particular, the intrinsic graph of $\psi$ is a 1-dimensional slice of the intrinsic graph of $\varphi$.
\medskip\\
Until the end of this section, we use notation inspired by \cite[Sections 2.2-2.4]{MR4345824}. More care needs to be taken because not every plane containing a horizontal line is itself horizontal. The following lemma allows to extend a $1$-dimensional intrinsic affine function in a canonical way to a higher-dimensional intrinsic affine function.

\begin{lemma}\label{l:1DSlicesAffine} Let $1<k\leq n$.
    For $e\in S^{k-1}$, let $\mathbb{V}_e=\mathrm{span}\{e\}\times \{0\}$ with complementary vertical subgroup $\mathbb{W}_e$. If $A_e:\mathbb{V}_e \to \mathbb{W}_e$ is intrinsic affine, then there exists an intrinsic affine map $A:\mathbb{V}_0=\mathbb{R}^k \times \{0\}\to \mathbb{W}_0=\{0\}\times \mathbb{R}^{2n+1-k}$ with the property that
    \begin{displaymath}
    \mathbb{A}_e(v)= \mathbb{A}(\Pi(\mathbb{A}_e(v))),\quad v\in\mathbb{\V}_e,
    \end{displaymath}
    where $\mathbb{A}$ and $\mathbb{A}_e$ denote the graph maps of $A$ and $A_e$ respectively, and $\Pi$ is the projection defined in \eqref{projtoRk}.
\end{lemma}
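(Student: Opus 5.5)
The plan is to reduce everything to a statement about horizontal planes. The intrinsic graph $\mathbb{A}_e(\mathbb{V}_e)$ of $A_e$ is, by Definition \ref{intrinsicaffine}, a horizontal line $\ell\in\mathcal{V}_1$. We will find a horizontal $k$-plane $\mathbb{V}\in\mathcal{V}_k$ that contains $\ell$ and is an intrinsic graph over $\mathbb{V}_0$. Then $A$ is the map whose graph map $\mathbb{A}$ is the inverse of $\pi_{\mathbb{V}_0}|_{\mathbb{V}}$. With this choice, every point $p=\mathbb{A}_e(v)\in\ell\subset\mathbb{V}$ satisfies $p=\mathbb{A}(\pi_{\mathbb{V}_0}(p))=\mathbb{A}(\Pi(p))$, which is exactly the claimed identity (identifying $\mathbb{V}_0$ with $\mathbb{R}^k$, so that $\pi_{\mathbb{V}_0}$ and $\Pi$ agree).

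First we reduce to $\ell$ passing through the origin. Let $p_0:=\mathbb{A}_e(0)$; left translation by $p_0^{-1}$ preserves $\mathcal{V}_1$ and $\mathcal{V}_k$. Moreover $\pi_{\mathbb{V}_0}$ is a group homomorphism, and $\pi_{\mathbb{V}_0}(p_0)=0$ because $p_0=0\cdot A_e(0)$ with $A_e(0)\in\mathbb{W}_e$, while $\pi_{\mathbb{V}_0}$ kills the $\mathbb{W}_e$-component's coordinates outside $\mathrm{span}\{e_i^\perp\}$. More carefully, I will instead just work with the horizontal direction. Write $\ell=p_0\cdot(\mathbb{R}w\times\{0\})$, where $w\in\mathbb{R}^{2n}$ is the horizontal direction. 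Since $\ell$ is a graph over $\mathbb{V}_e$, its projection to $\mathbb{V}_e$ is onto, so $\langle w,e\rangle\neq0$. Normalize so that $w=e+u$ with $u\in V_0^\perp=\{0\}\times\mathbb{R}^{2n-k}$; here $u$ has no component along the $e_i^\perp$ directions, because the $\mathbb{V}_0$-projection of $\ell$ must be the line $\mathbb{R}e$ (up to translation), as $\Pi(\mathbb{A}_e(v))$ lies in $\ell_{a,e}$-type lines.

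Next comes the key linear-algebra step. We need an isotropic $k$-dimensional subspace $V\ni w$ such that $\pi_{V_0}|_V$ is bijective. Take $V=\mathrm{span}\{w,\,e_1^\perp+u_1,\dots,e_{k-1}^\perp+u_{k-1}\}$ with $u_i\in V_0^\perp$ to be determined. Isotropy requires the following, where we use $\omega(e,e_i^\perp)=\omega(e_i^\perp,e_j^\perp)=0$:
\[
\omega(e,u_i)+\omega(u,e_i^\perp)+\omega(u,u_i)=0,\qquad \omega(e_i^\perp,u_j)+\omega(u_i,e_j^\perp)+\omega(u_i,u_j)=0 .
\]
I expect finding such $u_i$ to be the main obstacle, since the constraints are quadratic. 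The simplest attempt is to let $V$ be the image of $V_0$ under a symplectic map that fixes $V_0$ pointwise up to the shear sending $e\mapsto e+u$. Concretely, decompose $u=u'+u''$, where $u'\in JV_0$ and $u''\in(V_0\oplus JV_0)^\perp$; the latter is nonzero only if $k<n$. For the $u'$ part, write $u'=JSe$ and choose a symmetric matrix $S$ on $V_0\cong\mathbb{R}^k$ with prescribed $Se=s$ (for instance $S=s\otimes e+e\otimes s-\langle s,e\rangle e\otimes e$). Then $\{x+JSx\}$ is isotropic, as in Proposition \ref{isoa} with $k=n$ symmetry. For the $u''$ part, use $x\mapsto \langle x,e\rangle u''$ plus a correction $J$-term: set $V=\{x+JSx+\langle x,e\rangle u''+J(\cdot)\}$. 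The cross terms $\omega(\langle x,e\rangle u'',\langle y,e\rangle u'')$ vanish, and the cross terms between $u''$ and $V_0\oplus JV_0$ vanish since $u''$ is $\omega$-orthogonal to $V_0\oplus JV_0$. So $V:=\{x+JSx+\langle x,e\rangle u'':x\in V_0\}$ should be isotropic, contain $w$, and be a graph over $V_0$.

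Finally, we translate back. Set $\mathbb{V}:=p_0\cdot(V\times\{0\})$. It contains $\ell$ and projects bijectively onto $\mathbb{V}_0$, because $\pi_{\mathbb{V}_0}$ is a homomorphism and $V$ is a graph over $V_0$. Hence $\mathbb{V}$ is the intrinsic graph of an intrinsic affine $A:\mathbb{V}_0\to\mathbb{W}_0$, and the identity follows as explained at the start.
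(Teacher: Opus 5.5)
Your overall strategy is the paper's: find an isotropic $k$-plane $V\subset\mathbb{R}^{2n}$ that contains the horizontal direction $w$ of $\ell=\mathbb{A}_e(\mathbb{V}_e)$ and is a graph over $V_0$, then left-translate. Your isotropy check for $V=\{x+JSx+\langle x,e\rangle u'':x\in V_0\}$ is correct, and when $\pi_{V_0}(w)=e$ this $V$ is exactly the paper's plane $\mathrm{span}\{w,\,e_i^\perp+\omega(e_i^\perp,w)Je\}$.

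The gap is the normalization $w=e+u$ with $u\in\{0\}\times\mathbb{R}^{2n-k}$, i.e.\ the claim that $\Pi(\ell)$ is parallel to $e$. The hypotheses do not give this. The complementary subgroup $\mathbb{W}_e=\mathrm{span}\{e_1^\perp,\dots,e_{k-1}^\perp\}\times\mathbb{R}^{2n-k+1}$ contains the directions $e_i^\perp$, so $A_e$ may shear along them. For instance, $A_e(se)=(s\,e_1^\perp,0)$ is intrinsic linear, and its graph is the horizontal line $\mathbb{R}(e+e_1^\perp)\times\{0\}$. Your justification (``$\Pi(\mathbb{A}_e(v))$ lies in $\ell_{a,e}$-type lines'') borrows a normalization that the paper arranges only later, in the proof of Lemma \ref{l:TLem2.5}, by making the $e_i^\perp$-components of $A_e$ constant. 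The lemma itself is stated for arbitrary intrinsic affine $A_e$.

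In general $w=e+b+u$ with $b\in\mathrm{span}\{e_1^\perp,\dots,e_{k-1}^\perp\}$ and $u=Js+u''\in V_0^\perp$, and then your construction fails. Since $S(e+b)=s+\langle s,b\rangle e$, the point of your $V$ lying over $e+b$ is $w+\langle s,b\rangle Je$. Hence $w\notin V$ whenever $\langle s,b\rangle\neq 0$.

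There are two easy repairs.
\begin{itemize}
\item Run your construction with $f:=\pi_{V_0}(w)=e+b$ in place of $e$; note $f\neq 0$ since $\langle f,e\rangle=1$. Take $S$ symmetric on $V_0$ with $Sf=s$, e.g.\ $S=|f|^{-2}(s\otimes f+f\otimes s)-|f|^{-4}\langle s,f\rangle\, f\otimes f$, and replace $\langle x,e\rangle u''$ by $|f|^{-2}\langle x,f\rangle u''$. The isotropy argument is unchanged, and $x=f$ now yields $w$.
\item Use the paper's vectors $v_k=w$ and $v_i=e_i^\perp+\omega(e_i^\perp,w)Je$. Their isotropy uses only $\langle e,w\rangle=1$ and $e\perp e_i^\perp$, so they handle $b\neq 0$ without decomposing $w$ or choosing a symmetric matrix.
\end{itemize}
With either fix, your translation step and the final identification $\mathbb{A}=(\pi_{\mathbb{V}_0}|_{\mathbb{V}})^{-1}$ go through as you wrote them.
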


\begin{proof}
   \textbf{Step 1:} we first assume that $A_e:\V_e\to\W_e$ is intrinsic \textit{linear}, and we show the existence of an intrinsic \textit{linear} map $A:\V_0\to\W_0$ as in the statement.
    Since $A_e$ is intrinsic linear, then $\mathbb A_e(\V_e)$ is a 1-dimensional horizontal subgroup of $\Hn$. We now show that there exists a $k$-dimensional horizontal subgroup $\V=V\times \{0\}\subset\Hn$ which contains $\mathbb A_e(\V_e)$ with the additional property that $V$ is a Euclidean graph over $\R^k$ in $\R^{2n}$. We let $V=\text{span}_{\R^{2n}}(v_1,v_2,\dots, v_k)$, where \[v_k=\pi(\mathbb A_e(e)),\quad\text{and}\quad v_i= e_i^\perp+\omega(e_i^\perp, \pi(\mathbb A_e(e)))Je, \text{  for }1\leq i\leq k-1,\]
    where $J$ is the standard complex structure and $\{e_1^\perp,\dots, e_{k-1}^\perp\}$ is an orthonormal basis of $e^\perp$ inside $\R^k$. We now show that $\omega(v_i,v_j)=0$ for every $1\leq i,j\leq k$. For $j=k$ and $1\leq i\leq k-1$, this follows from
    \begin{align*}
    \omega(v_i,v_k)&=\omega(e_i^\perp,\pi(\mathbb A_e(e)))+\omega(e_i^\perp, \pi(\mathbb A_e(e)))\,\omega(Je, \pi(\mathbb A_e(e)))\\ &=\omega(e_i^\perp,\pi(\mathbb A_e(e)))-\omega(e_i^\perp, \pi(\mathbb A_e(e)))\,\langle e, \pi(\mathbb A_e(e))\rangle \\ &=\omega(e_i^\perp,\pi(\mathbb A_e(e)))-\omega(e_i^\perp, \pi(\mathbb A_e(e)))=0,
    \end{align*}
    where the penultimate equality follows from the fact that $\langle e, \pi(\mathbb A_e(e))\rangle =1$, which holds since $\mathbb A_e$ is a graph map over $\V_e$ and $e$ is a unit vector. This also implies that $\omega(v_k,v_j)=0$ for every $1\leq j\leq k-1$. Assume now that $1\leq i,j\leq k-1$. Then
    \begin{align*}
        \omega(v_i,v_j)=&\,\omega(e_i^\perp, e_j^\perp)+\omega(e_i^\perp, \pi(\mathbb A_e(e)))\,\omega(Je,e_j^\perp)+\omega(e_j^\perp, \pi(\mathbb A_e(e)))\,\omega(e_i^\perp,Je)\\ &+ \omega(e_i^\perp, \pi(\mathbb A_e(e)))\,\omega(e_j^\perp, \pi(\mathbb A_e(e)))\,\omega(Je,Je)\\=  &\,\langle J e_i^\perp, e_j^\perp\rangle-\omega(e_i^\perp, \pi(\mathbb A_e(e)))\,\langle e,e_j^\perp\rangle+\omega(e_j^\perp, \pi(\mathbb A_e(e)))\,\langle e_i^\perp,e\rangle=0.
    \end{align*}
     It follows that $V$ is an isotropic subspace of $\R^{2n}$. Note also that, by construction, $v_1,\dots, v_k$ are independent (since $e, e_1^\perp, \dots, e_{k-1}^\perp, Je$ are independent) and span a $k$-dimensional plane which is a graph over $\R^k$. 
    In particular, $V$ is the (Euclidean) graph of an isotropic linear map $A_0:\R^k\to\R^{2n-k}$.
    Hence, by Proposition \ref{ahp}, $A_0$ can be lifted to an intrinsic linear map $A: \V_0\to\W_0$ whose intrinsic graph is $\V=V\times\{0\}$. Thus, for every $p\in\V$, it holds $\mathbb A(\Pi(p))=p$. In particular,
    \[\mathbb{A}_e(v)= \mathbb{A}(\Pi(\mathbb{A}_e(v))),\quad v\in\mathbb{V}_e,\]
    since $\mathbb A_e(\mathbb V_e)\subset \V$.\\
    \textbf{Step 2:} assume now that $A_e:\V_e\to\W_e$ is intrinsic \textit{affine}. Let $p=\mathbb{A}_e(0)$, where $\mathbb{A}_e$ denotes the intrinsic graph map of $A_e$, and consider
 \begin{equation}\label{eq:B_e_def}
     \mathbb{B}_e:= p^{-1}\cdot \mathbb{A}_e.
 \end{equation}
We claim that $\mathbb{B}_e$ is the intrinsic graph map of an intrinsic \emph{linear} map $B_e:\mathbb{V}_e \to \mathbb{W}_e$. Indeed, $\mathbb{B}_e(\mathbb{V}_e)=p^{-1}\cdot\mathbb{A}_e(\mathbb{V}_e)$ is a horizontal subgroup and an intrinsic graph of a function $B_e:\mathbb{V}_e\to \mathbb{W}_e$, by the left invariance of these properties. Since moreover $0\in \mathbb{B}_e(\mathbb{V}_e) $, the function $B_e$ is intrinsic linear. By Step 1 of the proof, we can then find an intrinsic linear map $B:\V_0 \to \W_0$ such that its graph map $\mathbb{B}$ satisfies the relation
\begin{equation}\label{eq:B_iden}
    \mathbb{B}_e(v)=\mathbb{B}(\Pi(\mathbb{B}_e(v))),\quad v\in \V_e.
\end{equation}
Then the map $v\mapsto \mathbb{A}(v):=p\cdot \mathbb{B}(v+\Pi(p^{-1}))$ sends $\V_0$ to a horizontal $k$-plane that is the intrinsic graph of an (intrinsic affine) function $A:\V_0\to \W_0$. Moreover
\begin{displaymath}
    \mathbb{A}_e(v)= p\cdot \mathbb{B}_e(v)\overset{\eqref{eq:B_iden}}{=}p\cdot \mathbb{B}(\Pi(\mathbb{B}_e(v)))= p\cdot \mathbb{B}\left(\Pi(p^{-1})+\Pi(p\cdot \mathbb{B}_e(v))\right)\overset{\eqref{eq:B_e_def}}{=}\mathbb{A}(\Pi(\mathbb{A}_e(v))),\quad v\in \mathbb{V}_e,
\end{displaymath}
as desired. 
\end{proof}

\subsubsection{One-codimensional slices of intrinsic Lipschitz graphs}\label{ss:1cDslice}

Let $1<k\leq n$, $\mathbb{V}_0=\mathbb{R}^k \times \{0\}$ and $\mathbb{W}_0=\{0\}\times \mathbb{R}^{2n+1-k}$. Fix an intrinsic $L$-Lipschitz function 
\begin{displaymath}
\varphi:\mathbb{V}_0 \equiv \mathbb{R}^k \to \mathbb{W}_0\equiv \R^{2n+1-k},\quad \varphi(x)=(\varphi_1(x),\ldots,\varphi_{2n+1-k}(x))
\end{displaymath}
with graph map $\Phi(x)=x\cdot \varphi(x)$.
We aim to use that ($k-1$)-dimensional ``slices'' of $\varphi$ are again intrinsic Lipschitz, with controlled Lipschitz constant. To this end, consider an arbitrary affine ($k-1$)-dimensional plane $\pi_{a,e}$ in $\mathbb{R}^k$, parametrized as follows
\begin{equation}\label{eq:AffParam}
\pi_{a,e}(v)= v+ a e^{\bot}, \quad v\in  V_e:=\mathrm{span}\{e_1,\dots, e_{k-1}\}\subset \mathbb{R}^k,
\end{equation}
for fixed $e=\{e_1,\dots, e_{k-1}\}\subset S^{k-1}$ an orthonormal orientation of $\pi$, a unit vector $e^\bot$  orthogonal to $\pi$ inside $\R^k$, and $a\in \mathbb{R}$.

We consider the ($k-1$)-dimensional horizontal subgroup $\mathbb{V}_e:= V_e\times \{0\}$, and let $\mathbb{W}_e =\mathrm{span}\{e^{\bot}\}\times \mathbb{R}^{2n-k+1}$ be the complementary vertical subgroup. We also consider the vertical projection $\pi_{\mathbb{W}_e}$ associated to the splitting $\mathbb{H}^n= \mathbb{V}_e \ltimes \mathbb{W}_e$, defined as above Lemma \ref{l:SlicingLemma}. Through the identification of $\mathbb{V}_0$ with $\mathbb{R}^k$, we also identify the $(k-1)$-dimensional horizontal subgroup $\V_e$  of $\mathbb{H}^n$ with the $(k-1)$-dimensional subspace $V_e$ of $\mathbb{R}^k$.

\begin{lemma}[Codimension-$1$ slices of intrinsic Lipschitz graphs]\label{l:SlicingLemmak-1}
Let $e=\{e_1,\ldots,e_{k-1}\}\subset S^{k-1}$ and $a\in \mathbb{R}$ be arbitrary. 
 If $\varphi:\mathbb{V}_0 \to \mathbb{W}_0$ is intrinsic $L$-Lipschitz, then so is the function 
    \begin{displaymath}
    \psi:=\psi_{a,e}:\mathbb{V}_e\to \mathbb{W}_e,\quad \psi(v):= 
\pi_{\mathbb{W}_e}\left(\Phi(\pi_{a,e}(v))\right),
\end{displaymath}
where $\pi_{a,e}$ is defined in \eqref{eq:AffParam}.
Moreover, 
\begin{equation}\label{eq:SliceIncl}
    \mathrm{gr}(\psi)=\Phi(\pi_{a,e}(V_e))\subset \mathrm{gr}(\varphi).
\end{equation}
\end{lemma}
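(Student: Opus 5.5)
The plan is to copy the proof of Lemma \ref{l:SlicingLemma} almost verbatim, with the roles of line and hyperplane exchanged. First I make the slice explicit. Since $V_e\subset\mathbb{R}^k$ and $e^{\bot}\in\mathbb{R}^k$, both lie in the isotropic subspace underlying $\mathbb{V}_0$, so $\omega(v,ae^{\bot})=0$ for all $v\in V_e$. Hence the group product reduces to vector addition:
\begin{displaymath}
(\pi_{a,e}(v),0)=(v,0)\cdot(ae^{\bot},0),\qquad v\in V_e .
\end{displaymath}
It follows that
\begin{displaymath}
\Phi(\pi_{a,e}(v))=(v,0)\cdot(ae^{\bot},0)\cdot\varphi(\pi_{a,e}(v)).
\end{displaymath}
The first factor lies in $\mathbb{V}_e$. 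The factor $(ae^{\bot},0)$ lies in $\mathbb{W}_e=\mathrm{span}\{e^{\bot}\}\times\mathbb{R}^{2n-k+1}$. The value $\varphi(\pi_{a,e}(v))\in\mathbb{W}_0=\{0\}\times\mathbb{R}^{2n+1-k}$ also lies in $\mathbb{W}_e$, because the coordinates of $\mathbb{W}_0$ are exactly the last $2n+1-k$ coordinates, and these are contained in $\mathbb{W}_e$.

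One point should be checked here. $\mathbb{W}_e$ is the Euclidean orthogonal complement of $\mathbb{V}_e$ in $\mathbb{R}^{2n+1}$, and $\mathbb{R}^{2n+1}$ decomposes as $V_e\oplus\mathrm{span}\{e^{\bot}\}\oplus(\{0\}\times\mathbb{R}^{2n-k})\oplus\mathbb{R}$, so this holds. Since $\mathbb{W}_e$ is a subgroup, uniqueness of the splitting $p=p_{\mathbb{V}_e}\cdot p_{\mathbb{W}_e}$ gives
\begin{displaymath}
\psi(v)=(ae^{\bot},0)\cdot\varphi(\pi_{a,e}(v)).
\end{displaymath}
The graph map $\Psi(v)=v\cdot\psi(v)$ of $\psi$ therefore equals $\Phi(\pi_{a,e}(v))$. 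This immediately yields \eqref{eq:SliceIncl}: as $v$ ranges over $V_e$, $\Psi(v)$ traces out $\Phi(\pi_{a,e}(V_e))$, which lies in $\mathrm{gr}(\varphi)$.

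For the Lipschitz bound I will use Proposition \ref{1.2} in the splitting $\mathbb{V}_e\ltimes\mathbb{W}_e$. For $v,v'\in V_e$, I need to bound
\begin{displaymath}
\|v'^{-1}\cdot v\cdot\Psi(v)^{-1}\cdot\Psi(v')\|.
\end{displaymath}
All of $v,v',ae^{\bot}$ lie in the isotropic plane $\mathbb{V}_0$, so their products are just additions. Consequently
\begin{displaymath}
v'^{-1}\cdot v=(-v'+v,0)=(\pi_{a,e}(v')^{-1})\cdot(\pi_{a,e}(v)),
\end{displaymath}
because the $ae^{\bot}$ terms cancel. The expression then becomes
\begin{displaymath}
\|\pi_{a,e}(v')^{-1}\cdot\pi_{a,e}(v)\cdot\Phi(\pi_{a,e}(v))^{-1}\cdot\Phi(\pi_{a,e}(v'))\|,
\end{displaymath}
and Proposition \ref{1.2} applied to $\varphi$ bounds this by $L|\pi_{a,e}(v)-\pi_{a,e}(v')|=L|v-v'|=L\|v^{-1}v'\|$.

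The only step needing care, and the main potential obstacle, is the identification of $\pi_{\mathbb{W}_e}$ on these points. Everything else is the same commutation argument as in the $1$-dimensional slice lemma, so I expect the proof to be short.
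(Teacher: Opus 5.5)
Your proposal is correct and matches the paper's proof essentially step by step. The isotropy of $\mathbb{V}_0$ turns $(v,0)\cdot(ae^{\bot},0)$ into $(\pi_{a,e}(v),0)$, the splitting gives $\psi(v)=(ae^{\bot},0)\cdot\varphi(\pi_{a,e}(v))$ and hence $\Psi=\Phi\circ\pi_{a,e}$, and the Lipschitz bound reduces to Proposition \ref{1.2} for $\varphi$. Your extra checks, that $\mathbb{W}_0\subset\mathbb{W}_e$ and that the decomposition $p=p_{\mathbb{V}_e}\cdot p_{\mathbb{W}_e}$ is unique, only spell out what the paper uses implicitly.
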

By \eqref{eq:SliceIncl},
the intrinsic graph of $\psi$ is indeed a ($k-1$)-dimensional slice of the intrinsic graph of $\varphi$.

\begin{proof}
First note that $\omega(e_i,e^{\bot})=0$ for $i=1,\ldots,k-1$ since $e_i,e^{\bot} \in \mathbb{V}_0$, and $\mathbb{V}_0$ is isotropic. Therefore, we have
\begin{equation}\label{eq:ProdAdd}
    (v,0)\cdot (ae^{\bot},0)=(v+ae^{\bot},0),\quad v\in V_e.
\end{equation}
Now, for $\psi$ defined as in the statement of the lemma, and with the identification of $V_e$ and $\V_e$, we have
\begin{displaymath}
\psi(v)=
     \pi_{\mathbb{W}_e}\left(\Phi(\pi_{a,e}(v))\right)\overset{\eqref{eq:ProdAdd}}{=}  \pi_{\mathbb{W}_e}\left(v\cdot (ae^{\bot},0)\cdot \varphi(\pi_{a,e}(v))\right) =(a e^\bot,0)\cdot \varphi(\pi_{a,e}(v)).
\end{displaymath}
Here we used in the last step that $v\in \mathbb{V}_e$, while $(ae^{\bot},0)$ and $\varphi(\pi_{a,e}(v))$ (and hence also their product) belong to $\W_e$. 

Next, denoting by $\Psi$ the graph map of $\psi$ associated to the splitting $\mathbb{H}^n = \mathbb{V}_e \cdot \mathbb{W}_e$, it follows
\begin{equation}\label{eq:FormPsi}
    \Psi(v)= v\cdot (a e^\bot,0)\cdot \varphi(\pi_{a,e}(v))
   \overset{\eqref{eq:ProdAdd}}{=} \pi_{a,e}(v)\cdot \varphi(\pi_{a,e}(v))=\Phi(\pi_{a,e}(v)),
\end{equation}
which proves that the intrinsic graph of $\psi$ is a slice of the intrinsic graph of $\varphi$, as stated in \eqref{eq:SliceIncl}. Finally, using \eqref{eq:FormPsi}, the intrinsic $L$-Lipschitz continuity of $\psi$ can be seen as follows
\begin{align*}
    \left\|{v'}^{-1}\cdot v\cdot \Psi\left(v\right)^{-1}\cdot \Psi\left(v'\right)\right\|&=  \left\|{v'}^{-1}\cdot v\cdot \Phi(\pi_{a,e}(v))^{-1}\cdot \Phi(\pi_{a,e}(v'))\right\|\\
    &=  \|(\pi_{a,e}(v'),0)^{-1}\cdot (\pi_{a,e}(v),0)\cdot \Phi(\pi_{a,e}(v))^{-1}\cdot \Phi(\pi_{a,e}(v'))\|,
\end{align*}
where we used again $\omega(e_i,e^{\bot})=0$ for $i=1,\ldots,k-1$ in the last step. The last expression is bounded by $L|\pi_{a,e}(v)-\pi_{a,e}(v')|=L|v-v'|$, since $\varphi$ is intrinsic $L$-Lipschitz, recall Proposition \ref{1.2}. This proves the $L$-Lipschitz property of $\psi$.
\end{proof}

\begin{lemma}[Extending $1$-codimensional intrinsic affine functions]\label{l:k-1SlicesAffine} Let $1<k\leq n$.
    For a fixed system $e=\{e_1,\dots, e_{k-1}\}\subset S^{k-1}$ of orthonormal vectors, consider the horizontal subgroup $\mathbb{V}_e=\mathrm{span}\{e_1,\dots, e_{k-1}\}\times \{0\}$ with complementary vertical subgroup $\mathbb{W}_e$. If $A_e:\mathbb{V}_e \to \mathbb{W}_e$ is intrinsic affine, then there exists an intrinsic affine map $A:\mathbb{V}_0=\mathbb{R}^k \times \{0\}\to \mathbb{W}_0=\{0\}\times \mathbb{R}^{2n+1-k}$ with the property that
    \begin{displaymath}
    \mathbb{A}_e(v)= \mathbb{A}(\Pi(\mathbb{A}_e(v))),\quad v\in \mathbb{V}_e,
    \end{displaymath}
    where $\mathbb{A}$ and $\mathbb{A}_e$ denote the graph maps of $A$ and $A_e$, respectively, and $\Pi:\mathbb{H}^n \to \mathbb{R}^k$ is the projection defined in \eqref{projtoRk}. 
    In addition, one can choose $A$ such that $\mathrm{Lip}(\mathbb A)\lesssim_{k,\mathrm{Lip}(\mathbb A_e)}1$.
\end{lemma}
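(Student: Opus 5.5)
I would mirror the proof of Lemma \ref{l:1DSlicesAffine}: first treat the intrinsic linear case, then reduce the affine case to it by a left translation. The Lipschitz bound on $\mathbb A$ will come from quantitative control of the extension step.

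\textbf{Step 1 (linear case).} Suppose $A_e$ is intrinsic linear. Then $\mathbb A_e(\mathbb V_e)=U\times\{0\}$ is a $(k-1)$-dimensional horizontal subgroup. Here $U$ is spanned by $u_i:=\pi(\mathbb A_e(e_i))$, $i=1,\dots,k-1$, and $u_i$ projects onto $e_i$ under the orthogonal projection of $\mathbb R^{2n}$ onto $\mathbb R^k\times\{0\}$. I would look for one more vector of the form
\[
v_k:=e^\perp+\sum_{j=1}^{k-1}c_j\,Ju_j
\]
such that $\omega(u_i,v_k)=0$ for all $i$. Since $\omega(u_i,Ju_j)=-\langle u_i,u_j\rangle$ (up to the sign convention in the proof of Lemma \ref{l:1DSlicesAffine}), this amounts to the linear system
\[
G\,c=\big(\omega(u_i,e^\perp)\big)_{i=1}^{k-1},
\]
with $G=(\langle u_i,u_j\rangle)$ the Gram matrix of the $u_i$. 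Writing $u_i=e_i+w_i$ with $w_i\perp\mathbb R^k$, we get $G=I+(\langle w_i,w_j\rangle)\geq I$. So $G$ is invertible, $\|G^{-1}\|\le 1$, and $|c|\lesssim_k \max_i|u_i|\lesssim_{k,\mathrm{Lip}(\mathbb A_e)}1$.

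Set $V:=\mathrm{span}\{u_1,\dots,u_{k-1},v_k\}$. It is isotropic, since $U$ is isotropic and $\omega(v_k,v_k)=0$ trivially. To see that $V$ is a graph over $\mathbb R^k$, I need the projection of $v_k$ to $\mathbb R^k$ to have a nonzero component along $e^\perp$. The component of $Ju_j$ in $\mathbb R^k$ is $\pi_{\mathbb R^k}(Jw_j)$, because $Je_j\in\{0\}\times\mathbb R^n$ has no $\mathbb R^k$-component. This is where I expect the main obstacle: $Jw_j$ can have a component along $e^\perp$ and could spoil the graph property. I would try a different ansatz, $v_k:=e^\perp+y$ with $y\in\{0\}\times\mathbb R^{2n-k}$ (the complement of $\mathbb R^k$ inside $\mathbb R^{2n}$). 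With this choice the graph property is automatic: the $\mathbb R^k$-projection of $v_k$ is exactly $e^\perp$. The conditions $\omega(u_i,y)=-\omega(u_i,e^\perp)$ form $k-1$ linear equations in $y$. I would choose $y$ of minimal norm, which requires a quantitative lower bound on the rank of the map $y\mapsto(\omega(u_i,y))_i$ restricted to $\{0\}\times\mathbb R^{2n-k}$. Taking $y$ in $J\mathbb R^k=\{0\}\times\mathbb R^n$, the $y$ only pairs with the $\mathbb R^k$-coordinates $e_i$ of $u_i$, and $\omega(e_i,Je_j)=\pm\delta_{ij}$. The system then reduces to the choice $y=\pm\sum_i\omega(u_i,e^\perp)Je_i$, up to correction terms. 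Those correction terms come from $w_i$ having components in $J\mathbb R^k$, and they could make the system degenerate; I would handle this by allowing $y$ in all of $J\mathbb R^k+\mathrm{span}\{Jw_j\}$ and inverting the resulting matrix $I+M$. This is the step I would need to check most carefully. Once an isotropic graph $V$ is obtained with coefficients bounded in terms of $k$ and $\mathrm{Lip}(\mathbb A_e)$, Proposition \ref{ahp} lifts the isotropic linear map $A_0$ whose graph is $V$ to an intrinsic linear $A$ with $\mathrm{gr}(A)=V\times\{0\}\supset\mathbb A_e(\mathbb V_e)$. The identity $\mathbb A_e=\mathbb A\circ\Pi\circ\mathbb A_e$ follows as in Lemma \ref{l:1DSlicesAffine}. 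The matrix of $A_0$ is built from $u_i$ and $v_k$ expressed in the basis $e_1,\dots,e_{k-1},e^\perp$, so its entries are bounded by $|u_i|$ and $|v_k|$. Hence $\mathrm{Lip}(\mathbb A)\lesssim_{k,\mathrm{Lip}(\mathbb A_e)}1$, since $\mathbb A$ restricted to $\mathbb V_0$ is linear with values in a horizontal subgroup, where the Korányi metric is Euclidean.

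\textbf{Step 2 (affine case).} For intrinsic affine $A_e$, put $p=\mathbb A_e(0)$ and $\mathbb B_e=p^{-1}\mathbb A_e$. This $\mathbb B_e$ is the graph map of an intrinsic linear $B_e$ with the same $\mathrm{Lip}$, since left translation is an isometry. Apply Step 1 to obtain $\mathbb B$, and set
\[
\mathbb A(v):=p\cdot\mathbb B\big(v+\Pi(p^{-1})\big).
\]
The identity $\mathbb A_e=\mathbb A\circ\Pi\circ\mathbb A_e$ follows by the same computation as in Lemma \ref{l:1DSlicesAffine}. Finally, $\mathrm{Lip}(\mathbb A)=\mathrm{Lip}(\mathbb B)$, because left translation is an isometry and the domain shift is a Euclidean translation.
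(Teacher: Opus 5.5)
Your overall structure matches the paper's: the linear case first, then the affine case reduced to it via $p^{-1}\cdot\mathbb A_e$. Your second ansatz $v_k=e^\perp+y$ with $y\in J(\mathbb R^k\times\{0\})$ is also exactly the paper's choice. However, you leave open the one step the lemma rests on, namely solving $\omega(u_i,v_k)=0$ with a bounded $y$. You suspect ``correction terms'' that could make the system degenerate. Your proposed repair, enlarging to $J\mathbb R^k+\mathrm{span}\{Jw_j\}$ and inverting some $I+M$, is unproven, and it also undermines your own setup. Indeed, $Jw_j$ has components in $\mathbb R^k$ whenever $w_j$ has components in the coordinates $n+1,\dots,n+k$. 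That destroys precisely the graph property for which you restricted $y$ to $\{0\}\times\mathbb R^{2n-k}$. As written, the central step is a gap.

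The missing observation is a one-line computation. Since $J$ is orthogonal, $\omega(u,Je_j)=\langle Ju,Je_j\rangle=\langle u,e_j\rangle$ for every $u\in\mathbb R^{2n}$. Moreover, $u_i=\pi(\mathbb A_e(e_i))=e_i+\pi(A_e(e_i))$ with $A_e(e_i)\in\mathbb W_e=\mathrm{span}\{e^\perp\}\times\mathbb R^{2n-k+1}$, so $\langle u_i,e_j\rangle=\delta_{ij}$ exactly. (Incidentally, $u_i$ projects to $e_i+\langle u_i,e^\perp\rangle e^\perp$ in $\mathbb R^k$, not to $e_i$ as you state; the $e^\perp$-part is harmless because $\langle e^\perp,e_j\rangle=0$.) Hence for $y=\sum_{j=1}^{k-1}y_jJe_j$ one gets $\omega(u_i,e^\perp+y)=\omega(u_i,e^\perp)+y_i$. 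The system is therefore solved, with no corrections at all, by $y_i=\omega(e^\perp,u_i)$, i.e.\ $v_k=e^\perp+\sum_j\omega(e^\perp,u_j)Je_j$, which is the paper's vector. Then $\pi_k(v_k)=e^\perp$ and $\pi_k(u_i)=e_i+\langle u_i,e^\perp\rangle e^\perp$, so $V$ is a graph over $\mathbb R^k$. Also $|v_k|\le 1+\sum_j|u_j|\lesssim_{k,\mathrm{Lip}(\mathbb A_e)}1$.

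For the Lipschitz bound, your claim that the entries of $A_0$ are controlled by $|u_i|$ and $|v_k|$ needs one more step. Writing $x\in\mathbb R^k$ in the basis $\pi_k(u_1),\dots,\pi_k(u_{k-1}),e^\perp$ requires inverting a triangular change of basis. Its off-diagonal entries $\langle u_i,e^\perp\rangle$ are bounded by $\mathrm{Lip}(\mathbb A_e)$. This is how the paper bounds the coefficients $\sum_i|x_i|$ before using linearity of $\mathbb A$ into the horizontal subgroup.
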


\begin{proof}
    Throughout the proof, we will often identify elements in $\mathbb{V}_0$ with points in $\mathbb{R}^k$ without special mentioning. \\\textbf{Step 1.} We first assume that $A_e:\V_e\to\W_e$ is intrinsic \textit{linear}, and we show the existence of an intrinsic \textit{linear} map $A:\V_0\to\W_0$ as in the statement.
Since $A_e$ is intrinsic linear, then $\mathbb A_e(\V_e)$ is a ($k-1$)-dimensional horizontal subgroup of $\Hn$. We now show that there exists a $k$-dimensional horizontal subgroup $\V=V\times \{0\}\subset\Hn$ which contains $\mathbb A_e(\V_e)$ with the additional property that $V$ is a Euclidean graph over $\R^k$ in $\R^{2n}$.

We let $V=\text{span}_{\R^{2n}}(v_1,v_2,\dots, v_k)$ where
\begin{equation}\label{eq:vj_def}
v_i=\pi(\mathbb A_e(e_i))\text{ for }1\leq i\leq k-1\quad\text{and}\quad v_{k}= e^{\bot}+\sum_{j=1}^{k-1}\omega(e^{\bot},\pi(\mathbb A_e(e_j)))Je_j,
\end{equation}
where $J$ is the standard complex structure defined below \eqref{sym}, and $e^{\bot}$ is a unit vector orthogonal to $V_e=\mathrm{span}\{e_1,\ldots,e_{k-1}\}$ in $\mathbb{R}^k$. As usual, we identify here $v\in \mathbb{R}^k$ with $(v,0)\in \mathbb{R}^k \times \mathbb{R}^{2n-k}=\mathbb{R}^{2n}$, so that $e^{\bot}\in \mathbb{R}^k \times \{0\}\subset \mathbb{R}^{2n}$ and $Je_i\in \{0\}\times \mathbb{R}^n \subset \mathbb{R}^{2n}$.

We claim that $\omega(v_i,v_j)=0$ for every $1\leq i,j\leq k$. If $i,j\leq k-1$, this follows by the fact that $v_i$ and $v_j$ belong to  $\pi(\mathbb A_e(\mathbb V_e))$, which is an isotropic subspace of $\mathbb{R}^{2n}$. The other cases follow directly by construction of $v_k$. Indeed, for $1\leq i\leq k-1$, we have
\begin{align}\label{eq:VanForm}
    \omega(v_i,v_k)&=\omega(\pi(\mathbb A_e(e_i)),e^{\bot})+\sum_{j=1}^{k-1}\omega(e^{\bot},\pi(\mathbb A_e(e_j)))\omega(\pi(\mathbb A_e(e_i)),Je_j)\notag\\
    &=\omega(\pi(\mathbb A_e(e_i)),e^{\bot})+\sum_{j=1}^{k-1}\omega(e^{\bot},\pi(\mathbb A_e(e_j)))\langle\pi(\mathbb A_e(e_i)),e_j\rangle\notag\\
    &=\omega(\pi(\mathbb A_e(e_i)),e^{\bot})+\omega(e^{\bot},\pi(\mathbb A_e(e_i)))=0.
\end{align}
To see why the penultimate equation holds true, it suffices to consider $\Pi(\mathbb A_e(e_i))$, since $\langle v,e_j\rangle=0$ for $v\in \{0\}\times \mathbb{R}^{2n-k}$. 
Now, writing $\Pi(\mathbb A_e(e_i))$ in the orthonormal basis $\{e_1,\ldots,e_{k-1},e^{\bot}\}$ of $\mathbb{R}^k$
shows that $\langle\pi(\mathbb A_e(e_i)),e_j\rangle=0$ for $i\neq j$, and $\langle\pi(\mathbb A_e(e_i)),e_i\rangle=1$,
since $\Pi(\mathbb A_e(V_e))$ is a graph in $\mathbb{R}^k$ over $V_e=\mathrm{span}\{e_1,\ldots,e_{k-1}\}$. Thus 
 \eqref{eq:VanForm} holds.

Note also that $v_1,\dots, v_k$ are independent and span a $k$-dimensional plane $V$ which is a graph over $\R^k\times \{0\}$ in $\mathbb{R}^{2n}$. To verify the linear independence, consider the orthogonal projection $\pi_k:\mathbb{R}^{2n}\to \mathbb{R}^k\times \{0\}$. 
Then, by definition \eqref{eq:vj_def} of $v_j$, 
\begin{equation}\label{eq:CoordProjSpan}
    \pi_k\left(\sum_{j=1}^k c_j v_j\right)=\left[\sum_{j=1}^{k-1}c_j e_j\right]
+ \widetilde{c_k }e^{\bot},\quad\text{where }\widetilde{c}_k = c_k + \sum_{j=1}^{k-1}c_j\langle \pi(\mathbb{A}_e(e_j)),e^{\bot} \rangle.
\end{equation}
Thus, if $\sum_{j=1}^k c_j v_j=0$, then $\widetilde{c}_k=0$ and $c_j=0$ for $j=1,\ldots,{k-1}$, and consequently also $c_k=0$,  by linear independence of $\{e_1,\ldots,e_{k-1},e^{\bot}\}$ in $\mathbb{R}^k$. The identity \eqref{eq:CoordProjSpan} then also shows that $V$ is a graph over $\mathbb{R}^k\times \{0\}\subset \mathbb{R}^{2n}$.  In particular, $V$ is the (Euclidean) graph of an isotropic linear map $A_0:\R^k\to\R^{2n-k}$. 
    Hence, by Proposition \ref{ahp}, $A_0$ can be lifted to an intrinsic linear map $A: \V_0\to\W_0$ whose intrinsic graph is $\V=V\times \{0\}$. Thus, for every $p\in\V$, it holds $\mathbb A(\Pi(p))=p$. In particular,
    \[\mathbb{A}_e(v)= \mathbb{A}(\Pi(\mathbb{A}_e(v))),\quad v\in \mathbb{V}_e\]
since $\mathbb{A}_e(\mathbb{V}_e)\subset \mathbb{V}$.

Finally, we aim to control the Lipschitz constant of $\mathbb A:\mathbb{V}_0 \to \mathbb{H}^{n}$. 
By definition, we have
\begin{displaymath}
    \pi_k(v_i)=e_i+\langle \pi(\mathbb A_e(e_i)),e^{\bot}\rangle e^{\bot}\text{ for }i=1,\ldots,k-1,\quad\text{and}\quad \pi_k(v_k)=e^{\bot}.
\end{displaymath}
Note that $\{\pi_k(v_i):i=1,\dots, k\}$ form a basis of $\R^k$. Let $x$ be an arbitrary vector of $\R^k$ and write
\begin{align*}x=\sum_{i=1}^kx_i\pi_k(v_i)&=\sum_{i=1}^{k-1} x_i(e_i+\langle \pi(\mathbb A_e(e_i)),e^{\bot}\rangle e^{\bot})+x_ke^\perp\\ &=\sum_{i=1}^{k-1}x_ie_i+\left(x_k+\sum_{i=1}^{k-1}x_i\langle \pi(\mathbb A_e(e_i)),e^{\bot}\rangle\right)e^\perp.\end{align*}
Since $\{e_1,\dots, e_{k-1}, e^\perp\}$ is an orthonormal basis of $\R^k$,
\begin{equation}\label{orthonormal}\|x\|^2=\sum_{i=1}^{k-1}x_i^2+\left(x_k+\sum_{i=1}^{k-1}x_i\langle \pi(\mathbb A_e(e_i)),e^{\bot}\rangle\right)^2.\end{equation} Here and in the following, we denote by $\|\cdot\|$ the Kor\'{a}nyi norm, which agrees with the Euclidean norm when considering elements in $\mathbb R^{2n}\times \{0\}\subset \mathbb H^n$.
Notice that 
\[|\langle \pi(\mathbb A_e(e_i)),e^{\bot}\rangle|\leq \|\mathbb A_e(e_i)\|\leq \mathrm{Lip}(\mathbb A_e).\]
By \eqref{orthonormal}, it follows that
\[|x_i|\leq \|x\|\qquad i=1,\dots, k-1;\]
\[|x_k|\leq \|x\|+\mathrm{Lip}(\mathbb A_e)\sum_{i=1}^{k-1}|x_i|\leq (1+(k-1)\mathrm{Lip}(\mathbb A_e))\|x\|.\]
Hence
\begin{equation}\label{l1sum}\sum_{i=1}^k|x_i|\leq (k+(k-1)\mathrm{Lip}(\mathbb A_e))\|x\|.\end{equation}
By construction, $\mathbb A:\mathbb V_0\equiv \R^k\to \mathbb H^n\equiv \R^{2n+1}$ is a linear map. Hence
\[\mathbb A (x)=\mathbb A\left(\sum_{i=1}^kx_i\pi_k(v_i)\right)=\sum_{i=1}^kx_i\mathbb A(\pi_k (v_i)).\]
Finally, it suffices to prove that $\|\mathbb A(\pi_k(v_i))\|\lesssim_{k,\mathrm{Lip}(\mathbb A_e)}1$, since then we deduce that
\[\|\mathbb A(x)\|=\left\|\sum_{i=1}^kx_i\mathbb A(\pi_k (v_i))\right\|\leq \sum_{i=1}^k|x_i|\|\mathbb A(\pi_k(v_i))\|\stackrel{\eqref{l1sum}}\lesssim_{k,\mathrm{Lip} (\mathbb A_e)}\|x\|\]
and we conclude by linearity:
\[d(\mathbb A(x), \mathbb A(y))=\|\mathbb A(x)^{-1}\mathbb A(y)\|=\|\mathbb A(y)-\mathbb A(x)\|=\|\mathbb A(y-x)\|\lesssim_{k,\mathrm{Lip}(\mathbb A_e)}\|y-x\|.\]
Here we exploited the linearity of the map $\mathbb A$ and the fact that $\mathbb A(\mathbb V_0)=\mathbb V$ is a horizontal subgroup of $\mathbb H^n$. It remains to show that $\|\mathbb A(\pi_k(v_i))\|\lesssim_{k,\mathrm{Lip}(\mathbb A_e)}1$. First, notice that $\mathbb A(\pi_k(v_i))=v_i$. Then, by definition,
\[\|v_i\|=\|\pi(\mathbb A_e(e_i))\|\leq \|\mathbb A_e(e_i)\|\leq \mathrm{Lip}(\mathbb A_e)\qquad i=1,\dots, k-1;\]
\[\|v_k\|=\|e^{\bot}+\sum_{j=1}^{k-1}\omega(e^\perp, \pi(\mathbb A_e(e_j)))J e_j\|\leq 1+\sum_{j=1}^{k-1}\|\pi(\mathbb A_e(e_j)\|\lesssim_{k,\mathrm{Lip}(\mathbb A_e)}1.\]

 \textbf{Step 2:} If $A_e:\V_e\to\W_e$ is intrinsic \textit{affine}, then the conclusion follows from Step 1 by left-translation, exactly as in Step 2 of Lemma \ref{l:1DSlicesAffine}. The control on the Lipschitz constant of $\mathbb A$ follows by left-translation invariance of the Kor\'{a}nyi distance and by Step 1.
\end{proof}

\subsubsection{Proof of the geometric lemma for intermediate dimensions}\label{ss:ProofConclu}
Let $1<k\leq n$ and assume that $\varphi:\mathbb{V}_0=\mathbb{R}^k \times \{0\}\to \{0\}\times \mathbb{R}^{2n+1-k}=\W_0$ is intrinsic $L$-Lipschitz. Let $\mathcal{D}=\mathcal{D}^k$ be the family of standard dyadic cubes on $\R^k$, namely $\mathcal D^k=\cup_{j\in\mathbb Z} \mathcal D_j^k$, where
\[\mathcal D^k_j:=\left\{Q=\prod_{i=1}^k\big[a_i2^{-j}, (a_i+1)2^{-j}\big): (a_1,\dots, a_k)\in\mathbb Z^k\right\}.\] Throughout this section, for a parameter $\lambda>0$, the set $\lambda Q$ denotes the cube which is concentric with $Q$ and has diameter equal to $\lambda\,\mathrm{diam}(Q)$.
Suppressing the $\varphi$-dependence in the notation, for $Q\in\mathcal{D}$, we denote, for every $1\leq p<+\infty$, 
\begin{displaymath}
    \beta_p(Q):=\inf_A \left[\frac{1}{\mathrm{diam}(Q)^k}\int_Q \left(\frac{d(\varphi(x),A(x))}{\mathrm{diam}(Q)}\right)^p d\mathcal{H}^{k}(x)\right]^{1/p},
\end{displaymath}
\begin{displaymath}
    \beta_\infty(Q):=\inf_A\sup_{x\in Q}\frac{d(\varphi(x), A(x))}{\mathrm{diam} (Q)},
\end{displaymath}
where the infimum is taken over all intrinsic affine maps $A:\mathbb{V}_0 \to \mathbb{W}_0$. These are parametric and dyadic versions of the $\beta$-numbers in our main result, Theorem \ref{t:GlemBetanILGLk=2}.

As in Definition \ref{def_projbetas}, we denote by $A(k,m)$ the family of affine $m$-planes in $\mathbb{R}^k$, 
and by $\mathcal{A}_m(B)$ the subfamily of those elements in $ A(k,m)$ that intersect a given set $B\subset \mathbb{R}^k$, as in \cite{MR4345824}. The measure $\eta_m$ on $A(k,m)$ is defined as in \cite{MR4345824}. We use here the same notation, where $\mathbb{V}_0$ is identified with $\mathbb{R}^k$ via $(x,0)\in \mathbb{R}^k \times \{0\}\mapsto x\in \mathbb{R}^k$.

Let $Q\in \mathcal{D}$ be a standard dyadic cube in $\mathbb{R}^k$. For any $1\leq m\leq k$, any affine plane $V\in\mathcal{A}_m(Q)$, $1\le p<\infty$ and $1\le q <\infty$, 
we define
\begin{displaymath}\beta_p(Q,V):=\inf_A \left[\frac{1}{\mathrm{diam}(Q)^m}\int_{Q\cap V} \left(\frac{d(\varphi(x),A(x))}{\mathrm{diam}(Q)}\right)^p d\mathcal{H}^{m}(x)\right]^{1/p},
\end{displaymath}
\begin{displaymath}
\beta_{\infty}(Q,V):=   \inf_A\sup_{x\in Q\cap V}\frac{d(\varphi(x),A(x))}{\mathrm{diam}(Q)} ,
\end{displaymath}
where the infimum is taken over all intrinsic affine $A:\mathbb{V}_0 \to \mathbb{W}_0$.
Finally, we define 
\begin{displaymath}
    \beta^{m}_{p,q}(Q) :=\left[\fint_{\mathcal{A}_m(Q)}\beta_{p}(Q,V)^q\,d\eta_m(V)\right]^{1/q}.
\end{displaymath}

We stress that in these definitions, the maps $A$ are required to be \emph{intrinsic} affine, and the metric $d$ is the Kor\'anyi distance, so that the resulting coefficients are different from their Euclidean counterparts. Nonetheless, to ease the notation, we suppress the dependence on the Heisenberg geometry in the symbols. 

\begin{remark}\label{r:DiffBeta}
We will also use versions of these coefficients where ``$Q$'' is replaced by an enlarged cube ``$CQ$'' or by a ball ``$B$'' on both sides of the identity.
\end{remark}
We will repeatedly use that the relevant coefficients are bounded in our setting, as ensured by the following remark.

\begin{remark}\label{rmk:boundedness} 
   Let $1\leq k\leq n$ and assume that $\varphi:\mathbb{V}_0=\mathbb{R}^k \times \{0\}\to \{0\}\times \mathbb{R}^{2n+1-k}=\W_0$ is intrinsic $L$-Lipschitz.
   Then, for any cube $Q$ in $\V_0$, any $C\geq 1$, and if $k>1$, 
   for any affine $m$-plane $V$ with  $1\leq m\leq k-1$, it holds $\beta_p(CQ)\leq L$, $\beta_{p}(CQ,V)\leq L$ and $\beta_{p,q}^m(CQ)\leq L$ for all $1\leq p\leq \infty$, $1\leq q<\infty$.
    Indeed, let $\bar x$ be any point in $CQ$. Consider the intrinsic affine map $A:\V_0\to\W_0$ defined by 
    \[A(x):=x^{-1}\cdot \bar x\cdot\varphi(\bar x)\cdot \bar x^{-1}\cdot x.\]
    Notice that this map is well-defined since $\W_0$ is normal and that it is intrinsic affine since
    \[\gr(A)=\bar x\cdot\varphi(\bar x)\cdot \bar x^{-1}\cdot \mathbb V_0\]
    is a horizontal affine subspace of $\Hn$. Then, for any $x\in CQ$, by Proposition \ref{1.2}, \[d(\varphi(x), A(x))=\|x^{-1}\cdot \bar x\cdot\varphi(\bar x)^{-1}\cdot\bar x^{-1}\cdot x\cdot \varphi(x)\|\leq L\|\bar x^{-1}\cdot x\|\leq L\mathrm{diam}(CQ).\] The claim now follows immediately by definition of  $\beta_p(Q)$, $\beta_{p}(Q,V)$ and $\beta_{p,q}^m(Q)$.
\end{remark}

The geometric lemma for $1$-dimensional intrinsic Lipschitz graphs (Theorem \ref{t:GlemStratBetanILGLinftyk=2} in its parametric version) implies a geometric lemma for $k$-dimensional intrinsic Lipschitz functions and the integral-geometric $\beta_{\infty,4}^1$-numbers. The corresponding statement in the Euclidean case can be proven by a short argument, see  \cite[Lemma 2.5]{MR4345824}, but deriving the key inequality \eqref{eq:1DGlemRestrLine} in our setting requires additional work since 
 we need to consider all components of the relevant mappings simultaneously and resort to the 
slicing and extension theorems from Section \ref{ss:1Dslice}.

\begin{lemma}[$1$-dim.\ Dorronsoro implies $\mathrm{GLem}( \beta_{\infty,4}^1,4)$ for $k$-dimensional graphs]\label{l:TLem2.5} Let $1<k\leq n$, and set $\mathbb{V}_0=\mathbb{R}^k\times \{0\}$ as well as $\mathbb{W}_0=\{0\}\times \mathbb{R}^{2n+1-k}$.
    Let $\varphi:\mathbb{V}_0\to \mathbb{W}_0$ be intrinsic $L$-Lipschitz. Then, for any $C\geq 1$,
    \begin{displaymath}
   \sum_{Q\in \mathcal{D}^k,Q\subset Q_0}   \beta_{\infty,4}^1(CQ)^4\mathcal{H}^k(Q)\lesssim_{C,L,k}   \mathcal{H}^k(Q_0),\quad Q_0\in \mathcal{D}^k.
    \end{displaymath}
\end{lemma}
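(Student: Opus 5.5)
Following \cite[Lemma 2.5]{MR4345824}, the plan is to reduce the claim to a line-by-line statement and then integrate over lines. Parametrize the lines meeting $CQ$ as $\ell_{a,e}(V_e)$ with $e\in S^{k-1}$ and $a\in\mathbb{R}^{k-1}$ as in \eqref{def_lae}. Up to constants depending on $k$ and $C$, the measure $\eta_1$ restricted to $\mathcal{A}_1(CQ)$ is comparable to $de\,da$, with $a$ ranging over a ball of radius $\sim C\,\mathrm{diam}(Q)$. With this parametrization,
\begin{displaymath}
\beta^1_{\infty,4}(CQ)^4\mathcal{H}^k(Q)\sim_{C,k}\mathrm{diam}(Q)\int_{S^{k-1}}\int_{\{a:\ \ell_{a,e}\cap CQ\neq\emptyset\}}\beta_\infty(CQ,\ell_{a,e})^4\,da\,de.
\end{displaymath}
Summing over $Q\subset Q_0$ and using Fubini, it therefore suffices to prove a one-dimensional estimate uniformly in $(e,a)$ with $\ell_{a,e}\cap CQ_0\neq\emptyset$:
\begin{equation}\label{eq:1DGlemRestrLine}
\sum_{\substack{Q\subset Q_0\\ \ell_{a,e}\cap CQ\neq\emptyset}}\beta_\infty(CQ,\ell_{a,e})^4\,\mathrm{diam}(Q)\lesssim_{C,L,k}\mathrm{diam}(Q_0).
\end{equation}
Integrating \eqref{eq:1DGlemRestrLine} over $a$ in a ball of radius $\sim\mathrm{diam}(Q_0)$ and over $e\in S^{k-1}$ then gives $\mathrm{diam}(Q_0)^k\sim\mathcal{H}^k(Q_0)$.

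To prove \eqref{eq:1DGlemRestrLine}, I would use the slicing machinery. By Lemma \ref{l:SlicingLemma}, the slice $\psi=\psi_{a,e}:\mathbb{V}_e\to\mathbb{W}_e$ is intrinsic $L$-Lipschitz, and its graph map satisfies $\Psi(v)=\Phi(\ell_{a,e}(v))$. Next, I rotate by some $U\in U(n)$ with $Ue=e_1$, using Lemma \ref{l:RotRed}, so that Theorem \ref{t:ControlStratifBetaByOmegaInftyk=1} combined with the $L^\infty$ Dorronsoro theorem (Theorem \ref{t:DorronsoroLinftyk=1}) applies. Together they give a dyadic Carleson estimate on the line: for each interval $I=\ell_{a,e}^{-1}(CQ)$, of length $\lesssim\mathrm{diam}(Q)$, there is an intrinsic affine $A_e:\mathbb{V}_e\to\mathbb{W}_e$ with
\begin{displaymath}
\sup_{v\in I}d(\Psi(v),\mathbb{A}_e(v))\lesssim \widehat\Omega_{\infty,\psi}(\cdot,\mathrm{diam} I)\,\mathrm{diam}(Q).
\end{displaymath}
Moreover, $\sum\widehat\Omega^4\,\mathrm{diam}\lesssim\sum\Omega_{\infty,\Pi^\bot\psi}^2\,\mathrm{diam}$ is Carleson. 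The passage from dyadic cubes along the line to balls is standard, since each interval is met by boundedly many $CQ$ of a given generation.

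The main obstacle is converting this approximation of $\Psi$ by $\mathbb{A}_e$, which is measured along $\mathbb{V}_e$, into the quantity $\beta_\infty(CQ,\ell_{a,e})$. That quantity requires an intrinsic affine $A:\mathbb{V}_0\to\mathbb{W}_0$ with $d(\varphi(x),A(x))$ small for $x\in CQ\cap\ell_{a,e}$. For this I would use Lemma \ref{l:1DSlicesAffine}, which extends $A_e$ to $A$ with $\mathbb{A}_e=\mathbb{A}\circ\Pi\circ\mathbb{A}_e$. Writing $x=\ell_{a,e}(v)$, one has $d(\varphi(x),A(x))$ comparable to $d(\Phi(x),\mathbb{A}(x))$, since both are left-translated by $x$ in the same way. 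Then
\begin{displaymath}
d(\Phi(x),\mathbb{A}(x))\le d(\Psi(v),\mathbb{A}_e(v))+d(\mathbb{A}(\Pi\mathbb{A}_e(v)),\mathbb{A}(x)).
\end{displaymath}
The last term needs $\mathbb{A}$ to be Lipschitz, together with $|\Pi\mathbb{A}_e(v)-x|\lesssim d(\Psi(v),\mathbb{A}_e(v))$; the latter holds because $\Pi\Psi(v)=x$.

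Lemma \ref{l:1DSlicesAffine} does not state a Lipschitz bound for $\mathbb{A}$. I would prove one as in Lemma \ref{l:k-1SlicesAffine}, with a bound depending on $\mathrm{Lip}(\mathbb{A}_e)$. To control $\mathrm{Lip}(\mathbb{A}_e)$ in terms of $L$, I would use a near-optimal choice of $A_e$ together with Lemma \ref{lemma_lipconst}, exactly as in the proof of Theorem \ref{t:ControlStratifBetaByOmegaInftyk=1}. In the degenerate case where the approximation error is comparable to $\mathrm{diam}(Q)$, Remark \ref{rmk:boundedness} suffices. Putting these steps together yields \eqref{eq:1DGlemRestrLine}, and hence the lemma.
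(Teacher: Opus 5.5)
Your proposal is correct, and its overall structure is the paper's: the integral-geometric reduction via $\eta_1(\mathcal{A}_1(CQ))\sim_{C,k}\mathrm{diam}(Q)^{k-1}$, slicing by Lemma \ref{l:SlicingLemma}, the one-dimensional estimate from Theorem \ref{t:ControlStratifBetaByOmegaInftyk=1} and Theorem \ref{t:DorronsoroLinftyk=1} with bounded-overlap counting, and extension by Lemma \ref{l:1DSlicesAffine}.

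The difference lies in the step you identify as the main obstacle. The paper needs no Lipschitz bound on the extension $\mathbb{A}$. Since $\varphi$ takes values in $\mathbb{W}_0$, the horizontal components of $\psi$ in the directions $e_1^\perp,\dots,e_{k-1}^\perp$ are exactly the constants $a_1,\dots,a_{k-1}$. The paper therefore replaces the corresponding components of the near-optimal affine approximant by these constants. This only decreases the Euclidean error $I_\pi$. The modified map is still the horizontal part of an intrinsic affine map, because every affine map $\mathbb{R}\to\mathbb{R}^{2n-1}$ is isotropic; the vertical component is then re-chosen as in the proof of Theorem \ref{t:ControlStratifBetaByOmegaInftyk=1}. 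After this normalization $\Pi\circ\mathbb{A}_e=\ell_{a,e}$, hence $\mathbb{A}\circ\ell_{a,e}=\mathbb{A}_e$, and $d(\varphi(x),A(x))=d(\Psi(v),\mathbb{A}_e(v))$ holds with equality for $x=\ell_{a,e}(v)$.

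Your route is instead the triangle-inequality argument \eqref{eq:AuxFunct}--\eqref{eq:estimatesecondterm}, which the paper uses only in Lemma \ref{l:HigherDimInduc}, where the isotropy constraint blocks such a normalization. It costs you two extra ingredients:
\begin{itemize}
\item a Lipschitz bound for the extension in Lemma \ref{l:1DSlicesAffine}; this does go through as in Step 1 of Lemma \ref{l:k-1SlicesAffine}, since $\pi_k(v_i)=e_i^\perp$ for $i<k$, $\pi_k(v_k)=e+\sum_i\langle\pi(\mathbb{A}_e(e)),e_i^\perp\rangle e_i^\perp$, and $|v_i|\le 1+\mathrm{Lip}(\mathbb{A}_e)$;
\item the bound $\mathrm{Lip}(\mathbb{A}_e)\le 1+\mathrm{Lip}(\Pi^\perp(A_e))\lesssim 1+L$, obtained from Lemma \ref{lemma_lipconst} as in the proof of Theorem \ref{t:ControlStratifBetaByOmegaInftyk=1}.
\end{itemize}
In exchange, your argument treats this lemma and the codimension-one case uniformly and does not rely on one-dimensional affine maps being automatically isotropic.

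One small correction: approximate on the ball of radius $\mathrm{diam}(CQ)$ centred at a point of $\ell_{a,e}^{-1}(CQ)$, not at scale $\mathrm{diam}(I)$. The interval $I=\ell_{a,e}^{-1}(CQ)$ can be arbitrarily short when the line clips a corner of $CQ$, and the bounded-overlap counting needs scales comparable to $\mathrm{diam}(Q)$.
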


\begin{proof} Let $\ell:=\ell_{a,e}$ be an arbitrary affine line in $\mathbb{V}_0\equiv \mathbb{R}^k$, parametrized by \[\ell_{a,e}(v)= v + \sum_{i=1}^{k-1}a_i e_i^{\bot}\] for $ v\in V_e:=\mathrm{span}\{e\}\subset\R^k$, for some $e\in S^{k-1}$, $\{e_1^\perp,\dots, e_{k-1}^\perp\}$ an orthonormal basis of $e^\perp$ inside $\R^k$ and $a=(a_1,\ldots,a_{k-1})\in \mathbb{R}^{k-1}$.
By Lemma \ref{l:SlicingLemma}, $\Phi(\ell_{a,e})$ is an intrinsic $L$-Lipschitz graph  of the function 
\begin{displaymath}
\psi:\mathbb{V}_e \to \mathbb{W}_e,\quad \psi(v)=\pi_{\mathbb{W}_e}(\Phi(\ell_{a,e}(v))),
\end{displaymath}
where $\mathbb{V}_e:= V_e\times \{0\}$.
By Theorem \ref{t:GlemStratBetanILGLinftyk=2} (or,  rather, its parametric version), we  have 
\begin{equation}\label{eq:1DGlemRestrLine}
    \sum_{Q\subset Q_0,CQ\cap \ell \neq \emptyset} \beta_{\infty}(CQ,\ell)^4 \mathrm{diam}(Q)\lesssim_{C,L,k}\mathrm{diam}(Q_0),\quad Q_0\in \mathcal{D}^k
\end{equation}
with implicit constant independent of $Q_0$ and $\ell$. Let us now explain this argument in detail.

First, let $\psi:\V_e\to\W_e$ be as above. This is a 1-dimensional intrinsic $L$-Lipschitz map, so we can apply Theorem \ref{t:ControlStratifBetaByOmegaInftyk=1}. Actually, we need something more precise. Fix $Q,Q_0\in\mathcal D^k, Q\subset Q_0$ and assume $CQ\cap \ell\neq \emptyset$. 
Let $r:=\mathrm{diam}(CQ)$ and let $p$ denote any point in $CQ\cap\ell$. By the proof of Theorem \ref{t:ControlStratifBetaByOmegaInftyk=1}, for any $\varepsilon>0$ sufficiently small, we can find $A_e=(\Pi^\perp(A_e), A_{e,2n}):\V_e\to \W_e$ intrinsic affine such that 
\begin{equation*}I_\pi(\Pi^\perp(A_e)):=\sup_{v\in B_{\mathbb{V}_e}(\ell_{a,e}^{-1}(p),r)} \frac{|\Pi^{\bot}(\psi)(v)-\Pi^\perp(A_e)(v)|}{r}<\Omega_{\infty,\Pi^{\bot}(\psi)}(\ell_{a,e}^{-1}(p),r)+\varepsilon. \end{equation*}
Here, if $\W_e=W_e\times\R$ with $W_e\subset\R^{2n}$, we are denoting by $\Pi^\perp$ the standard projection on $W_e$, recall \eqref{eq:PiPerp1D}.
Since, in the supremum, we consider the Euclidean norm, we can assume without loss of generality that the first $k-1$ components of $A_e$ (by this we mean the coordinates of $A_e$ associated to $\{e_1^\perp, \dots, e_{k-1}^\perp\}$) are constant and coincide with the ones of $\psi$. Indeed, up to modifying $A_{e,2n}$ correspondingly, $A_e$ remains an intrinsic affine map and the quantity defining $I_\pi(\Pi^\perp(A_e))$ decreases. Hence, we can assume that the first $k-1$ components of $A_e$ are the same of those of $\psi$. This implies that $\Pi(\mathbb A_e(v))=\ell_{a,e}(v)$, where $\mathbb A_e$ denotes the graph map of $A_e$ and $\Pi$ is defined as in \eqref{projtoRk}. We now use Lemma \ref{l:1DSlicesAffine} to associate to $A_e$ an intrinsic affine map $ A:\V_0\to\W_0$ such that $\mathbb A_e(v)={\mathbb A}(\Pi(\mathbb A_e(v)))={\mathbb A}(\ell _{a,e}(v))$ for every $v\in\V_e$.
Let also $\Phi$ and $\Psi$ denote the graph maps of $\varphi$ and $\psi$ respectively. Then, if $x=\ell_{a,e}(v)$, \begin{equation}\label{section}d(\varphi(x), A(x))=d(\Phi(x), {\mathbb A}(x))= d(\Phi(\ell_{a,e}(v)), \mathbb A(\ell_{a,e}(v)))=d(\Psi(v),\mathbb A_e(v)).
\end{equation} By the proof of Theorem \ref{t:ControlStratifBetaByOmegaInftyk=1}, it follows that $\frac{d(\Psi(v),\mathbb A_e(v))}{r}\lesssim_L (\Omega_{\infty, \Pi^\perp(\psi)}(\ell_{a,e}^{-1}(p),r)+\varepsilon)^{1/2}$ for every $v\in B_{\mathbb V_e}(\ell_{a,e}^{-1}(p),r)$. Hence, by \eqref{section}, 
\[\sup_{x\in CQ\cap \ell_{a,e}}\frac{d(\varphi(x), A(x))}{\mathrm{diam}(CQ)}
\leq\sup_{v\in B_{\mathbb V_e}(\ell_{a,e}^{-1}(p),r)}\frac{d(\Psi(v),\mathbb A_e(v))}{r}\lesssim_L (\Omega_{\infty, \Pi^\perp(\psi)}(\ell_{a,e}^{-1}(p),r)+\varepsilon)^{1/2}.\]
Passing to the infimum among all intrinsic affine maps $ A:\V_0\to\W_0$, we get that
\[\beta_{\infty}(CQ,\ell_{a,e})= \inf_{ A}\sup_{x\in CQ\cap \ell_{a,e}}\frac{d(\varphi(x), A(x))}{\mathrm{diam}(CQ)}\lesssim_L (\Omega_{\infty, \Pi^\perp(\psi)}(\ell_{a,e}^{-1}(p),r))^{1/2}. \]
As a consequence, $\beta_\infty(CQ, \ell)^4\lesssim_L (\Omega_{\infty, \Pi^\perp(\psi)}(\ell_{a,e}^{-1}(p),\mathrm{diam}(CQ)))^{2}$ and \eqref{eq:1DGlemRestrLine} will follow by Theorem \ref{t:DorronsoroLinftyk=1} in the corresponding discrete version for dyadic cubes. Indeed, let $\overline{\mathcal D}^1=\cup_j\overline{\mathcal D_j}^1$ be a standard family of dyadic cubes on $\V_e$, induced by an isometric identification $\V_e\equiv \R$. For every $Q\in \mathcal D_j^k$ with $CQ\cap \ell\neq\emptyset$, there exists $\overline Q_Q\in\overline{\mathcal D}_j^1$ such that $B_{\V_e}(\ell_{a,e}^{-1}(p),\mathrm{diam}(CQ))\subseteq 3\sqrt kC\overline Q_Q$. Moreover, for any $\overline Q\in\overline{\mathcal D}_j^1$, the number of cubes $Q\in\mathcal D_j^k$ with $CQ\cap \ell\neq\emptyset$ and $B_{\V_e}(\ell_{a,e}^{-1}(p), \mathrm{diam}(CQ))\subseteq 3\sqrt kC\overline Q$ is bounded by a geometric constant $N$ depending only on $k$ and $C$. This allows us to estimate
\begin{align*}
   \sum_{Q\subset Q_0,CQ\cap \ell \neq \emptyset} \beta_{\infty}(CQ,\ell)^4 \mathrm{diam}(Q)&\lesssim_L \sum_{Q\subset Q_0,CQ\cap \ell \neq \emptyset} (\Omega_{\infty, \Pi^\perp(\psi)}(\ell_{a,e}^{-1}(p),\mathrm{diam}(CQ)))^{2}\mathrm{diam}(Q)\\ &\lesssim_{k,L}  \sum_{Q\subset Q_0,CQ\cap \ell \neq \emptyset} (\Omega_{\infty, \Pi^\perp(\psi)}(3\sqrt k C\overline Q_Q))^{2}\mathrm{diam}(\overline Q_Q)\\&\lesssim_{C,k,L}  \sum_{\overline Q\in \overline{\mathcal D}^1, \overline Q\subset M\overline Q_0} (\Omega_{\infty, \Pi^\perp(\psi)}(3\sqrt k C\overline Q))^{2}\mathrm{diam}(\overline Q)\\ &\lesssim_{C,k,L}\mathrm{diam}(\overline Q_0)\leq\mathrm{diam}(Q_0),\quad Q_0\in \mathcal{D}^k. 
\end{align*}
Here $\overline Q_0\in \overline{\mathcal D}^1$ is the cube associated to $Q_0\in\mathcal D^k$ as above and $M$ is a constant depending only on $k$. 
This completes the proof of \eqref{eq:1DGlemRestrLine}.

For fixed $Q_0\in \mathcal{D}^k$, we can now estimate as follows
\begin{align*}
    \sum_{Q\in \mathcal{D}^k,Q\subset Q_0}\beta_{\infty,4}^1(CQ)^4 \mathcal{H}^k(Q) &= \sum_{Q\in \mathcal{D}^k,Q\subset Q_0}\left[\fint_{\mathcal{A}_1(CQ)}\beta_{\infty}(CQ,\ell)^4\,d\eta_1(\ell)\right]\mathcal{H}^k(Q)\\
    &\lesssim_{C,k} \int_{\mathcal{A}_1(CQ_0)}\sum_{Q\subset Q_0,CQ\cap \ell \neq \emptyset}\beta_{\infty}(CQ,\ell)^4 \mathrm{diam}(Q)\,d\eta_1(\ell),
\end{align*}
where we have used $\eta_1(\mathcal{A}_1(CQ))\sim_{C,k} \mathrm{diam}(Q)^{k-1}$ (see the proof of Lemma 2.5 in \cite{MR4345824}) in the last step. The lemma then follows from \eqref{eq:1DGlemRestrLine}.
\end{proof}

The following serves as a counterpart for \cite[Lemma 2.6]{MR4345824}, and it is a $1$-codimensional version of the last argument in the proof of Lemma \ref{l:TLem2.5}.

\begin{lemma}[$(k-1)$-dim.\  Dorronsoro implies $\mathrm{GLem}( \beta_{p,q}^{k-1},q)$ for $k$-dimensional graphs]\label{l:HigherDimInduc} Let $1\leq p\leq \infty$ and $1\leq q<\infty$.
    Let $n\in \mathbb{N}$ and $1<k \le n$. Suppose that, for any $M\geq 1$, there is a constant $C=C_{k,n,p,q,L,M}>0$ such that every intrinsic $L$-Lipschitz function $\psi:\mathbb{R}^{k-1}\times \{0\}\to \{0\}\times \mathbb{R}^{2n+2-k}$ satisfies 
 \begin{displaymath}
         \sum_{Q\in \mathcal{D}^{k-1},Q\subset Q_0}\beta_{p}(MQ)^q \mathcal{H}^{k-1}(Q)\leq C \mathcal{H}^{k-1}(Q_0),\quad Q_0 \in \mathcal{D}^{k-1}.
    \end{displaymath}
Then every intrinsic $L$-Lipschitz function $\varphi:\mathbb{V}_0=\mathbb{R}^k \times \{0\}\to \W_0=\{0\}\times \mathbb{R}^{2n+1-k}$ satisfies, for any $K\geq 1$,
    \begin{displaymath}
         \sum_{Q\in \mathcal{D}^{k},Q\subset Q_0}\beta_{p,q}^{k-1}(KQ)^q \mathcal{H}^k(Q)\leq C' \mathcal{H}^k(Q_0),\quad Q_0 \in \mathcal{D}^k,
    \end{displaymath}
    with $C'$ depending only on $k,n,p,q,L,K$.
\end{lemma}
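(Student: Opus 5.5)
The plan follows the last part of the proof of Lemma \ref{l:TLem2.5}: slice $\varphi$ along affine hyperplanes, apply the $(k-1)$-dimensional hypothesis to each slice, and integrate over the family of hyperplanes.

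\textbf{Step 1: reduce to a single slice.} Fix an affine $(k-1)$-plane $V=\pi_{a,e}(V_e)\in\mathcal{A}_{k-1}(KQ_0)$, parametrized as in \eqref{eq:AffParam}. By Lemma \ref{l:SlicingLemmak-1}, the slice $\psi=\psi_{a,e}:\mathbb{V}_e\to\mathbb{W}_e$ is intrinsic $L$-Lipschitz, and $\Psi(v)=\Phi(\pi_{a,e}(v))$. Using a unitary rotation $R_U$ (Lemma \ref{l:RotRed}), we may identify $\mathbb{V}_e$ with $\mathbb{R}^{k-1}\times\{0\}$ and $\mathbb{W}_e$ with $\{0\}\times\mathbb{R}^{2n+2-k}$. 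The Korányi distance and intrinsic affinity are invariant under $R_U$, so the hypothesis applies to $\psi$ with the same $L$.

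\textbf{Step 2: compare coefficients.} For a cube $Q$ with $KQ\cap V\neq\emptyset$, I want
\[
\beta_p(KQ,V)\lesssim_{k,L}\beta_p(B_Q)
\]
for the slice $\psi$, where $B_Q\subset V_e$ is a ball or cube of comparable diameter containing $\pi_{a,e}^{-1}(KQ\cap V)$. To get this, take an almost-optimal intrinsic affine $A_e:\mathbb{V}_e\to\mathbb{W}_e$ for $\psi$ on $B_Q$. As in the proof of Lemma \ref{l:TLem2.5}, we may assume its $e^\perp$-component is constant and equal to $a$, so that $\Pi(\mathbb{A}_e(v))=\pi_{a,e}(v)$. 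Replacing the $e^\perp$-component by the constant $a$ should not increase the distance in a controlled sense: the horizontal part shrinks, and the vertical part is adjusted via Remark \ref{r:VertTranslIntrAff}. This is the step I expect to be the main obstacle. If it is too delicate, I would instead project $\mathbb{A}_e$ onto the plane and absorb the resulting error using Remark \ref{rmk:boundedness}.

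Next, extend $A_e$ with Lemma \ref{l:k-1SlicesAffine} to an intrinsic affine $A:\mathbb{V}_0\to\mathbb{W}_0$ satisfying $\mathbb{A}(\pi_{a,e}(v))=\mathbb{A}_e(v)$. Then, exactly as in \eqref{section},
\[
d\bigl(\varphi(x),A(x)\bigr)=d\bigl(\Psi(v),\mathbb{A}_e(v)\bigr),\qquad x=\pi_{a,e}(v).
\]
Since $\pi_{a,e}$ is an isometry, integrating over $KQ\cap V$ with $\mathcal{H}^{k-1}$ yields the comparison above.

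\textbf{Step 3: discretize along the slice.} Put dyadic cubes $\overline{\mathcal{D}}^{k-1}$ on $V_e$. Each $Q\in\mathcal{D}^k_j$ with $KQ\cap V\neq\emptyset$ gets a cube $\overline{Q}_Q\in\overline{\mathcal{D}}^{k-1}_j$ with $B_Q\subset M\overline{Q}_Q$, where $M=M(k,K)$. Each $\overline{Q}$ is assigned to boundedly many $Q$, and all the $\overline{Q}_Q$ lie inside $M'\overline{Q}_0$. The hypothesis applied on a bounded number of top cubes then gives
\[
\sum_{Q\subset Q_0,\,KQ\cap V\neq\emptyset}\beta_p(KQ,V)^q\,\mathrm{diam}(Q)^{k-1}\lesssim\mathrm{diam}(Q_0)^{k-1},
\]
uniformly in $V$.

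\textbf{Step 4: integrate over hyperplanes.} By definition and Fubini,
\[
\sum_{Q\subset Q_0}\beta^{k-1}_{p,q}(KQ)^q\,\mathcal{H}^k(Q)=\sum_{Q\subset Q_0}\frac{\mathcal{H}^k(Q)}{\eta_{k-1}(\mathcal{A}_{k-1}(KQ))}\int_{\mathcal{A}_{k-1}(KQ)}\beta_p(KQ,V)^q\,d\eta_{k-1}(V).
\]
We have $\eta_{k-1}(\mathcal{A}_{k-1}(KQ))\sim_{k,K}\mathrm{diam}(Q)$, so the weight is $\sim\mathrm{diam}(Q)^{k-1}$. Exchanging sum and integral, applying Step 3, and using $\eta_{k-1}(\mathcal{A}_{k-1}(KQ_0))\sim\mathrm{diam}(Q_0)$ gives the bound $\lesssim\mathrm{diam}(Q_0)^{k}\sim\mathcal{H}^k(Q_0)$.
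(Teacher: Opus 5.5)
Your Steps 1, 3 and 4 (slicing with Lemma \ref{l:SlicingLemmak-1}, discretizing along $V$, and the Fubini exchange with $\eta_{k-1}(\mathcal{A}_{k-1}(KQ))\sim\mathrm{diam}(Q)$) match the paper. Step 2, however, has a genuine gap, and it is the step you flagged.

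\textbf{Why the normalization fails.} You cannot import from Lemma \ref{l:TLem2.5} the normalization ``the $e^{\bot}$-component of $A_e$ is the constant $a$''. Write the linear part of the horizontal component of $A_e$ as $v\mapsto\lambda(v)e^{\bot}+w(v)$, with $w(v)\in\{0\}\times\mathbb{R}^{2n-k}$, and set $\mu(v):=\langle Je^{\bot},w(v)\rangle$. Since $Je^{\bot}$ is a horizontal direction of $\mathbb{W}_e$, the isotropy condition for $A_e$ contains the term $\lambda(v)\mu(v')-\lambda(v')\mu(v)$. Setting $\lambda=0$ while keeping $w$ changes the isotropy defect by $\lambda\wedge\mu$. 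This vanishes automatically when $\dim\mathbb{V}_e=1$, but not when $k-1\geq 2$. So the normalized map is in general not intrinsic affine (Proposition \ref{ahp}), and the paper explicitly says it is unclear whether such a near-minimizer can be arranged.

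Remark \ref{r:VertTranslIntrAff} cannot repair this: it only adds constants to the vertical component, whereas changing the horizontal part forces a non-constant vertical change through the contact equation. Even in Lemma \ref{l:TLem2.5}, the paper never claims $d(\psi,A_e)$ decreases under the replacement. It only uses that the horizontal error $I_\pi$ decreases, and then recovers vertical control from the proof of Theorem \ref{t:ControlStratifBetaByOmegaInftyk=1}. That route is unavailable here, because the hypothesis is stated for Kor\'{a}nyi $\beta_p$-numbers of $\psi$. Without the normalization, Lemma \ref{l:k-1SlicesAffine} gives only $\mathbb{A}_e(v)=\mathbb{A}(\Pi(\mathbb{A}_e(v)))$, not $\mathbb{A}(\pi_{a,e}(v))=\mathbb{A}_e(v)$. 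Hence your identity $d(\varphi(x),A(x))=d(\Psi(v),\mathbb{A}_e(v))$ is not available.

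\textbf{The missing idea.} Skip the normalization and compare through the auxiliary point $\mathbb{A}(\Pi(\mathbb{A}_e(v)))$. For $x=\pi_{a,e}(v)$, using $\Pi(\Psi(v))=\pi_{a,e}(v)$ and that $\Pi$ is $1$-Lipschitz,
\[
d\bigl(\varphi(x),A(x)\bigr)\le d\bigl(\psi(v),A_e(v)\bigr)+d\bigl(\mathbb{A}(\Pi(\mathbb{A}_e(v))),\mathbb{A}(\Pi(\Psi(v)))\bigr)\le\bigl(1+\mathrm{Lip}(\mathbb{A})\bigr)\,d\bigl(\psi(v),A_e(v)\bigr).
\]
The real work is then a bound on $\mathrm{Lip}(\mathbb{A})$ that is uniform in $Q$, $V$ and the near-minimizer. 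The paper obtains it in three steps:
\begin{itemize}
\item The quantitative part of Lemma \ref{l:k-1SlicesAffine} gives $\mathrm{Lip}(\mathbb{A})\lesssim_{k,\mathrm{Lip}(\mathbb{A}_e)}1$, so it suffices to bound $\mathrm{Lip}(\mathbb{A}_e)$.
\item Near-minimality together with Remark \ref{rmk:boundedness} bounds the averaged error of $A_e$ by $\lesssim\mathrm{Lip}(\psi)^p\lesssim\mathrm{Lip}(\Psi)^p$. The last step uses boundedness of the projection onto $\mathbb{W}_e$, uniformly in $e$ via unitary rotations.
\item A Kor\'{a}nyi-valued variant of Lemma \ref{lemma_lipconst} then gives $\mathrm{Lip}(\mathbb{A}_e)\lesssim\mathrm{Lip}(\Psi)\le L+1$. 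This uses that the graph map of an intrinsic affine map has a direction of maximal stretch, like a Euclidean affine map.
\end{itemize}

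Your fallback of ``absorbing the error'' with Remark \ref{rmk:boundedness} does not replace this. That remark only gives an $O(L)$ bound, which is not proportional to $\beta_p$ and would destroy the Carleson estimate; it enters only as an input to the Lipschitz bound for $\mathbb{A}_e$. With the comparison above you obtain $\beta_p(KQ,V)\lesssim_{k,n,p,L,K}\beta_p(3\sqrt{k}K\overline{Q}_Q)$ for $\psi$, and your Steps 3--4 then finish the proof.
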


\begin{proof} Fix $K\geq 1$ and $Q_0\in \mathcal D^k$.
    We compute
    \begin{align}\label{eq:SlicingGlem}
        \sum_{Q\in \mathcal{D}^k,Q\subset Q_0}\beta_{p,q}^{k-1}(KQ)^q &\mathcal{H}^k(Q)=
         \sum_{Q\in \mathcal{D}^k,Q\subset Q_0}\left[\fint_{\mathcal{A}_{k-1}(KQ)}\beta_p(KQ,V)^q\,d\eta_{k-1}(V)\right]\mathcal{H}^k(Q)\notag\\
        & \sim_{k,K}  \sum_{Q\in \mathcal{D}^k,Q\subset Q_0}\left[\int_{\mathcal{A}_{k-1}(KQ)}\beta_p(KQ,V)^q\,d\eta_{k-1}(V)\right]\mathrm{diam}(Q)^{k-1}\notag\\
         &\leq \int_{\mathcal{A}_{k-1}(KQ_0)}\left[\sum_{Q\in \mathcal{D}^k,Q\subset Q_0,KQ\cap V\neq \emptyset}\beta_p(KQ,V)^q\mathrm{diam}
(Q)^{k-1} \right]\,d\eta_{k-1}(V).   \end{align}
At this point we want to apply the assumption for $(k-1)$-dimensional intrinsic Lipschitz graphs. To this end, we need essentially Lemma \ref{l:SlicingLemmak-1} and Lemma \ref{l:k-1SlicesAffine}. The idea is to consider ``$(k-1)$-dimensional slices $\psi$'' of the given $k$-dimensional intrinsic Lipschitz function $\varphi:\V_0 \to \W_0$.
This would immediately yield the desired conclusion if we knew that the intrinsic affine function $A_e$ minimizing the $\beta$ numbers associated to the slicing map $\psi$ from Lemma \ref{l:SlicingLemmak-1}  coincided with $\psi$ in the $e^{\bot}$-component, which is constant. However, due to the constraint for graph isotropic functions on more than $1$-dimensional domains, it is not clear to us whether this can be arranged in general. For this reason, our argument is slightly more complicated and passes via comparison with  an auxiliary function; see \eqref{eq:AuxFunct} below.

For a fixed $V\in \mathcal A_{k-1}(KQ_0)$, we will now estimate the expression inside the integral in \eqref{eq:SlicingGlem}. To this end, we assume that $V$ is 
parametrized by $\pi_{a,e}$ as in \eqref{eq:AffParam}, with $e=\{e_1,\dots,e_{k-1}\}\subset S^{k-1}$ an orthonormal orientation of $V$ and $a\in\R$.
By Lemma \ref{l:SlicingLemmak-1}, using the same notation, there exists an intrinsic $L$-Lipschitz map $\psi:\mathbb V_e\to\mathbb W_e$ such that  $\Psi(v)=\Phi(\pi_{a,e}(v))$ for every $v\in \V_e$. Let $\overline{ \mathcal D}^{k-1}=\cup_j \overline {\mathcal D_j}^{k-1}$ be a standard family of dyadic cubes on $\mathbb V_e\equiv \R^{k-1}$. For every $Q\in\mathcal D_j^k$ with $Q\subset Q_0$ and $KQ\cap V\neq \emptyset$, fix $p_Q\in KQ\cap V$. Then we choose $\overline{ Q}_Q\in\overline{\mathcal D_j}^{k-1}$ such that $B_{\V_e}(\pi_{a,e}^{-1}(p_Q), \mathrm{diam}(KQ))\subset 3\sqrt k K\overline{Q}_Q$. Notice that, for any $\overline Q\in\overline{\mathcal D}_j^{k-1}$, the number of cubes $Q\in\mathcal D_j^k$ with $KQ\cap V\neq\emptyset$ and $B_{\V_e}(\pi_{a,e}^{-1}(p_Q), \mathrm{diam}(KQ))\subset 3\sqrt{k}K\overline Q$ is bounded by a constant $N$ depending only on $k$ and $K$. From now on, we assume $1\leq p<+\infty$; the case $p=+\infty$ can be treated analogously, as explained below.
Let $\varepsilon>0$ and let $A_{\varepsilon,e}:\mathbb V_e\to\mathbb W_e$ be an intrinsic affine map such that 
\begin{equation}\label{choiceofAepse}
\frac{1}{\mathrm{diam}(3\sqrt kK\overline Q_Q)^{k-1}}\int_{3\sqrt kK\overline Q_Q}\left(\frac{d(\psi(v),A_{\varepsilon,e}(v))}{\mathrm{diam}(3\sqrt kK\overline Q_Q)}\right)^p\,d\mathcal H^{k-1}(v)\leq \beta_p^p(3\sqrt kK\overline Q_Q)+\varepsilon,\end{equation}
where $\beta_p$ are the $\beta$-numbers associated to $\psi:\V_e\to\W_e$.
By Lemma \ref{l:k-1SlicesAffine}, there exists an intrinsic affine map $A_\varepsilon:\V_0\to\W_0$ such that $\mathbb A_{\varepsilon,e}(v)=\mathbb A_\varepsilon(\Pi(\mathbb A_{\varepsilon,e}(v)))$ for every $v\in \mathbb V_e$ and such that $\mathrm{Lip}(\mathbb A_\varepsilon)\lesssim_{k,\mathrm{Lip}(\mathbb A_{\varepsilon,e})}1$. Hence, by the triangle inequality, for every $v\in \V_e$,
\begin{align}\label{eq:AuxFunct} d(\Phi(\pi_{a,e}(v)), \mathbb{A}_\varepsilon(\pi_{a,e}(v))&\leq
  d(\Phi(\pi_{a,e}(v)), \mathbb{A}_\varepsilon(\Pi(\mathbb{A}_{\varepsilon,e}(v))))+
d(\mathbb{A}_\varepsilon(\Pi(\mathbb{A}_{\varepsilon,e}(v))),\mathbb{A}_\varepsilon(\pi_{a,e}(v)))\notag\\
  &=d(\Psi(v),\mathbb{A}_{\varepsilon,e}(v))+ d(\mathbb{A}_\varepsilon(\Pi(\mathbb{A}_{\varepsilon,e}(v))),\mathbb{A}_\varepsilon(\pi_{a,e}(v))).\end{align}
  Notice that $d(\Psi(v),\mathbb A_{\varepsilon,e}(v))=d(\psi(v), A_{\varepsilon,e}(v))$, while
  \begin{align}\label{eq:estimatesecondterm}
d(\mathbb{A}_\varepsilon(\Pi(\mathbb{A}_{\varepsilon,e}(v))),\mathbb{A}_\varepsilon(\pi_{a,e}(v))) &\leq \mathrm{Lip}(\mathbb A_\varepsilon)\, d(\Pi(\mathbb A_{\varepsilon,e}(v)), \pi_{a,e}(v))\notag\\ &=  \mathrm{Lip}(\mathbb A_\varepsilon)\, d(\Pi(\mathbb A_{\varepsilon,e}(v)), \Pi(\Psi(v)))\notag\\ &\leq \mathrm{Lip}(\mathbb A_\varepsilon)\, d(\mathbb A_{\varepsilon,e}(v), \Psi(v))\notag\\ &=\mathrm{Lip}(\mathbb A_\varepsilon)\, d( A_{\varepsilon,e}(v), \psi(v)).
  \end{align}
  Here we exploited the fact that \[\Pi(\Psi(v))=\Pi(\Phi(\pi_{a,e}(v)))=\Pi(\pi_{a,e}(v)\cdot\varphi(\pi_{a,e}(v)))=\pi_{a,e}(v).\]
  Recall, by Lemma \ref{l:k-1SlicesAffine}, that the Lipschitz constant of $\mathbb A_\varepsilon$ is controlled by the one of $\mathbb A_{\varepsilon,e}$. 

  We want now to bound $\mathrm{Lip}(\mathbb A_{\varepsilon,e})$ by $\mathrm{Lip}(\Psi)$. First, if the intrinsic Lipschitz constant of $\psi$ is $\mathrm{Lip}(\psi)=0$, then $\psi$ is intrinsic affine and we can take $A_{\varepsilon,e}\equiv \psi$, and thus $\mathrm{Lip}(\mathbb A_{\varepsilon,e})=\mathrm{Lip}(\Psi)$.   Let us therefore assume that $\mathrm{Lip}(\psi)>0$ and let us
  now prove that $\mathrm{Lip}(\mathbb A_{\varepsilon,e})\lesssim_{k,n,p,K} \mathrm{Lip}(\Psi)$, provided that $\varepsilon$ is chosen small enough, for instance if $\varepsilon\leq (\mathrm{Lip}(\psi))^p$. Recall that we have chosen $A_{\varepsilon,e}$ in such a way that \eqref{choiceofAepse} holds. By Remark \ref{rmk:boundedness}, we know that $\beta_p(3\sqrt k K\overline Q_Q)\leq \mathrm{Lip}(\psi)$.  Hence,
  \begin{equation*}
\frac{1}{\mathrm{diam}(3\sqrt kK\overline Q_Q)^{k-1}}\int_{3\sqrt k K\overline Q_Q}\left(\frac{d(\psi(v),A_{\varepsilon,e}(v))}{\mathrm{diam}(3\sqrt kK\overline Q_Q)}\right)^p\,d\mathcal H^{k-1}(v)\leq 2(\mathrm{Lip}(\psi))^p.\end{equation*}
We want now to argue that the same conclusion holds with $\mathrm{Lip}(\psi)$ replaced by $\mathrm{Lip}(\Psi)$, up to a geometric constant $C=C(k,n)$, see \eqref{controlLip} below. In fact, for every $v,v'\in \mathbb V_e$, it holds
\begin{align}\label{lippsi/Psi}\|v'^{-1} \cdot v \cdot \psi(v)^{-1}&\cdot v^{-1} \cdot v' \cdot \psi(v')\|=\|(\Psi(v)^{-1}\cdot\Psi(v'))_{\W_e}\|\notag\\ &\leq C\|\Psi(v)^{-1}\cdot\Psi(v')\|\leq C\,\mathrm{Lip}(\Psi)\|v^{-1}\cdot v'\|.\end{align}
Here we are using the vertical projection on $\W_e$, defined by $p= p_{\V_e}\cdot p_{\W_e}$ with $p_{\V_e}\in\V_e$ and $p_{\W_e}\in\W_e$. Specifically,
\begin{equation*}\Psi(v)^{-1}\cdot\Psi(v')=\underbrace{v^{-1}\cdot v'}_{\in \V_e}\cdot \underbrace{v'^{-1} \cdot v \cdot \psi(v)^{-1}\cdot v^{-1} \cdot v' \cdot \psi(v')}_{\in\W_e} \end{equation*}
The first inequality follows instead by \cite[Proposition 4.8]{MR3587666}. There, the constant $C$ depends a priori on the fixed splitting $\mathbb H^n=\V_e\cdot \W_e$. However, in our arguments, $\V_e$ is always a $(k-1)$-dimensional horizontal subspace with orthogonal complement $\W_e$ in $\R^{2n+1}$. One can then use the rotations from Section \ref{ss:RedStd} to show that $C=C_e$ is independent of $e$ for such splittings. More precisely, for any two choices $e$ and $e'$, there exists $U\in U(n)$ such that $R_U(\V_e)=\V_{e'}$, and consequently also $R_U(\W_e)=\W_{e'}$. Then $p_{\V_{e'}}=R_U([R_U^{-1}(p)]_{\V_e})$ and  $p_{\W_{e'}}=R_U([R_U^{-1}(p)]_{\W_e})$ since these points belong to $\V_{e'}$ and $\W_{e'}$, respectively, and their product equals
\begin{displaymath}
R_U([R_U^{-1}(p)]_{\V_e})\cdot R_U([R_U^{-1}(p)]_{\W_e})
= R_U([R_U^{-1}(p)]_{\V_e}\cdot [R_U^{-1}(p)]_{\W_e})
= R_U(R_U^{-1}(p))=p.
\end{displaymath}
Thus, since $R_U$ is an isometry,
\begin{displaymath}
    \|p_{\W_{e'}}\|=\|R_U([R_U^{-1}(p)]_{\W_e})\|=
    \|[R_U^{-1}(p)]_{\W_e}\|\leq C_e \|R_U^{-1}(p)\|=C_e\|p\|,
\end{displaymath}
which shows that we can take $C_{e'}=C_e$. Inequality \eqref{lippsi/Psi} and Proposition \ref{1.2} show that $\mathrm{Lip}(\psi)\leq C\mathrm{Lip}(\Psi)$. Therefore, using the left invariance of the distance $d$, we deduce that 
\begin{equation}\label{controlLip}
\frac{1}{\mathrm{diam}(3\sqrt kK\overline Q_Q)^{k-1}}\int_{3\sqrt kK\overline Q_Q}\left(\frac{d(\Psi(v),\mathbb A_{\varepsilon,e}(v))}{\mathrm{diam}(3\sqrt kK\overline Q_Q)}\right)^p\,d\mathcal H^{k-1}(v)\leq 2C^p(\mathrm{Lip}(\Psi))^p.\end{equation}
Our goal is now to use a variation of Lemma \ref{lemma_lipconst} in order to conclude that $\mathrm{Lip}(\mathbb A_{\varepsilon, e})\lesssim_{k,n,p,K} \mathrm{Lip}(\Psi)$. 
The argument is quite similar, so we do not repeat it. We only focus on the suitable modifications. First, the exponent $2$ in that lemma can be replaced by any $p\geq 1$ by considering in the proof the $p-$th root instead of the square root. Moreover, the coefficient 2 appearing in the right-hand side of Lemma \ref{lemma_lipconst} can be replaced by any positive number $A>0$ (in our case $2C^p$) by replacing the constant $4$ appearing in the proof under the square root with $2A$. Similarly, also the fact that in Lemma \ref{lemma_lipconst} we consider balls $B(x,r)$ is not crucial. They can be replaced by any other family of sets, each of which contains a cube of comparable measure and diameter (in particular it applies to $3\sqrt k K\overline Q_Q$).

The main difference is that in \eqref{controlLip} we consider $\mathbb H^n$-valued maps, while in the original formulation of Lemma \ref{lemma_lipconst} we deal with Euclidean values. In particular, the Lipschitz function $f:\R^k\to\R^m$ is replaced by the Lipschitz map $\Psi: \V_e\equiv \R^{k-1}\to \mathbb H^n$. Since the latter is still a metric Lipschitz map, then all the (metric) estimates appearing in the original proof work also here without modifications. The last variation with respect to the original statement of Lemma \ref{lemma_lipconst} is that the affine map $A:\mathbb{R}^k\to\R^m$ is replaced by the graph map $\mathbb A_{\varepsilon,e}:\V_e\equiv \R^{k-1}\to \mathbb H^n$. However, this map still shares similar properties: there exists a direction $\bar e\in \mathbb S^{k-2}$ of maximal stretch for $\mathbb A_{\varepsilon, e}$, meaning that \begin{equation}\label{maximalstretch}d(\mathbb A_{\varepsilon,e}(y),\mathbb A_{\varepsilon,e}(y+h\bar e))=\mathrm {Lip}(\mathbb A_{\varepsilon,e})|h|\end{equation} for every $y\in\R^{k-1}$, $h\in\R$ (here we are identifying $\V_e$ with $\R^{k-1}$ by using an orthonormal basis). The fact that, for fixed $\bar e\in\mathbb S^{k-2}$, the equality does not depend on $y\in \R^{k-1}$ and $h\in \R$ follows from the property that $A_{\varepsilon,e}$ is intrinsic affine: this implies that $\mathbb A_{\varepsilon,e}(\V_e)$ is an affine $k-1$-dimensional horizontal plane, so that $d(\mathbb A_{\varepsilon,e}(y),\mathbb A_{\varepsilon,e}(y+h\bar e))=d_{\mathrm{Eucl}}(\pi(\mathbb A_{\varepsilon,e}(y)),\pi(\mathbb A_{\varepsilon,e}(y+h\bar e)))$, where $\pi:\mathbb H^n\to\R^{2n}$ denotes the standard coordinate projection. Then we can use the fact that $\pi\circ \mathbb A_{\varepsilon,e}$ is a standard affine map (see also Proposition \ref{ahp} 2.), so that the ratio 
 \[\frac{d_{\mathrm{Eucl}}(\pi(\mathbb A_{\varepsilon,e}(y)),\pi(\mathbb A_{\varepsilon,e}(y+h\bar e)))}{|h|}\]
 is constant, for fixed $\bar e\in\mathbb S^{k-2}$. By similar arguments, also $\mathrm{Lip}(\mathbb A_{\varepsilon,e})=\mathrm{Lip}_{\mathrm {Eucl}}(\pi\circ\mathbb A_{\varepsilon, e})$ and the conclusion for $\mathbb A_{\varepsilon,e}$ follows from the corresponding one for $\pi\circ \mathbb A_{\varepsilon,e}$.
 
With these modifications, the proof follows the same lines as the original one (by replacing the Euclidean distance with the Kor\'{a}nyi metric). In particular, the constant $N$ obtained by this strategy depends not only on $k$, but also on $n,p,K$. Hence, $\mathrm{Lip}(\mathbb A_{\varepsilon,e})\lesssim_{k,n,p,K} \mathrm{Lip}(\Psi)$.

By construction, it also holds that $\mathrm{Lip}(\Psi)\leq \mathrm{Lip}(\Phi)$ since, for every $v,v'\in\V_e$,
\[d(\Psi(v),\Psi(v'))=d(\Phi(\pi_{a,e}(v)), \Phi(\pi_{a,e}(v')))\leq \mathrm{Lip}(\Phi)|\pi_{a,e}(v)-\pi_{a,e}(v')|=\mathrm{Lip}(\Phi)|v-v'|.\] Finally, recalling that $\varphi:\V_0\to\W_0$ is intrinsic $L$-Lipschitz, Proposition \ref{1.2} gives 
\[\|\Phi(v)^{-1}\Phi(v')\|\leq \|v^{-1}\cdot v'\|+\|v'^{-1} \cdot v \cdot \varphi(v)^{-1}\cdot v^{-1} \cdot v' \cdot \varphi(v')\|\leq (L+1)\|v^{-1}\cdot v'\|.\] This implies $\mathrm{Lip}(\Phi)\leq L+1$. Combining all the previous estimates on the Lipschitz constants, we get (using also the explicit expression hidden in $\mathrm{Lip}(\mathbb A_\varepsilon)\lesssim_{k,\mathrm{Lip}(\mathbb A_{\varepsilon,e})}1$, given by the proof of Lemma \ref{l:k-1SlicesAffine}) that $\mathrm{Lip}(\mathbb A_{\varepsilon})\lesssim_{k,n,p,L,K} 1$.
We now conclude as follows.
By choice of $\overline Q_Q$, we have
\[\pi_{a,e}^{-1}(KQ\cap V)\subset B_{\V_e}(\pi_{a,e}^{-1}(p_Q), \mathrm{diam}(KQ))\subset 3\sqrt kK\overline Q_Q.\] Hence,
\begin{align*}\beta_p^p(KQ,V)&\leq \frac{1}{\mathrm{diam}(KQ)^{k-1}}\int_{KQ\cap V} \left(\frac{d(\varphi(x),A_\varepsilon(x))}{\mathrm{diam}(KQ)}\right)^p d\mathcal{H}^{k-1}(x)\\ &=\frac{1}{\mathrm{diam}(KQ)^{k-1}}\int_{KQ\cap V} \left(\frac{d(\Phi(x),\mathbb A_\varepsilon(x))}{\mathrm{diam}(KQ)}\right)^p d\mathcal{H}^{k-1}(x)\\ &=\frac{1}{\mathrm{diam}(KQ)^{k-1}}\int_{\pi_{a,e}^{-1}(KQ\cap V)} \left(\frac{d(\Phi(\pi_{a,e}(v)),\mathbb A_\varepsilon(\pi_{a,e}(v)))}{\mathrm{diam}(KQ)}\right)^p d\mathcal{H}^{k-1}(v)\\ &\lesssim_{k,p}\frac{1}{\mathrm{diam}(3\sqrt kK\overline Q_Q)^{k-1}}\int_{3\sqrt kK\overline Q_Q} \left(\frac{d(\Phi(\pi_{a,e}(v)),\mathbb A_\varepsilon(\pi_{a,e}(v)))}{\mathrm{diam}(3\sqrt k K\overline Q_Q)}\right)^p d\mathcal{H}^{k-1}(v)\\ &\stackrel{\eqref{eq:AuxFunct},\eqref{eq:estimatesecondterm}}\lesssim_{k,n,p,L,K}\frac{1}{\mathrm{diam}(3\sqrt kK\overline Q_Q)^{k-1}}\int_{3\sqrt kK\overline Q_Q} \left(\frac{d(\psi(v), A_{\varepsilon,e}(v))}{\mathrm{diam}(3\sqrt k K\overline Q_Q)}\right)^p d\mathcal{H}^{k-1}(v)\\ &\stackrel{\eqref{choiceofAepse}}\leq \beta_p^p(3\sqrt k K\overline Q_Q)+\varepsilon.\end{align*}
By arbitrariness of $\varepsilon$, we get that $\beta_p(KQ,V)\lesssim_{k,n,p,L,K} \beta_p (3\sqrt kK\overline Q_Q)$, where the latter coefficients are associated to the function $\psi$. 

Let us just mention the main modifications for the case $p=+\infty. $ In that case, for $0<\varepsilon\leq \mathrm{Lip}(\psi)$, choose $A_{\varepsilon, e}:\V_e\to\W_e$ such that
\[\sup_{v\in 3\sqrt k K\overline Q_Q} \frac{d(\psi(v),A_{\varepsilon,e}(v))}{\mathrm{diam}(3\sqrt kK\overline Q_Q)}\leq \beta_\infty(3\sqrt kK\overline Q_Q)+\varepsilon.\]
By proceeding exactly as above, one gets
\[\sup_{v\in 3\sqrt k K\overline Q_Q} \frac{d(\Psi(v),\mathbb A_{\varepsilon,e}(v))}{\mathrm{diam}(3\sqrt kK\overline Q_Q)}\leq 2C \mathrm{Lip}(\Psi).\]
This implies that $\mathrm{Lip}(\mathbb A_{\varepsilon,e})\lesssim_{k,n,K}\mathrm{Lip}(\Psi)$. In fact, if $\bar e\in \mathbb S^{k-2}$ is a direction of maximal stretch for $\mathbb A_{\varepsilon, e}$ as above, by considering  $z=y+h\bar e$, with $y\in 3\sqrt k K\overline Q_Q$, $h\sim \mathrm{diam}(\overline Q _Q)$ and  $y+h\bar e\in 3\sqrt k K\overline Q_Q$, then 
\[\begin{split}\mathrm{Lip}(\mathbb A_{\varepsilon,e})|z-y|&=d(\mathbb A_{\varepsilon,e}(z),\mathbb A_{\varepsilon,e}(y))\leq d(\mathbb A_{\varepsilon,e}(z), \Psi(z))+d(\Psi(z), \Psi(y))+d(\Psi(y), \mathbb A_{\varepsilon,e}(y))\\ &\leq \mathrm{Lip}(\Psi)|z-y|+4C\mathrm{Lip}(\Psi)\mathrm{diam}(3\sqrt kK \overline Q_Q) \sim_{k, n, K} \mathrm{Lip}(\Psi)|z-y|.\end{split}\]
Hence, $\mathrm{Lip}(\mathbb A_{\varepsilon,e})\lesssim_{k,n,K}\mathrm{Lip}(\Psi)$ and, exactly as above, this implies $\mathrm{Lip}(\mathbb A_\varepsilon)\lesssim_{k,n,L,K}1$. Concerning the final estimate $\beta_\infty(KQ,V)\lesssim_{k,n,L,K}\beta_\infty(3\sqrt k K \overline Q_Q)$ it is enough to run the same computation as above, but using the definitions for the infinity-based $\beta$-numbers.

Coming back to \eqref{eq:SlicingGlem}, we can now estimate
\begin{align*}
   \sum_{Q\in \mathcal{D}^k,Q\subset Q_0,KQ\cap V\neq \emptyset}\beta_p(KQ,V)^q\mathrm{diam}
(Q)^{k-1}& \lesssim_{k,n,p,L,K} \sum_{\overline Q\in\overline{\mathcal D}^{k-1}, \overline Q\subset M\overline Q_{Q_0}}\beta_p(3\sqrt kK\overline Q)^q\mathrm{diam}(\overline Q)^{k-1}\\ &\lesssim_{k,n,p,q,L,K} \mathrm{diam}(\overline Q_{Q_0})^{k-1}\leq \mathrm{diam}(Q_0)^{k-1},
\end{align*}
where we applied our inductive hypothesis on $\psi$ and proceeded as in  Lemma \ref{l:TLem2.5}. Substituting into the final estimate of \eqref{eq:SlicingGlem},
we get the conclusion.
\end{proof}

In Lemma \ref{l:IntGeomBdd} below, we will prove for $k$-dimensional intrinsic Lipschitz graphs an estimate of the form
\begin{equation}\label{eq:GoalBetaBound}
    \beta_2(cQ)^{8q}\lesssim \beta_{\infty,q}^{k-1}(Q)^q+\beta_{\infty,4}^1(Q)^4,
\end{equation}
for cubes in $\mathbb{R}^k$.
This will yield a geometric lemma for such graphs since the terms on the right-hand side can be controlled by Lemma \ref{l:TLem2.5} and \ref{l:HigherDimInduc} respectively. 

To prove \eqref{eq:GoalBetaBound},  we will study, for each $Q\in \mathcal{D}^k$, the intrinsic Lipschitz function $\varphi$ on a simplex $\triangle$ associated to $Q$. Then we will use the values of its graph map $\Phi$ at the vertex points of $\triangle$ to build a well-approximating intrinsic affine function $A_{\mathbb{V}}$ with the help of Lemma \ref{l:orp21,2.15}. A crucial idea,
borrowed from \cite{MR4345824}, is that $A_{\mathbb{V}}$ should be well approximating because $\Phi$ is well approximated by intrinsic affine functions along the $(k-1)$-dimensional faces of $\triangle$ (and along line segments inside $\triangle$).  While intuitively plausible, this requires some work since we only have information about the integral-geometric $\beta_{\infty,q}^{k-1}$ numbers, rather than approximation along any fixed $(k-1)$-dimensional plane. This requires modifying $\triangle$ so that its faces are in a generic position, which is the content of Lemma \ref{l:orp21,2.15} below. To state it precisely, we recall \cite[Definition 2.12]{MR4345824}:
\begin{definition}
    A family $\mathcal{V}\subset A(k,k-1)$ is said to be \emph{$\tau$-transversal}, for some $\tau>0$, if for any selection of pairwise distinct planes $V_1,\ldots,V_k\in \mathcal{V}$, the determinant of the normal vectors to $V_1,\ldots,V_k$ is at least $\tau$.
\end{definition}
Transversal families have useful geometric properties that will be applied in the proof of Lemma \ref{l:orp21,2.15}. In particular, if to each plane in a transversal family we associate a nearby plane, then the new family will also be transversal, quantitatively. The notion of ``nearby'' is made precise with the help of the following metric $\tilde{d}$ on $A(k,k-1)$.

 Any $V \in A(k,k-1)$ can be written as $V = \{x : x \cdot e = t\}$, where $e \in S^{k-1}\subset\R^k\cong\V_0$ is normal to $V$, and $t \in \mathbb{R}$. The pair $(e,t)$ is unique up to sign. If $V_1, V_2$ are then associated to $(e_1,t_1)$ and $(e_2,t_2)$, respectively, write
\begin{equation}\label{eq:DistPlane}
\tilde{d}(V_1, V_2) := \min\{|(e_1, t_1) - (e_2, t_2)|, |(e_1, t_1) + (e_2, t_2)|\},
\end{equation}
where $|\cdot|$ refers to the Euclidean metric on $S^{k-1} \times \mathbb{R} \subset \mathbb{R}^{k+1}$.

The following is an ($L^{\infty}$-based) counterpart of \cite[Lemma 2.15]{MR4345824}. Since the domain $\mathbb{V}_0$ is isometric to Euclidean $\mathbb{R}^k$, many of the arguments can be used in our case without changes. The final part of the argument is simpler in our situation since we only need to consider $\beta_{\infty}$ instead of $\beta_2$. On the other hand, the exponent $2$ appearing in $\beta_{\infty,2}^{k-1}$ can also be replaced by other exponents. We state Lemma \ref{l:orp21,2.15} only for the $L^2$-based integral geometric numbers in order to keep the formulation close to the original one. This is sufficient for our purposes and does not require any additional difficulty in the proof.

\begin{lemma}\label{l:orp21,2.15} 
    Let $Q=[0,1]^k$ and $\tau > 0$. Fix an intrinsic $L$-Lipschitz map $\varphi:\mathbb{V}_0\cong\R^k \to\mathbb{W}_0\cong\R^{2n-k+1}$, and a collection of $k+1$ planes $V_1, \dots, V_{k+1} \in \mathcal{A}_{k-1}(\frac{1}{2}Q)$ that is $\tau$-transversal. Provided $C \ge 1$ is sufficiently large and $\varepsilon > 0$ is sufficiently small (depending on $\tau$), there exist planes $V'_1, \dots, V'_{k+1} \in \mathcal{A}_{k-1}(Q)$ satisfying the following properties:
    \begin{enumerate}
        \item[(a)] $\tilde{d}(V_j, V'_j) \le \varepsilon$ for all $1 \le j \le k+1$.
        \item[(b)] $\beta_\infty(CQ, V'_j) \lesssim_{C,\varepsilon} \beta_{\infty,2}^{k-1}(CQ)$ for all $1 \le j \le k+1$.
        \item[(c)] For each $1 \le j \le k+1$, let $A'_j$ be an \textbf{intrinsic} affine quasi minimizer for $\beta_\infty(CQ, V'_j)$. If $1 \le i_1 < \dots < i_k \le k+1$ and $x$ is the unique point in the intersection $V'_{i_1} \cap \dots \cap V'_{i_k}$, then $x \in CQ$ and
        $$d(\varphi(x),A'_{i_j}(x)) \lesssim_{C,\varepsilon} \beta_{\infty,2}^{k-1}(CQ), \quad 1 \le j \le k.$$
    \end{enumerate}
    Here, all beta numbers are associated with the intrinsic Lipschitz map $\varphi$.
\end{lemma}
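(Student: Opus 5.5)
Following \cite[Lemma 2.15]{MR4345824}, I would select the perturbed planes $V'_j$ by a Chebyshev-type argument on the measure $\eta_{k-1}$. The domain $\mathbb{V}_0$ is isometric to $\mathbb{R}^k$, so the geometry of $A(k,k-1)$ and of the measure $\eta_{k-1}$ is exactly Euclidean. For each $j$, the ball $\mathcal{B}_j:=\{V\in A(k,k-1):\tilde d(V,V_j)\le\varepsilon\}$ satisfies $\eta_{k-1}(\mathcal{B}_j)\gtrsim_\varepsilon 1$. If $\varepsilon$ is small, every plane in $\mathcal{B}_j$ meets $Q$, since $V_j$ meets $\frac12 Q$. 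For $C$ large, $\eta_{k-1}(\mathcal{A}_{k-1}(CQ))\sim_C 1$. By definition of $\beta^{k-1}_{\infty,2}(CQ)$ and Chebyshev,
\[
\eta_{k-1}\bigl(\{V\in\mathcal{A}_{k-1}(CQ):\beta_\infty(CQ,V)>M\beta^{k-1}_{\infty,2}(CQ)\}\bigr)\lesssim_C M^{-2}.
\]
For $M=M(C,\varepsilon)$ large this set occupies at most a small fraction of each $\mathcal{B}_j$.

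The harder requirement is (c), and I would handle it the same way as in Orponen's proof. For a fixed choice of indices, the intersection point $x=V'_{i_1}\cap\dots\cap V'_{i_k}$ is well defined and lies in $CQ$. Indeed, $\tau$-transversality is stable under $\tilde d$-perturbations of size $\varepsilon\ll\tau$, so the normals of the $V'_{i_j}$ still have determinant $\gtrsim\tau$. Also, the intersection point of the $V_{i_j}$ lies within $O_\tau(1)$ of $\frac12Q$, which gives $x\in CQ$ once $C$ is large. Moreover, $x$ depends in a bi-Lipschitz (quantitatively, depending on $\tau$) way on the $k$-tuple of planes.

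The key point is that $\sup_{y\in CQ\cap V'}d(\varphi(y),A'(y))$ controls the error at \emph{every} point of $V'\cap CQ$. This is simpler than in the $\beta_2$ setting of \cite{MR4345824}. Once (b) holds for $V'_{i_j}$ and $A'_{i_j}$ is a quasi-minimizer, we get
\[
d(\varphi(x),A'_{i_j}(x))\le \operatorname{diam}(CQ)\,\sup_{y\in CQ\cap V'_{i_j}}\frac{d(\varphi(y),A'_{i_j}(y))}{\operatorname{diam}(CQ)}\lesssim \beta_\infty(CQ,V'_{i_j}),
\]
because $x\in V'_{i_j}\cap CQ$. Then (b) yields (c) with constants depending on $C$ and $\varepsilon$. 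So in the $L^\infty$ version no Fubini argument over the intersection points is needed. One only needs to ensure the intersections lie inside $CQ$ and that (b) holds for all $k+1$ planes simultaneously.

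The main steps are therefore as follows.
\begin{enumerate}
\item Choose $\varepsilon$ small depending on $\tau$, so that every $(k+1)$-tuple with $\tilde d(V'_j,V_j)\le\varepsilon$ is $\tau/2$-transversal and all planes meet $Q$.
\item Choose $C$ large depending on $\tau$, so that all $k$-fold intersections lie in $CQ$.
\item Choose $M$ by Chebyshev as above, and pick each $V'_j\in\mathcal{B}_j$ outside the exceptional set; this gives (a) and (b).
\item Deduce (c) from (b) by the sup estimate.
\end{enumerate}

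The main obstacle I expect is the bookkeeping in the first step, namely the quantitative stability of transversality and of the intersection point under $\tilde d$-perturbations. This is a purely Euclidean fact, and I would take it from \cite{MR4345824}. One point needs some care: the quasi-minimizers are required to be intrinsic affine maps rather than Euclidean ones. This does not affect the argument, since the infimum defining $\beta_\infty(CQ,V)$ is already taken over intrinsic affine maps.
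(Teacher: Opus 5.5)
Your proposal is correct and follows the paper's proof. Both arguments pick $V'_j$ in a $\tilde d$-neighbourhood of $V_j$ by applying Chebyshev's inequality to the definition of $\beta^{k-1}_{\infty,2}(CQ)$ with respect to $\eta_{k-1}$, and both derive (c) directly from (b) in the $L^\infty$ setting, since $x\in V'_{i_j}\cap CQ$. The uniqueness of $x$ and the inclusion $x\in CQ$ come from the stability of transversality, as in Orponen. The only cosmetic difference is that the paper applies Chebyshev in two stages, first choosing the normal $e'_j$ by integrating over offsets $t\in[0,\varepsilon]$ and then choosing $t$, while you apply it once on the whole ball $\mathcal{B}_j$; the two are equivalent.
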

\begin{proof}
Fix $j\in \{1,\ldots,k+1\}$ and consider the $(k-1)$-plane $V_j$. Let $e_j\in S^{k-1}$ be a direction normal to $V_j$. The plane $V_j'$ will ultimately be of the form
\begin{displaymath}
    V_j'=\{x\in \mathbb{R}^k:\, \langle x,e_j'\rangle=s_j+t\}
\end{displaymath}
for suitable $e_j'\in S^{k-1}$ with $|e_j-e_j'|\leq \varepsilon$, $s_j\in \mathbb{R}$, and $t\in [0,\varepsilon]$. 
To simplify the notation, for $e_j'\in S^{k-1}$ with $|e_j-e_j'|\leq \varepsilon$,  we write
\begin{displaymath}
    V_j'(s):=\{x\in \mathbb{R}^k:\, \langle x,e_j'\rangle =s\}.
\end{displaymath}
Then let $s_j\in \mathbb{R}$ be a parameter minimizing $s\mapsto \tilde d(V_j,V_j'(s))$. So far, everything is exactly as in the proof of \cite[Lemma 2.15]{MR4345824}, and in exactly the same way we obtain
\begin{equation}\label{eq:Orp2.16}
\tilde{d}(V_j,V_j'(s_j+t))\lesssim\varepsilon\quad\text{and}\quad V_j'(s_j+t)\cap Q\neq \emptyset,\quad t\in [0,\varepsilon]
\end{equation}
if $\varepsilon>0$ is sufficiently small and $\tilde{d}$ is the metric on $A(k,k-1)$ as defined in \eqref{eq:DistPlane}.

Now we explain how to choose $e_j'$, again following \cite{MR4345824}, but using $\beta_{\infty}$ instead of $\beta_2$. By definition of $\beta_{\infty,2}^{k-1}(CQ)$ and the measure $\eta_{k-1}$, Chebyshev's inequality guarantees the existence of $e_j'\in S^{k-1}$ with 
\begin{equation}\label{eq:Orp2.17}
    |e_j-e_j'|\leq \varepsilon\quad\text{and}\quad
    \int_0^{\varepsilon}\beta_{\infty}(CQ,V_j'(s_j+t))^2\,dt\lesssim_{C,\varepsilon}\beta_{\infty,2}^{k-1}(CQ)^2.
\end{equation}
Applying Chebyshev's inequality again to \eqref{eq:Orp2.17}
and using \eqref{eq:Orp2.16}, we finally find $t\in [0,\varepsilon]$ such that $V_j':=V_j'(s_j+t)$ satisfies conditions (a) and (b) in the statement of Lemma \ref{l:orp21,2.15}. Then (c) is just a consequence of (b) (concerning the uniqueness of the point $x$ in (c), see Example 2.13 and Remark 2.20 in \cite{MR4345824}). 
\end{proof}

\begin{lemma}\label{l:IntGeomBdd}
    Let $1 < k \leq n$. Suppose that $\varphi:\mathbb{V}_0\cong\mathbb{R}^k \to \mathbb{W}_0\cong \mathbb{R}^{2n-k+1}$ is an intrinsic $L$-Lipschitz map. Then, there exists a sufficiently small dimensional constant  $c > 0$ such that for every cube $Q \subset \mathbb{R}^k$, we have
    \begin{equation}\label{eq:betaBound}
         \beta_2(cQ)^{8} \lesssim_{{k,n, L}}\beta_{\infty,4}^1(Q)^4 +\beta_{\infty,2}^{k-1}(Q).
    \end{equation}
    
\end{lemma}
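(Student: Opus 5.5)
By scaling and translation (left translations and dilations preserve intrinsic Lipschitz maps and the coefficients), I reduce to $Q=[0,1]^k$. Set $\delta:=\beta_{\infty,4}^1(Q)^4+\beta_{\infty,2}^{k-1}(Q)$; it is enough to show $\beta_2(cQ)\lesssim \delta^{1/8}$. If $\delta$ exceeds a small threshold $\delta_0$, the bound follows from Remark \ref{rmk:boundedness}, since $\beta_2(cQ)\le L\lesssim L\,\delta_0^{-1/8}\delta^{1/8}$. So I assume $\delta\le\delta_0$.

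\emph{Step 1: selecting vertices.} I fix once and for all a $\tau$-transversal family $V_1,\dots,V_{k+1}\in\mathcal{A}_{k-1}(\tfrac12 Q)$ bounding a non-degenerate simplex that contains $cQ$ (with room to spare) and contains a cube of side comparable to $1$; $c$ and $\tau$ depend only on $k$. I apply Lemma \ref{l:orp21,2.15} (with $Q$ in place of $CQ$ after rescaling) and obtain perturbed planes $V_j'$ with $\beta_\infty(Q,V_j')\lesssim\beta_{\infty,2}^{k-1}(Q)$, together with intrinsic affine quasi-minimizers $A_j'$. The $k$-fold intersections of the $V'_j$ give vertices $x_0,\dots,x_k$. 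For $\varepsilon$ small these still span a simplex $\triangle'$ containing $cQ$ and a cube of side $\gtrsim1$, and by Lemma \ref{lem:geometric} its faces contain $(k-1)$-balls of comparable size. Put $q_i:=\Phi(x_i)$, so that $\pi_{\mathbb{V}_0}(q_i)=x_i$. Property (c) gives that every vertex on the face $V'_m$ satisfies $d(q_i,\mathbb{A}'_m(x_i))\lesssim\beta_{\infty,2}^{k-1}(Q)$. Hence the points lie within that distance of the $(k-1)$-dimensional horizontal plane $\mathbb{U}_m:=\mathbb{A}_m'(\mathbb{V}_0\cap V_m')\in\mathcal{V}_{k-1}$, which is hypothesis (3) of Lemma \ref{l:almostHoriz_k}. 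Hypothesis (1) holds by construction. For hypothesis (2), the upper bound follows from $d(q_i,q_j)\le (L+1)|x_i-x_j|$ and the lower bound from $d(q_i,q_j)\ge|x_i-x_j|$. Lemma \ref{l:almostHoriz_k} then yields an intrinsic affine $A_{\mathbb V}$ with $d(q_i,\mathbb A_{\mathbb V}(x_i))\lesssim\beta_{\infty,2}^{k-1}(Q)^{1/2}$. Note that Lemma \ref{l:almostHoriz_k} needs $\varepsilon$ small, which is why the case $\delta\le\delta_0$ was separated.

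\emph{Step 2: propagating from vertices to $cQ$ (the main obstacle).} I now need to control $d(\Phi(x),\mathbb A_{\mathbb V}(x))$ for most $x\in cQ$, not only at the vertices. Following \cite{MR4345824}, every $x\in cQ$ lies on lines $\ell$ joining pairs of points on faces of $\triangle'$. Along $\ell$, $\Phi$ is close to a horizontal line with error $\beta_\infty(Q,\ell)$. On the faces, I would first run the same argument one dimension lower, using the $(k-1)$-dimensional vertex data and $\beta_\infty(Q,V'_j)$, to show that on each face $\Phi$ is $\lesssim\beta_{\infty,2}^{k-1}(Q)^{1/2}$-close to $\mathbb{A}_{\mathbb V}$. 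Next, using Lemma \ref{l:VertDiff} together with the fact that $\mathbb A_{\mathbb V}$ restricted to $\ell$ is a horizontal line, I compare the horizontal line approximating $\Phi|_\ell$ with $\mathbb A_{\mathbb V}|_\ell$. Two horizontal lines that are $\eta$-close at two points a distance $\sim1$ apart are $\lesssim\eta^{1/2}$-close along the whole segment: the horizontal parts are close linearly, and the vertical discrepancy is controlled through $\omega$. Each such comparison costs a square root. Averaging over lines with Chebyshev, via $\beta_{\infty,4}^1(Q)^4=\fint\beta_\infty(Q,\ell)^4\,d\eta_1$, the set of bad lines is small. This gives
\[
\fint_{cQ}d(\varphi,A_{\mathbb V})^2\lesssim \beta_{\infty,2}^{k-1}(Q)^{1/4}+\beta_{\infty,4}^1(Q)^{1/2},
\]
and hence $\beta_2(cQ)^8\lesssim \beta_{\infty,2}^{k-1}(Q)+\beta_{\infty,4}^1(Q)^4$.

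Tracking these square-root losses is the delicate bookkeeping. Two points have to be checked carefully, and I expect them to be the hard part: that the losses accumulate to exactly the exponent $8$, and that near-horizontality is preserved under the interpolation.
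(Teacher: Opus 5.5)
Your architecture is the paper's: normalize to a unit cube, dispose of large coefficients via Remark \ref{rmk:boundedness}, get generic faces from Lemma \ref{l:orp21,2.15}, feed the vertex images into Lemma \ref{l:almostHoriz_k}, then propagate vertices $\to$ faces $\to$ lines. The lemma's content, however, is exactly the exponent count you leave unchecked, and as written it does not close. Write $b:=\beta^{k-1}_{\infty,2}(Q)$.

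\emph{The face step.} Lemma \ref{l:almostHoriz_k} as stated gives only $d(\mathbb{A}_{\mathbb V}(x_i),\mathbb{A}'_j(x_i))\lesssim b^{1/2}$ at the vertices of the face in $V'_j$. Your own interpolation principle then yields only $b^{1/4}$ on that face, which is what the paper gets via \eqref{eq:MetricComp}. This loss is real at the level of Kor\'anyi distances. Over the line $s\mapsto se_1$ in $\mathbb{V}_0$, consider the horizontal lines $(se_1,0)$ and $\bigl(se_1+(s-\tfrac12)\eta Je_1,\tfrac{\eta}{4}s\bigr)$. They are $\tfrac{\eta}{2}$-close at $s=0,1$, but their vertical discrepancy is $\tfrac{\eta}{2}s(1-s)$, so they are $\sim\sqrt{\eta}$ apart at $s=\tfrac12$. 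Hence your claim that $\Phi$ is $b^{1/2}$-close to $\mathbb{A}_{\mathbb V}$ on the faces does not follow from ``the same argument one dimension lower''. It would need extra input: for instance, reading off from Step 5 of the proof of Lemma \ref{l:almostHoriz_k} that $|q_i-\mathbb{A}_{\mathbb V}(x_i)|\lesssim b$ in the Euclidean norm of $\mathbb{R}^{2n+1}$, and carrying Euclidean errors until the last step.

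\emph{The averaging step.} Your display $\fint_{cQ}d(\varphi,A_{\mathbb V})^2\lesssim b^{1/4}+\beta^1_{\infty,4}(Q)^{1/2}$ only yields $\beta_2(cQ)^8\lesssim b+\beta^1_{\infty,4}(Q)^2$. Since $\beta^1_{\infty,4}(Q)\lesssim L$, this is weaker than \eqref{eq:betaBound}. Discarding a Chebyshev-small family of bad lines loses in the same way: optimizing the threshold again gives $\beta^1_{\infty,4}(Q)^2$.

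\emph{What works} is the following.
\begin{itemize}
\item Fix one direction $e$ such that $\int_{\pi_e(cQ)}\beta_\infty(Q,\ell^\xi)^4\,d\mathcal{H}^{k-1}(\xi)\lesssim\beta^1_{\infty,4}(Q)^4$.
\item Use that each line $\ell^\xi$ parallel to $e$ meeting $cQ$ hits $\partial\triangle'$ at two points a distance $\sim1$ apart, because $cQ$ sits well inside $\triangle'$.
\item Combine the face error $b^{1/4}$ with one further square-root loss along $\ell^\xi$. This gives the pointwise bound $d(\varphi(x),A_{\mathbb V}(x))\lesssim_L b^{1/8}+\beta_\infty(Q,\ell^\xi)^{1/2}$ for $x\in\ell^\xi\cap cQ$.
\item Raise this to the eighth power pointwise, not the second, and apply Jensen and Fubini: $\beta_2(cQ)^8\lesssim\fint_{cQ}d(\varphi,A_{\mathbb V})^8\lesssim b+\beta^1_{\infty,4}(Q)^4$.
\end{itemize}
This is where the exponent $8$ comes from, and it is the paper's argument.
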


We emphasize that we will prove the statement for any cube, not just for dyadic cubes, which helps with some rescaling arguments.
We follow the approach in \cite[Section 2.4.3]{MR4345824}, but work with the ``full'' mapping $\varphi$ instead of its real-valued components, and the Heisenberg distance in the computation of the $\beta$-numbers. In the end we apply Lemma \ref{l:almostHoriz_k} to find an approximating \emph{horizontal} $k$-plane instead of a general affine $k$-plane.

\begin{proof}[Proof of Lemma \ref{l:IntGeomBdd}]
 We will actually establish the following modified bound
    \begin{equation}\label{eq:betaBoundModified}
        \beta_2(cQ)^{8} \lesssim_{k,n,{L}} \beta^{1}_{\infty,4}(CQ)^4+\beta^{k-1}_{\infty,2}(CQ) 
    \end{equation}
    for any cube $Q \subset \mathbb{R}^k$, where $C \gg 1$ is a large  constant depending on $k$. 
    This modified bound implies the desired result. Indeed, for any arbitrary cube $\tilde{Q}$, setting $Q = C^{-1}\tilde{Q}$ (so that $\tilde{Q} = CQ$) in \eqref{eq:betaBoundModified} yields
    $$
    \beta_2\Big(\frac{c}{C}\tilde{Q}\Big)^{8} \lesssim_{k,n,{L}}\beta^{1}_{\infty,4}(\tilde{Q})^4 +\beta^{k-1}_{\infty,2}(\tilde{Q}),
    $$
    which is precisely \eqref{eq:betaBound} with the small constant $c$ replaced by $c/C$.

    Having established this reduction, we can now focus on proving \eqref{eq:betaBoundModified}. By applying appropriate Heisenberg dilations and left translations, we may and will assume without loss of generality that $Q = [0,1]^k$.
Indeed, the families of intrinsic $L$-Lipschitz functions and intrinsic affine functions are preserved by these operations, as well as the beta-coefficients appearing in \eqref{eq:betaBoundModified} (when evaluated on the corresponding cubes). For the same reason, by applying a left translation, we can also assume that $\varphi(0)=0$.

For a small constant $0<c<1$ which can be determined depending only on $k$, let $\Delta_0 \subset \mathbb{V}_0$ be a closed simplex bounded by $k+1$ faces with $10cQ \subset \Delta_0 \subset \frac{1}{2}Q$. 
Let $V_1, \dots, V_{k+1} \in \mathcal{A}_{k-1}(\frac{1}{2}Q)$ be the planes containing the faces of $\Delta_0$. 
    Applying Lemma \ref{l:orp21,2.15}, we can find $V_1', \dots, V_{k+1}' \in \mathcal{A}_{k-1}(Q)$ satisfying (a)--(c), where $C\gg 1$ is at least as large and $\varepsilon$ at least as small as required for the application of  Lemma \ref{l:orp21,2.15}.
Let $A_j$ be the essential minimizer for $\beta_\infty(CQ, V_j')$. Let $\Delta$ be the simplex whose $(k-1)$-faces are contained in the planes $V_1', \dots, V_{k+1}'$; denote these faces by $\Delta_j$. 
By choosing $\varepsilon > 0$ small enough, it still holds that $5cQ \subset \Delta$. Note that $\varepsilon$ can be chosen depending only on $k$, so we will refrain from writing the $\varepsilon$-dependence explicitly in the following. 

Since $A_j$ is intrinsic affine, its image $\mathbb{A}_j(V_j')$ is a $(k-1)$-dimensional horizontal plane, which we shall denote by $\mathbb{U}_j$. Let $p_0, \dots, p_k$ be the vertices of the simplex $\Delta$, and define $q_j \coloneqq \Phi(p_j)$ for $j = 0, \dots, k$. By Lemma \ref{l:orp21,2.15}(c), it follows that $\Delta\subset CQ$. Because the projections $\pi_{\mathbb{V}_0}(q_j) = p_j$ exactly form the vertices of $\Delta$, condition (1) of Lemma \ref{l:almostHoriz_k} is satisfied. 

To verify condition (2), observe that
\begin{equation*}
    |p_i - p_j| \le d(q_i, q_j) = d(\Phi(p_i), \Phi(p_j)) \lesssim_L|p_i - p_j|.
\end{equation*}
Thus, we may choose a sufficiently large constant $C_L \ge C$ (depending also on $L$, in addition to $k$) such that $1/C_L \leq d(q_i,q_j) \leq C_L$ (using also Remark \ref{lem:geometric}). This confirms condition (2) of Lemma \ref{l:almostHoriz_k}.

Regarding condition (3), recall that $\beta_2(cQ)$ is bounded by Remark \ref{rmk:boundedness}. So, we can suppose that $\beta_{\infty, 2}^{k-1}(CQ)$ is as small as we want, since otherwise \eqref{eq:betaBound} would be immediately satisfied. Let us therefore assume that 
 $\beta_{\infty, 2}^{k-1}(CQ)\ll 1$.
 Then, if $p_j$ belongs to $V_m'$ and $q_j = \Phi(p_j)$, by (c) of Lemma \ref{l:orp21,2.15} we have
$$
d(q_j, \mathbb{U}_m) \le d(q_j, \mathbb{A}_m(p_j)) \lesssim_k \beta_\infty(CQ, V_m') \lesssim_{C,k} \beta_{\infty, 2}^{k-1}(CQ).
$$ Thus, recalling that $C$ depends only on $k$, part  (3) of Lemma \ref{l:almostHoriz_k} is verified with an error parameter $\lesssim_{k} \beta_{\infty, 2}^{k-1}(CQ)$. 

Now we apply Lemma \ref{l:almostHoriz_k} with the enlarged constant $C_L\geq C$ and we conclude that there exist a $k$-dimensional horizontal plane $\mathbb{V}$ and an associated intrinsic affine map $A_{\mathbb{V}}$ with graph map $\mathbb{A}_{\mathbb{V}}$ such that
\begin{equation}\label{eq:EpsSquareEst}
d(q_i, \mathbb{A}_{\mathbb{V}}(\pi_{\mathbb{V}_0}(q_i))) \lesssim_{k,L} \beta_{\infty, 2}^{k-1}(CQ)^{1/2}
\end{equation}
for all $i = 0, 1, \dots, k$ (note how the dependence on $C_L$ is now absorbed into the implicit constant $\lesssim_{C,k,L}$).
Consequently, for all vertices $p_i$ of the face $\Delta_j$, we have
\begin{align}\label{eq:FromVertexToInteriorEst}
d(A_{\mathbb{V}}(p_i), A_j(p_i)) &\le d(A_{\mathbb{V}}(p_i), \varphi(p_i)) + d(\varphi(p_i), A_j(p_i)) \notag\\
&\stackrel{\text{Lem. \ref{l:orp21,2.15} (c)}}{\lesssim_k} d(A_{\mathbb{V}}(p_i), \varphi(p_i)) + \beta_{\infty, 2}^{k-1}(CQ) \notag\\
&\stackrel{\text{\eqref{eq:EpsSquareEst}}}{\lesssim_{{k,L}}} \beta_{\infty, 2}^{k-1}(CQ)^{1/2} + \beta_{\infty, 2}^{k-1}(CQ) \notag\\
&\lesssim_{{k,L}} \beta_{\infty, 2}^{k-1}(CQ)^{1/2}.
\end{align}

Note that $\Delta_j$ is a $(k-1)$-simplex, containing $k$ corners of $\Delta$. In particular, the (Euclidean or Heisenberg) distance of the points $p_i$ is $\sim_c 1$. The estimate \eqref{eq:FromVertexToInteriorEst}, which is valid at the vertex points $p_i\in \Delta_j$ implies an estimate for all points $p\in \Delta_j$. To see this, we use the following comparison between Euclidean metric $|\cdot|$ and Kor\'{a}nyi metric $d$:
\begin{equation}\label{eq:MetricComp}
\frac{1}{C_0} |y-y'|\leq d(y,y')\leq  C_0|y-y'|^{1/2},\quad y,y'\in B(0,R),
\end{equation}
for some constant $C_0=C_0(R,n)\geq 1$ (see \cite[Proposition 2.7 and Proposition 2.15]{MR3587666}). Since $Q=[0,1]^k$ by assumption and we are assuming that $\varphi(0)=0$, we only have to deal with points that stay inside the domain  $B(0,R(L,k,n))$, where the estimate \eqref{eq:MetricComp} is valid.
Thus, by \eqref{eq:FromVertexToInteriorEst}, we have the following estimate for the vertex points
\begin{displaymath}
    |\mathbb{A}_{\mathbb{V}}(p_i)-\mathbb{A}_j(p_i)|\lesssim_{{k,L,n}}\beta_{\infty, 2}^{k-1}(CQ)^{1/2}.
\end{displaymath}
Now $\mathbb{A}_{\mathbb{V}}(V_j')$ and $\mathbb{A}_j(V_j')$ are two horizontal $(k-1)$-planes, therefore
\begin{displaymath}
    \||\mathbb{A}_{\mathbb{V}}-\mathbb{A}_j|\|_{L^{\infty}(\Delta_j)}
    \lesssim_{{k,L,n}} \beta_{\infty, 2}^{k-1}(CQ)^{1/2}.
\end{displaymath}
Then, applying again \eqref{eq:MetricComp},
\begin{equation}\label{eq:DedInnerEst}
d(A_{\mathbb{V}}(x), A_j(x)) \lesssim_{{k,L,n}} \beta_{\infty, 2}^{k-1}(CQ)^{1/4}, \quad x \in \Delta_j.
\end{equation}
We have shown $A_j$ and $A_{\mathbb{V}}$ are close to each other on the faces of $\Delta$. Next, we show that $A_\V$ approximates well $\varphi$ inside $\Delta$. 
By the definition of $\beta^1_{\infty, 4}(CQ)$ and of the measure $\eta_1$ (see also \cite{MR4345824}), we can find $e \in S^{k-1} \subset \mathbb{V}_0$ such that the following holds. Let $\pi := \pi_e$ be the orthogonal projection to $V^e := e^\perp$. Write $\ell^\xi := \pi^{-1}(\xi)$ for $\xi \in V^e$. Then we may assume
\begin{equation}\label{eq:ReprIntHigher}
\int_{\pi(cQ)} \beta_\infty(CQ, \ell^\xi)^4 \, d\mathcal{H}^{k-1}(\xi)\lesssim_k \beta^1_{\infty, 4}(CQ)^4.
\end{equation}

The line $\ell^\xi$ for $\xi \in \pi(cQ)$ meets $cQ$ and $\partial \Delta$. Let $\partial \Delta \cap \ell^\xi = \{x^\xi, y^\xi\}$. Assume $x^\xi \in V_i'$ and $y^\xi \in V_j'$. We have $|x^\xi - y^\xi| \sim_c 1$ whenever $\xi \in \pi(cQ)$, since $cQ\subset 5cQ\subset \Delta$ as observed at the beginning of the proof. Let $A^\xi$ be the (essential) minimizer for $\beta_\infty(CQ, \ell^\xi)$. Then
\begin{align*}
d(A_{\mathbb{V}}(x^\xi), A^\xi(x^\xi)) &\le d(A_{\mathbb{V}}(x^\xi), A_i(x^\xi)) + d(A_i(x^\xi), \varphi(x^\xi)) + d(\varphi(x^\xi), A^\xi(x^\xi)) \\
&\stackrel{\eqref{eq:DedInnerEst}\,+\,\text{Lem. \ref{l:orp21,2.15} (b)}}{\lesssim_{{k,L,n}}} \beta_{\infty, 2}^{k-1}(CQ)^{1/4} + \beta_{\infty, 2}^{k-1}(CQ) + \beta_\infty(CQ, \ell^\xi) \\
&\lesssim_{{k,L,n}} \beta_{\infty, 2}^{k-1}(CQ)^{1/4} + \beta_\infty(CQ, \ell^\xi).
\end{align*}
Analogously,
$$
d(A_{\mathbb{V}}(y^\xi), A^\xi(y^\xi)) \lesssim_{{k,L,n}} \beta_{\infty, 2}^{k-1}(CQ)^{1/4} + \beta_\infty(CQ, \ell^\xi).
$$
Again, arguing as in the proof of \eqref{eq:DedInnerEst},  we have,
\begin{equation}\label{eq:DistAVAxi}
d(A_{\mathbb{V}}(x), A^\xi(x)) \lesssim_{{k,L,n}} \beta_{\infty, 2}^{k-1}(CQ)^{1/8} + \beta_\infty(CQ, \ell^\xi)^{1/2}, \quad  x \in \ell^\xi \cap cQ.
\end{equation}
Thus, for each $x\in \ell^{\xi}\cap cQ$, we can estimate as follows
\begin{align*}
    d(\varphi(x),A_{\mathbb{V}}(x))&\leq d(\varphi(x),A^{\xi}(x))+d(A^{\xi}(x),A_{\mathbb{V}}(x))\\
    &\lesssim_{k,L,n} \beta_{\infty}(CQ,\ell^{\xi})+ \beta_{\infty, 2}^{k-1}(CQ)^{1/8} + \beta_\infty(CQ, \ell^\xi)^{1/2}.
\end{align*}

Then, by Jensen's inequality and Fubini's theorem,
\begin{align*}
    \beta_2(cQ)^{8} \lesssim_c \int_{cQ} d(\varphi(x),A_{\mathbb{V}}(x))^{8}&\,d\mathcal{L}^k(x)=\int_{\pi(cQ)}\int_{\ell^{\xi}\cap cQ}d(\varphi(x),A_{\mathbb{V}}(x))^{8}\,d\mathcal{H}^1(x) d\mathcal{H}^{k-1}(\xi)\\
    &{\lesssim_{c,{k,L,n}}} \int_{\pi(cQ)} \beta_{\infty}(CQ,\ell^\xi)^{8}+ [\beta_{\infty,2}^{k-1}(CQ)+\beta_{\infty}(CQ,\ell^\xi)^{4}]\, d\mathcal{H}^{k-1}(\xi)\\& \lesssim_{{k,L,n}} \int_{\pi(cQ)}\beta_{\infty}(CQ,\ell^\xi)^4\, d\mathcal{H}^{k-1}(\xi) + \beta_{\infty,2}^{k-1}(CQ).
\end{align*}
By \eqref{eq:ReprIntHigher}, we can further estimate as follows:
\begin{displaymath}
    \beta_2(cQ)^{8} \lesssim_{{k,L,n}}\beta_{\infty,4}^1(CQ)^4 +\beta_{\infty,2}^{k-1}(CQ).
\end{displaymath}
This establishes \eqref{eq:betaBoundModified} and thus concludes the proof.
\end{proof}

With these preparations in place, we are ready to run the inductive argument for the proof of our main result.
This is inspired by the argument in \cite[Section 2.4.2]{MR4345824}, but the difference for $k>2$ is that we pass via $L^{\infty}$-based integral geometric $\beta$-numbers. Recall that this allowed us to simplify some steps in the proof of Lemma \ref{l:IntGeomBdd} compared to Orponen's argument in Euclidean spaces, where it was important to recover the sharp exponent $2$ in the geometric lemma. On the other hand, we now need an additional step to bound the $L^{\infty}$-based parametrized $\beta$-numbers of an intrinsic Lipschitz graph by the $L^2$-based ones with a worse exponent.

\begin{theorem}[Dorronsoro for $k$-dim.\ intrinsic Lipschitz graphs]\label{t:kDDorronsoro} Let $n\geq 2$ and $1<k\leq n$. There exists $q^*={q^*(k)}$ such that the following holds.
    Let  $\mathbb{V}_0=\mathbb{R}^k\times \{0\}$ as well as $\mathbb{W}_0=\{0\}\times \mathbb{R}^{2n+1-k}$.
    Let $\varphi:\mathbb{V}_0\to \mathbb{W}_0$ be intrinsic $L$-Lipschitz. Then, for any $C\geq 1$, 
    \begin{equation}\label{eq:ParamGLemCube}
        \sum_{Q\in \mathcal{D}^k,Q\subset Q_0}\beta_{2}(CQ)^{q^*}\mathcal{H}^k(Q) \lesssim_{{k,n,L,C}} \mathcal{H}^k(Q_0),\quad Q_0\in \mathcal{D}^k.
    \end{equation}
    For $k=2$, we can choose $q^*=32$.
\end{theorem}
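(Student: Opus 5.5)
We argue by induction on $k$, combining Lemma \ref{l:IntGeomBdd} with the two geometric lemmas for the integral-geometric coefficients, Lemma \ref{l:TLem2.5} and Lemma \ref{l:HigherDimInduc}. The point is that \eqref{eq:betaBound} controls $\beta_2(cQ)^8$ by $\beta^1_{\infty,4}(Q)^4+\beta^{k-1}_{\infty,2}(Q)$. Lemma \ref{l:TLem2.5} already sums the first term with exponent $4$. To sum the second term we need a Carleson estimate for $\beta^{k-1}_{\infty,q}$ with some finite $q$. By Lemma \ref{l:HigherDimInduc} (with $p=\infty$), this follows from a parametrized geometric lemma for $\beta_\infty$ of $(k-1)$-dimensional intrinsic Lipschitz functions $\psi:\mathbb{R}^{k-1}\times\{0\}\to\{0\}\times\mathbb{R}^{2n+2-k}$.

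\textbf{Bounding $\beta_\infty$ by $\beta_2$.} The first step is an auxiliary estimate of the form $\beta_\infty(MQ)\lesssim_{k,n,L}\beta_2(M'Q)^{\theta}$ for some $\theta=\theta(k)>0$ and all intrinsic $L$-Lipschitz $\psi$ on a $(k-1)$-dimensional domain. To prove it, take a quasi-minimizer $A$ for $\beta_2(M'Q)$. Arguing as in Lemma \ref{lemma_lipconst} and its Heisenberg variant in the proof of Lemma \ref{l:HigherDimInduc}, the graph map $\mathbb A$ has Lipschitz constant $\lesssim 1$. Hence $x\mapsto d(\Psi(x),\mathbb A(x))$ is Hölder-$1/2$ on $M'Q$ after normalization, using \eqref{eq:MetricComp} together with the fact that both graph maps are Lipschitz into $\mathbb{H}^n$. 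Suppose $d(\Psi(x_0),\mathbb A(x_0))\geq \delta\,\mathrm{diam}(Q)$ at some $x_0\in MQ$. Then $d(\Psi,\mathbb A)\gtrsim\delta\,\mathrm{diam}(Q)$ on a ball of radius $\sim\delta^2\mathrm{diam}(Q)$ around $x_0$. This gives $\beta_2(M'Q)^2\gtrsim \delta^{2}\delta^{2(k-1)}$, i.e.\ $\beta_\infty\lesssim\beta_2^{1/k}$. Combined with the boundedness in Remark \ref{rmk:boundedness}, any power of $\beta_2$ then dominates the corresponding power of $\beta_\infty$.

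\textbf{Induction.} The base case of the induction is $k-1=1$. Here Theorem \ref{t:GlemStratBetanILGLinftyk=2}, in its parametric form via Theorem \ref{t:ControlStratifBetaByOmegaInftyk=1} and the dyadic discretization in the proof of Lemma \ref{l:TLem2.5}, directly gives $\sum\beta_\infty(MQ)^4\mathcal H^1(Q)\lesssim\mathcal H^1(Q_0)$ with $q=4$. Lemma \ref{l:HigherDimInduc} with $p=\infty$, $q=4$ then yields a Carleson bound for $\beta^{1}_{\infty,4}(KQ)^4$. By Hölder, $\beta^{k-1}_{\infty,2}\le\beta^{k-1}_{\infty,4}$, so $\beta^{1}_{\infty,2}(KQ)^4$ is summable as well. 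Raising \eqref{eq:betaBound} (applied to $\tilde Q=KQ$, so that $cK Q$ covers $CQ$ for $K=C/c$) to the power $4$ gives
\[
\beta_2(CQ)^{32}\lesssim \beta^1_{\infty,4}(KQ)^{16}+\beta^{1}_{\infty,2}(KQ)^4\lesssim \beta^1_{\infty,4}(KQ)^{4}+\beta^{1}_{\infty,2}(KQ)^4,
\]
where the last step uses boundedness. Summing over $Q\subset Q_0$ then gives $q^*=32$ for $k=2$.

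For general $k$, assume \eqref{eq:ParamGLemCube} for dimension $k-1$ with exponent $q^*(k-1)$, holding for all $C$. The auxiliary estimate upgrades this to a Carleson bound for $\beta_\infty(MQ)^{q_{k-1}}$ with $q_{k-1}=k\,q^*(k-1)$. Lemma \ref{l:HigherDimInduc} then gives $\sum\beta^{k-1}_{\infty,q_{k-1}}(KQ)^{q_{k-1}}\mathcal H^k(Q)\lesssim\mathcal H^k(Q_0)$. Again by Hölder and boundedness, $\beta^{k-1}_{\infty,2}(KQ)^{q_{k-1}}\le\beta^{k-1}_{\infty,q_{k-1}}(KQ)^{q_{k-1}}$. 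Raising \eqref{eq:betaBound} to the power $q_{k-1}\ge 4$ and using boundedness to lower the exponent $4q_{k-1}$ on $\beta^1_{\infty,4}$ down to $4$, we obtain \eqref{eq:ParamGLemCube} with $q^*(k)=8q_{k-1}$. The dependence of the constants on $C$ is harmless, since all lemmas allow arbitrary enlargement factors.

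\textbf{Main obstacle.} I expect the hardest step to be the $\beta_\infty$-versus-$\beta_2$ comparison. It requires uniform Lipschitz control of near-minimizing intrinsic affine maps and the Hölder continuity of the Heisenberg distance between two graphs, both of which have to be justified carefully on bounded sets via \eqref{eq:MetricComp}. For $k=2$ this step is avoided entirely, since the one-dimensional $L^\infty$ Dorronsoro estimate is available, and this is why the exponent $32$ is explicit in that case.
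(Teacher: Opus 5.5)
Your proposal is correct and follows the paper's proof essentially step by step. The base case $k=2$ uses the one-dimensional $L^\infty$ estimate together with Lemma \ref{l:IntGeomBdd} raised to the fourth power, giving $q^*=32$. The inductive step uses a $\beta_\infty$-versus-$\beta_2$ comparison for $(k-1)$-dimensional graphs, then Lemma \ref{l:HigherDimInduc} with $p=\infty$, then Lemma \ref{l:IntGeomBdd} raised to the power $q$, which yields $q^*(k)=8q$.

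The only difference is in the comparison step. Since both graph maps are metrically Lipschitz, the triangle inequality already makes $x\mapsto d(\Psi(x),\mathbb A(x))$ Lipschitz, so you do not need the Hölder-$1/2$ detour through \eqref{eq:MetricComp}. This is how the paper gets $\beta_\infty(MQ)\lesssim\beta_2(2MQ)^{2/(k+1)}$ instead of your exponent $1/k$; the difference only changes the size of $q^*(k)$.
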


\begin{proof}[Proof of Theorem \ref{t:kDDorronsoro}] We fix $n$ and proceed by induction on $k$.

We start with the base case $k=2$ of the induction. Let $\varphi:\V_0=\mathbb{R}^2 \times \{0\}\to \W_0=\{0\}\times \mathbb{R}^{2n-1}$ be an intrinsic Lipschitz function. 
For $k=2$, Lemma \ref{l:IntGeomBdd} yields, for any cube $Q$ in $\mathbb{R}^2$, that
\begin{displaymath}
    \beta_2(cQ)^{8} \lesssim_{{n, L}}\beta_{\infty,4}^1(Q)^4 +\beta_{\infty,2}^{1}(Q).
\end{displaymath}
Since $\beta_{\infty,2}^1\leq \beta_{\infty,4}^1$ we get, raising both sides to the fourth power and using Remark \ref{rmk:boundedness},
\begin{equation}\label{eq:BetaComp}
    \beta_2(cQ)^{32} \lesssim_{{n, L}}\beta_{\infty,4}^1(Q)^4. 
\end{equation}
Since
Lemma \ref{l:TLem2.5} shows, for any $C\geq 1$, that
\begin{displaymath}
    \sum_{Q\in \mathcal{D}^2,Q\subset Q_0}\beta_{\infty,4}^1(CQ)^4 \mathcal{H}^2(Q)\lesssim_{C,L}\mathcal{H}^2(Q_0),\quad Q_0 \in \mathcal{D}^2,
\end{displaymath}
we conclude the case $k=2$ of Theorem \ref{t:kDDorronsoro} with $q^*(2)=32$ 
from \eqref{eq:BetaComp}.

Let us now assume that the statement of Theorem \ref{t:kDDorronsoro}  holds for $k-1$ with an exponent $q^*(k-1)$, and deduce the version for $k$-dimensional intrinsic Lipschitz graphs. First, if the statement holds for $k-1$ and the $\beta_2$-numbers, then it also holds for the $\beta_{\infty}$-numbers with a possibly worse exponent $q=q(k-1)$.
To see this, we will prove
for intrinsic $L$-Lipschitz functions $\varphi:\mathbb{R}^{k-1}\times \{0\}\to \{0\}\times \mathbb{R}^{2n+2-k}$ and  $M\geq 1$ that
\begin{equation}\label{eq:Beta_inftyBeta_2}
    \beta_\infty(MQ)\lesssim_{L,k,n,M} \beta_2(2MQ)^{\frac{2}{2+k-1}},\quad Q\in \mathcal{D}^{k-1}.
\end{equation} Without loss of generality, we assume that $\varphi$ is not intrinsic affine on $MQ$.
The proof of \eqref{eq:Beta_inftyBeta_2} is similar to the comparison for standard $\beta$-numbers \cite[p.27]{MR1251061}.
Indeed, let $A$ be an intrinsic affine function (essentially) realizing the infimum in the definition of $\beta_2(2MQ)$, and let $z\in MQ$ be a point (essentially) realizing $ \mathrm{arg max}_{x\in MQ}d(\varphi(x),A(x))$. 
Finally, we set
\begin{displaymath}
    D:= d(\varphi(z),A(z))=d(\Phi(z),\mathbb{A}(z)).
\end{displaymath}
By Remark \ref{rmk:boundedness}  and Lemma \ref{lemma_lipconst} (with the variations explained in the proof of Lemma \ref{l:HigherDimInduc}), and since $\varphi$ is intrinsic $L$-Lipschitz, we know that
\begin{displaymath}
    \mathrm{Lip}(\mathbb{A})\lesssim_{k,n,M}\mathrm{Lip}(\Phi),
\end{displaymath}
where $\mathbb{A}$ and $\Phi$ denote the graph maps of $A$ and $\varphi$, respectively. Applying the triangle inequality as follows:
\begin{align*}
d(\varphi(x),A(x))&=d(\Phi(x),\mathbb{A}(x))\geq 
d(\Phi(z),\mathbb{A}(z))-d(\Phi(z),\Phi(x))-d(\mathbb{A}(z),\mathbb{A}(x))\\
&\geq D- [\mathrm{Lip}(\Phi)+\mathrm{Lip}(\mathbb{A})]d(z,x),
\end{align*}
we conclude that $d(\varphi(x),A(x))\geq D/2$ for all $x\in 2MQ$ with 
\begin{equation}\label{eq:suffclose}
    d(z,x)\leq \frac{D}{2[\mathrm{Lip}(\Phi)+\mathrm{Lip}(\mathbb{A})]}.
\end{equation}
Therefore, if $D\leq \mathrm{diam}(MQ)$, then $d(\varphi(x),A(x))\geq D/2$ holds for points $x$ in a ball of radius $\sim_{L,k,n,M} D$. Thus, in this case
\begin{align*}
 \beta_2(2MQ)^2 &= 
 \frac{1}{\mathrm{diam}(2MQ)^{k-1}}\int_{2MQ}\left(\frac{d(\varphi(x),A(x))}{\mathrm{diam}(2MQ)}\right)^2\,d\mathcal{H}^{k-1}(x)\gtrsim_{L,k,n,M}\left(\frac{D}{\mathrm{diam}(MQ)}\right)^{2+k-1},
\end{align*}
and therefore
\begin{equation}\label{eq:CompInfty2}
    \beta_{\infty}(MQ)\leq \frac{2D}{\mathrm{diam}(MQ)}\lesssim_{L,k,n,M} \beta_2(2MQ)^{\frac{2}{2+k-1}},
\end{equation}
where the first inequality can be ensured by the choice of $A$ and $z$.
This concludes the proof of  \eqref{eq:CompInfty2} if $D\leq \mathrm{diam}(MQ)$. Otherwise, we use \eqref{eq:suffclose} to conclude that $d(\varphi(x),A(x))\geq \mathrm{diam}(MQ)/2$ for points $x$ in a ball of radius $\sim_{L,k,n}\mathrm{diam}(MQ)$, which yields $\beta_2(2MQ)\gtrsim_{L,k,n,M}1$, and we conclude simply by 
 using the boundedness of $\beta_{\infty}(MQ)$ ensured by Remark \ref{rmk:boundedness}.

With  estimate \eqref{eq:CompInfty2} in place, Lemma \ref{l:HigherDimInduc} yields the following geometric lemma for $k$-dimensional intrinsic Lipschitz graphs and  the integral geometric $\beta^{k-1}_{\infty,q}$-numbers, for $q=q(k-1)\geq 2$:
\begin{equation}\label{eq:Glem1needed}
    \sum_{Q\in \mathcal{D}^k,Q\subset Q_0}\beta_{\infty,q}^{k-1}(KQ)^{q}\mathcal{H}^k(Q)\lesssim_{{k,n,q,L,K}}\mathcal{H}^k(Q_0),\quad Q_0\in\mathcal D^k.
\end{equation}
Now, by Lemma \ref{l:IntGeomBdd} applied to $\varphi$,
\begin{displaymath}
    \beta_2(cQ)^{8} \lesssim_{{k,n, L}}\beta_{\infty,4}^1(Q)^4 +\beta_{\infty,2}^{k-1}(Q).
\end{displaymath}
Since $\beta_{\infty,2}^{k-1}\leq \beta_{\infty,q}^{k-1}$ we get, raising both sides to the power $q$ and using Remark \ref{rmk:boundedness},
\begin{equation}\label{eq:Glem2needed}
   \beta_2(cQ)^{8q}\lesssim_{k,n,q,{L}}\beta^{k-1}_{\infty,q}(Q)^q+\beta^{1}_{\infty,4}(Q)^4  
\end{equation}
for any cube $Q$ in $\mathbb{R}^k$.

Finally,
 Lemma \ref{l:TLem2.5}, \eqref{eq:Glem1needed} and \eqref{eq:Glem2needed} will imply the statement in   Theorem \ref{t:kDDorronsoro} for $k$-dimensional intrinsic Lipschitz graphs.
\end{proof}

We finally prove the main result of this section, a geometric lemma for $k$-dimensional intrinsic Lipschitz graphs in $\mathbb{H}^n$ if $1<k<n$.

\begin{proof}[Proof of Theorem \ref{t:GlemBetanILGLk=2}]
   Proceeding as in the first part of the proof of Theorem \ref{t:ControlStratifBetaByOmega}, for every $\V\in\mathcal V_k$, one has
   \[\beta_{2,\mathcal V_k}^E(x,r)\lesssim_{\mathrm{Lip}(\varphi_i)}\left[\fint_{B_{\R^k}(\Pi(x),r)}\left(\frac{d(\Phi(v),\V)}{r}\right)^2dv\right]^{1/2}.\]
   In particular, for every intrinsic affine map $A:\V_0\to\W_0$, if $\mathbb A$ denotes the corresponding graph map, then
   \[\beta_{2,\mathcal V_k}^E(x,r)\lesssim_{\mathrm{Lip}(\varphi_i)}\left[\fint_{B_{\R^k}(\Pi(x),r)}\left(\frac{d(\Phi(v),\mathbb A(v))}{r}\right)^2dv\right]^{\frac{1}{2}}=\left[\fint_{B_{\R^k}(\Pi(x),r)}\left(\frac{d(\varphi(v), A(v))}{r}\right)^2dv\right]^\frac{1}{2}.\]
   This means that $\beta_{2,\mathcal V_k}^E(x,r)\lesssim_{\mathrm{Lip}(\varphi_i)}\beta_2(B_{\R^k}(\Pi(x),r))$ with the notation introduced at the beginning of Section \ref{ss:ProofConclu}, recall Remark \ref{r:DiffBeta}. Hence, Theorem \ref{t:kDDorronsoro} implies, together with the area formula (Theorem \ref{af}) and the estimate on the Jacobian (Lemma \ref{rem_jacobian}), the desired Carleson condition also on the $\beta_{2,\mathcal V_k}^E$-numbers. To be precise, for this argument we need a Carleson-integral version of  Theorem \ref{t:kDDorronsoro} stated in terms of balls in $\mathbb{R}^k$ instead of the discrete version with dyadic cubes. However, Theorem \ref{t:kDDorronsoro}  is easily seen to imply such a version, see for instance similar arguments in the proof of \cite[Lemma 3.18]{arXiv:2601.03837}. Here the argument is even simpler in the sense that the domain is $\mathbb{R}^k$ with a standard dyadic system instead of a more general $k$-regular set.
\end{proof}

\subsection{Consequences of the geometric lemmas}\label{ss:ConseqGLem}
As immediate corollaries of the geometric lemmas proven in the previous sections, we also obtain corresponding
 \emph{weak geometric lemmas} in the sense of Definition \ref{def:WGLballs}, which we state here in terms of horizontal $\beta$-numbers.

\begin{corollary}\label{cor:WGLforILG} Let $n\in \mathbb{N}$ and $k\in \{1,\cdots,n\}$.
    Let $\mathbb{V}$ be a $k$-dimensional horizontal subgroup of $\mathbb{H}^n$ with complementary vertical subgroup $\mathbb{W}$. If $E\subset \mathbb{H}^n$ is the intrinsic graph of an intrinsic Lipschitz function $\varphi:\mathbb{V}\to \mathbb{W}$, then $E\in \mathrm{WGL}(\beta^E_{\infty,\mathcal V_k},C(\cdot))$ for a function $\varepsilon \mapsto C(\varepsilon)$ depending only on $k$, $n$, and the intrinsic Lipschitz constant of $\varphi$.
\end{corollary}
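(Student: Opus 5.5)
The plan is to derive the weak geometric lemma from the strong ones proven above via Chebyshev's inequality, after comparing $\beta_\infty$ with $\beta_2$ at a comparable scale. By Lemma \ref{l:RotRed} and Lemma \ref{l:RotInvCoeff} we may assume $\mathbb{V}=\mathbb{V}_0$, $\mathbb{W}=\mathbb{W}_0$. The graph $E$ is $k$-regular with constants depending on $n,k$ and the Lipschitz constant, by \cite[Theorem 3.9]{zbMATH06607318}.

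\textbf{Step 1: a local comparison.} We need, for $x\in E$ and $r>0$,
\begin{displaymath}
\beta^E_{\infty,\mathcal V_k}(x,r)\lesssim \beta^E_{2,\mathcal V_k}(x,2r)^{2/(k+2)}.
\end{displaymath}
This follows the Euclidean comparison \cite[p.27]{MR1251061}. Let $\mathbb{V}$ be an (almost) minimizing horizontal plane for $\beta^E_{2,\mathcal V_k}(x,2r)$, and let $y\in E\cap B(x,r)$ maximize $d(\cdot,\mathbb{V})$, with $D:=d(y,\mathbb{V})$. Since $z\mapsto d(z,\mathbb{V})$ is $1$-Lipschitz for $d$, every $z\in E\cap B(y,\min\{D/2,r\})$ satisfies $d(z,\mathbb{V})\ge D/2$. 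By lower $k$-regularity this set has measure $\gtrsim \min\{D,r\}^k$. Therefore
\begin{displaymath}
\beta^E_{2,\mathcal V_k}(x,2r)^2\gtrsim \left(\frac{\min\{D,r\}}{r}\right)^{k+2}.
\end{displaymath}
If $D\le r$, this gives the claim. If $D>r$, then $\beta_2(x,2r)\gtrsim 1$, while $\beta^E_{\infty,\mathcal V_k}(x,r)$ is bounded: choose $\mathbb{V}$ through $x$, for example the intrinsic affine plane constructed in Remark \ref{rmk:boundedness}. The comparison is therefore the same as in \eqref{eq:Beta_inftyBeta_2}, but performed on the set $E$ itself rather than parametrically.

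\textbf{Step 2: Chebyshev.} Fix $\varepsilon>0$. By Step 1, $\beta_\infty(y,r)>\varepsilon$ implies $\beta_2(y,2r)>c\,\varepsilon^{(k+2)/2}$. Hence
\begin{displaymath}
\chi_{\{\beta_\infty>\varepsilon\}}(y,r)\le (c\varepsilon^{(k+2)/2})^{-p}\,\beta_2(y,2r)^p,
\end{displaymath}
where $p=p(k)$ is the exponent from Theorem \ref{t:SumiLG}, concretely Theorems \ref{t:GlemStratBetanILGLinftyk=2}, \ref{t:GlemStratBetanILG} and \ref{t:GlemBetanILGLk=2}. For $k=1$ and $k=n$ we use $\beta_2\le\widehat\beta_2$ (and $\beta_2\le\beta_\infty\le\widehat\beta_\infty$ when $k=1$). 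Integrating over $E\cap B(x,R)$ and $r\in(0,R)$, then substituting $s=2r$, bounds the WGL integral by
\begin{displaymath}
\varepsilon^{-p(k+2)/2}\int_0^{2R}\int_{E\cap B(x,2R)}\beta_2(y,s)^p\,d\mathcal H^k\,\frac{ds}{s}\lesssim \varepsilon^{-p(k+2)/2}R^k,
\end{displaymath}
by the geometric lemma at scale $2R$ (entire graphs are unbounded, so $2R\le\mathrm{diam}\,E$). This gives $C(\varepsilon)$.

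\textbf{Main obstacle.} The main technical point is Step 1, specifically making sure that all constants depend only on $n$, $k$ and $\mathrm{Lip}(\varphi)$. This holds because the regularity constants of $E$ and the constants in the geometric lemmas do. Measurability of the coefficients in $(y,r)$ is standard and is also needed.
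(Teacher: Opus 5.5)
Your proposal is correct and follows the same route as the paper. The paper takes the geometric lemmas from Theorems \ref{t:GlemStratBetanILG}, \ref{t:GlemStratBetanILGLinftyk=2} and \ref{t:GlemBetanILGLk=2} and then cites the standard Euclidean argument of David--Semmes; your comparison $\beta^E_{\infty,\mathcal V_k}(x,r)\lesssim\beta^E_{2,\mathcal V_k}(x,2r)^{2/(k+2)}$ via $k$-regularity, followed by Chebyshev, is exactly that argument written out. The one minor difference is that for $k=1$ the paper applies $\mathrm{GLem}(\beta^E_{\infty,\mathcal V_1},4)$ directly, so your detour through $\beta_2$ is not needed in that case.
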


\begin{proof} 
We first recall that in all the considered cases, we have 
$E\in \mathrm{GLem}(\beta^E_{p,\mathcal{V}_k},q,C)$ for some $p=p_{k,n}\in [2,\infty]$ and some exponent $q=q_{k,n}\geq 4$, with $C$ depending only on $k$, $n$, and the intrinsic Lipschitz constant of $\varphi$.

In the  case $k=n$, the geometric lemma follows from Theorem \ref{t:GlemStratBetanILG}, recalling from Definitions \ref{d:HorizBeta} and \ref{d:StratifBeta} that $\beta^E_{2,\mathcal{V}_n}\leq \widehat{\beta}^E_{2,\mathcal{V}_n}$. Therefore, Theorem \ref{t:GlemStratBetanILG} yields that $E\in \mathrm{GLem}(\beta^E_{2,\mathcal{V}_n},q)$ for $q=4$. 
If $k=1$, Theorem \ref{t:GlemStratBetanILGLinftyk=2} yields similarly that  $E\in \mathrm{GLem}(\beta^E_{\infty,\mathcal{V}_1},4)$, which immediately implies that $E\in \mathrm{WGL}(\beta^E_{\infty,\mathcal V_1})$.
Finally, if $1<k<n$, we have  $E\in\mathrm{GLem}(\beta^E_{2,\mathcal{V}_k},q^*) $ by Theorem \ref{t:GlemBetanILGLk=2}. In each case, the quantitative control on the constant in the  geometric lemma is ensured by the precise formulation of the respective theorem.

 Since $E$ is $k$-regular (\cite[Theorem 3.9]{zbMATH06607318}), for each $k\in \{1,\ldots,n\}$, the respective geometric lemma implies the stated weak geometric lemma by 
 the same argument as in Euclidean spaces, which works
   irrespective of the integrability exponents; see \cite[pag. 27]{zbMATH00051391}.
\end{proof}

Low-dimensional intrinsic Lipschitz graphs in $\mathbb{H}^n$ also satisfy the geometric lemma for projection $\beta$-numbers (as in Definition \ref{def_projbetas}) with exponent $2$. This is a technical condition which appeared first in \cite{arXiv:2601.03837} and which will be used, in combination with Corollary \ref{cor:WGLforILG}, to complete the characterization of sets with corona decompositions by intrinsic Lipschitz graphs  (see Theorem \ref{c:ILG-C Char} in Section \ref{ss:BigPieceAppl}).

\begin{theorem}\label{t:GlemAffProjBetanILG}
 Let $\mathbb{V}$ be a $k$-dimensional horizontal subgroup of $\mathbb{H}^n$, $1\leq k\leq n$, with complementary vertical subgroup $\mathbb{W}$. If $E\subset \mathbb{H}^n$ is the intrinsic graph of an intrinsic Lipschitz function $\varphi:\mathbb{V}\to \mathbb{W}$, then
\begin{equation*}
    \int_{E\cap B(y,R)} \int_0^R {\beta^E_{2,\pi,A(2n,k)}}(x, r)^2 \frac{dr}{r} d\mathcal{H}^k(x) \lesssim_{k,n, \mathrm{Lip}(\varphi_1),\dots,\mathrm{Lip}(\varphi_{2n-k})} R^k,\qquad y\in E,\,R>0.
\end{equation*}
\end{theorem}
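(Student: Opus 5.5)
The plan is to reduce to the standard splitting and then to the classical Euclidean Dorronsoro theorem applied componentwise to the horizontal part $\Pi^{\bot}(\varphi)$. No isotropy is needed, because $\beta_{2,\pi,A(2n,k)}$ takes the infimum over all affine $k$-planes in $\mathbb{R}^{2n}$.

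First, by Lemma \ref{l:RotRed} and Lemma \ref{l:RotInvCoeff}, it suffices to treat $\mathbb{V}=\mathbb{V}_0$ and $\mathbb{W}=\mathbb{W}_0$; the projection coefficients are among those covered by Lemma \ref{l:RotInvCoeff}. Write $f:=(\varphi_1,\dots,\varphi_{2n-k}):\mathbb{R}^k\to\mathbb{R}^{2n-k}$. By Theorem \ref{horcon}(i), $f$ is Euclidean Lipschitz.

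Next we parametrize, exactly as in Step 1 of the proof of Theorem \ref{t:ControlStratifBetaByOmega}. We use the area formula (Corollary \ref{cov}), the Jacobian bound of Lemma \ref{rem_jacobian}, the inclusion $\Pi(\Gamma\cap B(x,r))\subset B_{\mathbb{R}^k}(\Pi(x),r)$, and $k$-regularity $\mathcal{H}^k(E\cap B(x,r))\sim r^k$ from \cite[Theorem 3.9]{zbMATH06607318}. For any affine map $A:\mathbb{R}^k\to\mathbb{R}^{2n-k}$, take $W:=\{(v,A(v))\}\in A(2n,k)$. Then $d_{\rm Eucl}(\pi(\Phi(v)),W)\le|f(v)-A(v)|$, and consequently
\begin{displaymath}
\beta^E_{2,\pi,A(2n,k)}(x,r)^2\lesssim_{k,n,\mathrm{Lip}(\varphi_i)}\Omega_{2,f}(\Pi(x),r)^2 ,
\end{displaymath}
where $\Omega_{2,f}$ is the coefficient of Theorem \ref{t:DorronsoroSpecial} (vector-valued, with the infimum over all affine maps, so it is bounded by the sum of the componentwise coefficients).

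Third, we establish the local Carleson estimate $\int_{B_{\mathbb{R}^k}(y,R)}\int_0^R\Omega_{2,f}(v,r)^2\,\frac{dr}{r}\,dv\lesssim_{k,\mathrm{Lip}(f)}R^k$. This follows the proof of Proposition \ref{aiso}, but more simply. After translating so that $y=0$ and subtracting $f(0)$, which does not change $\Omega_{2,f}$ since affine maps are invariant under adding constants, we multiply $f$ by a Lipschitz cutoff equal to $1$ on $B(0,2R)$ and supported in $B(0,4R)$. This is a Euclidean truncation, so Proposition \ref{loc} is not needed. The truncated map $\tilde f$ lies in $W^{1,2}$ with $\|\nabla\tilde f\|_{L^2}^2\lesssim\mathrm{Lip}(f)^2R^k$, since $|f|\lesssim \mathrm{Lip}(f)R$ on the support. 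It agrees with $f$ on every $B(v,r)$ with $v\in B(0,R)$ and $r<R$. Theorem \ref{t:DorronsoroSpecial}, applied to each component, then gives the bound.

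Finally, we pull back to $E$. For $y\in E$, $R>0$ and $x\in E\cap B(y,R)$ we have $\Pi(x)\in B_{\mathbb{R}^k}(\Pi(y),R)$. The area formula with the Jacobian bound converts $\int_{E\cap B(y,R)}\cdots\,d\mathcal{H}^k(x)$ into $\int_{B_{\mathbb{R}^k}(\Pi(y),R)}\cdots\,dv$ up to constants, and combining with the second and third steps yields the claim. The only mildly delicate point is the second step: it needs the Jacobian bound and the regularity lower bound to replace averages over $E\cap B(x,r)$ by averages over the Euclidean ball. Both are already available, so I do not expect a genuine obstacle; the exponent $2$ is inherited directly because no vertical component enters the projection coefficients.
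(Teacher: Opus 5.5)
Your proposal is correct and matches the paper's proof step for step. You reduce to $\mathbb{V}_0,\mathbb{W}_0$ via Lemmas \ref{l:RotRed} and \ref{l:RotInvCoeff}, derive the bound $\beta^E_{2,\pi,A(2n,k)}(x,r)^2\lesssim\Omega_{2,\Pi^{\bot}(\varphi)}(\Pi(x),r)^2$ as in Step 1 of Theorem \ref{t:ControlStratifBetaByOmega}, and finish with the classical Dorronsoro theorem plus a localization; your explicit Euclidean cutoff is exactly the ``standard (and easier) counterpart of Proposition \ref{aiso}'' the paper alludes to, and you rightly note that Proposition \ref{loc} is not needed because no isotropy must be preserved.
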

For $k=n$, we have applied the isotropic Dorronsoro theorem (Proposition \ref{aiso}) to prove a stronger result for horizontal projection $\beta$-numbers instead of projection $\beta$-numbers. This result, formulated as Theorem \ref{t:GlemProjBetanILG}, immediately implies the case $k=n$ of Theorem \ref{t:GlemAffProjBetanILG}. Similarly, Theorem \ref{t:GlemStratBetanILGLinftyk=2} implies that $1$-dimensional intrinsic Lipschitz graphs in $\mathbb{H}^n$ satisfy $\mathrm{GLem}(\beta_{\infty,\pi,\mathcal V_1},2)$, which yields the case $k=1$ of Theorem \ref{t:GlemAffProjBetanILG}.
Below we give a proof of Theorem \ref{t:GlemAffProjBetanILG} that works for all cases $k\in \{1,\ldots,n\}$.
\begin{proof}[Proof of Theorem \ref{t:GlemAffProjBetanILG}]

By Lemma \ref{l:RotRed} and Lemma \ref{l:RotInvCoeff}, we may  assume that
$ \V=\mathbb{R}^k\times \{0\}$ and $\W=\{0\}\times \mathbb{R}^{2n+1-k}$.
Proceeding as in the proof of Theorem \ref{t:ControlStratifBetaByOmega}, we find that 
\begin{displaymath}
\beta^E_{2,\pi,A(2n,k)}(x,r)^2\lesssim_{\mathrm{Lip}(\varphi_1),\dots,\mathrm{Lip}(\varphi_{2n-k})}\inf_A I_{\pi,2}(A),
\end{displaymath}
where 
\begin{displaymath}
I_{\pi,2}(A):=   \fint_{B_{\mathbb{R}^k}(\Pi(x),r)} \left(\frac{|\Pi^{\bot}(\varphi)(v)-A(v)|}{r}\right)^2 dv
\end{displaymath} 
and $A$ runs over \emph{all} affine maps  $\R^k\to\R^{2n-k}$. Here, 
\begin{equation*}
 \Pi^{\bot}: \Hn \to \mathbb{R}^{2n-k},\quad    \Pi^\bot(w_1, \dots, w_{2n}, t) = (w_{k+1}, \dots, w_{2n}).\end{equation*}
Since $\inf_A I_{\pi,2}(A)=\Omega_{2,\Pi^\bot(\varphi)}(\Pi(x),r)^2$, then
\[\beta^E_{{2},\pi, A(2n,k)}(x,r)^{2}\lesssim_{k,n,\mathrm{Lip}(\varphi_1),\dots,\mathrm{Lip}(\varphi_{2n-k})} \Omega_{2,\Pi^{\bot}(\varphi)}(\Pi(x),r)^2,\quad x\in E,\,r>0\]
and the conclusion follows from Theorem \ref{t:DorronsoroSpecial} and a localization argument, which is the standard (and easier) counterpart of Proposition \ref{aiso} for $\Omega_{2,f}(x,r)$ instead of $\Omega_{2,f}^{iso}(x,r)$.
\end{proof}

Combined with an observation from \cite{arXiv:2601.03837}, the results of this section imply that low-dimensional intrinsic Lipschitz graphs in $\mathbb{H}^n$ admit certain corona decompositions defined in
 \cite{arXiv:2601.03837}. 
 The definitions are stated in terms of systems of dyadic cubes, whose definition we will recall for AD-regular sets in abstract metric spaces in Definition \ref{d:DyadicSystem}. For now, the reader may think of $\mathbb{D}(E)$ to be a generalization of the standard dyadic cubes $\mathcal{D}$ from $\mathbb{R}^k$ to $k$-regular sets in $\mathbb{H}^n$.

 \begin{definition}[Coronization]\label{d:coronization}
 Let $n\in \mathbb{N}$, $k\in \{1,\ldots,n\}$, and $E\subset \mathbb{H}^n$ be $k$-regular with a dyadic system $\mathbb{D}(E)$.  A \emph{coronization of $E$ with constant $C>0$} is a decomposition $\mathbb{D}(E)=\mathcal{G} \dot{\cup}\mathcal{B}$ of the dyadic cubes into a  \emph{good set} $\mathcal G$ and a \emph{bad set}
 $\mathcal B$ with the following properties:
    \begin{itemize}
        \item for every $R\in\mathbb{D}(E)$ \begin{equation*}
            \sum_{Q\in\mathcal B, Q\subseteq R} \mathcal{H}^k(Q)\leq C\mathcal{H}^k(R);
        \end{equation*}
        \item the good set $\mathcal G$ can be partitioned into a family $\mathcal F$ of disjoint \emph{trees} $\mathcal S$ (also called \emph{stopping time regions}) such that:
        \begin{itemize}
           \item each $\mathcal S\in\mathcal F$ is \textit{coherent}: it has a (unique) maximal element, denoted by $Q(\mathcal S)$, that contains all other elements of $\mathcal{S}$ as subsets, has the property that if 
        $Q\in \mathcal S, Q'\in\mathbb{D}(E)$ with $Q\subseteq Q'\subseteq Q(\mathcal S)$, then $Q'\in\mathcal S$, and finally is such that if $Q\in \mathcal{S}$, then either all of the children of $Q$ lie in $\mathcal{S}$ or none of them do;
         \item for every $R\in\mathbb{D}(E)$ \begin{equation}\label{carlesonpackingcond}
            \sum_{\mathcal S\in\mathcal F, Q(\mathcal S)\subseteq R} \mathcal{H}^k(Q(\mathcal S))\leq C\mathcal{H}^k(R).
        \end{equation}
    \end{itemize}
 \end{itemize}
\end{definition}

\begin{definition}[Corona decomposition by intrinsic Lipschitz graphs (ILG-C)]\label{d:ILG-C}
Let $n\in \mathbb{N}$, $k\in \{1,\ldots,n\}$, and $E\subset \mathbb{H}^n$ be $k$-regular.  We say that $E$ admits a \textit{corona decomposition by intrinsic Lipschitz graphs} (ILG-C) if, for every $\eta>0$, there exists a constant $C=C(\eta)>0$ such that $E$ admits a coronization $\mathbb{D}(E)=\mathcal{G}\dot{\cup}\mathcal{B}$ with constant $C$  
where,  for each $\mathcal S\in\mathcal F$, there exists a $k$-dimensional intrinsic Lipschitz graph $\Gamma=\Gamma_\mathcal S$ with intrinsic Lipschitz constant bounded by $\eta$ so that, for all $Q\in\mathcal S$, it holds that
     \begin{equation}\label{eq:ILGcorona}
     \mathrm{dist}(x,\Gamma)\leq \eta\,\mathrm{diam} ( Q)\quad\text{if }x\in E, \,\mathrm{dist}(x,Q)\leq \mathrm{diam}(Q).
     \end{equation}
\end{definition}

We next show that low-dimensional intrinsic Lipschitz graphs $E\subset \mathbb{H}^n$ satisfy (ILG-C). This may sound like a tautology at first, but the crux of the matter is that (ILG-C) requires the conditions in Definition \ref{d:ILG-C} to hold with an arbitrarily small choice of $\eta$. Assuming that $E$ is not flat,  for small enough $\eta$, the graph $\Gamma_\mathcal S$ in \eqref{eq:ILGcorona} is therefore required to have smaller Lipschitz constant than the given graph $E$ (as a graph over a, possibly different, $k$-dimensional subgroup).

\begin{corollary} Let $n\in \mathbb{N}$ and $k\in \{1,\cdots,n\}$.
  Let $\mathbb{V}$ be a $k$-dimensional horizontal subgroup of $\mathbb{H}^n$ with complementary vertical subgroup $\mathbb{W}$. If $E\subset \mathbb{H}^n$ is the intrinsic graph of an intrinsic Lipschitz function $\varphi:\mathbb{V}\to \mathbb{W}$, then $E$ admits a corona decomposition by intrinsic Lipschitz graphs. 
\end{corollary}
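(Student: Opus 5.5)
The plan is to invoke the sufficient condition from \cite{arXiv:2601.03837}, namely the direction ``$\Leftarrow$'' of Corollary \ref{c:introChar}. That result states that a $k$-regular set $E\subset\mathbb{H}^n$ with $E\in\mathrm{GLem}(\beta_{2,\pi,A(2n,k)},2)$ and $E\in\mathrm{WGL}(\beta_{\infty,\mathcal V_k})$ admits (ILG-C). So it suffices to check three hypotheses for the intrinsic graph $E$ of $\varphi$: that $E$ is $k$-regular, that it satisfies the strong geometric lemma for projection $\beta$-numbers with exponent $2$, and that it satisfies the weak geometric lemma for horizontal $\beta_\infty$-numbers. We also need to keep track of the constants, so that they depend only on $k$, $n$ and $\mathrm{Lip}(\varphi)$.

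The steps are as follows.
\begin{enumerate}
\item Reduce to the standard splitting $\mathbb{V}_0\ltimes\mathbb{W}_0$ by Lemma \ref{l:RotRed}. This is harmless because the unitary rotations $R_U$ are isometric group isomorphisms. They therefore preserve $k$-regularity, the coefficients (Lemma \ref{l:RotInvCoeff}), and the (ILG-C) property itself, since the images of dyadic systems and of intrinsic Lipschitz graphs with constant $\le\eta$ are again of the same type.
\item Obtain $k$-regularity of $E$ from \cite[Theorem 3.9]{zbMATH06607318}, as already used in the proof of Corollary \ref{cor:WGLforILG}.
\item Take $E\in\mathrm{GLem}(\beta^E_{2,\pi,A(2n,k)},2)$ directly from Theorem \ref{t:GlemAffProjBetanILG}. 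Its constant depends on $k$, $n$ and $\mathrm{Lip}(\varphi_i)\le\mathrm{Lip}(\varphi)$ (Theorem \ref{horcon}(i)).
\item Take $E\in\mathrm{WGL}(\beta^E_{\infty,\mathcal V_k})$ from Corollary \ref{cor:WGLforILG}.
\end{enumerate}

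One bookkeeping point needs attention. Definition \ref{def:qgeomlemballs} restricts to $R\le\mathrm{diam}(E)$, and here $E$ is entire, so $\mathrm{diam}(E)=\infty$. Theorem \ref{t:GlemAffProjBetanILG} gives the bound for all $R>0$, so this is consistent.

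The main obstacle is matching the precise formulation of the sufficient condition in \cite{arXiv:2601.03837} with what we have proved. Their theorem is stated in terms of a dyadic system $\mathbb{D}(E)$ and possibly dyadic versions of the coefficients. If so, I would pass between the ball-based Carleson conditions and the dyadic ones by the standard comparison already alluded to in the proof of Theorem \ref{t:GlemBetanILGLk=2} (cf.\ \cite[Lemma 3.18]{arXiv:2601.03837}), using $k$-regularity and the monotonicity $h(x,r)\lesssim h(y,Cr)$ for nearby points and comparable scales. The monotonicity holds for both $\beta_{2,\pi,A(2n,k)}$ and $\beta_{\infty,\mathcal V_k}$ by regularity. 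With these hypotheses verified, the conclusion follows.
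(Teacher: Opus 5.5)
Your proposal is correct and follows the paper's proof, which combines Corollary \ref{cor:WGLforILG} and Theorem \ref{t:GlemAffProjBetanILG} with \cite[Corollary 1.7]{arXiv:2601.03837}. Your rotation reduction in step 1 is unnecessary, since both results are already stated for an arbitrary horizontal subgroup $\mathbb{V}$. Also, the paper quotes the projection condition with $\beta_{1,\pi,A(2n,k)}$; your $\beta_{2}$ version implies it because $\beta_1\le\beta_2$.
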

\begin{proof}
By Corollary \ref{cor:WGLforILG} and Theorem \ref{t:GlemAffProjBetanILG}, it follows that
\begin{displaymath}
E\in \mathrm{WGL}(\beta_{\infty,\mathcal V_k})\quad\text{and}\quad E\in \mathrm{GLem}(\beta_{1,\pi, A(2n,k)},2).
\end{displaymath}Applying \cite[Corollary 1.7]{arXiv:2601.03837}, we deduce that $E$ has a corona decomposition by intrinsic Lipschitz graphs with small constant (ILG-C).
\end{proof}

\section{The ``big pieces'' approach in metric spaces and consequences}\label{s:BP}

The geometric lemmas we obtained in Section \ref{s:GLem} for intrinsic Lipschitz graphs propagate to sets that have merely big pieces, or big pieces squared, of intrinsic Lipschitz graphs. The purpose of this section is to make that statement rigorous, following the approach in \cite{MR4485846}, and derive some consequences from it. This is most conveniently done using dyadic systems for AD-regular sets, to which we already referred in the previous section, for instance in Definition \ref{d:coronization}. We begin by stating the precise definitions.

\subsection{Dyadic systems}

Let $(X,d)$ be a metric space. Recall that a set $E\subset X$ is \emph{$s$-regular} with constant $C$, denoted $E\in \mathrm{Reg}_s(C)$ if it is closed and satisfies condition 
\eqref{def_Ahlfors}.

\begin{definitiontheorem}[Dyadic system]\label{d:DyadicSystem}
 Let $(X,d)$ be a metric space and $0<s<\infty$.    Assume that $E\in \mathrm{Reg}_s(C_E)$. 
 There exists a constant $D \in (1, \infty)$, depending only on $C_E$ and $s$, and a collection $\mathbb{D}(E) = \cup_{j\in\mathbb{J}}\mathbb{D}_j$ where, for every $j \in \mathbb{J}$, $\mathbb{D}_j$ is a family of pairwise disjoint Borel sets and
\begin{enumerate}
    \item[(1)] For each $j \in \mathbb{J}$, $E = \bigcup_{Q\in\mathbb{D}_j} Q$.
    \item[(2)] If $Q_1, Q_2 \in \mathbb{D}(E)$ and $Q_1 \cap Q_2 \neq \emptyset$, then $Q_1 \subseteq Q_2$ or $Q_2 \subset Q_1$.
    \item[(3)] If $j \in \mathbb{J}$ and $Q \in \mathbb{}{D}_j$, then $\operatorname{diam}(Q) \leq D2^{-j}$.
    \item[(4)] For each $j \in \mathbb{J}$ and $Q \in \mathbb{D}_j$, there exists $x_Q \in E$ such that $B(x_Q, D^{-1}2^{-j}) \cap E \subset Q$.
    \item[(5)] $\mathcal{H}^s(\{x \in Q : d(x, E \setminus Q) \leq \eta2^{-j}\}) \leq D\eta^{\frac{1}{D}}\mathcal{H}^s(Q)$ for all $j \in \mathbb{J}$, $Q \in \mathbb{D}_j$, $\eta > 0$.
\end{enumerate}
Here $\mathbb{J} = \mathbb{Z}$ if $E$ is unbounded, and $\mathbb{J} = \{j \in \mathbb{Z} : j \geq J_0\}$ where $J_0 \in \mathbb{Z}$ is such that $D^{-1}\, 2^{-(J_0+1)} \leq \operatorname{diam}(E) < D^{-1}\,2^{-J_0}$ otherwise.
We also define $\ell(Q):=2^{-j}$ for $Q\in \mathbb{D}_j$,
$$ \mathbb{D}_{Q_0}(E) := \{Q \in \mathbb{D}(E) : Q \subset Q_0\}, \quad Q_0 \in \mathbb{D}(E), $$
and for a given constant $\lambda > 1$, we set
$$ \lambda Q := \{x \in E : d(x, Q) \leq (\lambda - 1)\operatorname{diam}(Q)\}. $$
\end{definitiontheorem}

This construction is essentially due to Christ \cite[Theorem 11]{MR1096400} (for spaces of homogeneous type, with cubes covering up to null sets and $1/2$ replaced by some constant $\varrho\in (0,1)$); see \cite[Proposition 2.12]{MR3589162} and, e.g., the comments in \cite[\S 2.6]{2023arXiv230612933B}, \cite[\S 2.2]{MR4485846}, or  \cite[\S 3.2.1]{arXiv:2601.03837} for further references and various modifications. Note that, if $E$ is bounded, condition (4) ensures that $\mathbb{D}_{J_0}=\{E\}$ for our choice of $J_0$ (since $D^{-1}2^{-J_0}>\mathrm{diam}(E)$ and therefore  $B(x_Q, D^{-1}2^{-J_0}) \cap E =E$).

\subsection{Big pieces}

We will employ and generalize parts of the ``big pieces'' framework developed in \cite{MR4485846} for abstract metric spaces.

\begin{definition}[Big pieces]\label{d:BP}
    Let $(X,d)$ be a separable metric space, $0<k<\infty$, and let $C,C'>1$. Assume that $\mathcal{E}\subset \mathrm{Reg}_k(C)$. Then a set $E\in \mathrm{Reg}_k(C')$ \emph{has big pieces of $\mathcal{E}$} if there exists a constant $\theta>0$ such that for every $x\in E$ and $0<r<\infty$, $r\leq \mathrm{diam}(E)$, there exists $\Gamma \in \mathcal{E}$ such that
    \begin{equation}\label{eq:MeasuLowerBound}
        \mathcal{H}^k(\Gamma \cap E\cap B(x,r))\geq \theta r^k.
    \end{equation}
    We also write $E\in BP(\mathcal{E})(\theta,C')$ in this case.
\end{definition}

It is implicit from Definition \ref{d:BP} that the diameters of the elements in $\mathcal{E}$ can be quantitatively controlled in the following sense. If $\Gamma\in \mathcal{E}$ satisfies \eqref{eq:MeasuLowerBound} for some $x\in E$ and $0<r<\infty$, $r\leq \mathrm{diam}E$, then $\mathrm{diam}(\Gamma)\gtrsim r$, where the implicit constants depend only on $\theta,k$, and $C$.

\begin{example}
    If there exists a constant $L>0$ such that a $k$-regular set $E\subset \mathbb{H}^n$ satisfies the condition in Definition \ref{d:BP} for the family $\mathcal{E}^{\mathrm{iLG}}_L$ of $k$-dimensional intrinsic $L$-Lipschitz graphs, then we say that $E$ has \emph{big pieces of intrinsic Lipschitz graphs}.
\end{example}

\begin{definition}[Big pieces squared]
      Let $(X,d)$ be a separable metric space, $0<k<\infty$, and let $C,C',C''>1$. Assume that $\mathcal{E}\subset \mathrm{Reg}_k(C)$. Then a set $E\in \mathrm{Reg}_k(C'')$ \emph{has big pieces squared of $\mathcal{E}$} if there exist  constants $\theta,\theta'>0$ such that for every $x\in E$ and $0<r<\infty$, $r\leq \mathrm{diam}(E)$, there exists $\Gamma \in \mathrm{Reg}_k(C')\cap BP(\mathcal{E})(\theta,C')$ such that
    \begin{displaymath}
        \mathcal{H}^k(\Gamma \cap E\cap B(x,r))\geq \theta' r^k.
    \end{displaymath}
    We also write $E\in BP\left(BP(\mathcal{E})(\theta,C')\right)(\theta',C'')$ in this case.
\end{definition}

\subsection{Stability of abstract geometric lemmas under the big pieces functor}\label{s:StabAbstrGlem}
Stability of geometric lemmas under the ``big pieces functor'' is a general principle, which was noticed and frequently used for classical geometric lemmas in Euclidean spaces already by David and Semmes \cite{MR1251061}. In a more abstract setting, it was formulated in 
\cite[Section 2.6 and Appendix A]{MR4485846}, on which the present section is heavily based. However, not all the geometric lemmas needed for the applications in our paper are of the form considered in \cite{MR4485846}. Therefore, we generalize  the stability result in \cite[Proposition 2.23]{MR4485846} so that it covers the relevant $\beta$-numbers (as well as some other flatness coefficients from the literature beyond the scope of 
\cite{MR4485846}). The results in \cite{MR4485846} are formulated for sets that are AD-regular with respect to a fixed Borel regular measure $\mu$ on the ambient metric space; for us, $\mu$ will be a Hausdorff measure on a separable metric space.
At the same time, we extend the result to general, possibly unbounded, AD-regular sets (cf.  \cite[Remark 2.12]{MR4485846}).

Let $(X,d)$ be a separable metric space and let $\mathscr{H}$ be a family of abstract coefficient functions  
$h^E_q=h_q:\mathbb{D}(E)\to [0,\infty)$, one for each $k$-regular set $E$ in $X$, and each $q\in[1,\infty]$. We assume that the abstract coefficient functions are uniformly bounded in the sense that, for all $1\leq C<\infty$, we have 
\begin{equation}\label{eq:BoundH}
  \sup_{E\in \mathrm{Reg}_k(C)}\sup_{Q\in \mathbb{D}(E)}h^E_q(Q)<\infty.
\end{equation}
Throughout the section, we consider a fixed family
$\mathscr{H}$, and constants are allowed to depend on the supremum in \eqref{eq:BoundH} without explicit mentioning.

\begin{definition}[Geometric lemma (abstract version with cubes)]\label{d:GLemAbstr}
    Let $p\in [1,\infty)$, $q\in [1,\infty]$, and $0<k<\infty$. We say that a $k$-regular set $E\subset X$ satisfies the \emph{$p$-geometric Lemma with respect to the coefficient function $h_q$}, denoted $E\in \mathrm{GLem}(h_q,p)$, if there exists a constant $M>0$ such that 
    \begin{equation}\label{eq:GLemCond}
        \sum_{Q\in \mathbb{D}_R(E)}h_q(Q)^p\,\mathcal{H}^k(Q)\leq M\mathcal{H}^k(R),\quad R\in \mathbb{D}(E).
    \end{equation}
    To emphasize the dependence on the constant, we also write $E\in \mathrm{GLem}(h_q,p,M)$.
\end{definition}

The reader may be familiar with a formulation of the geometric lemma where  \eqref{eq:GLemCond} appears with a coefficient function of the form $h_q(KQ)$ for some $K>1$, say $K=2$. This is just a matter of notational convention. 
Following the notation in \cite{MR4485846}, we include the enlargement of the cubes in the definition of the coefficient functions; see Definition  \ref{d:AbstractCoeffFunction} for an example.

\begin{definition}[Weak geometric lemma (abstract version with cubes)]\label{d:WGLAbsrt}
      Let  $q\in [1,\infty]$ and $0<k<\infty$. We say that a $k$-regular set $E\subset X$ satisfies the \emph{weak geometric lemma with respect to the coefficient function $h_q$}, denoted $E\in \mathrm{WGL}(h_q)$, if there exists a function $C:(0,\infty)\to(0,\infty)$ such that, for all $\varepsilon>0$,
    \begin{displaymath}
        \sum_{\substack{Q\in \mathbb{D}_R(E)\\h_q(Q)>\varepsilon}}\mathcal{H}^k(Q)\leq C(\varepsilon)\mathcal{H}^k(R),\quad R\in \mathbb{D}(E).
    \end{displaymath}
    To emphasize the dependence on the function, we also write $E\in \mathrm{WGL}(h_q,C(\cdot))$.
\end{definition}

Assume now that $E\in\mathrm{Reg}_k(C)\cap BP(\mathcal E)(\theta, C)$, where $\mathcal E\subset \mathrm {Reg}_k(C)$. 
The ``big pieces'' property from Definition \ref{d:BP} implies a corresponding property for dyadic cubes. To see this, let $R\in\mathbb D(E)$ be a dyadic cube in $E$.
By property (4) of dyadic cubes (Definition and Theorem \ref{d:DyadicSystem}) there exists a constant $D$, depending only on $C$ and $k$, such that $B(x_R,D^{-1}\ell(R))\cap E \subset R$. Moreover, since $E\in\mathrm{Reg}_k(C)\cap BP(\mathcal E)(\theta, C)$, there exists $\widetilde{E} \in \mathcal{E}$ such that
$\mathcal H^k(\widetilde E\cap E\cap B(x_R,D^{-1}\ell(R)))\geq \theta [D^{-1}\ell(R)]^k$.

In conclusion, to every $R\in\mathbb D(E)$ we can assign $\widetilde E\in\mathcal E$ such that $\mathcal H^k(\widetilde E\cap R)\geq c\theta \mathcal{H}^k(R)$, where $c=c(k,C,\theta)$; 
cf.\ \cite[Lemma 2.14]{MR4485846}. 
Then we define the family
\begin{displaymath}
    \mathbb{D}_{R,\widetilde{E}}(E):=\{Q\in \mathbb{D}_R(E):\,  Q\cap \widetilde{E}\neq \emptyset\}.
\end{displaymath}
For every $Q\in    \mathbb{D}_{R,\widetilde{E}}(E)$, we find $\widetilde Q=\widetilde Q(Q)\in \mathbb D(\widetilde E)$ with $Q\cap \widetilde Q\neq \emptyset$,
\begin{equation}\label{eq:DiamComp}
    \mathrm{diam}(\widetilde{Q})\sim_{C,k,\theta} \mathrm{diam}(Q),
\end{equation}
and
\begin{equation}\label{eq:CubeIncl}
 \{z\in \widetilde{E}:\, d(z,Q)\leq 5 \mathrm{diam}(Q) \}  
 \subset 2\widetilde{Q}.
\end{equation}
To see this, first consider $Q\in    \mathbb{D}_{R,\widetilde{E}}(E)$ with $\mathrm{diam}(\widetilde{E})\geq 10\, \mathrm{diam}Q$. Then we can choose  $\widetilde Q=\widetilde Q(Q)\in \mathbb D(\widetilde E)$ with $Q\cap \widetilde Q\neq \emptyset$ such that
\begin{equation}\label{eq:DiamCompPrecise}
    10 \,\mathrm{diam}(Q)\leq \mathrm{diam}(\widetilde{Q})\lesssim_{C,k} \mathrm{diam}(Q).
\end{equation}
By assumption, there exists $x_0 \in Q\cap \widetilde Q$. Then,  for all
$z$ in the set on the left-hand side of \eqref{eq:CubeIncl}, we have $d(z,x_0)\leq 6\,\mathrm{diam}Q$ and consequently, by the lower bound in \eqref{eq:DiamCompPrecise}, it follows that $d(z,\widetilde{Q})\leq \mathrm{diam}(\widetilde{Q})$, which proves  \eqref{eq:CubeIncl}.

On the other hand, if $Q\in    \mathbb{D}_{R,\widetilde{E}}(E)$ is such that $\mathrm{diam}(\widetilde{E})< 10\, \mathrm{diam}Q$ (thus, in particular, $\widetilde{E}$ is bounded), then we simply take $\widetilde{Q}=\widetilde{Q}(Q)=\widetilde{E}\in \mathbb{D}(\widetilde{E})$ equal to the top cube of $\widetilde{E}$. This trivially satisfies \eqref{eq:CubeIncl}. The condition \eqref{eq:DiamComp} follows since the reasoning below  Definition \ref{d:BP},  ensures that $\mathrm{diam}(\widetilde{E})\gtrsim \mathrm{diam}(R)(\geq \mathrm{diam}(Q))$ with implicit constants depending only on $C,\theta$, and $k$.

For $Q\in \mathbb{D}_R(E)$, we also define
  \begin{equation}\label{eq:Iq(Q)def}I_q(Q):=\left(\mathcal H^k(Q)^{-1}\int_{2Q\cap(E\setminus \widetilde{E})\cap \widetilde E(2\mathrm{diam}(Q))}(d(y,\widetilde E)(\mathrm{diam}(Q))^{-1})^q\,d\mathcal H^k(y)\right)^\frac{1}{q},\quad q<\infty\end{equation}
   and
\begin{displaymath}
    I_{\infty}(Q)=\sup_{y\in 2Q\cap (E\setminus \widetilde{E})\cap \widetilde{E}(2\mathrm{diam}(Q))}\frac{d(y,\widetilde{E})}{\mathrm{diam}(Q)},
\end{displaymath}where $\widetilde E(2\mathrm{diam}(Q))$ denotes the 
$2\mathrm{diam}(Q)$-neighborhood of $\widetilde{E}$,

We are now ready to state our generalization of \cite[Proposition 2.23]{MR4485846}. The main difference is that we consider general coefficient functions $h_q^E$ satisfying a certain axiomatic condition \eqref{eq:AssLemmaA2}, rather than $\beta_q$-numbers  defined by measuring distances from approximating sets as in \cite[Definition 2.15]{MR4485846}. For convenience, we restrict ourselves to the case $q\geq 1$; presumably one could obtain results in the full range $q>0$ as in \cite{MR4485846}, by introducing $q$-dependent constants in suitable places, but this generality is not needed for the applications we have in mind.

\begin{theorem}[Stability of geometric lemma under big pieces functor]\label{thm: BHHGNgeneralizedAbstract}
    Let $C_0>0,C>1, \theta>0, M>0,p\in [1,\infty), q\in[1,\infty], k\in\mathbb N$ be fixed such that
    \begin{equation}\label{eq:exponentRange}
    \frac{1}{q}-\frac{1}{p}+\frac{1}{k}>0.
    \end{equation}
   Assume that $E\in\mathrm{Reg}_k(C)\cap BP(\mathcal E)(\theta, C)$, where $\mathcal E\subset \mathrm {Reg}_k(C)$.
   Suppose that  $\mathscr{H}$ is a family of abstract coefficient functions with the property that, for every $R\in \mathbb{D}(E)$,
 \begin{equation}\label{eq:AssLemmaA2}
          h_{q}(Q)\leq C_0(\widetilde h_{ q}(\widetilde Q)+ I_q(Q)),\qquad Q\in \mathbb{D}_{R,\widetilde{E}}(E),
      \end{equation}
      where the notation is as stated before the theorem, and $\widetilde{h}_q=h_q^{\widetilde{E}}$ is the coefficient function associated to $\widetilde{E}$.
Then,   if $\mathcal E\subset \mathrm{GLem}(h_{q},p,M)$, it also holds that
   \[E\in\mathrm{GLem}(h_{q},p,M')\] for a positive constant $M'=M'(C_0,C,\theta, M, p,q,k)$.
\end{theorem}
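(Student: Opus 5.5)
The plan follows the scheme of \cite[Proposition 2.23 and Appendix A]{MR4485846}. Fix $R\in\mathbb{D}(E)$. First reduce to a ``good $\lambda$''/John--Nirenberg-type statement: by \eqref{eq:BoundH}, the fact that each $Q\in\mathbb{D}_R(E)$ contains a big piece of some $\widetilde E_Q\in\mathcal E$ (with $\mathcal H^k(\widetilde E\cap R)\geq c\theta\mathcal H^k(R)$), and a standard stopping-time/packing argument as in \cite[Lemma 2.14 and Appendix A]{MR4485846}, it suffices to prove the weaker estimate
\begin{equation*}
\sum_{Q\in \mathbb{D}_{R,\widetilde E}(E)} h_q(Q)^p\,\mathcal H^k(Q)\lesssim \mathcal H^k(R)
\end{equation*}
for the single piece $\widetilde E$ associated to $R$. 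Indeed, the cubes of $\mathbb{D}_R(E)$ not meeting $\widetilde E$ are contained in maximal cubes covering $R\setminus\widetilde E$, of total measure at most $(1-c\theta)\mathcal H^k(R)$. One then iterates inside each of them. The geometric decay $(1-c\theta)^j$ together with the local bound above (applied with the good piece attached to each maximal cube) sums to a Carleson bound. This is the usual ``big pieces implies Carleson'' lemma for sums of the form $\sum_Q a_Q\mathcal H^k(Q)$ with $a_Q\geq 0$.

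For the local estimate, use \eqref{eq:AssLemmaA2} and $(a+b)^p\lesssim_p a^p+b^p$. This splits the sum into a term with $\widetilde h_q(\widetilde Q)^p$ and a term with $I_q(Q)^p$.

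\textbf{First term.} The map $Q\mapsto\widetilde Q(Q)$ has bounded multiplicity, because $Q\cap\widetilde Q\neq\emptyset$ and the diameters are comparable by \eqref{eq:DiamComp}; regularity then bounds the number of preimages by a constant depending on $C,k,\theta$. All such $\widetilde Q$ lie below finitely many cubes $\widetilde R_1,\dots,\widetilde R_N\in\mathbb{D}(\widetilde E)$ of diameter $\sim\mathrm{diam}(R)$, with $N$ bounded. Since $\mathcal H^k(Q)\sim\mathcal H^k(\widetilde Q)$, the hypothesis $\widetilde E\in\mathrm{GLem}(h_q,p,M)$ gives
\begin{equation*}
\sum_Q\widetilde h_q(\widetilde Q)^p\mathcal H^k(Q)\lesssim M\sum_i\mathcal H^k(\widetilde R_i)\lesssim\mathcal H^k(R).
\end{equation*}

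\textbf{Second term.} The term with $I_q(Q)^p$ is the main obstacle, and it is where \eqref{eq:exponentRange} enters. Here I would reproduce the argument of \cite[Appendix A]{MR4485846}. Set $\delta(y):=d(y,\widetilde E)$ for $y\in E\setminus\widetilde E$, and decompose $E\cap 3R\setminus\widetilde E$ into Whitney-type regions where $\delta(y)\sim 2^{-j}$. For $q<\infty$, $p\geq q$, first apply H\"older's inequality (or bound $I_q^p\leq I_q^q\cdot\sup I^{p-q}$, using that $I_q$ is bounded). The quantity $I_q(Q)$ only sees points $y\in 2Q$ with $\delta(y)\leq 2\,\mathrm{diam}(Q)$, and contributes $(\delta(y)/\mathrm{diam}Q)^q$. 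Exchanging the order of summation yields
\begin{equation*}
\int_{3R\setminus\widetilde E}\sum_{Q\ni\text{near }y,\ \mathrm{diam}Q\gtrsim\delta(y)}\Big(\tfrac{\delta(y)}{\mathrm{diam}Q}\Big)^{q}\cdots\,d\mathcal H^k(y),
\end{equation*}
a geometric series in the scale of $Q$, bounded by a constant, which gives $\lesssim\mathcal H^k(R)$. When $p>q$, the normalization by $\mathcal H^k(Q)^{-1}$ inside $I_q$ produces a factor $(\mathcal H^k(Q)/\mathcal H^k(\text{Whitney piece}))^{p/q-1}\sim(\mathrm{diam}Q/\delta(y))^{k(p/q-1)}$. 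The series $\sum_Q(\delta/\mathrm{diam}Q)^{p-k(p/q-1)}$ then converges exactly when $p-kp/q+k>0$, i.e.\ $\frac1q-\frac1p+\frac1k>0$. One also needs that Whitney pieces at scale $\delta$ have measure $\gtrsim\delta^k$ near $\widetilde E$, which follows from regularity of $E$.

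The case $q=\infty$ is treated similarly, with the supremum controlled by a single point at distance $\delta$, and \eqref{eq:exponentRange} reducing to $p<\infty$ with $k/p$-decay. The step I expect to require the most care is this exponent bookkeeping, together with handling the boundary case $\mathrm{diam}(\widetilde E)<10\,\mathrm{diam}Q$, where $\widetilde Q$ is the top cube. There are only boundedly many scales of such $Q$, so this case is harmless.
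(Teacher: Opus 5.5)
\textbf{Overall structure.} Your architecture matches the paper's. You prove the local bound $\sum_{Q\in\mathbb{D}_{R,\widetilde E}(E)}h_q(Q)^p\mathcal H^k(Q)\lesssim\mathcal H^k(R)$ by splitting via \eqref{eq:AssLemmaA2}. The $\widetilde h_q(\widetilde Q)$ part is handled by the geometric lemma on $\widetilde E$ plus the bounded multiplicity from \eqref{eq:DiamComp}, exactly as in the paper.

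Your local-to-global step is a genuinely different route, and it is valid. The paper uses Chebyshev to find a large set of $x\in R$ with $\sum_{Q\ni x}h_q(Q)^p<N$, then invokes the John--Nirenberg-type \cite[Lemma A.1]{MR4485846}. Your stopping time is more elementary. To avoid any a priori finiteness issue in ``iterating'', note that every $Q\in\mathbb{D}_R(E)$ meets the piece $\widetilde E_P$ of its smallest stopping ancestor $P$. Charging $Q$ to that $P$ gives a total $\lesssim\sum_P\mathcal H^k(P)\le\sum_j(1-c\theta)^j\mathcal H^k(R)$.

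\textbf{The gap: the $I_q$ term.} This is the only place \eqref{eq:exponentRange} is used, and your bookkeeping is backwards.

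\begin{itemize}
\item For $p\ge q$ nothing is needed. Your own observation $I_q\lesssim 1$, hence $I_q^p\lesssim I_q^q$, reduces to $p=q$, where the geometric series closes unconditionally. Consistently, \eqref{eq:exponentRange} holds automatically when $p\ge q$.
\item The loss factor $(\mathrm{diam}Q/\delta)^{k(p/q-1)}$ you claim for $p>q$ does not exist. If the mass of $I_q(Q)$ sits in a Whitney piece $W$ with $\delta\sim\mathrm{diam}(W)$, then $I_q(Q)^p\mathcal H^k(Q)\approx(\delta/\mathrm{diam}Q)^p\,\mathcal H^k(W)\,(\mathrm{diam}Q/\delta)^{k(1-p/q)}$. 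This is a gain when $p>q$ and a loss only when $p<q$.
\item Your displayed equivalence is false: $p-kp/q+k>0$ means $\frac1p-\frac1q+\frac1k>0$, not \eqref{eq:exponentRange}.
\item For $q=\infty$ the hypothesis reads $p>k$, not $p<\infty$.
\end{itemize}

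So the regime where the hypothesis actually bites, namely $p<q$ (for instance $q=\infty$ with $k<p<\infty$), is not covered by your argument.

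\textbf{How to fix it.} Either cite \cite[Lemma A.5]{MR4485846} as the paper does, or redo the count for $p<q$ as follows.

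\begin{itemize}
\item Set $r=\mathrm{diam}Q$ and $W_j=\{y\in E\setminus\widetilde E:\delta(y)\sim 2^{-j}\}$. Then $I_q(Q)^q\lesssim\sum_{2^{-j}\lesssim r}(2^{-j}/r)^q\,r^{-k}\mathcal H^k(W_j\cap 2Q)$.
\item Use subadditivity of $t\mapsto t^{p/q}$ twice: once in $j$, and once over a cover of $W_j\cap 2Q$ by disjoint cubes $P\in\mathbb{D}(E)$ of size $\sim 2^{-j}$ lying in $\{\delta\sim 2^{-j}\}$, with $\mathcal H^k(P)\sim 2^{-jk}$.
\item Exchange the sums. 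Each such $P$ then contributes $2^{-jk}\sum_{r\gtrsim 2^{-j}}(2^{-j}/r)^{p-k(1-p/q)}$. This series is finite exactly when $p-k(1-p/q)>0$, which is \eqref{eq:exponentRange}; the case $q=\infty$ corresponds to $p/q=0$.
\item The cubes $P$ have bounded overlap across $j$ and lie in a neighborhood of $R$, so the total is $\lesssim\mathcal H^k(R)$.
\end{itemize}
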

\begin{proof}
With assumption \eqref{eq:AssLemmaA2} in place, the proof reduces to the same steps as in \cite[p.2070]{MR4485846} to deduce \cite[Proposition 2.23]{MR4485846} from \cite[Lemma A.2]{MR4485846}.

In view of assumption \eqref{eq:AssLemmaA2}, for a fixed $R\in \mathbb{D}(E)$ and $Q\in \mathbb{D}_{R,\widetilde{E}}(E)$,
one can estimate $  h_{q}(Q)$ by $C_0(\widetilde h_{q}(\widetilde Q)+ I_q(Q))$. The Carleson sum of the quantities $\widetilde h_{q}(\widetilde Q)$ can be controlled since $\widetilde E\in \mathrm{GLem}(h_{ q},p,M)$, while for the quantity $I_q(Q)$ we apply Lemma A.5 in \cite{MR4485846} (which requires \eqref{eq:exponentRange}) without modifications. More precisely, combining these observations one gets
\[\int_{R\cap \widetilde E}\left(\sum_{x\in Q,Q\in \mathbb{D}_R(E)}h_{q}(Q)^p\right)\,d\mathcal H^k(x)\leq \sum_{Q\in \mathbb{D}_{R,\widetilde{E}}(E)}h_q(Q)^p\,\mathcal{H}^k(Q)\lesssim \mathcal H^k(R),\]
where the implicit constant depends on $C_0$, $M$, $C$, $p$, $q$, $k$,  $\theta$. Here we also used  \eqref{eq:DiamComp} to ensure that each $Q'\in \mathbb{D}(\widetilde{E})$ can occur as $\widetilde{Q}=\widetilde{Q}(Q)$ only for $\sim_{C,k,\theta} 1$ many $Q\in \mathbb{D}_R(E)$ (cf.\ \cite[(A.1)]{MR4485846}).

Applying Chebyshev's inequality and proceeding as in \cite{MR4485846}, this implies that there exists $N>0$ such that 
\[\mathcal H^k\left(\left\{x\in R: \sum_{x\in Q,Q\in \mathbb{D}_R(E)}h_{q}(Q)^p<N\right\}\right)\gtrsim\mathcal H^k(R),\quad R\in\mathbb D(E).\]
Now the claim follows by \cite[Lemma A.1]{MR4485846}, which is general and applies for $\alpha=h_{q}^p$.
\end{proof}
We next give sufficient conditions for coefficient functions to satisfy the assumption \eqref{eq:AssLemmaA2} in Theorem \ref{thm: BHHGNgeneralizedAbstract}. This generalizes \cite[Lemma A.2.]{MR4485846} and is the crucial result for our generalization of \cite[Proposition 2.23]{MR4485846}.

\begin{definition}\label{d:AbstractCoeffFunction} Let $(X,d)$ be a separable metric space and let $\mathcal{B}(X)$ be its Borel $\sigma$-algebra.
    Let $\mathscr{P}=(\rho_A)_{A\in\mathcal A}$ be a collection of nonnegative $[\mathcal{B}(X)\otimes \cdots \otimes \mathcal{B}(X)]$-measurable functions on $X^m=X\times \cdots \times X$, indexed by a family $\mathcal A$. 
    Given a $k$-regular subset $E$ of $X$, $m\in \mathbb{N}$, a dyadic cube $Q\in \mathbb{D}(E)$, $\mu=(\mathcal{H}^k)^m=\mathcal{H}^k\otimes \cdots \otimes \mathcal{H}^k$, $q\in [1,\infty]$, we define the coefficients $h_{\mathscr{P}, q}$ as 
    \[h_{\mathscr{P}, q}(Q):=\inf_{A\in\mathcal A}\left(\fint_{[2Q]^m}\left(\frac{\rho_A(y)}{\mathrm{diam}(Q)}\right)^q\,d\mu(y)\right)^\frac{1}{q}\quad\text{if }q<\infty,\]
    and
    \begin{displaymath}
        h_{\mathscr{P}, q}(Q):=\inf_{A\in\mathcal A}
        \sup_{y\in [2Q]^m}
       \frac{\rho_A(y)}{\mathrm{diam}(Q)}\quad\text{if }q=\infty.
    \end{displaymath}
\end{definition}
Here, $\mu$ denotes the product measure of the corresponding Hausdorff measures. In particular, if $A_1,\ldots,A_m$ are $\mathcal{H}^k$-measurable subsets of $X$, then $\mu(A_1\times \cdots \times A_m)=\mathcal{H}^k(A_1)\cdots \mathcal{H}^k(A_m)$. Definition \eqref{d:AbstractCoeffFunction} can be seen as a generalization of the $\beta$-numbers in \cite[Definition 2.15]{MR4485846}. In addition to the multi-variable version and the more general function $\rho_A$, our coefficients have the additional slight difference that we normalize by the measure of $2Q$ rather than $Q$. This is a notational convenience, but means that, compared to \cite{MR4485846}, we have an additional dependence on AD-regularity constants in some of our statements.

\begin{lemma}\label{lemmaA2}
 Let  $L>0$, $C>1, \theta>0, q\in[1,\infty], m\in\mathbb N$, $0<k<\infty$.
   Suppose that $\mathscr P=(\rho_A)_{A\in\mathcal A}$ is a family of nonnegative $[\mathcal{B}(X)\otimes \cdots \otimes \mathcal{B}(X)]$-measurable functions defined on $X^m= X\times \cdots \times X$   with the property that
   \begin{equation}\label{eq:AxiomRho}\rho_A(y)\leq L\left( \rho_A(z)+ \sum_{j=1}^m d(y_j,z_j)\right),\quad y=(y_1,\ldots,y_m)
   ,\, z=(z_1,\ldots,z_m)\in X^m.\end{equation}
  Then, if $E\in \mathrm{Reg}_k(C)\cap BP(\mathcal E)(\theta, C)$, where $\mathcal E\subset \mathrm {Reg}_k(C)$, it holds
   for every $R\in \mathbb{D}(E)$,
 \begin{equation}\label{eq:ObtainedAssLemmaA2}
          h_{\mathscr{P},q}(Q)\leq C(q,k,C,\theta,L,m)(\widetilde h_{\mathscr{P}, q}(\widetilde Q)+ I_q(Q))\quad Q\in \mathbb{D}_{R,\widetilde{E}}(E),
      \end{equation}
      where the notation is as stated before Theorem \ref{thm: BHHGNgeneralizedAbstract}.
\end{lemma}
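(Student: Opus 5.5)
We follow the template of \cite[Lemma A.2]{MR4485846}. Fix $R\in\mathbb D(E)$, the associated $\widetilde E\in\mathcal E$, and $Q\in\mathbb D_{R,\widetilde E}(E)$ with its partner cube $\widetilde Q$ satisfying \eqref{eq:DiamComp} and \eqref{eq:CubeIncl}. Given $\varepsilon>0$, choose $A\in\mathcal A$ that almost realizes the infimum in $\widetilde h_{\mathscr P,q}(\widetilde Q)$. It then suffices to show
\[
\Big(\fint_{[2Q]^m}\Big(\tfrac{\rho_A(y)}{\mathrm{diam}(Q)}\Big)^q d\mu(y)\Big)^{1/q}\lesssim \Big(\fint_{[2\widetilde Q]^m}\Big(\tfrac{\rho_A(z)}{\mathrm{diam}(\widetilde Q)}\Big)^q d\mu(z)\Big)^{1/q}+I_q(Q),
\]
and then let $\varepsilon\to0$. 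The key idea is to move each coordinate $y_j\in 2Q$ to a nearby point of $\widetilde E$ using the axiom \eqref{eq:AxiomRho}, and then average over a set of comparison points.

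\textbf{Step 1: splitting $2Q$.} Split $2Q$ into two pieces:
\begin{itemize}
\item $G_1=2Q\cap\widetilde E$;
\item $G_2=2Q\setminus\widetilde E$.
\end{itemize}
Every $y_j\in 2Q$ satisfies $d(y_j,\widetilde E)\le d(y_j,x_0)\le 3\,\mathrm{diam}(Q)$, where $x_0\in Q\cap\widetilde E$. So $G_2$ lies in the $2\mathrm{diam}(Q)$-type neighbourhood appearing in $I_q(Q)$, up to harmless constant adjustments; a slight enlargement of the neighbourhood may be needed there.

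\textbf{Step 2: comparison set.} Define $T$ to be the points of $\widetilde E$ at distance at most $5\,\mathrm{diam}(Q)$ from $Q$. By \eqref{eq:CubeIncl}, $T\subset 2\widetilde Q$. By regularity of $\widetilde E$ and \eqref{eq:DiamComp}, the measure $\mathcal H^k(T)$ is comparable to $\mathcal H^k(2\widetilde Q)$, which is comparable to $\mathrm{diam}(Q)^k$, and also to $\mathcal H^k(2Q)$.

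\textbf{Step 3: pointwise estimate.} Let $y\in[2Q]^m$ and $z\in T^m$. For coordinates with $y_j\in G_1$, compare $y_j$ directly with $z_j$; both lie in $T$, so $d(y_j,z_j)\lesssim\mathrm{diam}(Q)$.

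A naive use of \eqref{eq:AxiomRho} then only gives
\[
\rho_A(y)\le L\Big(\rho_A(z)+\sum_j d(y_j,z_j)\Big),
\]
and $\sum_j d(y_j,z_j)\lesssim\mathrm{diam}(Q)$ does not vanish. This term is not small, so averaging over all of $T^m$ loses everything.

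\textbf{Step 4: the fix, and the main obstacle.} The remedy is to average $z_j$ only over a small ball. Take $z_j$ in $B(\hat y_j,\delta\,\mathrm{diam}(Q))\cap\widetilde E$, where $\hat y_j\in\widetilde E$ is a nearest point to $y_j$; this is fine for $y_j\in G_1$, where $\hat y_j=y_j$. Then $d(y_j,z_j)\le d(y_j,\widetilde E)+\delta\,\mathrm{diam}(Q)$, and taking $q$-th powers and averaging gives
\[
\rho_A(y)^q\lesssim \fint_{\prod_j B_j}\rho_A(z)^q\,d\mu(z)+\sum_j d(y_j,\widetilde E)^q+\delta^q\,\mathrm{diam}(Q)^q .
\]
The residual term $\delta^q\,\mathrm{diam}(Q)^q$ is still not bounded by the right-hand side. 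This is exactly where the proof of \cite[Lemma A.2]{MR4485846} uses the structure $\rho_A=\mathrm{dist}(\cdot,A)$, for which $\delta$ can be sent to $0$ via Lebesgue points.

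For general $\rho$ I would do the same: for fixed $y$, take $\delta\to0$. For $\mu$-a.e.\ $y\in G_1^m$, the ball averages of $\rho_A^q$ converge to $\rho_A(y)^q$. I would argue this by Lebesgue differentiation on the product of regular sets, which is doubling; in fact the axiom \eqref{eq:AxiomRho} implies $\rho_A$ is Lipschitz with constant $L$, so there is plain continuity.

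To control the small-ball averages uniformly, integrate over $y$ first and use Fubini: each $z$ lies in $\lesssim$ the measure of $\delta$-balls worth of $y$'s, so
\[
\int_{[2Q]^m}\fint_{\prod B_j}\rho_A^q\,d\mu\,d\mu\lesssim\int_{T^m}\rho_A^q\,d\mu .
\]
Then the $\delta$-term is $\delta^q\,\mathrm{diam}(Q)^q$ for every $\delta$, and sending $\delta\to0$ kills it. The $d(y_j,\widetilde E)$ terms integrate to $I_q(Q)^q\,\mathrm{diam}(Q)^q$ times measure factors, because the other coordinates integrate to $\mathcal H^k(2Q)^{m-1}$.

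\textbf{Step 5: conclusion.} Dividing by $\mu([2Q]^m)\sim\mu([2\widetilde Q]^m)$ and $\mathrm{diam}(Q)^q\sim\mathrm{diam}(\widetilde Q)^q$ gives \eqref{eq:ObtainedAssLemmaA2}, with the constant factor $m$ coming from the sum over coordinates. The case $q=\infty$ is analogous with suprema; it is simpler, because one may take $z_j=\hat y_j$ directly, which needs continuity but no averaging.

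The main obstacle is the Step 4 bookkeeping: making the $\delta\to0$/Fubini argument valid uniformly for coordinates in $G_2$. These are points off $\widetilde E$ whose nearest points may concentrate. I would handle this with the bounded-overlap argument from \cite[Lemma A.2]{MR4485846}, using $\delta$-balls around $\hat y_j$ and the bound $\mathcal H^k(B(\hat y_j,\delta\,\mathrm{diam} Q)\cap\widetilde E)\gtrsim(\delta\,\mathrm{diam} Q)^k$.
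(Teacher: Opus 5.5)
There is a genuine gap, and it sits exactly where you locate the ``main obstacle''. Averaging at the fixed scale $\delta\,\mathrm{diam}(Q)$ around nearest points $\hat y_j$ gives no Fubini bound uniform in $\delta$ for coordinates $y_j\in 2Q\setminus\widetilde E$. Nearest points can concentrate, and at a fixed scale there is no bounded-overlap principle, in \cite[Lemma A.2]{MR4485846} or elsewhere.

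For instance, take $k=m=1$, let $\widetilde E$ be a line in $\mathbb{R}^2$, and let $E$ contain a segment of length $\sim\mathrm{diam}(Q)$ inside $2Q$, orthogonal to $\widetilde E$ at $0$. All its points then have $\hat y=0$. Take $\rho_A=\mathrm{dist}(\cdot,\mathbb{R}^2\setminus B(0,h))$ with $\delta\,\mathrm{diam}(Q)\ll h\ll\mathrm{diam}(Q)$. The left-hand side of your Fubini inequality is $\gtrsim\mathrm{diam}(Q)h^q$, while $\int_T\rho_A^q\lesssim h^{q+1}$. Keeping $\delta$ fixed does not rescue the argument either. The leftover $\delta^q\,\mathrm{diam}(Q)^q$ is integrated over all $y$ having some $y_j\notin\widetilde E$, and that set can have measure $\sim\mu([2Q]^m)$ while $I_q(Q)$ is arbitrarily small.

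Two side remarks. First, \cite[Lemma A.2]{MR4485846} does not use Lebesgue points. Second, \eqref{eq:AxiomRho} does not make $\rho_A$ Lipschitz when $L>1$: any Borel $\rho_A$ with values in $[1,L]$ satisfies it. Continuity is never needed anyway.

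What is missing is an adaptive averaging scale, namely the distance to $\widetilde E$, together with a good set that controls the overlap. On $[2Q]^m\cap\widetilde E^m\subset[2\widetilde Q]^m$ (by \eqref{eq:CubeIncl}) no averaging is needed.

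Split $[2Q]^m\setminus\widetilde E^m=\bigcup_i\Omega_i$, where on $\Omega_i$ the coordinate $y_i$ maximizes $d(y_j,\widetilde E)>0$. For $y\in\Omega_i$, average \eqref{eq:AxiomRho} over those $z$ with $z_j\in B(y_j,2d(y_i,\widetilde E))\cap\widetilde E$ for $j\neq i$, and with $z_i\in G(y_i)$. Here $G(y_i)$ is the set of $v\in\widetilde E\cap B(y_i,2d(y_i,\widetilde E))$ satisfying $M(v,2d(y_i,\widetilde E))\le K_1$, where $M(v,s)=\int_{\{w\in E\setminus\widetilde E:\,d(w,\widetilde E)\le s,\,d(v,w)\le 2d(w,\widetilde E)\}}d(w,\widetilde E)^{-k}\,d\mathcal H^k(w)$. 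Fubini and Chebyshev give $\mathcal H^k(G(y_i))\gtrsim d(y_i,\widetilde E)^k$.

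With this choice the error term is $\sum_j d(y_j,z_j)\lesssim d(y_i,\widetilde E)$, which integrates to $(\mathrm{diam}(Q)I_q(Q))^q$; the remaining coordinates contribute only a factor $\mu([2Q]^{m-1})$. In the Fubini step, the $y_j$-integrals for $j\neq i$ cost $\lesssim d(y_i,\widetilde E)^{k(m-1)}$ by regularity of $E$, and this cancels the normalization. The remaining integral $\int_{\{y_i:\,z_i\in G(y_i)\}}d(y_i,\widetilde E)^{-k}\,d\mathcal H^k(y_i)$ is at most $K_1$ by the definition and monotonicity of $M$.

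Using the common scale $\max_j d(y_j,\widetilde E)$ is what makes mixed configurations harmless, where some coordinates lie on $\widetilde E$ and others do not. No $\delta$ and no limiting argument are needed. Your $q=\infty$ sketch is fine once $\hat y_j$ is replaced by any $z_j\in\widetilde E$ with $d(y_j,z_j)<2d(y_j,\widetilde E)$, which lies in $2\widetilde Q$ by \eqref{eq:CubeIncl}; again no continuity is used.
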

\begin{remark}\label{r:ApplGenBigPiece}
    We are particularly interested in Lemma \ref{lemmaA2} for the case $L=1$, $m=1$, $X=\mathbb H^n$, $k\in \{1,\ldots,n\}$,  $\mathcal A=\mathcal A(2n,k)$, $q=2$, $\rho_A(y):=d_{\mathrm {Eucl}}(\pi(y), A)$; see Theorem \ref{c:ILG-C Char}.
\end{remark}
\begin{remark}
    If $m=1$ and $\mathcal A$ is a family of subsets of $X$ and $\rho_A:=d(\cdot,A)$, then  Lemma \ref{lemmaA2} essentially corresponds to \cite[Lemma A.2]{MR4485846}.
\end{remark}

We present two other examples of flatness coefficients from the literature that satisfy the assumptions of Lemma \ref{lemmaA2}, and to which the big pieces stability result in Theorem \ref{thm: BHHGNgeneralizedAbstract} therefore applies. 

\begin{example}[$\kappa$-coefficients]
 For  a set $E\in \mathrm{Reg}_1(C)$ in a metric space $(X,d)$, we
define
\begin{equation}\label{eq:MetricBeta}
\kappa(Q):= \frac{1}{\mathcal{H}^1(2Q)^3}\int_{2Q}\int_ {2Q}\int_{2Q}
\frac{\partial(\{x_1,x_2,x_3\})}{\mathrm{diam}(Q)}\,d\mathcal{H}^1\lfloor_E(x_1)d\mathcal{H}^1\lfloor_E(x_2)d\mathcal{H}^1\lfloor_E(x_3),
\end{equation}
for $Q\in\mathbb{D}(E)$. Here, $\partial(\{x_1,x_2,x_3\})$ denotes the \emph{triangular excess} of
 three points
$x_1,x_2,x_3\in X$ defined by
\begin{equation*}
\begin{split}
\partial (\{x_1,x_2,x_3\})&\coloneqq \inf_{\sigma \in S_3} \left\{\partial_1(x_{\sigma(1)},x_{\sigma(2)},x_{\sigma(3)})\right\}\\
&\coloneqq \inf_{\sigma \in S_3} \left\{d(x_{\sigma(1)},x_{\sigma(2)})+d(x_{\sigma(2)},x_{\sigma(3)})-d(x_{\sigma(1)},x_{\sigma(3)})\right\},
\end{split}
\end{equation*}
where $S_3$ is the group of permutations of $\{1,2,3\}$. Thus, in the notation of Definition \ref{d:AbstractCoeffFunction}, we have $\kappa=h_{\mathscr{P},1}$ with $\mathscr{P}=\{\partial (\cdot)\}$ and $m=3$.

\medskip

Let $y=(y_1,y_2,y_3),z=(z_1,z_2,z_3)\in X^3$, and assume that $\sigma \in S_3$ is such that
\begin{displaymath}
    \partial(\{z_1,z_2,z_3\})  = d(z_{\sigma(1)},z_{\sigma(2)})+d(z_{\sigma(2)},z_{\sigma(3)})-d(z_{\sigma(1)},z_{\sigma(3)}),
\end{displaymath}
then we have by triangle inequality
\begin{align*}
    \partial(\{y_1,y_2,y_3\}) \leq &d(y_{\sigma(1)},y_{\sigma(2)})+d(y_{\sigma(2)},y_{\sigma(3)})-d(y_{\sigma(1)},y_{\sigma(3)})\\
    \leq & d(y_{\sigma(1)},z_{\sigma(1)})+ d(z_{\sigma(1)},z_{\sigma(2)})+ d(z_{\sigma(2)},y_{\sigma(2)})\\
    & + d(y_{\sigma(2)},z_{\sigma(2)})+ d(z_{\sigma(2)},z_{\sigma(3)})+ d(z_{\sigma(3)},y_{\sigma(3)})\\
    &+ d(y_{\sigma(1)},z_{\sigma(1)})- d(z_{\sigma(1)},z_{\sigma(3)})+ d(z_{\sigma(3)},y_{\sigma(3)})\\
    \leq & \partial(\{z_1,z_2,z_3\})+ 2\sum_{i\in \{1,2,3\}} d(y_i,z_i),
\end{align*}
which verifies the assumption of Lemma \ref{lemmaA2}.

\medskip
Geometric
lemmas for the coefficients $\kappa$ and related rectifiability
results were studied by Hahlomaa and Schul
\cite{MR2163108,MR2297880,MR2456269,MR2337487,MR2342818,2007arXiv0706.2517S} and recently by the second author and Violo \cite{MR4963423}. See also \cite{2026arXiv260517680C} for an application related to singular integrals. 
\end{example}

\begin{example}[$\iota$-coefficients]\label{ex:iota}
Let $1\leq k\leq n$ and assume that $E\in \mathrm{Reg}_1(C)$ is a set in $(\mathbb{H}^n,d)$. For every $\mathbb{V}\in \mathcal{V}_k$, 
 $q\in[1,\infty)$ and every $Q\in
\mathbb{D}(E)$,  we  define
\[
\iota_{q,\V}(Q)\coloneqq \left(\frac{1}{\mathcal{H}^k(2Q)^2}\int_{2Q}\int_{2Q}
\left[\frac{|d(x_1,x_2)-d(\pi_{\mathbb{V}}(x_1),\pi_{\mathbb{V}}(x_2))|}{\mathrm{diam}(2Q)}\right]^q
d\mathcal{H}^k(x_1) d\mathcal{H}^k(x_2) \right)^\frac1q
\]
and
\begin{equation}\label{d:newbetas proj}
    \iota_{q,\mathcal V_k}(Q)=\inf_{\V\in \mathcal V_k}\iota_{q,\V}(Q).
\end{equation}
Here $\pi_{\mathbb{V}}$ denotes the horizontal Heisenberg projection onto the affine horizontal plane $\mathbb{V}$, see \cite{Hahlomaa,FV2,arXiv:2601.03837}, and Definition \ref{d:HorizProj} for the case of a horizontal subgroup $\mathbb{V}$. Thus, in the notation of Definition \ref{d:AbstractCoeffFunction}, we have $\iota_{q,\mathcal{V}_k}=h_{\mathscr{P},q}$ with $m=2$ and
\begin{displaymath}
\mathscr{P}=\{(x_1,x_2)\mapsto |d(x_1,x_2)-d(\pi_{\mathbb{V}}(x_1),\pi_{\mathbb{V}}(x_2))|\}_{\mathbb{V}\in \mathcal{V}_k}.
\end{displaymath}
Now, for arbitrary $(y_1,y_2),(z_1,z_2)\in \mathbb{H}^n \times \mathbb{H}^n$, we obtain by the triangle inequality and the $1$-Lipschitz continuity of $\pi_{\mathbb{V}}$ that 
\begin{align*}
    |d(y_1,y_2)-d(\pi_{\mathbb{V}}(y_1),\pi_{\mathbb{V}}(y_2))|= &
    d(y_1,y_2)-d(\pi_{\mathbb{V}}(y_1),\pi_{\mathbb{V}}(y_2))\\
    \leq & d(y_1,z_1)+d(z_1,z_2)+d(z_2,y_2)\\
    &+ d(\pi_{\mathbb{V}}(y_1),\pi_{\mathbb{V}}(z_1))-
    d(\pi_{\mathbb{V}}(z_1),\pi_{\mathbb{V}}(z_2))
    + d(\pi_{\mathbb{V}}(z_2),\pi_{\mathbb{V}}(y_2))\\
    \leq & |d(z_1,z_2)-d(\pi_{\mathbb{V}}(z_1),\pi_{\mathbb{V}}(z_2))|+ 2\sum_{i=1}^2d(y_i,z_i).
\end{align*}
This shows that the coefficients $\iota_{q,V}$ satisfy the assumptions of Lemma  \ref{lemmaA2}.
Geometric lemmas for  $\iota_{q,\V}$  were studied in \cite{FV2,arXiv:2601.03837}.
\end{example}

We now discuss the proof of Lemma \ref{lemmaA2}. This follows closely the proof strategy of \cite[Lemma A.2.]{MR4485846} (for $q<\infty$) and \cite[Lemma A.3]{MR4485846} (for $q=\infty$), and we will refer to \cite{MR4485846} for some details that work exactly in the same way. Our  task is to verify that \eqref{eq:AxiomRho} is the correct axiomatic assumption for this kind of statement, which is rather immediate for $m=1$. The main challenge is the multi-variable case $m>1$.

\begin{proof}[Proof of Lemma \ref{lemmaA2}]
Fix $m\in \mathbb{N}$. All implicit constants in the following will be allowed to depend on $m$ without special mentioning.

We first discuss the proof for $q<\infty$.
  Fix $\eta>0$, $R\in \mathbb{D}(E)$ with associated $\widetilde{E}$, and $Q\in \mathbb{D}_{R,\widetilde{E}}(E)$. We assign $\widetilde{Q}=\widetilde{Q}(Q)\in \mathbb{D}(\widetilde{E})$ to the cube $Q$ as explained around \eqref{eq:DiamComp}.
By definition of $\widetilde h_{\mathscr{P}, q}(\widetilde Q)$, there exists $A\in\mathcal A$ such that
 \begin{equation}\label{eq:ChoiceA}\left(\fint_{[2\widetilde Q]^m}\rho_A(y)^q\,d\mu(y)\right)^\frac{1}{q}\leq [\mathrm{diam}(\widetilde Q)]\widetilde h_{\mathscr{P}, q}(\widetilde Q)+\eta,\end{equation}
 where $\mu=(\mathcal{H}^k)^m=\mathcal{H}^k\otimes \cdots \otimes \mathcal{H}^k$.
 For this choice of $A$ we also have, by definition,
 \[[\mathrm{diam}( Q)]h_{\mathscr{P}, q}(Q)\leq\left(\fint_{[2Q]^m}\rho_A(y)^q\,d\mathcal \mu(y)\right)^\frac{1}{q}.\]
By the triangle inequality, as in \cite[(A.7)]{MR4485846} we combine the previous inequalities and get
\begin{align}\label{eq:A7}[\mathrm{diam}&(Q)]h_{\mathscr{P}, q}(Q)\\&\leq \left(\mu([2Q]^m)^{-1}\int_{[2Q]^m\cap \widetilde{E}^m}\rho_A(y)^q\,d\mu(y)\right)^\frac{1}{q}+\left(\mu([2Q]^m)^{-1}\int_{[2Q]^m\setminus \widetilde{E}^m}\rho_A(y)^q\,d\mu(y)\right)^\frac{1}{q}\notag
\\&\lesssim_{C,k,q,\theta} ( [\mathrm{diam}(\widetilde Q)]\widetilde h_{\mathscr{P}, q}(\widetilde Q)+\eta)+ \left(\mu([2Q]^m)^{-1}\int_{[2Q]^m\setminus \widetilde{E}^m}\rho_A(y)^q\,d\mu(y)\right)^\frac{1}{q}.\notag\end{align}
Here we used the following two  facts. First, $2Q\cap \widetilde{E}\subset 2 \widetilde{Q}$ by \eqref{eq:CubeIncl}, and hence $[2Q]^m\cap \widetilde{E}^m\subset [2\widetilde{Q}]^m$. Second, $\mathcal{H}^k(2Q)\sim \mathcal{H}^k(Q)\sim \mathcal{H}^k(\widetilde{Q})\sim \mathcal{H}^k(2\widetilde{Q})$ with constants depending only on $\theta$, $k$ and $C$ by $k$-regularity and \eqref{eq:DiamComp}.

To complete the proof of the lemma we  need to estimate the last term. Indeed, returning to \eqref{eq:A7}, we see that it suffices to show
\begin{equation}\label{eq:Goal}
\mu(Q^m)^{-1}\int_{[2Q]^m\setminus \widetilde{E}^m}\rho_A(y)^q\,d\mu(y) \lesssim [\mathrm{diam}(\widetilde{Q})]^q\left(\widetilde{h}_{\mathscr{P}, q}(\widetilde{Q})^q + I_q(Q)^q\right) + \eta^q,
\end{equation}
with implicit constant depending at most on $q,k,C,\theta,L$ (and $m$).
To prove \eqref{eq:Goal}, we proceed similarly as in \cite{MR4485846}, from (A.8) to (A.12), but some care needs to be taken in the case $m>1$. We define
\begin{displaymath}
    \Omega_i:=\{y=(y_1,\ldots,y_m)\in[2Q]^m\setminus \widetilde{E}^m:\, d(y_i,\widetilde{E})=\max_{1\leq j\leq m}d(y_j,\widetilde{E})>0\},\quad i=1,\ldots,m,
\end{displaymath}
that is, points in $\Omega_i$ can have some components in $\widetilde{E}$ but the $i$-th component lies in $2Q\setminus \widetilde{E}$ and
\begin{equation}\label{eq:DistEst}
d(y_i,\widetilde{E})\geq d(y_j,\widetilde{E})\quad\text{for all }j=1,\ldots,m.
\end{equation}

Then we can write $[2Q]^m \setminus \widetilde{E}^m = \bigcup_{i=1}^m \Omega_i$. The inclusion ``$\subseteq$'' holds since for every $y\in [2Q]^m \setminus \widetilde{E}^m$ at least one of the components $y_i$
does not belong to $\widetilde{E}$. The sets $\Omega_i$ are not necessarily disjoint, but this is not a problem since we only need the upper bound
\begin{equation}\label{eq:Omega_iUpper}
 \mu(Q^m)^{-1}\int_{[2Q]^m \setminus \widetilde{E}^m}\rho_A(y)^q \, d\mu(y) \leq \sum_{i=1}^m \mu(Q^m)^{-1}\int_{\Omega_i}\rho_A(y)^q d\mu(y).   
\end{equation}
By our assumption \eqref{eq:AxiomRho} on $\rho_A$, we have
\begin{equation}\label{eq:rho_q_ineq}
   \rho_A(y)^q\lesssim_{L,q} \rho_A(z)^q+ \sum_{j=1}^m d(y_j,z_j)^q,\quad y=(y_1,\ldots,y_m),\, z=(z_1,\ldots,z_m)\in X^m.
\end{equation}

To obtain \eqref{eq:Goal}, by \eqref{eq:Omega_iUpper}, we need to integrate the left-hand side of \eqref{eq:rho_q_ineq} over $\Omega_i$ for $i=1,\ldots,m$.
Thus, let us fix $i\in \{1,\ldots,m\}$.
Inspired by \cite{MR4485846}, we will first integrate with respect to $z$ over a suitable domain, and then integrate with respect to $y$ over $\Omega_i$. For this purpose we need to introduce some notation.

By definition of $\Omega_i$, we have $d(y_i,\widetilde{E})>0$ for $y\in \Omega_i$. Following \cite[(A.8),(A.11)]{MR4485846}, we  use the notation
\begin{equation}\label{eq:G(y_i)}
    G(y_i):=\{v\in \widetilde{E}:\,v\in B(y_i,2d(y_i,\widetilde{E})),\,M(v,2d(y_i,\widetilde{E}))\leq K_1\},\quad y\in \Omega_i,
\end{equation}
where
\begin{displaymath}
    M(v,s):=\int_{\{w\in E\setminus \widetilde{E}:\,d(w,\widetilde{E})\leq s,d(v,w)\leq 2 d(w,\widetilde{E})\}}d(w,\widetilde{E})^{-k}d\mathcal{H}^k(w),\quad v\in \widetilde{E},
\end{displaymath}
and $K_1>0$, depending only on $k$, $C$, and $\theta$, is chosen 
large enough such that
\begin{equation}\label{eq:LowerG(yi)}
    \mathcal{H}^k(G(y_i))\gtrsim_{C,k,\theta} d(y_i,\widetilde{E})^k,\quad y_i\in 2Q\setminus \widetilde{E}.
\end{equation}
The existence of $K_1$ follows by Fubini's theorem, Chebyshev's inequality and $k$-regularity of $\widetilde{E}$ as in the one-variable case
\cite[(A.12)]{MR4485846} since the number $m\geq 1$ is not involved in this statement. To be precise, in the case of bounded $\widetilde{E}$, in order to
apply the $k$-regularity of 
$\widetilde{E}$, we also need the fact that if $y_i \in 2Q\setminus \widetilde{E}$ for $Q\in \mathbb{D}_{R,\widetilde{E}}(E)$, then
\begin{equation}\label{eq:RadLargeEnough}
d(y_i,\widetilde{E})\leq 2 \mathrm{diam}(Q)\leq 
2 \mathrm{diam}(R)\lesssim_{C,k,\theta}\mathrm{diam}(\widetilde{E})
\end{equation}
by the reasoning below Definition \ref{d:BP}.

Now we fix $y\in \Omega_i$ and integrate \eqref{eq:rho_q_ineq} with respect to $z$ over 
\begin{displaymath}
    B_i^{\ast}(y):=\{z=(z_1,\ldots,z_m):\, z_i\in G(y_i)\text{ and }z_j\in B(y_j,2d(y_i,\widetilde{E}))\cap \widetilde{E} \text{ for }j\neq i \}.
\end{displaymath}
If $m=1$ (as for the case considered in \cite{MR4485846}), then simply $y=y_1$ and $B_1^{\ast}(y)=G(y)$. In general,
the set $B_i^{\ast}(y)$ is a direct product where
all  factors are contained in balls of radius $2d(y_i,\widetilde{E})$ and specifically
the $i$-th factor agrees with the set $G(y_i)$ defined in \eqref{eq:G(y_i)}. 
This produces
\begin{displaymath}
\rho_A(y)^q    \mu(B_i^{\ast}(y))
\lesssim_{L,q} \int_{B_i^{\ast}(y)}\rho_A(z)^q \,d\mu(z) +  d(y_i,\widetilde{E})^q\mu(B_i^{\ast}(y)),
\end{displaymath}
where we used that $d(y_j,z_j)\leq 2 d(y_i,\widetilde{E})$ for $z_j\in B(y_j,2d(y_i,\widetilde{E}))$ and all $j=1,\ldots,m$.
Dividing by the measure of $B_i^{\ast}(y)$, we obtain
\begin{equation}\label{eq:AfterZInt}
\rho_A(y)^q   
\lesssim_{L,q}  \mu(B_i^{\ast}(y))^{-1} \int_{B_i^{\ast}(y)}\rho_A(z)^q \,d\mu(z) +d(y_i,\widetilde{E})^q,\quad y\in \Omega_i.
\end{equation}
The measure $\mu(B_i^{\ast}(y))$ can be bounded from below as follows.
By $k$-regularity of $\widetilde{E}$, we have
\begin{equation}\label{eq:MeasBall}
    \mathcal{H}^k\left(B\left(y_j,2d(y_i,\widetilde{E})\right)\cap \widetilde{E}\right)\gtrsim_{C,k,\theta} d(y_i,\widetilde{E})^k.
\end{equation}
Here, we used again \eqref{eq:RadLargeEnough}, and the observation  that $B\left(y_j,2d(y_i,\widetilde{E}\right)$ contains a ball of comparable radius centered at a point in $\widetilde{E}$ since \eqref{eq:DistEst} holds. Therefore, by \eqref{eq:LowerG(yi)} and \eqref{eq:MeasBall}, we obtain
\begin{displaymath}\label{eq:G(y)}
 \mu(B_i^{\ast}(y))\gtrsim_{C,k,\theta} d(y_i,\widetilde{E})^{km}.
\end{displaymath}
Returning to \eqref{eq:AfterZInt}, we have found that
\begin{align*}
    \rho_A(y)^q   
&\lesssim_{L,q,C,k,\theta} \frac{1}{ d(y_i,\widetilde{E})^{km}} \int_{B_i^{\ast}(y)}\rho_A(z)^q \,d\mu(z) + d(y_i,\widetilde{E})^q\\
&=\frac{1}{ d(y_i,\widetilde{E})^{km}} \int_{\widetilde{E}^m}\chi_{B_i^{\ast}(y)}(z)\rho_A(z)^q \,d\mu(z) + d(y_i,\widetilde{E})^q,\quad y\in \Omega_i.
\end{align*}
Integrating with respect to $\mu$ over $\Omega_i$, and multiplying by $\mu(Q^m)^{-1}$ yields
\begin{align}\label{eq:mu_rho_Int_interm}
&\mu(Q^m)^{-1} \int_{\Omega_i} \rho_A(y)^q   \,d\mu(y)\lesssim_{m,L,q,C,k,\theta}\\& \mu(Q^m)^{-1}  \int_{E^m}\frac{1}{ d(y_i,\widetilde{E})^{km}} \int_{\widetilde{E}^m}\chi_{B_i^{\ast}(y)}(z)\rho_A(z)^q \chi_{\Omega_i}(y)\,d\mu(z)\,d\mu(y) + \mu(Q^m)^{-1}\int_{\Omega_i} d(y_i,\widetilde{E})^q\,d\mu(y).\notag
\end{align}
The second term is already familiar. Indeed, we recall
\begin{displaymath}
\mathrm{diam}(Q)I_q(Q)=\left(\mathcal H^k(Q)^{-1}\int_{[2Q\cap \widetilde E(2\mathrm{diam}(Q))]\setminus \widetilde{E}}d(v,\widetilde E)^q\,d\mathcal H^k(v)\right)^\frac{1}{q}.
\end{displaymath}
If $m=1$, we can apply this definition directly. If $m>1$, denoting by $\hat y$ the point in $X^{m-1}$ which coincides with $y\in X^m$ except that the $i$-th coordinate is deleted, we obtain 
\begin{align*}
   \mu(Q^m)^{-1}&\int_{\Omega_i} d(y_i,\widetilde{E})^q\,d\mu(y) \\& \leq  \mu(Q^m)^{-1} \int_{[2Q\cap \widetilde E(2\mathrm{diam}(Q))]\setminus \widetilde{E}} d(y_i,\widetilde E)^q\int_{[2Q]^{m-1}} d[\mathcal{H}^k\otimes \cdots \otimes \mathcal{H}^k ](\hat y) \, d\mathcal{H}^k(y_i)\\
     &\sim_{C,k} \mathcal{H}^k(Q)^{-1} \int_{[2Q\cap \widetilde E(2\mathrm{diam}(Q))]\setminus \widetilde{E}} d(y_i,\widetilde E)^q \,d\mathcal{H}^k(y_i)\\
     &\sim_{C,k,\theta} [\mathrm{diam}(\widetilde{Q}) I_q(Q)]^q.
\end{align*}
The first inequality holds since $y\in \Omega_i$ implies that $y_i\in 2Q$, where by assumption $Q\cap \widetilde{E}\neq \emptyset$, thus also $y_i \in \widetilde{E}(2\mathrm{diam}(Q))$, while $y_i\notin \widetilde{E}$. Then we also used that $\mu(Q^m)\sim_{C,k} \mathcal{H}^k(Q)^m$, and \eqref{eq:DiamComp} by the definition of $\widetilde{Q}=\widetilde{Q}(Q)$.

Summing over $i$ as in \eqref{eq:Omega_iUpper}, this yields one of the desired bounds in \eqref{eq:Goal}. It remains to bound the first term in \eqref{eq:mu_rho_Int_interm}. That is,
\begin{align}\label{eq:DoubleIntEst}
 &  \mu(Q^m)^{-1}  \int_{E^m}\frac{1}{ d(y_i,\widetilde{E})^{km}} \int_{\widetilde{E}^m}\chi_{B_i^{\ast}(y)}(z)\rho_A(z)^q \chi_{\Omega_i}(y)\,d\mu(z)\,d\mu(y)\notag\\&= 
    \mu(Q^m)^{-1} \int_{\widetilde{E}^m}  \left[\int_{E^m}\frac{1}{ d(y_i,\widetilde{E})^{km}} \chi_{B_i^{\ast}(y)}(z) \chi_{\Omega_i}(y)\,d\mu(y)\right]\,\rho_A(z)^q\,d\mu(z)\notag\\
    &= \mu(Q^m)^{-1} \int_{[2\widetilde{Q}]^m}  \left[\int_{E^m}\frac{1}{ d(y_i,\widetilde{E})^{km}} \chi_{B_i^{\ast}(y)}(z) \chi_{\Omega_i}(y)\,d\mu(y)\right]\,\rho_A(z)^q\,d\mu(z).
\end{align}
The justification for the last inequality is the following. The integrand, as a function of $z$, can be non-zero  only if there is $y\in \Omega_i \subset [2Q]^m$ such that $z\in B_i^{\ast}(y)\subset \widetilde{E}^m$. This means in particular that, for all $j\in \{1,\ldots,m\}$, we have  $y_j \in 2Q$ and $d(z_j,y_j)\leq 2 d(y_i,\widetilde{E})$. Recall that since $y_i \in 2Q$ and $Q\cap \widetilde{E}\neq \emptyset$, we have $d(y_i,\widetilde{E})\leq 2 \mathrm{diam}(Q)$. Therefore, we know that the integrand in \eqref{eq:DoubleIntEst} vanishes unless possibly if
\begin{displaymath}
    z_j \in  B(y_j, 2d(y_i,\widetilde{E}))\cap \widetilde{E} \subset 
    B(y_j, 4\mathrm{diam}(Q))\cap \widetilde{E}
    \subset
    \{z\in\widetilde{E}:\,d(z,Q)\leq 5 \mathrm{diam}(Q)\}
    \overset{\eqref{eq:CubeIncl}}{\subset} 2 \widetilde{Q}.
\end{displaymath}
Here the second but last inclusion follows from the fact that $y_j\in 2Q$, so $d(y_j,Q)\leq \mathrm{diam}(Q)$ and $v\in  B(y_j, 4\mathrm{diam}(Q))$ satisfies $d(v,Q)\leq 5\mathrm{diam}(Q)$.

Thus, for $z\in [2\widetilde{Q}]^m$, it remains to control the inner integral in \eqref{eq:DoubleIntEst}. This can be written as
 \begin{align*}
   &\int_{E^m}\frac{1}{ d(y_i,\widetilde{E})^{km}} \chi_{B_i^{\ast}(y)}(z) \chi_{\Omega_i}(y)\,d\mu(y)\\
   &\leq \int_{\{y_i\in 2Q\setminus\widetilde{E}:\,z_i\in G(y_i)\}}\frac{1}{ d(y_i,\widetilde{E})^{km}}\left[\int_{\{\hat y\in [2Q]^{m-1}:\, d(z_j,y_j)\leq 2 d(y_i,\widetilde{E})\text{ for }j\neq i\}}\,d[\mathcal{H}^k\otimes \cdots \otimes d\mathcal{H}^k ](\hat y)\right]\,d\mathcal{H}^k(y_i).
 \end{align*}
 Since $d(y_i,\widetilde{E})\lesssim \mathrm{diam}(Q)$ (recall \eqref{eq:RadLargeEnough}) and $z_j \in 2\widetilde{Q}$ with $Q\cap \widetilde{Q}\neq \emptyset$ and $\mathrm{diam}(\widetilde{Q})\sim_{C,k,\theta}\mathrm{diam}(Q)$ by \eqref{eq:DiamComp}, we obtain by $k$-regularity of $E$ that
 \begin{displaymath}
   \int_{E^m}\frac{1}{ d(y_i,\widetilde{E})^{km}} \chi_{B_i^{\ast}(y)}(z) \chi_{\Omega_i}(y)\,d\mu(y)
   \lesssim_{C,k,\theta} \int_{\{y_i\in 2Q\setminus\widetilde{E}:\,z_i\in G(y_i)\}}\frac{1}{ d(y_i,\widetilde{E})^{k}}\,d\mathcal{H}^k(y_i).
   \end{displaymath}
   Returning to \eqref{eq:DoubleIntEst}, we have found that
   \begin{align*}
    &  \mu(Q^m)^{-1}  \int_{E^m}\frac{1}{ d(y_i,\widetilde{E})^{km}} \int_{\widetilde{E}^m}\chi_{B_i^{\ast}(y)}(z)\rho_A(z)^q \chi_{\Omega_i}(y)\,d\mu(z)\,d\mu(y)\\
    & \lesssim_{C,k,\theta} \mu(Q^m)^{-1} \int_{[2\widetilde{Q}]^m}  \left[\int_{\{y_i \in 2Q\setminus\widetilde{E}:\,z_i\in G(y_i)\}}\frac{1}{ d(y_i,\widetilde{E})^{k}}\,d\mathcal{H}^k(y_i)\right]\,\rho_A(z)^q\,d\mu(z).
   \end{align*}
   The inner integral is exactly the same expression as in  \cite[(A.18)]{MR4485846}, and it can be bounded in the same way by $K_1$, using the definitions of $G$ and $M$ in \eqref{eq:G(y_i)}. Thus, we obtain 
   \begin{align*}
    &  \mu(Q^m)^{-1}  \int_{E^m}\frac{1}{ d(y_i,\widetilde{E})^{km}} \int_{\widetilde{E}^m}\chi_{B_i^{\ast}(y)}(z)\rho_A(z)^q \chi_{\Omega_i}(y)\,d\mu(z)\,d\mu(y)\\
    & \lesssim_{C,k,\theta} \mu(Q^m)^{-1} \int_{[2\widetilde{Q}]^m}  \,\rho_A(z)^q\,d\mu(z) \lesssim [\mathrm{diam}(\widetilde{Q}) \widetilde{h}_{\mathscr{P},q}(\widetilde{Q})+\eta]^q,
    \end{align*}
by our choice of $A$ in \eqref{eq:ChoiceA} at the beginning of the proof.  Inserting this in \eqref{eq:mu_rho_Int_interm}, and recalling \eqref{eq:Omega_iUpper}, it yields \eqref{eq:Goal}.

\medskip

The case $q=\infty$ is proven
with a more direct application of the assumption \eqref{eq:AxiomRho},
thus adapting \cite[Lemma A.3]{MR4485846}  (instead of  \cite[Lemma A.2]{MR4485846}). This  is easier than the case $q<\infty$ since no product measure is involved. In fact, the argument follows closely the beginning of the proof of \cite[Lemma A.4]{MR4485846}, with the main difference that we assign to $y\in [2Q]^m$  the point $z=(z_1,\ldots,z_m)\in \widetilde{E}^m$ defined by setting $z_j=y_j$ if $y_j\in \widetilde{E}$ and $z_j\in \widetilde{E}$ with $d(y_j,z_j)<2d(y_j,\widetilde{E})$ otherwise. This replaces the choice of $\widetilde{y}$ above \cite[(A.26)]{MR4485846} in the case $m=1$. 
 \end{proof}

\subsection{Applications of the big pieces functor}\label{ss:BigPieceAppl}
In this section we apply the stability result for abstract geometric lemmas under the big pieces functor (Theorem \ref{thm: BHHGNgeneralizedAbstract}) to the geometric lemmas in Section \ref{s:GLem}. The careful reader may note that the geometric and weak geometric lemmas in Section \ref{s:GLem} are formulated with integrals as in Definitions \ref{def:qgeomlemballs}--\ref{def:WGLballs}, whereas the results in Section 
\ref{s:StabAbstrGlem} are stated with Carleson sums over systems of dyadic cubes (Definitions \ref{d:GLemAbstr}--\ref{d:WGLAbsrt}) since in this abstract setting the relevant functions might not exhibit the measurability needed for integration. However, for all weak and strong geometric lemmas in our applications, the integral and dyadic sum versions are equivalent by standard arguments; see \cite[Lemma 3.18 and Remark 3.21]{arXiv:2601.03837}. Therefore, we henceforward use the symbol ``$\mathrm{GLem}$'' interchangeably for the integral and dyadic sum versions, and analogously for ``$\mathrm{WGL}$".

 Combining results from previous sections, we obtain
the characterization of the condition (ILG-C) (corona decomposition by intrinsic Lipschitz graphs)
 stated in the introduction as Corollary \ref{c:introChar}.
 To do so, we will use the fact that if a $k$-regular set $E\subset \mathbb{H}^n$ satisfies (ILG-C) in the sense of  Definition \ref{d:ILG-C}, then, for every $\eta>0$, it admits a $(\eta,2)$-coronization in the sense of \cite[Definition 2.20]{MR4485846} with respect to the family $\mathcal{E}=\mathcal{E}^{\mathrm{iLG}}_{\eta}$ of intrinsic $\eta$-Lipschitz graphs.

\begin{theorem}\label{c:ILG-C Char} Let $n\in \mathbb{N}$ and $k\in \{1,\cdots,n\}$.
    Let $E\subset\mathbb H^n$ be $k$-regular. Then \[E\in\mathrm{(ILG-C)}\Longleftrightarrow E\in\mathrm{GLem}(\beta_{2,\pi,A(2n,k)},2) \text{  and } E\in\mathrm{WGL}(\beta_{\infty,\mathcal V_k}).\]
\end{theorem}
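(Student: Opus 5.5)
The direction ``$\Leftarrow$'' is \cite[Corollary 1.7]{arXiv:2601.03837}: the projection $\beta$-numbers satisfy $\beta_{1,\pi,A(2n,k)}\le \beta_{2,\pi,A(2n,k)}$, so $\mathrm{GLem}(\beta_{2,\pi,A(2n,k)},2)$ implies $\mathrm{GLem}(\beta_{1,\pi,A(2n,k)},2)$. Together with $\mathrm{WGL}(\beta_{\infty,\mathcal V_k})$ this is the hypothesis needed there. The work is therefore in ``$\Rightarrow$''. There I would follow the scheme of \cite{MR4485846}: an (ILG-C) set has big pieces squared of intrinsic Lipschitz graphs, and both conditions pass from the graphs to $E$ through two applications of the big pieces functor.

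\textbf{Step 1: big pieces squared.} Assume $E\in$ (ILG-C). Fix a small $\eta>0$. As noted just before the theorem, $E$ then admits an $(\eta,2)$-coronization with respect to $\mathcal E^{\mathrm{iLG}}_\eta$ in the sense of \cite[Definition 2.20]{MR4485846}. The abstract result of \cite{MR4485846} turns such a coronization into big pieces squared:
\[
E\in BP\bigl(BP(\mathcal E^{\mathrm{iLG}}_\eta)(\theta,C')\bigr)(\theta',C'').
\]
Here the intrinsic graphs may be taken entire, thanks to the extension Theorem \ref{3.2}. Their regularity constants are uniform, since entire intrinsic Lipschitz graphs are $k$-regular with constants depending only on $n$, $k$ and the Lipschitz constant \cite{zbMATH06607318}.

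\textbf{Step 2: the strong lemma.} For the projection numbers, Theorem \ref{t:GlemAffProjBetanILG} gives $\mathcal E^{\mathrm{iLG}}_\eta\subset \mathrm{GLem}(\beta_{2,\pi,A(2n,k)},2,M)$ with a uniform $M$, after passing to the dyadic formulation by \cite[Lemma 3.18]{arXiv:2601.03837}. These coefficients are $h_{\mathscr P,2}$ with $m=1$, $L=1$ and $\rho_A=d_{\mathrm{Eucl}}(\pi(\cdot),A)$, as in Remark \ref{r:ApplGenBigPiece}. The axiom \eqref{eq:AxiomRho} holds because $\pi$ is $1$-Lipschitz from $(\mathbb H^n,d)$ to $\mathbb R^{2n}$. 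Lemma \ref{lemmaA2} then provides \eqref{eq:AssLemmaA2}, and the coefficients are bounded, which is \eqref{eq:BoundH}. Since $p=q=2$, the exponent condition \eqref{eq:exponentRange} reads $1/k>0$ and is trivially satisfied. Applying Theorem \ref{thm: BHHGNgeneralizedAbstract} twice, once per ``$BP$'' layer, yields $E\in\mathrm{GLem}(\beta_{2,\pi,A(2n,k)},2)$.

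\textbf{Step 3: the weak lemma.} For $\mathrm{WGL}(\beta_{\infty,\mathcal V_k})$ I would not push the WGL itself through the functor. Instead I would transfer a strong geometric lemma with finite exponent and then deduce the WGL on $E$. From the proof of Corollary \ref{cor:WGLforILG}, $\mathcal E^{\mathrm{iLG}}_\eta\subset\mathrm{GLem}(\beta_{2,\mathcal V_k},q)$ with $q=q(k)\ge 4$ and uniform constants. The horizontal $\beta$-numbers are of the form $h_{\mathscr P,2}$ with $\rho_{\mathbb V}=d(\cdot,\mathbb V)$, which satisfies \eqref{eq:AxiomRho} with $L=1$. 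The exponent condition $\tfrac12-\tfrac1q+\tfrac1k>0$ holds because $\tfrac12-\tfrac1q\ge 0$. Two applications of Theorem \ref{thm: BHHGNgeneralizedAbstract} then give $E\in\mathrm{GLem}(\beta_{2,\mathcal V_k},q)$, and Chebyshev's inequality turns this into $\mathrm{WGL}(\beta_{2,\mathcal V_k})$.

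To upgrade this to $\beta_{\infty,\mathcal V_k}$ I would use the standard $L^2$-to-$L^\infty$ comparison on the $k$-regular set $E$:
\[
\beta_{\infty,\mathcal V_k}(x,r)\lesssim \beta_{2,\mathcal V_k}(x,2r)^{2/(k+2)}.
\]
This is proved as in \eqref{eq:Beta_inftyBeta_2}: a point far from the plane has a ball of comparable radius around it in which all points of $E$ remain far. This gives $\mathrm{WGL}(\beta_{\infty,\mathcal V_k})$.

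I expect the main obstacle to be Step 1, and in particular checking that the coronization obtained from Definition \ref{d:ILG-C} fits the hypotheses of \cite{MR4485846}. Those hypotheses involve approximating sets satisfying \eqref{eq:ILGcorona} on the whole tree with uniform regularity constants. The other delicate point is the bookkeeping that keeps $\theta,\theta',C',C''$ uniform across both layers of the functor.
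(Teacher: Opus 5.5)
Your proof is correct. The ``$\Leftarrow$'' direction and your Steps~1--2 match the paper's proof. There, \cite[Theorem~1.1]{MR4485846} gives big pieces squared of intrinsic Lipschitz graphs. Then Theorem~\ref{t:GlemAffProjBetanILG}, combined with Theorem~\ref{thm: BHHGNgeneralizedAbstract} for $p=q=2$ and Remark~\ref{r:ApplGenBigPiece}, transports $\mathrm{GLem}(\beta_{2,\pi,A(2n,k)},2)$ to $E$.

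The genuine difference is your Step~3. The paper transfers the \emph{weak} lemma directly. Graphs satisfy $\mathrm{WGL}(\beta_{\infty,\mathcal V_k})$ by Corollary~\ref{cor:WGLforILG}, and the WGL-stability result \cite[Proposition~2.24]{MR4485846} carries this through the big pieces functor; it applies because horizontal $\beta$-numbers are distance-to-set coefficients.

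You instead push the \emph{strong} lemma $\mathrm{GLem}(\beta_{2,\mathcal V_k},q(k))$ through Theorem~\ref{thm: BHHGNgeneralizedAbstract}. This is legitimate, since $d(\cdot,\mathbb V)$ satisfies \eqref{eq:AxiomRho} with $L=1$, and $\tfrac12-\tfrac1{q(k)}+\tfrac1k>0$ because $q(k)\ge 4$. You then apply Chebyshev and the comparison $\beta_{\infty,\mathcal V_k}(x,r)\lesssim\beta_{2,\mathcal V_k}(x,2r)^{2/(k+2)}$ on $E$ itself rather than on the graphs. That comparison only uses $k$-regularity of $E$ and the $1$-Lipschitz property of $d(\cdot,\mathbb V)$, so it is valid there.

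What each route buys:
\begin{itemize}
\item Your route stays entirely within the paper's own stability framework, needing no separate WGL-stability result. It also yields a stronger by-product: every (ILG-C) set satisfies the strong horizontal lemma $\mathrm{GLem}(\beta_{2,\mathcal V_k},q(k))$.
\item The paper's route is shorter. It only needs the weak lemma on the graphs, so no exponent condition such as \eqref{eq:exponentRange} enters that part.
\end{itemize}

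One step is missing. The paper applies \cite[Theorem~1.1]{MR4485846} only to unbounded $E$. For bounded $E$ it passes to $E_\ast=E\cup\mathbb V$, where $\mathbb V\in\mathcal V_k$ lies at distance comparable to $\mathrm{diam}(E)$. The set $E_\ast$ is unbounded, $k$-regular and still (ILG-C), and the conclusions then restrict back to the $k$-regular subset $E$. You should add this reduction to Step~1.
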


\begin{proof}
    The  implication ``$\Leftarrow$'' is \cite[Corollary 1.7]{arXiv:2601.03837}.

    To prove the converse implication ``$\Rightarrow$'', let $E\in \mathrm{Reg}_k(C_E)$ satisfy (ILG-C) with respect to a dyadic system $\mathbb{D}(E)$. We first assume that $E$ is unbounded.  Then, by  \cite[Theorem 1.1]{MR4485846}, $E$ has big pieces squared of intrinsic Lipschitz graphs. More precisely,  for every $\eta>0$, take for instance $\eta=1$, we have $E\in BP(BP(\mathcal{E}^{\mathrm{iLG}}_{\eta})(\theta,C'))(\theta',C'')$ with constants $\theta,\theta',C',C''$ depending only on $C_E,k,\eta$ and the constant $C(\eta)$ from the coronization. Here, for the application of \cite[Theorem 1.1]{MR4485846}, we used the fact that $k$-dimensional intrinsic $\eta$-Lipschitz graphs in $\mathbb{H}^n$ are $k$-regular with a constant depending only on $\eta,k$, and $n$. 
Now, we
     recall that, for the given range of dimensions, $k$-dimensional intrinsic Lipschitz graphs satisfy $\mathrm{GLem}(\beta_{2,\pi,A(2n,k)},2)$ and $\mathrm{WGL}(\beta_{\infty,\mathcal V_k})$, with quantitatively controlled constants, by Theorem \ref{t:GlemAffProjBetanILG} and Corollary \ref{cor:WGLforILG}. It then follows from the big pieces stability (Theorem \ref{thm: BHHGNgeneralizedAbstract} for $q=p=2$, Remark \ref{r:ApplGenBigPiece},  and \cite[Proposition 2.24]{MR4485846}), that $E$ also satisfies $\mathrm{GLem}(\beta_{2,\pi,A(2n,k)},2)$ and $\mathrm{WGL}(\beta_{\infty,\mathcal V_k})$.

In the case where the given set $E\in \mathrm{Reg}_k(C_E)$ with (ILG-C) is bounded, analogously as in \cite[Remark 2.12]{MR4485846}, we fix a $k$-dimensional horizontal plane $\mathbb{V}$ whose distance to $E$ is comparable to the diameter of $E$ and we  consider the set $E_{\ast}=E\cup \mathbb{V}$. Since horizontal planes also have (ILG-C), the set $E_{\ast}$ will be an unbounded set with (ILG-C) by construction. Following the above reasoning for the unbounded case, it follows that $E_{\ast}\in \mathrm{GLem}(\beta_{2,\pi,A(2n,k)},2)$ and $E_{\ast}\in \mathrm{WGL}(\beta_{\infty,\mathcal V_k})$. Since $E$ is a $k$-regular subset of $E_{\ast}$, we conclude that $E$ itself also has the corresponding properties. 
\end{proof}

We conclude with another application of the results  in Section \ref{s:GLem} and stability of weak geometric lemmas under the big pieces functor. For this purpose, we need the following definition which builds on the notion of horizontal Heisenberg projections (cf.\ Definition \ref{d:HorizProj}).

\begin{definition}\label{d:BPHP}
    Let $n\in \mathbb{N}$ and $k\in \{1,\ldots,n\}$. We say that a $k$-regular set $E\subset \mathbb{H}^n$ has \emph{big horizontal projections} if there exists a constant $c>0$ such that for all $x\in E$ and all $0<r<\mathrm{diam}(E)$ there is a horizontal subgroup $\mathbb{V}=\mathbb{V}_{x,r}\in \mathcal{V}_k^0$ with the property that
    \begin{displaymath}
        \mathcal{H}^k(\pi_{\mathbb{V}}(E\cap B(x,r)))\geq c r^k,
    \end{displaymath}
    where $\pi_{\mathbb{V}}:\mathbb{H}^n \to \mathbb{V}$, $\pi_{\mathbb{V}}(z,t)=(\pi_V(z),0)$.
\end{definition}

We can now prove Theorem \ref{t:BPiLGIntro} from the introduction, which we re-state here for the reader's convenience.

\begin{theorem}\label{t:BPiLG} Let $n\in \mathbb{N}$ and {$k\in \{1,\cdots,n\}$}.
    Let $E\subset\mathbb H^n$ be $k$-regular. Assume that $E$ has big pieces of intrinsic Lipschitz graphs. Then $E\in \mathrm{WGL}(\beta_{\infty,\mathcal V_k})$ and $E$ has big horizontal projections.
\end{theorem}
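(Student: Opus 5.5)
The theorem has two conclusions, and I would prove them separately.

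\textbf{Weak geometric lemma.} The WGL part should follow from the big pieces stability of weak geometric lemmas, together with the fact that intrinsic Lipschitz graphs satisfy $\mathrm{WGL}(\beta_{\infty,\mathcal V_k})$.

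By hypothesis, $E\in BP(\mathcal{E}^{\mathrm{iLG}}_L)(\theta,C')$ for some $L$. Three inputs are needed:
\begin{itemize}
\item By \cite[Theorem 3.9]{zbMATH06607318}, every $k$-dimensional intrinsic $L$-Lipschitz graph is $k$-regular with constants depending only on $k,n,L$.
\item By Corollary \ref{cor:WGLforILG}, each such graph lies in $\mathrm{WGL}(\beta_{\infty,\mathcal V_k},C(\cdot))$, with $C(\cdot)$ uniform in the family.
\item The coefficients $\beta_{\infty,\mathcal V_k}$ are of the type $d(\cdot,\mathbb{V})$ treated in \cite{MR4485846}, namely Definition \ref{d:AbstractCoeffFunction} with $m=1$ and $\rho_{\mathbb V}=d(\cdot,\mathbb V)$. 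This $\rho_{\mathbb V}$ is $1$-Lipschitz, so \eqref{eq:AxiomRho} holds.
\end{itemize}
With these, I would invoke \cite[Proposition 2.24]{MR4485846}, the stability of WGL under the big pieces functor, exactly as in the proof of Theorem \ref{c:ILG-C Char}. This gives $E\in\mathrm{WGL}(\beta_{\infty,\mathcal V_k})$. Lemma \ref{lemmaA2} for $q=\infty$ supplies the comparison $h(Q)\lesssim \widetilde h(\widetilde Q)+I_\infty(Q)$ if a self-contained route is preferred.

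The integral and dyadic formulations agree by \cite[Lemma 3.18]{arXiv:2601.03837}. If $E$ is bounded, I would add a far-away horizontal plane as in the proof of Theorem \ref{c:ILG-C Char}.

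\textbf{Big horizontal projections.} Fix $x\in E$ and $0<r<\mathrm{diam}(E)$, and let $\Gamma$ be an intrinsic $L$-Lipschitz graph over $\mathbb V\in\mathcal V_k^0$ with $\mathcal H^k(\Gamma\cap E\cap B(x,r))\ge\theta r^k$. I would use this $\mathbb V$ in Definition \ref{d:BPHP}.

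By Lemma \ref{l:RotRed} we may rotate so that $\mathbb V=\mathbb V_0$, since $R_U$ commutes appropriately with the projections and preserves measures. Then $\pi_{\mathbb V_0}$ restricted to $\Gamma$ is the inverse of the graph map $\Phi$, and $\Phi$ is metrically Lipschitz with constant $\lesssim L+1$ (as shown in the proof of Lemma \ref{l:HigherDimInduc}). Setting $F=\Gamma\cap E\cap B(x,r)$, we have $F=\Phi(\pi_{\mathbb V_0}(F))$, so
\[
\theta r^k\le\mathcal H^k(F)\le (L+1)^k\,\mathcal H^k(\pi_{\mathbb V_0}(F))\le (L+1)^k\,\mathcal H^k\bigl(\pi_{\mathbb V_0}(E\cap B(x,r))\bigr).
\]
This gives the claim with $c=\theta(L+1)^{-k}$. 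The area formula (Theorem \ref{af}) with Lemma \ref{rem_jacobian} gives the same bound.

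\textbf{Main obstacle.} The main point needing care is the first part: checking that \cite[Proposition 2.24]{MR4485846} applies with uniform constants, in particular the quantitative uniformity of $C(\varepsilon)$ across the family $\mathcal E^{\mathrm{iLG}}_L$. This uniformity is ensured by the quantitative form of Corollary \ref{cor:WGLforILG}.
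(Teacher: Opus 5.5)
Your proposal is correct and follows the paper's proof essentially step by step. The WGL part goes through Corollary \ref{cor:WGLforILG} plus the big-pieces stability of \cite[Proposition 2.24]{MR4485846}, with a far-away horizontal plane added when $E$ is bounded. Big horizontal projections follow from $\pi_{\mathbb V}\circ\Phi=\mathrm{id}$ and the metric Lipschitz continuity of the graph map $\Phi$.

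One side remark is superfluous: the comparison from Lemma \ref{lemmaA2} alone does not give WGL stability, since Theorem \ref{thm: BHHGNgeneralizedAbstract} concerns strong geometric lemmas. Proposition 2.24 of \cite{MR4485846} is what actually carries that step.
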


\begin{proof}
   The fact that $E\in \mathrm{WGL}(\beta_{\infty,\mathcal V_k})$ follows by Corollary \ref{cor:WGLforILG} combined with the `big pieces' stability result in \cite[Proposition 2.24]{MR4485846}. Literally, the latter only applies if $E$ is unbounded, but in the bounded case we can deduce the result by considering a union of $E$ with a horizontal plane, analogously as in the proof of Theorem \ref{c:ILG-C Char}.

   The property that $E$ has big horizontal projections is standard: let $x\in E$, $r>0$. Since $E$ has big pieces of intrinsic Lipschitz graphs, there exists $\varphi:\V\to \W$ intrinsic $L$-Lipschitz, where $\V$ is a  horizontal subgroup of dimension $k$ and $\W$ is the corresponding complementary vertical subgroup such that
   \[\mathcal H^k(E\cap B(x,r)\cap \Phi(\V))\geq Cr^k,\] where $\Phi$ denotes the graph map associated to $\varphi$, and $L, C$ do not depend on $x,r$. Recall that $\Phi$ is metric $\tilde L$-Lipschitz, where $\tilde L$ depends only on $L$ (see e.g., \cite[Corollary 4.62 (i)]{MR3587666}). Hence, we get
   \[\mathcal H^k(\pi_{\V}(E\,\cap B(x,r)))\geq \frac{1}{\tilde L^k}\mathcal H^k(E\,\cap B(x,r)\cap \Phi(V))\geq \frac{C}{\tilde L^k}r^k. \qedhere\]
  
\end{proof}

\begin{remark}\label{r:BPiLG}  As stated in the introduction, in the codimension $1$ case, it is known that $(2n+1)$-regular sets in $\mathbb{H}^n$ have big pieces of intrinsic Lipschitz graphs over vertical hyperplanes if and only if they admit big vertical projections and satisfy a weak geometric lemma for vertical $\beta$-numbers.
    This was first proven for  $\mathbb{H}^1$  \cite{zbMATH07106894}. The generalization to $\mathbb{H}^n$ is a combination of several results:  \cite[Theorem 7.1]{zbMATH07222190} establishes  the ``if'' part. 
The ``only if'' part  follows essentially  from \cite[Propositions 4.6 and 6.3]{zbMATH07222190}  since entire codimension-$1$ intrinsic Lipschitz graphs are Semmes surfaces by \cite[Section 3.3]{2019arXiv190406904R}. The generalization from ``intrinsic Lipschitz graphs'' to ``sets with big pieces of intrinsic Lipschitz graphs'' also uses the stability result in  
\cite[Proposition 2.24]{MR4485846} and a higher-dimensional version of \cite[Remark 4.21]{zbMATH07106894}. 
\end{remark}

\printbibliography
\end{document}